\definecolor{Green}{RGB}{30, 150, 30}
\newtheorem{thm}{Theorem}[section]
\newtheorem{claim}[thm]{Claim}
\newtheorem{prop}[thm]{Proposition}
\newtheorem{lem}[thm]{Lemma}
\newtheorem{cor}[thm]{Corollary}
\theoremstyle{definition}
\newtheorem{defn}[thm]{Definition}
\newtheorem{question}[thm]{Question}
\newtheorem{rem}[thm]{Remark}
\newtheorem{notation}[thm]{Notation}
\newtheorem{StandingAssumption}[thm]{Standing Assumption}
\newcommand*\Z{\mathbb{Z}}
\newcommand*\supp{\operatorname{supp}}
\newcommand*\gate{\mathfrak{g}}
\newcommand*\diam{\operatorname{diam}}
\newcommand*\nest{\sqsubseteq}
\newcommand*\propnest{\sqsubsetneq}
\newcommand*\mf[1]{\mathfrak {#1}}
\newcommand*\mc[1]{\mathcal {#1}}
\newcommand*\trans{\pitchfork}
\newcommand*\BS{\operatorname{Big}}
\newcommand{\ol}{\overline}
\def\P{\mathbf{P}}
\def\F{\mathbf{F}}
\def\E{\mathbf{E}}
\newcommand*\gp[2]{({#1} \mid {#2})_{x_0}}
\newcommand*\gprel[3]{({#1} \mid {#2})_{#3}}
\newcommand*\gph[2]{\langle {#1} \mid {#2} \rangle_{x_0}}
\newcounter{jcomments}
\newcounter{jrcomments}
\newcounter{ccomments}
\long\def\Restate#1#2#3#4{
\medskip\par\noindent
{\bf #1 \ref{#2} #3} {\it #4}\par\medskip }
\newcommand{\s}{\mf{S}}
\title[Structure invariant properties of the hierarchically
hyperbolic boundary]{Structure invariant properties of the\\ hierarchically
hyperbolic boundary}
\author{Carolyn Abbott}
\address{Brandeis University, Waltham, MA, USA}
\email{carolynabbott@brandeis.edu}
\author{Jason Behrstock}
\address{Lehman College and The Graduate Center, CUNY, New York, New York, USA}
\curraddr{Barnard College, Columbia University, New York, New York, USA}
\email{jason@math.columbia.edu}
\author{Jacob Russell}
	\address{Department of Mathematics, Rice University, Houston, TX}
\email{jacob.russell@rice.edu}
\begin{document}

\begin{abstract}
	We prove  several topological and dynamical properties of the boundary of a hierarchically hyperbolic group are independent of the specific hierarchically hyperbolic structure. This is accomplished by proving that the boundary is invariant under a ``maximization'' procedure introduced by the first two authors and Durham.
\end{abstract}

\maketitle

\section{Introduction}

A geodesic metric space $\mc X$ has a \emph{hierarchically hyperbolic
structure}  if there exist an index set $\mf{S}$ parameterizing a
collection of hyperbolic spaces and projections from $\mc X$
to each of these hyperbolic spaces satisfying some conditions that
encode the presence/absence of certain quasi-isometrically embedded
products in $\mc X$; see Definition \ref{defn:HHS}.  A
\emph{hierarchically hyperbolic group} (HHG) is a finitely generated group
where the word metric on the group has a hierarchically hyperbolic
structure that is compatible with the group action.

The notion of hierarchical hyperbolicity was introduced by
Behrstock, Hagen, and Sisto \cite{BHS_HHSI,BHS_HHSII} and includes a
number of important examples in geometric group theory including
mapping class groups, most three-manifold groups \cite{BHS_HHSII},
right-angled Coxeter groups, large classes of Artin groups
\cite{BHS_HHSI,HagenMartinSisto:XLartinHHS}, hyperbolic groups, and
various combinations of these examples
\cite{BHS_HHSII,BR_Combination,BR_graph_products}.

A hierarchically hyperbolic group is typically studied by fixing a
particular hierarchically hyperbolic structure and deducing results
about the group using the geometric and combinatorial properties
associated to that structure.  On the other hand, once a group admits
one hierarchically hyperbolic structure it automatically admits many.
For example, a hyperbolic group has a hierarchically hyperbolic
structure where the index set consists of a single element and the
associated hyperbolic space is the group itself.  However, one can also take a
more complicated index set consisting of a collection of quasi-convex
subgroups of the hyperbolic group together with an electrification of
the original group collapsing those subgroups and their cosets.  (Note
that a hyperbolic group is not hyperbolic relative to such a
collection if they are not almost malnormal, but one does get
hierarchical hyperbolicity without such an assumption.)  This point of
view provides an upside to having multiple structures by yielding new
techniques for studying hyperbolic groups and their boundaries; see 
e.g., 
\cite{Spriano_hyperbolic_I}.

In a few cases, the possible structures for an HHG are understood; for instance a
hierarchically hyperbolic group is virtually abelian if and only if
the associated hyperbolic spaces are either bounded or quasi-lines
\cite{PS_Unbounded_domains}.  However, at this point,  it
remains out of reach to understand all the possible structures on a
given group in general.  A natural question in this direction is whether or not a
hierarchically hyperbolic group possesses a most ``natural'' or
``simplest'' structure.  In a hierarchically hyperbolic structure, there is a partial order, called \emph{nesting}, on the set of hyperbolic spaces and a unique nest-maximal element.  Understanding the geometry of the nest-maximal hyperbolic space  is one way to make precise the notion of a simplest structure. Some progress on this has been made in \cite{ABD}, where the authors
gave a construction, which we call \emph{maximization}, that modifies
any given structure to produce one where the nest-maximal hyperbolic
space is canonical.  The canonicality can be seen in a number of ways,
including being unique up to quasi-isometry, as well as encoding all
the Morse elements of the group.  This paper begins with the work of
\cite{ABD} as a starting point in order to study the effect of
maximization on the boundary of a hierarchically hyperbolic group.

Durham, Hagen, and Sisto introduced a 
boundary that provides a compactification for a
hierarchically hyperbolic group and coincides with the Gromov 
boundary when the group is hyperbolic \cite{HHS_Boundary}. Their construction depends {\it a priori} on the 
choice of hierarchically hyperbolic structure $\frak S$ for the group $G$, a pair which we denote $(G,\frak S)$; accordingly, we denote this boundary $\partial (G,\frak S)$.  Question 1 in Durham, Hagen, and Sisto's paper is: 
given two different structures on a hierarchically hyperbolic 
group, does the identity map from  the group to itself extend to 
a homeomorphism between the boundaries of the two different structures? 

In this paper we resolve  
Durham--Hagen--Sisto's question for 
any structure and its maximized version. 

\Restate{Theorem}{thm:BdryABDInvariant}{} {If $(G,\mf{S})$ is an HHG
	and $\mf{T}$ is the structure obtained by maximizing $\mf{S}$, then
	the identity map on $G$ extends continuously to a $G$--equivariant map $ \partial(G,\mf{S}) \to
	\partial(G,\mf{T})$ that is both a simplicial isomorphism and a
	homeomorphism.  }

As part of our proof of Theorem \ref{thm:BdryABDInvariant}, we also prove that two other important notions in hierarchical hyperbolicity---hierarchical quasiconvexity and hierarchy paths---are also invariant under the maximization procedure; see Section \ref{sec:hqcinvar} for the precise statements.

Theorem~\ref{thm:BdryABDInvariant} allows one to convert questions 
about the HHG boundary to questions about a maximized structure. In 
particular, this 
allows us to obtain a number of results about HHG boundaries which  
are independent of the choice of HHG structure used to build the 
boundary.

One consequence is that some topological properties of the boundary of
the maximized hyperbolic space can be shown to hold in every HHG 
boundary, for instance:

\Restate{Corollary}{cor:boundary_connected}{} {Let $G$ be an HHG. 
	If the hyperbolic space  associated to the 
	nest-maximal element  in some (and hence any) maximized 
	hierarchically hyperbolic structure is one-ended, then for any HHG
	structure $\mf{S}$ for $G$, the HHS boundary $\partial(G,\mf{S})$ is
	connected.}

\noindent The converse of Corollary~\ref{cor:boundary_connected} is an interesting open question.

Theorem~\ref{thm:BdryABDInvariant} and the fact that the maximized
hyperbolic space encodes the Morse elements of the group implies that the Morse
elements are precisely the set of elements that act with north-south
dynamics in \textit{any} HHG boundary.

\Restate{Corollary}{cor:boundary_NSdynamics}{} {Let $(G,\mf{S})$ be a
	hierarchically hyperbolic group that is not virtually cyclic.  An
	element $g \in G$ acts with north-south dynamics on
	$\partial(G,\mf{S})$ if and only if $g$ is a Morse element of $G$.  In
	particular, the set of elements of $G$ that act with north-south
	dynamics does not depend on the HHG structure $\mf{S}$.}

We can also show that the set of attracting fixed points of the Morse elements is dense in the boundary, regardless of the choice of HHG structure.

\Restate{Corollary}{cor:Morse_boundary_is_dense}{} {Let $(G,\mf{S})$
	be an HHG that is not virtually cyclic.  
	Either $G$ is quasi-isometric to a product of two unbounded spaces 
	or  the set of attracting fixed points of Morse elements in  
	$\partial(G,\mf{S})$ is dense in $\partial (G,\mf{S})$. In the latter case, the 
	Morse boundary is a dense subset of the HHS boundary.}

Note that for the above corollary, as well as the following one, the hypothesis that $G$ is 
quasi-isometric to a product of two unbounded spaces could be 
replaced by the equivalent statement that  
$G$ does contain a Morse element. This equivalence is obtained by 
first applying the rank rigidity theorem \cite[Theorem~9.13]{HHS_Boundary} to know 
that a group is either a product or contains a rank-one element, then 
applying maximization to ensure that a rank-one element 
is irreducible axial, and  finally appealing to \cite[Theorem~6.15]{HHS_Boundary} (or 
alternatively \cite[Theorem~4.4]{ABD}), which implies that irreducible 
axials are Morse.

Finally, we use the density of Morse elements to show the limit set of
a normal subgroup is the entire HHS boundary.  Examples of such normal
subgroups include the kernel of the Birman exact sequence \cite{birmanbraids},
Bestvina--Brady subgroups of RAAGs \cite{Bestvin_Brady_Morse_Theory},
the normal closure of sufficiently high powers of Dehn twists, as
studied in \cite{Dahmani:plate_spinning}, and the infinitely generated
RAAG subgroups of mapping class groups considered in
\cite{ClayMangahasMargalit}.

\Restate{Corollary}{cor:normal_subgroups}{} {Let $(G,\mf{S})$ be an
	HHG that is not quasi-isometric to a product of two unbounded spaces 
	and 
	is not virtually cyclic.  If $N$ is an infinite normal subgroup of $G$, 
	then the limit set of $N$ in
	$\partial (G,\mf{S})$ is all of $\partial (G,\mf{S})$.}

For hyperbolic groups, if the normal subgroup $N$ is also hyperbolic,
then a remarkable theorem of Mj says there is a continuous surjection
of the Gromov boundary of $N$ onto the Gromov boundary of the ambient
group $G$ induced by the (highly distorted) inclusion of $N$ into $G$
\cite{Mj_Cannon-Thurston}.  These maps are often called
\emph{Cannon--Thurston maps} in honor of the fact that they were first
discovered by Cannon and Thurston in the case where $G$ is the fundamental
group of a fibered hyperbolic $3$--manifold
\cite{Cannon-Thurston_Original}.  Corollary
\ref{cor:normal_subgroups} therefore inspires the following question.

\begin{question}\label{ques:cannon-thurston-map}
	For which HHGs do Cannon--Thurston maps exist?  That is, if
	$(G,\mf{S})$ is an HHG and $N$ is a normal subgroup of $G$ that
	has an HHG structure $\mf{T}$, when does the inclusion $N \to G$
	induce a continuous surjection $\partial(N,\mf{T}) \to \partial
	(G,\mf{S})$?
\end{question}

Natural test cases of Question \ref{ques:cannon-thurston-map} are the
kernel of the Birman exact sequence and cases when a Bestvina--Brady
group is itself a RAAG. 
The answer is ``no'' when $G$ is the direct
product of two hyperbolic groups, but no other obstructions are
currently known.  

A more general formulation of
Question~\ref{ques:cannon-thurston-map} is to remove the normal
hypothesis and ask for which hierarchically hyperbolic subgroups there
is a continuous extension from the HHS boundary of the subgroup to its
limit set.  This version holds for quasiconvex subgroups in hyperbolic
groups.  Moreover, the naive obstruction noted above provided by the
product of two hyperbolic groups is not an obstruction to this version
of the question.  On the other hand, Mousley showed that for a family
of RAAG subgroups of mapping class groups, there are obstructions to
extending, while for a family of free groups she characterizes exactly
when they do extend \cite{Mousley:noboundarymapsHHS}.


\subsection*{Organization of the paper}
In Section \ref{sec:hyp_background}, we set  notation and collect preliminary facts on hyperbolic spaces and their boundaries. In Section \ref{sec:HHS_background}, we define hierarchically hyperbolic spaces and their boundaries. We also describe several tools from the theory of hierarchical hyperbolicity that we will use. We begin Section \ref{sec:maximization} by describing the maximization procedure in detail (Section \ref{sec:maximizationsteps}), and then devote the remainder of the section to the proof of Theorem \ref{thm:BdryABDInvariant}. Section \ref{sec:applications} contains the applications of Theorem \ref{thm:BdryABDInvariant} including Corollaries \ref{cor:boundary_connected}, \ref{cor:boundary_NSdynamics}, \ref{cor:Morse_boundary_is_dense}, and \ref{cor:normal_subgroups}. Section \ref{sec:maximization} contains the bulk of the technical work of the paper. Section \ref{sec:applications} is an essentially self-contained collection of applications where the only reference to the rest of the paper is the statement of  Theorem \ref{thm:BdryABDInvariant}.

\subsection*{Acknowledgments}
We thank Mark Hagen for helpful discussions, especially 
concerning the topology of the hierarchically hyperbolic
boundary. We thank the anonymous referee for comments that improved the exposition of the paper.
 Abbott was supported by NSF grants DMS-1803368 and 
DMS-2106906. Behrstock was supported by the Simons Foundation as a 
Simons Fellow. Behrstock thanks the Barnard/Columbia Mathematics 
department for their hospitality. Russell was supported by NSF grant DMS-2103191.

\section{Preliminaries on hyperbolic spaces} \label{sec:hyp_background}

In this section, we establish notation and recall some basic notions 
about hyperbolic spaces. We also deduce a few general results 
about hyperbolic spaces, which  we will use later in the paper. 

\subsection{Coarse Geometry}
We begin by gathering several facts about metric spaces and coarse 
geometry. We refer the reader to \cite{BridsonHaefliger:book} for further details.

Let $(X,d_X)$ be a metric space. If $Y\subseteq X$ is a subspace, then for any constant $C\geq 0$,  we denote the closed $C$--neighborhood of $Y$ in $X$ by
\[\mathcal N_C(Y)=\{x\in X\mid d_X(x,Y)\leq C\}.\]      We say two  subsets $Y,Z \subseteq X$ are \emph{$C$--coarsely equal}, for some $C\geq 0$, if $Y \subseteq \mc{N}_C(Z)$ and $Z  \subseteq \mc{N}_C(Y)$. When $Y$ and $Z$ are $C$--coarsely equal, we write $Y\asymp_C Z$.  

A map of metric spaces $f\colon (X,d_X)\to (Y,d_Y)$ is a \emph{$(\lambda,c)$--quasi-isometric embedding} if for all $x,y\in X$
\[
\frac1\lambda d_X(x,y)-c\leq d_Y(f(x),f(y))\leq \lambda d_X(x,y)+c.
\]
A \emph{$(\lambda,c)$--quasigeodesic} is a
$(\lambda,c)$--quasi-isometric embedding of a closed interval
$I\subseteq \mathbb R$ into $X$, and a \emph{geodesic} is an isometric
embedding of $I$ into $X$.  We let $[x,y]$ denote a geodesic in $X$
from $x$ to $y$.  In the case of quasigeodesics, we allow $f$ to be a
\emph{coarse map}, that is, a map which sends points in $I$ to
uniformly bounded diameter sets in $X$.  Accordingly, we can assume
that the domain of a quasigeodesic is an interval in $\mathbb{Z}$
instead of $\mathbb R$ when convenient.  A (coarse) map $f\colon
[0,T]\to X$ is an \emph{unparametrized $(\lambda,c)$--quasigeodesic}
if there exists a non-decreasing function $g\colon
[0,T']\to[0,T]$ such that the following hold:
	\begin{itemize} 
	\item $g(0)=0$, 
	\item $g(T')=T$,  
	\item $f\circ g\colon [0,T']\to X$ is a $(\lambda,c)$--quasigeodesic.
	\item for each $j\in[0,T']\cap \mathbb N$, we have the diameter of
	$f(g(j))\cup f(g(j+1))$ is at most $c$.  
	\end{itemize}

For  $\delta\geq 0$, a geodesic metric space $X$ is \emph{$\delta$--hyperbolic} if we have $[x,y] \subseteq \mc{N}_\delta( [x,z] \cup [z,y])$. If the particular choice of $\delta$ is not important, we simply say that $X$ is \emph{hyperbolic}.

Quasigeodesics in a hyperbolic metric space satisfy the following
Morse property, which roughly states that quasigeodesics with the same
endpoints remain in a uniform neighborhood of each other.  This is
also known as \emph{quasigeodesic stability}.

\begin{lem}[Morse Lemma]\label{lem:Morse}
Let $X$ be a $\delta$--hyperbolic metric space, and fix $\lambda\geq 1$ and $c\geq0$.  There exists a constant $\sigma$ depending only on $\delta,\lambda$, and $c$ such that if $\gamma_1$ and $\gamma_2$ are $(\lambda,c)$--quasigeodesics in $X$ with the same endpoints, then $\gamma_1 \asymp_\sigma \gamma_2$.
\end{lem}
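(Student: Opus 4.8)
The plan is the classical two-step reduction. First, it suffices to compare a single $(\lambda,c)$--quasigeodesic with a geodesic: I claim there is $\sigma_0=\sigma_0(\delta,\lambda,c)$ so that any $(\lambda,c)$--quasigeodesic $\gamma$ from $x$ to $y$ satisfies $\gamma\asymp_{\sigma_0}[x,y]$. Granting this, if $\gamma_1,\gamma_2$ are $(\lambda,c)$--quasigeodesics with the same endpoints, choose a geodesic $[x,y]$ between those endpoints; then $\gamma_1\asymp_{\sigma_0}[x,y]\asymp_{\sigma_0}\gamma_2$, so $\gamma_1\asymp_{2\sigma_0}\gamma_2$ and we take $\sigma=2\sigma_0$. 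Before proving the claim, I would reduce to the case of a continuous quasigeodesic: given a coarse $(\lambda,c)$--quasigeodesic $\gamma$ with domain an interval in $\mathbb Z$, replace it by the path $\hat\gamma$ obtained by concatenating geodesics $[\gamma(n),\gamma(n+1)]$ over consecutive integers in the domain. A direct check shows $\hat\gamma$ is a $(\lambda',c')$--quasigeodesic with $\lambda',c'$ depending only on $\lambda,c$, has the same endpoints as $\gamma$, lies within $\lambda+c$ of $\gamma$, and has the feature that the length of any subpath of $\hat\gamma$ is bounded by an affine function (with constants depending only on $\lambda,c$) of the distance between the subpath's endpoints. So from now on $\gamma\colon[0,L]\to X$ denotes such a continuous, rectifiable quasigeodesic with this length control.

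\textbf{Geodesic near quasigeodesic.} Let $D=\max_{p\in[x,y]}d_X(p,\gamma)$, realized at some $p_0\in[x,y]$ (the maximum exists since $[x,y]$ is compact). Pick points $u,v\in[x,y]$ on opposite sides of $p_0$ with $d_X(u,p_0)=d_X(v,p_0)=2D$, unless an endpoint of $[x,y]$ is reached first, in which case take that endpoint instead. Choose $u',v'$ on $\gamma$ with $d_X(u,u')\le D$ and $d_X(v,v')\le D$ (if $u$ is an endpoint of $[x,y]$ take $u'=u$, since the endpoints of $\gamma$ and $[x,y]$ coincide; similarly for $v$). Form the path $P$ from $u$ to $v$ by concatenating $[u,u']$, the subpath of $\gamma$ between $u'$ and $v'$, and $[v',v]$. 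Since $d_X(u',v')\le 6D$, the length control on $\gamma$ gives $\operatorname{length}(P)\le A D+B$ for constants $A,B$ depending only on $\lambda,c$. Now I invoke the standard logarithmic estimate (which I would isolate as a sub-lemma, proved by repeated dyadic bisection of $P$ and the thin-triangle condition): in a $\delta$--hyperbolic space, every point of the geodesic $[u,v]$ lies within $\delta\log_2\bigl(\operatorname{length}(P)\bigr)+1$ of $P$. Applying this to $p_0$, and noting that $p_0$ is at distance at least $D$ from all of $P$ (its distance to $\gamma$ is $D$ by choice of $p_0$, and its distance to $[u,u']\cup[v',v]$ is at least $2D-D=D$), we obtain
\[
D\ \le\ \delta\log_2(AD+B)+1 .
\]
Because the right-hand side grows only logarithmically in $D$, this inequality forces $D\le\sigma_1$ for some $\sigma_1=\sigma_1(\delta,\lambda,c)$; hence $[x,y]\subseteq\mathcal N_{\sigma_1}(\gamma)$.

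\textbf{Quasigeodesic near geodesic.} For $\gamma(t)$, let $[s_1,s_2]\ni t$ be the maximal parameter interval on which $d_X(\gamma(s),[x,y])\ge \sigma_1+1$ throughout (if there is none, we are done). Then $d_X(\gamma(s_i),[x,y])\le\sigma_1+1$: either $\gamma(s_i)$ lies on the boundary of such an interval, or it is an endpoint of $\gamma$, which lies on $[x,y]$. Pick $z_i\in[x,y]$ with $d_X(\gamma(s_i),z_i)\le\sigma_1+1$. The geodesic $[z_1,z_2]\subseteq[x,y]$ is contained in $\mathcal N_{\sigma_1}(\gamma)$ by the previous step, but every point of $\gamma$ with parameter in $[s_1,s_2]$ is at distance more than $\sigma_1$ from $[x,y]\supseteq[z_1,z_2]$; using that $\{\gamma(s):s\le s_1\}$ and $\{\gamma(s):s\ge s_2\}$ are connected, contain $\gamma(s_1)$ and $\gamma(s_2)$ respectively, and reach the endpoints of $[x,y]$, one concludes $d_X(z_1,z_2)$ is bounded by a constant depending only on $\sigma_1,\delta$. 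Feeding this bound into the length control gives $\operatorname{length}(\gamma|_{[s_1,s_2]})\le\ell$ for a constant $\ell=\ell(\delta,\lambda,c)$, whence $d_X(\gamma(t),[x,y])\le\ell+\sigma_1+1=:\sigma_2$. Taking $\sigma_0=\max(\sigma_1,\sigma_2)$ finishes the quasigeodesic-versus-geodesic comparison and hence the lemma.

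\textbf{Main obstacle.} The crux is the geodesic-near-quasigeodesic step: proving the logarithmic path lemma by dyadic subdivision and then closing the self-improving inequality $D\le\delta\log_2(AD+B)+1$, together with a careful treatment of the degenerate cases where $u$ or $v$ is forced to be an endpoint of $[x,y]$. The reverse inclusion is comparatively routine, its only delicate point being the connectedness/covering argument bounding $d_X(z_1,z_2)$.
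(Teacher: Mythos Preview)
Your proof is correct; it is essentially the standard argument from Bridson--Haefliger \cite[III.H, Theorem~1.7]{BridsonHaefliger:book}. The paper, however, does not prove this lemma at all: it is stated as a background fact about hyperbolic spaces, with the reader implicitly referred to the literature. So there is no approach to compare against---the paper simply quotes the result.
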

We say $\sigma$ is the \emph{Morse constant} associated to $( \lambda,c)$--quasigeodesics in a $\delta$--hyperbolic~space.

\subsection{The Gromov product and the Gromov boundary}
Let $X$ be a $\delta$--hyperbolic metric space. For any $x,y,z \in X$, the \emph{Gromov product of $x$ and $y$ with respect to $z$} is \[ \gprel{x}{y}{z} = \frac{1}{2} \left(d_X(x,z) + d_X(y,z) - d_X(x,y) \right).\]

The Gromov product $\gprel{x}{y}{z}$ is uniformly close  to the distance from $z$ to a geodesic connected $x$ and $y$:
\begin{lem}[{\cite[p.~410]{BridsonHaefliger:book}}] \label{lem:Gromov_product_is_distance_to_geodesic}
	For any $\delta$--hyperbolic space $X$ and $x,y,z \in X$, we have \[ |\gprel{x}{y}{z}- d_X(z, [x,y]) | \leq \delta.\]
\end{lem}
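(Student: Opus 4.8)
The plan is to establish the two inequalities $\gprel{x}{y}{z} \le d_X(z,[x,y])$ and $d_X(z,[x,y]) \le \gprel{x}{y}{z} + \delta$ separately; the first needs no hyperbolicity, while the second is where $\delta$-thinness of triangles enters.

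For the lower bound on the distance, let $w \in [x,y]$ realize $d_X(z,[x,y])$, so that $d_X(x,y) = d_X(x,w) + d_X(w,y)$. Substituting this identity into the definition $\gprel{x}{y}{z} = \frac{1}{2}(d_X(x,z) + d_X(y,z) - d_X(x,y))$ and then applying the triangle inequalities $d_X(x,z) \le d_X(x,w) + d_X(w,z)$ and $d_X(y,z) \le d_X(y,w) + d_X(w,z)$ collapses everything to $\gprel{x}{y}{z} \le d_X(w,z) = d_X(z,[x,y])$.

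For the upper bound, I would exploit the $\delta$-slimness of the geodesic triangle on $x, y, z$. Parametrizing $[x,z]$ from $x$, consider the two closed subsets of its parameter interval consisting of the points lying in $\mc N_\delta([x,y])$ and those lying in $\mc N_\delta([y,z])$; slimness of $[x,z]$ forces their union to be the whole interval, and each is nonempty (one contains $x$, the other $z$), so by connectedness of the interval they meet in a point $q \in [x,z]$ that is $\delta$-close to both $[x,y]$ and $[y,z]$. Choose $p \in [x,y]$ and $r \in [y,z]$ with $d_X(q,p) \le \delta$ and $d_X(q,r) \le \delta$, so $d_X(p,r) \le 2\delta$. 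The core of the argument is then a bookkeeping step: setting $t = d_X(x,q)$ and using that $p$ lies on the geodesic $[x,y]$, that $r$ lies on $[y,z]$, and the three approximate coincidences among $p, q, r$, one bounds $d_X(x,p), d_X(y,p), d_X(y,r), d_X(z,r)$ in terms of $t$ and the side lengths $d_X(x,y), d_X(y,z), d_X(x,z)$, and combines these with the identities $\gprel{y}{z}{x} + \gprel{x}{z}{y} = d_X(x,y)$ and its cyclic variants to deduce $d_X(z,r) \le \gprel{x}{y}{z} + (\text{a bounded multiple of }\delta)$. Since $d_X(z,[x,y]) \le d_X(z,p) \le d_X(z,r) + 2\delta$, this yields an inequality of the claimed shape.

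The main obstacle is not conceptual but the constant-chasing in this last step: the connectedness argument produces a point of $[x,y]$ lying near $z$ almost for free, but verifying that its distance to $z$ exceeds $\gprel{x}{y}{z}$ by at most $\delta$ — rather than by the larger multiple that the naive triangle-inequality estimates give — requires either a careful optimization of those estimates or, equivalently, comparison of the triangle with an approximating tripod and control of its insize; this is why one ordinarily just invokes \cite[p.~410]{BridsonHaefliger:book}.
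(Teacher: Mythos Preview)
The paper does not prove this lemma at all; it simply cites \cite[p.~410]{BridsonHaefliger:book} and moves on. So there is no paper proof to compare against --- your proposal is strictly more than what the paper offers.

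As for the argument itself: the lower bound is clean and correct. For the upper bound, your slim-triangles approach (find $q \in [x,z]$ within $\delta$ of both other sides, then bookkeep) is the right idea and will certainly give $d_X(z,[x,y]) \le \gprel{x}{y}{z} + C\delta$ for some explicit small $C$; you are candid that squeezing $C$ down to $1$ is where the work lies. That honesty is warranted: with the Rips slim-triangles definition the paper uses, the naive estimates give a constant like $4\delta$, and the sharp bound with $\delta$ in Bridson--Haefliger is proved via the equivalent \emph{thin}-triangles (tripod/insize) characterisation, where the internal point of the tripod sits at distance exactly $\gprel{x}{y}{z}$ from $z$ and the $\delta$-thinness gives the result in one line. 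Since the paper only ever uses this lemma as a coarse estimate (always absorbed into larger constants), the distinction between $\delta$ and $4\delta$ is immaterial for its purposes, and your outline would suffice.
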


Given a fixed basepoint $x_0$ of $X$, a sequence of points $(x_n)$ in $X$  \emph{converges to infinity} if $$\gp{x_n}{x_k}  \to  \infty$$ as
 $n,k \to \infty$. Two sequences $(x_n)$ and $(y_n)$ are \emph{asymptotic} if $\gp{x_n}{y_n} \to \infty$ as $n \to \infty$. Note, this is equivalent to requiring that $\gp{x_n}{y_k} \to \infty$ as $n,k \to \infty$.  The \emph{Gromov boundary} $\partial X$ of  $X$ is the set of sequences in $X$ that converge to infinity modulo the equivalence relation of being asymptotic. 

The Gromov product extends to $x,y \in X \cup \partial X$ and $z \in X$ by taking the supremum of $$\liminf\limits_{n,k}\gprel{x_n}{y_k}{z}$$ over all sequences $(x_n)$ and $(y_k)$ that are either asymptotic to $x$ or $y$ when they are boundary points or converge to $x$ or $y$ when they are points in $X$. We can then topologize $X \cup \partial X$ by declaring a sequence $(x_n)$ in $X \cup \partial X$ to converge to $x \in \partial X$ if and only if $$\lim_{n \to \infty} \gp{x_n}{x} = \infty.$$	

\begin{defn}\label{def:nbhdbasis}
For each $p \in \partial X$, the sets 
\[
M(r;p) = \left\{ x \in X \cup \partial X : \gp{p}{x} > r\right\} \] where
$r >0$ form a neighborhood basis for $p$ in $X \cup \partial X$.  Note that
if $r \leq r'$, then $M(r';p) \subseteq M(r;p)$.
 \end{defn}

Despite the presence of the basepoint in the above definitions, convergence to infinity, being asymptotic, the Gromov boundary, and the topology of $X \cup \partial X$ are all independent of the choice of basepoint.

If $\gamma \colon [0,\infty) \to X$ is a quasigeodesic ray, then
there exists a unique $p \in \partial X$ so that $\gamma(t_n) \to p$
for every increasing sequence $(t_n)$ that approaches infinity.  In
this case, we say that $\gamma$ \emph{represents} $p \in \partial X$.
Every point in $\partial X$ can be represented by a
$(1,20\delta)$--quasigeodesic ray based at any point in $X$; see 
e.g., \cite{KapovichBenakli:boundaries}.

The next two lemmas allow geodesics to assist in calculating  Gromov products.  The first is straight-forward, and its proof is left to the reader.

\begin{lem}\label{lem:Gromov_product_along_qg}
	For every $\lambda\geq 1$, $c \geq 0$, and $\delta \geq 0$, there exist  $B \geq 0$ and $t_0 \geq 0$ so that the following holds.
	
	Let $X$ be a $\delta$--hyperbolic space with basepoint $x_0$, and let $p\in \partial X$ and $x \in X$. Let $\gamma \colon [0,\infty) \to X$ be a $(\lambda,c)$--quasigeodesic starting at $x_0$ and representing $p$. For all $t \geq t_0$, we have \[|\gp{\gamma(t)}{x}  - \gp{p}{x}| \leq B. \] 
\end{lem}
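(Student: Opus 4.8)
The plan is to deduce the estimate from two applications of the four--point inequality for the Gromov product, the key point being that because $\gamma$ represents $p$, the quantity $\gp{\gamma(t)}{p}$ is comparable to $d_X(x_0,\gamma(t))$ up to an error controlled by $\lambda$, $c$, and $\delta$ alone. Recall that in a $\delta$--hyperbolic space there is $\kappa=\kappa(\delta)\ge 0$ with
\[
\gprel{a}{b}{x_0}\ \ge\ \min\{\gprel{a}{c}{x_0},\ \gprel{c}{b}{x_0}\}-\kappa
\]
for all $a,b,c\in X$, and that (after enlarging $\kappa$ by a bounded amount) the same holds for $a,b,c\in X\cup\partial X$, since by definition the extended product is a supremum of liminfs of interior products; the interior version can alternatively be read off from Lemma~\ref{lem:Gromov_product_is_distance_to_geodesic} and $\delta$--thinness of triangles.

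Next I would show there is a constant $C_1=C_1(\lambda,c,\delta)$ with
\[
d_X(x_0,\gamma(t))-C_1\ \le\ \gp{\gamma(t)}{p}\ \le\ d_X(x_0,\gamma(t)),
\]
recalling that $\tfrac1\lambda t-c\le d_X(x_0,\gamma(t))\le\lambda t+c$ because $\gamma$ is a $(\lambda,c)$--quasigeodesic based at $x_0$. The upper bound is general. For the lower bound, fix $s>t$; on a geodesic $[x_0,\gamma(s)]$, the Morse Lemma (Lemma~\ref{lem:Morse}) provides a point within the Morse constant $\sigma=\sigma(\lambda,c,\delta)$ of $\gamma(t)$, and feeding this point into the four--point inequality gives $\gprel{\gamma(s)}{\gamma(t)}{x_0}\ge d_X(x_0,\gamma(t))-\sigma-\kappa$, with this bound uniform in $s>t$. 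Letting $s\to\infty$ along a sequence and using that $\gamma$ represents $p$ then gives $\gp{\gamma(t)}{p}\ge d_X(x_0,\gamma(t))-C_1$ with $C_1=\sigma+\kappa$.

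Now set $B:=C_1+\kappa$. For the upper bound, applying the four--point inequality to $\gp{p}{x}$ through $\gamma(t)$,
\[
\gp{p}{x}\ \ge\ \min\{\gp{p}{\gamma(t)},\ \gp{\gamma(t)}{x}\}-\kappa,
\]
and since $\gp{\gamma(t)}{x}\le d_X(x_0,\gamma(t))\le\gp{\gamma(t)}{p}+C_1$, both entries of the minimum are at least $\gp{\gamma(t)}{x}-C_1$; hence $\gp{\gamma(t)}{x}\le\gp{p}{x}+B$ for every $t$. For the lower bound, applying the four--point inequality to $\gp{\gamma(t)}{x}$ through $p$,
\[
\gp{\gamma(t)}{x}\ \ge\ \min\{\gp{\gamma(t)}{p},\ \gp{p}{x}\}-\kappa;
\]
since $\gp{\gamma(t)}{p}\ge\tfrac1\lambda t-c-C_1$ grows without bound, for $t$ beyond a threshold $t_0$ the first entry of the minimum is the larger one, so the right side equals $\gp{p}{x}-\kappa\ge\gp{p}{x}-B$. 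Combining the two bounds gives $|\gp{\gamma(t)}{x}-\gp{p}{x}|\le B$ for all $t\ge t_0$.

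I expect the main difficulty to be the middle step together with the choice of $t_0$: one must establish the comparison between $\gp{\gamma(t)}{p}$ and $d_X(x_0,\gamma(t))$ with constants depending only on $(\lambda,c,\delta)$---relying solely on the Morse Lemma, so that neither properness of $X$ nor existence of a geodesic ray representing $p$ is needed---and then keep careful track, in each of the two applications of the four--point inequality, of which entry of the minimum is realized, so that the two one--sided bounds assemble into the claimed estimate on the full range $t\ge t_0$.
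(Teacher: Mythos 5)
The paper does not actually supply a proof of this lemma (it is ``left to the reader''), so there is nothing to compare against; your argument via the four-point inequality is a standard and correct way to carry it out. Each step checks out: the comparison $d_X(x_0,\gamma(t))-C_1\le\gp{\gamma(t)}{p}\le d_X(x_0,\gamma(t))$ follows from the Morse Lemma exactly as you describe (the point $q\in[x_0,\gamma(s)]$ with $d(q,\gamma(t))\le\sigma$ makes both entries of the relevant minimum at least $d_X(x_0,\gamma(t))-\sigma$), and the two applications of the four-point inequality assemble into the two one-sided bounds with $B=C_1+\kappa$.

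One point deserves to be made explicit. In your lower bound, the threshold $t_0$ is determined by the requirement $\gp{\gamma(t)}{p}\ge\gp{p}{x}$, so it depends on $\gp{p}{x}$ and hence on $x$ and $p$, not only on $(\lambda,c,\delta)$ as the lemma's quantifier order literally asserts. This is not a defect of your argument: the statement with $t_0$ uniform in $x$ is in fact false. Take $X=\R$ with $\gamma(t)=t$ and $p=+\infty$; for $x=T$ one computes $\gp{\gamma(t)}{x}=\min(t,T)$ and $\gp{p}{x}=T$, so $\left|\gp{\gamma(t_0)}{x}-\gp{p}{x}\right|=T-t_0$ is unbounded over $x$. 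The upper bound $\gp{\gamma(t)}{x}\le\gp{p}{x}+B$ does hold for all $t$ with no threshold, as your proof shows; it is only the lower bound that requires $t$ large relative to $\gp{p}{x}$. So your proof establishes the correct (and intended) statement, in which only $B$ is uniform in $(\lambda,c,\delta)$ and $t_0$ may depend on the configuration; this is consistent with how the lemma is invoked later, where one chooses a point ``sufficiently far along $\gamma$'' after the other data are fixed. It would be worth stating this dependence of $t_0$ explicitly rather than leaving it implicit in the phrase ``beyond a threshold.''
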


\begin{lem}\label{lem:Gromov_product_go_far_enough}
	Let $X$ be a $\delta$--hyperbolic space and $x,y,z \in X$. If $q$ is a point on $[z,x]$ so that $q \in \mc{N}_C\left([x,y]\right)$, then  \[ |\gprel{x}{y}{z} - \gprel{q}{y}{z}| \leq C.\]
\end{lem}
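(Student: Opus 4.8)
The plan is to reduce the statement to an application of Lemma~\ref{lem:Gromov_product_is_distance_to_geodesic}, which says the Gromov product $\gprel{a}{b}{c}$ is within $\delta$ of $d_X(c,[a,b])$ for any geodesic $[a,b]$. First I would write $\gprel{x}{y}{z}$ and $\gprel{q}{y}{z}$ each in terms of distances to geodesics: choosing a geodesic $[x,y]$ and a geodesic $[q,y]$, we have $|\gprel{x}{y}{z} - d_X(z,[x,y])|\leq\delta$ and $|\gprel{q}{y}{z} - d_X(z,[q,y])|\leq\delta$. So by the triangle inequality it suffices to show $|d_X(z,[x,y]) - d_X(z,[q,y])|$ is bounded by a constant depending only on $C$ (and then absorb the $2\delta$ into the final constant, or re-examine whether the sharper bound $C$ is really claimed — I would double-check that the intended constant is $C$ exactly, which may require a more hands-on argument rather than going through Lemma~\ref{lem:Gromov_product_is_distance_to_geodesic}).

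For the sharp bound $C$, I would instead argue directly with the definition of the Gromov product. Since $q$ lies on the geodesic $[z,x]$, we have $d_X(z,x) = d_X(z,q) + d_X(q,x)$. Substituting into the definition,
\[
\gprel{x}{y}{z} = \tfrac12\bigl(d_X(z,q) + d_X(q,x) + d_X(z,y) - d_X(x,y)\bigr),
\]
while
\[
\gprel{q}{y}{z} = \tfrac12\bigl(d_X(z,q) + d_X(z,y) - d_X(q,y)\bigr).
\]
Subtracting, the $d_X(z,q)$ and $d_X(z,y)$ terms cancel, leaving
\[
\gprel{x}{y}{z} - \gprel{q}{y}{z} = \tfrac12\bigl(d_X(q,x) - d_X(x,y) + d_X(q,y)\bigr) = \gprel{x}{y}{q}.
\]
So the claim is exactly that $0 \le \gprel{x}{y}{q} \le C$. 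Non-negativity is immediate from the triangle inequality (the Gromov product is always non-negative). For the upper bound, I would use the hypothesis $q\in\mc N_C([x,y])$: by Lemma~\ref{lem:Gromov_product_is_distance_to_geodesic}, $\gprel{x}{y}{q} \le d_X(q,[x,y]) + \delta \le C + \delta$. This gives the result with constant $C+\delta$ rather than $C$; to get exactly $C$ one should note that the distance from a point to a geodesic $[x,y]$ bounds the Gromov product from \emph{above} directly without the $\delta$ error: if $p'\in[x,y]$ realizes the distance, then $\gprel{x}{y}{q} \le \gprel{x}{y}{q} $, and using $d_X(q,x)\le d_X(q,p')+d_X(p',x)$ and $d_X(q,y)\le d_X(q,p')+d_X(p',y)$ together with $d_X(x,y) = d_X(x,p')+d_X(p',y)$ yields $\gprel{x}{y}{q}\le d_X(q,p') = d_X(q,[x,y]) \le C$.

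The main obstacle is therefore not conceptual but a matter of bookkeeping: recognizing the algebraic identity $\gprel{x}{y}{z} - \gprel{q}{y}{z} = \gprel{x}{y}{q}$ (which crucially uses that $q$ is \emph{on} the geodesic $[z,x]$, not merely near it), and then extracting the clean bound $\gprel{x}{y}{q}\le d_X(q,[x,y])$ by hand rather than invoking the $\delta$-lossy Lemma~\ref{lem:Gromov_product_is_distance_to_geodesic}. I expect the whole argument to be a few lines once the identity is written down, with the only subtlety being to present the elementary triangle-inequality estimate for $\gprel{x}{y}{q}\le d_X(q,[x,y])$ cleanly enough that no hyperbolicity constant creeps into the final bound.
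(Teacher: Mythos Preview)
Your proof is correct and essentially the same as the paper's. Both use that $q\in[z,x]$ gives $d_X(z,x)=d_X(z,q)+d_X(q,x)$, then pick $p\in[x,y]$ with $d_X(p,q)\le C$ and apply the triangle inequality to $d_X(q,x)$ and $d_X(q,y)$; your observation that the difference $\gprel{x}{y}{z}-\gprel{q}{y}{z}$ equals $\gprel{x}{y}{q}$ is a slightly cleaner way of packaging the same computation, and your final estimate $\gprel{x}{y}{q}\le d_X(q,[x,y])$ is exactly what the paper's inequalities amount to once unwound. (You can safely drop the exploratory first paragraph via Lemma~\ref{lem:Gromov_product_is_distance_to_geodesic}; the direct argument you give afterwards is complete and sharp.)
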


\begin{proof}
	Since $q \in [z,x]$, we have \[d_X(q,z) +d_X(y,z) - d_X(q,y) = d_X(x,z) - d_X(q,x) + d_X(y,z) -d_X(q,y). \]
	Let $p$ be point on $[x,y]$ with $d_X(p,q) \leq C$. Thus, \[d_X(q,y) \leq d_X(p,y) + C  \text{ and } d_X(q,x) \leq d_X(p,x) + C. \]
	Hence,  
	\begin{align*}
		d_X(x,z) - d_X(q,x) + d_X(y,z) - d_X(q,y) &\geq d_X(x,z) - d_X(p,x)  + d_X(y,z) - d_X(p,y) - 2C \\
		&=  d_X(x,z)   + d_X(y,z) - d_X(x,y) - 2C. 
	\end{align*}
	For the other inequality, we apply \[d_X(q,y) \geq d_X(p,y) - C  \text{ and } d_X(q,x) \geq d_X(p,x) - C \]
	to conclude 
	\begin{align*}
		d_X(x,z) - d_X(q,x) + d_X(y,z) - d_X(q,y) &\leq d_X(x,z) - d_X(p,x)  + d_X(y,z) - d_X(p,y) + 2C \\
		&=  d_X(x,z)   + d_X(y,z) - d_X(x,y) + 2C. 
	\end{align*}
	Dividing everything by $2$ produces $|\gprel{x}{y}{z} - \gprel{p}{y}{z}| \leq C$.	
\end{proof}

Our last lemma shows that  points of $X$ that are close to $M(r;p)$ are in $M(r';p)$ for some $r'$ slightly smaller than $r$.

\begin{lem}\label{lem:enlarging_boundary_neighborhoods}
	Let $X$ be a $\delta$--hyperbolic space. There exists a constant $B \geq 0$, depending only on $\delta$, so that for all $p\in\partial X$, if  $C \geq 3\delta +2B$ and  $r \geq 2C +1$, then $$\mc{N}_C(M(r;p) \cap X) \subseteq M(r-2C;p).$$  
\end{lem}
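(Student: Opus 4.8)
The plan is to reduce the statement about the boundary point $p$ to a statement about honest points of $X$ by pushing $p$ out to infinity along a quasigeodesic ray, where the Gromov product is computed in $X$ and behaves well under bounded perturbation. Concretely, let $B$ and $t_0$ be the constants produced by Lemma \ref{lem:Gromov_product_along_qg} for the parameters $(\lambda,c) = (1,20\delta)$; these depend only on $\delta$, and we may take $B \geq 0$. As recalled in the text, $p$ is represented by a $(1,20\delta)$--quasigeodesic ray $\gamma\colon[0,\infty)\to X$ with $\gamma(0) = x_0$. Suppose now $x \in \mc{N}_C(M(r;p)\cap X)$, and choose $y \in M(r;p)\cap X$ with $d_X(x,y)\leq C$, so $\gp{p}{y} > r$; the goal is to show $\gp{p}{x} > r - 2C$.

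First I would record the elementary fact that for any $a,b,z\in X$ one has $|\gp{a}{z} - \gp{b}{z}| \leq d_X(a,b)$, which is immediate from applying the triangle inequality to the terms $d_X(a,x_0)-d_X(b,x_0)$ and $d_X(a,z)-d_X(b,z)$ appearing in the definition of the Gromov product. Then the argument is a three-step chain of estimates, all valid for $t \geq t_0$: apply Lemma \ref{lem:Gromov_product_along_qg} with the point $y$ to get $\gp{\gamma(t)}{y} \geq \gp{p}{y} - B > r - B$; apply the perturbation fact to get $\gp{\gamma(t)}{x} \geq \gp{\gamma(t)}{y} - C > r - B - C$; and apply Lemma \ref{lem:Gromov_product_along_qg} again, now with the point $x$, to get $\gp{p}{x} \geq \gp{\gamma(t)}{x} - B > r - 2B - C$. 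Since $C \geq 3\delta + 2B \geq 2B$, this gives $r - 2B - C \geq r - 2C$, hence $\gp{p}{x} > r - 2C$, i.e. $x \in M(r-2C;p)$, which is exactly the claimed inclusion.

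I do not anticipate a serious obstacle here: the only things needing care are confirming that the constants coming out of Lemma \ref{lem:Gromov_product_along_qg} depend only on $\delta$ once $(\lambda,c)$ is fixed at $(1,20\delta)$, and bookkeeping the three successive losses so they sum to at most $2C$. The hypothesis $r \geq 2C+1$ is not actually used above; it only serves to keep $r - 2C \geq 1 > 0$ so that $M(r-2C;p)$ is a genuine neighborhood-type set. One could also bypass Lemma \ref{lem:Gromov_product_along_qg} entirely and argue straight from the sequence definition of the extended Gromov product: choose a sequence $(p_n)$ converging to $p$ with $\liminf_n \gp{p_n}{y} > r$, use $\gp{p_n}{x} \geq \gp{p_n}{y} - C$ termwise, and observe $\gp{p}{x} \geq \liminf_n \gp{p_n}{x} > r - C \geq r - 2C$; this version needs no constraint on $C$, $\delta$, or $B$ at all, but the route through Lemma \ref{lem:Gromov_product_along_qg} is closer in spirit to the rest of the section.
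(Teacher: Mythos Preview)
Your proposal is correct. Both you and the paper push $p$ out along a $(1,20\delta)$--quasigeodesic ray via Lemma~\ref{lem:Gromov_product_along_qg} and then control the $C$--perturbation from the point of $M(r;p)$ to $x$, but the mechanisms for that perturbation step differ. The paper converts Gromov products into distances from $x_0$ to geodesics using Lemma~\ref{lem:Gromov_product_is_distance_to_geodesic} and then invokes thin triangles to pass from the geodesic $[x',y]$ to $[x,y]$; this detour through geodesics is where the $3\delta$ in the hypothesis $C \geq 3\delta + 2B$ appears. Your route stays entirely at the level of Gromov products, using only the elementary $1$--Lipschitz estimate $|\gp{a}{z}-\gp{b}{z}| \leq d_X(a,b)$, and consequently needs only $C \geq 2B$; Lemma~\ref{lem:Gromov_product_is_distance_to_geodesic} is never invoked. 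Your closing observation is also on point: arguing directly from the sequence definition of the extended Gromov product yields the inclusion with loss only $C$ and no lower bound on $C$ whatsoever, so the lemma as stated is weaker than what either argument actually establishes.
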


\begin{proof}
	Let $B$ be the constant from Lemma \ref{lem:Gromov_product_along_qg} for $\lambda =1$ and $c = 20\delta$. Let $C \geq 3\delta +2B$, $r \geq 2C+1$, and $p\in \partial X$.
	
	 Select a a $(1,20\delta)$--quasigeodesic  $\gamma$ starting at the basepoint $x_0$ and representing $p$. There is a point $y \in \gamma$ that is sufficiently far along $\gamma$ so that $y \in M(r;p)$ and Lemma \ref{lem:Gromov_product_along_qg} applies to $y$. Let $x \in \mc{N}_C(M(r;p) \cap X)$ and $x' \in M(r;p) \cap X$ with $d(x,x') \leq C$. Using Lemmas \ref{lem:Gromov_product_along_qg} and \ref{lem:Gromov_product_is_distance_to_geodesic} we have 
	\begin{equation}\label{eq:gp_calc}
		r \leq \gp{x'}{p} \leq \gp{x'}{y} +B \leq d(x_0, [x',y]) +\delta +B. 
	\end{equation} Since $d(x,x') \leq C$, $\delta$--hyperbolicity of $X$ implies $d(x_0,[x',y]) \leq d(x_0,[x,y])+C +\delta$. Combining this with  \eqref{eq:gp_calc} yields $$  r \leq \gp{x'}{p}  \leq d(x_0,[x,y]) + C +2\delta +B.$$  When $C \geq 3\delta +2B$, this shows $x \in M(r-2C;p)$  as  $$d(x_0,[x,y]) \leq \gp{x}{y} +B \leq \gp{x}{p} +\delta + B$$ by Lemmas \ref{lem:Gromov_product_along_qg} and \ref{lem:Gromov_product_is_distance_to_geodesic}.	
\end{proof}

\subsection{Quasiconvex subsets}

A subset $Y$ of a $\delta$--hyperbolic space $X$ is \emph{$\mu$--quasiconvex} if every geodesic in $X$ between points in $Y$ is contained in the closed $\mu$--neighborhood of $Y$. We recall a few basic facts about quasiconvex subsets of hyperbolic spaces and verify a simple lemma. We direct  the reader to~\cite[\S 11.7]{DrutuKapovichBook} for full details. 

When $Y$ is a $\mu$--quasiconvex subset of a $\delta$--hyperbolic
space $X$ and $x\in X$, the set of points $\{y \in Y : d_X(x,y) \leq
d_X(x,Y) +1\}$ is uniformly bounded in terms of $\mu$ and $\delta$.
Hence, there is a well defined coarse map $\mf{p}_Y \colon X \to Y$ so
that $$\mf{p}_{Y}(x) = \{y \in Y : d_X(x,y) \leq d_X(x,Y) +1\}.$$ We call the map
$\mf{p}_{Y}$ the \emph{closest point projection onto $Y$}.

For a quasiconvex subset $Y \subseteq X$, we let $\partial Y$ denote the set of points in $\partial X$ that are represented by sequences of points in $\partial Y$. The following lemma shows that quasigeodesics in $X$ that represent points in $\partial Y$ can be modified to be eventually contained in $Y$.
\begin{lem}\label{lem:qgeonearY}
Let $Y$ be a $\mu$--quasiconvex subset of a $\delta$--hyperbolic space
$X$.  Let $\gamma\colon [0,\infty)\to X$ be a
$(1,20\delta)$--quasigeodesic ray from a point $x\in X$ to a point
$p\in\partial Y$.  There then exists a constant $A\geq 1$, depending
only on $\delta$ and $\mu$, such that the following holds.  There is a
$(1,20\delta+2A)$--quasigeodesic ray $\gamma'\colon (0,\infty)\cap
\Z\to X$ from $x$ to $p$ and a constant $T\in [0,\infty)\cap \Z$ such
that $\gamma'(t)\in Y$ for all $t\geq T$ and $\gamma'$ is uniformly 
close to $\gamma$. 
\end{lem}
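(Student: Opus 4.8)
The plan is to leave $\gamma$ untouched on an initial segment and, past a suitable finite time, to replace it by nearby points of $Y$. The real work will be in proving the following claim: there exist $D=D(\delta,\mu)\ge 0$ and $T_0\ge 0$, with $T_0$ also depending on $d_X(x,\mf{p}_Y(x))$, such that $d_X(\gamma(t),Y)\le D$ for all $t\ge T_0$. Throughout I will compute Gromov products with basepoint $x$, which is legitimate because the boundary and its topology are basepoint-independent; write $y_*:=\mf{p}_Y(x)$ for a fixed representative of the closest point projection.

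\textbf{Proof of the claim.} Using $p\in\partial Y$, I would fix a sequence $(y_n)$ in $Y$ with $y_n\to p$, and fix $t\ge T_0$ (with $T_0$ determined below). Since $\gamma$ is a $(1,20\delta)$--quasigeodesic ray from $x$ representing $p$, the extended Gromov product $\gprel{\gamma(t)}{p}{x}$ is within a constant $K_1=K_1(\delta)$ of $d_X(x,\gamma(t))$; hence, by the definition of the extended Gromov product together with $y_n\to p$, for all sufficiently large $n$ one has $\gprel{\gamma(t)}{y_n}{x}\ge d_X(x,\gamma(t))-K_1-1$. Rewriting via $\gprel{x}{y_n}{\gamma(t)}=d_X(x,\gamma(t))-\gprel{\gamma(t)}{y_n}{x}$ and applying Lemma~\ref{lem:Gromov_product_is_distance_to_geodesic} shows that $\gamma(t)$ lies within $K_1+1+\delta$ of the geodesic $[x,y_n]$. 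On the other hand, by the standard theory of closest point projection onto quasiconvex subsets of hyperbolic spaces \cite[\S 11.7]{DrutuKapovichBook}, the concatenation $[x,y_*]\ast[y_*,y_n]$ is a $(1,K_2)$--unparametrized quasigeodesic from $x$ to $y_n$ with $K_2=K_2(\delta,\mu)$, while $[y_*,y_n]\subseteq\mc{N}_\mu(Y)$ by $\mu$--quasiconvexity of $Y$; by the Morse Lemma~\ref{lem:Morse} the geodesic $[x,y_n]$ lies within a constant $K_3=K_3(\delta,\mu)$ of $[x,y_*]\cup[y_*,y_n]$, so every point of $[x,y_n]$ at distance more than $d_X(x,y_*)+2K_3$ from $x$ lies in $\mc{N}_{K_3+\mu}(Y)$. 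Since $d_X(x,\gamma(t))\ge t-20\delta$, choosing $T_0:=d_X(x,y_*)+2K_3+K_1+21\delta+1$ forces the point of $[x,y_n]$ nearest to $\gamma(t)$ to be that far from $x$, giving $d_X(\gamma(t),Y)\le (K_1+1+\delta)+(K_3+\mu)=:D$. Crucially $D$ depends only on $\delta,\mu$ and not on $n$, which proves the claim.

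\textbf{Construction of $\gamma'$.} Granting the claim, I would set $A:=\max\{1,D\}$, let $T$ be the least integer that is at least $T_0$, and define $\gamma'\colon(0,\infty)\cap\Z\to X$ by $\gamma'(t):=\gamma(t)$ for integers $0<t<T$ and, for integers $t\ge T$, by taking $\gamma'(t)$ to be any point of $Y$ with $d_X(\gamma(t),\gamma'(t))\le D$ (which exists by the claim). Then $d_X(\gamma'(t),\gamma(t))\le A$ for all integers $t>0$, so $\bigl|d_X(\gamma'(s),\gamma'(t))-d_X(\gamma(s),\gamma(t))\bigr|\le 2A$ for all integers $s,t>0$; since $\gamma$ restricted to positive integers is a $(1,20\delta)$--quasi-isometric embedding, it follows that $\gamma'$ is a $(1,20\delta+2A)$--quasigeodesic ray. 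By construction $\gamma'(t)\in Y$ for all integers $t\ge T$, and $\gamma'$ stays within $A$ of $\gamma$, hence is uniformly close to $\gamma$ and starts within $1+20\delta$ of $x=\gamma(0)$. Finally $\gprel{\gamma'(t)}{p}{x}\ge\gprel{\gamma(t)}{p}{x}-A\to\infty$, so $\gamma'(t)\to p$, and being a quasigeodesic ray $\gamma'$ represents $p$. Since $A=A(\delta,\mu)$, this yields all the assertions of the lemma.

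\textbf{The main obstacle.} The hard part is the claim: showing that $\gamma$ itself---not merely some specially chosen (quasi)geodesic ray toward $p$---is eventually uniformly close to $Y$, and doing so \emph{without} assuming $X$ is proper, so that one may not simply extract a limiting geodesic ray $[y_*,p)$ inside $\mc{N}_\mu(Y)$ and fellow-travel it with $\gamma$. The route above sidesteps this by a triangle-inequality manipulation of Gromov products that pins $\gamma(t)$ to one fixed geodesic $[x,y_n]$ for a single suitably large $n$, after which only the finite Morse Lemma and the textbook projection estimates of \cite[\S 11.7]{DrutuKapovichBook} along $[x,y_n]$ are needed; the one point requiring care---carried out above---is that the resulting bound $D$ is genuinely independent of $n$ (and of $t$).
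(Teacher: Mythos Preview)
Your proof is correct and follows the same two-step approach as the paper: first show that $\gamma$ eventually lies in a uniform neighborhood of $Y$, then replace the tail by nearby points of $Y$ and verify the quasigeodesic constants. The only difference is one of detail: the paper asserts the key claim (eventual closeness to $Y$ with constant depending only on $\delta,\mu$) as a standard consequence of quasiconvexity and $p\in\partial Y$, whereas you supply a full argument via Gromov products and the projection-through-$y_*$ trick; your construction and verification of $\gamma'$ are then essentially identical to the paper's.
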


\begin{proof}
Since $p\in \partial Y$ and $Y$ is $\mu$--quasiconvex, there is a constant $A$, depending only on $\delta$ and $\mu$, and  some $t_0\in (0,\infty)$ such that $\gamma|_{[t_0,\infty)}\subseteq \mc N_{A}(Y)$.   Therefore, we may assume without loss of generality that $t_0\in \Z_{\geq 0}$. For each $i \in \mathbb{Z}_{>0}$, define $t_i=t_0+i$, and choose a point $y_i\in Y$ such that $d_X(\gamma(t_i),y_i)\leq A$.

Define $\gamma'\colon [0,\infty)\cap \Z\to X$ by 
\[
\gamma'(t)=\begin{cases}
\gamma(t) & \textrm{ if } t\in [0,t_0]\cap \Z \\
y_{t-t_0} & \textrm{ if } t\in (t_0,\infty)\cap \Z.
\end{cases}
\]

We will show that $\gamma'$ is a $(1,20\delta + 2A)$--quasigeodesic.  Let $t,s\in [0,\infty)\cap \Z$.  If $t,s\in [0,t_0]$, then the result is clear.  Suppose $t,s\in (t_0,\infty)\cap \Z$.  Then $d_X(\gamma'(t),\gamma'(s))=d_X(y_{t-t_0},y_{s-t_0})$, and  
\begin{align*}
d_X(\gamma(t-t_0),\gamma(s-t_0))-2A&\leq d_X(y_{t-t_0},y_{s-t_0})\leq d_X(\gamma(t-t_0),\gamma(s-t_0))+2A  \\
|(t-t_0)-(s-t_0)|-20\delta-2A & \leq d_X(y_{t-t_0},y_{s-t_0}) \leq |(t-t_0)-(s-t_0)|+20\delta+2A.
\end{align*}
Since $|(t-t_0)-(s-t_0)|=|t-s|$, we conclude that $\gamma'|_{(t_0,\infty)}$ is a $(1,20\delta+2A)$--quasigeodesic.

Finally, suppose $t \in[0,t_0]$ and $s\in (t_0,\infty)$.  Then $\gamma'(s)=y_{s-t_0}$ and $t_{s-t_0}=t_0+(s-t_0)=s$, and so $d_X(y_{s-t_0},\gamma(s))=d_X(y_{s-t_0},\gamma(t_{s-t_0}))\leq A$.  Thus we have \[d_X(\gamma(t),\gamma(s)-A\leq d_X(\gamma'(t),\gamma'(s))=d_X(\gamma(t),y_{s-t_0})\leq d_X(\gamma(t),\gamma(s))+A.
\]
Therefore, $\gamma'$ is a $(1,20\delta+2A)$--quasigeodesic which, by
construction, is from $x$ to $p$ and is uniformly close to $\gamma$, completing the proof.
\end{proof}

\section{Preliminaries on hierarchically hyperbolic spaces}\label{sec:HHS_background}
In this section, we recall some of the tools we will use to work with
hierarchically hyperbolic spaces and groups and define the HHS
boundary.  We begin with the definition of an HHS.

\begin{defn}[HHS]\label{defn:HHS}
	Let $E>0$ and $\mc{X}$ be an $(E,E)$--quasigeodesic space.  A
	\emph{hierarchically hyperbolic space structure with constant $E$}
	for $\mc{X}$ is an index set $\mathfrak S$ and a set $\{ \mc{C}W :
	W\in\mathfrak S\}$ of $E$--hyperbolic spaces $(\mc{C}W,d_W)$ such
	that the following axioms are satisfied.  \begin{enumerate}
		
	\item\textbf{(Projections.)}\label{axiom:projections} For each $W \in \mf{S}$, there exists a \emph{projection} $\pi_W \colon \mc{X} \rightarrow 2^{\mc{C}W}$ such that for all $x \in \mc{X}$, $\pi_W(x) \neq \emptyset$ and $\diam(\pi_W(x)) \leq E$. Moreover, each $\pi_W$ is $(E, E)$--coarsely
		Lipschitz and $\mc{C}W \subseteq \mc{N}_E(\pi_{W}(\mc{X}))$ for all $W \in \mf{S}$.

		\item \textbf{(Nesting.)} \label{axiom:nesting} If $\mathfrak S \neq \emptyset$, then $\mf{S}$ is equipped with a  partial order $\nest$ and contains a unique $\nest$--maximal element. When $V\nest W$, we say $V$ is \emph{nested} in $W$.  For each
		$W\in\mathfrak S$, we denote by $\mathfrak S_W$ the set of all $V\in\mathfrak S$ with $V\nest W$.  Moreover, for all $V,W\in\mathfrak S$ with $V\propnest W$ there is a specified non-empty subset $\rho^V_W\subseteq \mc{C}W$ with $\diam(\rho^V_W)\leq E$, and a map $\rho_V^W \colon \mc{C}W - \mc{N}_E(\rho_W^V) \to 2^{\mc{C}V}$.

		\item \textbf{(Orthogonality.)} 
		\label{axiom:orthogonal} $\mathfrak S$ has a symmetric relation called \emph{orthogonality}. If $V$ and $W$ are orthogonal, we write $V\perp
		W$ and require that $V$ and $W$ are not $\nest$--comparable. Further, whenever $V\nest W$ and $W\perp
		U$, we require that $V\perp U$. We denote by $\mf{S}_W^\perp$ the set of all $V\in \mf{S}$ with $V\perp W$.

		\item \textbf{(Transversality.)}
		\label{axiom:transversality} If $V,W\in\mathfrak S$ are not
		orthogonal and neither is nested in the other, then we say $V,W$ are
		\emph{transverse}, denoted $V\trans W$.  Moreover, for all $V,W \in \mathfrak{S}$ with $V\trans W$ there are non-empty
		sets $\rho^V_W\subseteq \mc{C}W$ and
		$\rho^W_V\subseteq \mc{C} V$ each of diameter at most $E$.

		\item \textbf{(Finite complexity.)} \label{axiom:finite_complexity} Any set of pairwise $\nest$--comparable elements has cardinality at most $E$.
		
		\item \textbf{(Containers.)} \label{axiom:containers}  For each $W \in \mf{S}$ and $U \in \mf{S}_W$ with $ \mf{S}_W\cap \mf{S}_U^\perp \neq \emptyset$, there exists $Q \in\mf{S}_W$ such that $V \nest Q$ whenever $V \in\mf{S}_W \cap \mf{S}_U^\perp$.  We call $Q$ the \emph{container of $U$ in $W$}.
		
		\item\textbf{(Uniqueness.)} There exists a function 
		$\theta \colon [0,\infty) \to [0,\infty)$ so that for all $r \geq 0$, if $x,y\in\mc X$ and
		$d_\mc{X}(x,y)\geq\theta(r)$, then there exists $W\in\mathfrak S$ such
		that $d_W(\pi_W(x),\pi_W(y))\geq r$. \label{axiom:uniqueness}
		
		\item \textbf{(Bounded geodesic image.)} \label{axiom:bounded_geodesic_image} For all  $V,W\in\mathfrak S$ with $V\propnest W$, if a $\mc{C} W$ geodesic $\gamma$ does not intersect $\mc{N}_E(\rho_W^V)$, then $\diam_{\mc{C}V}(\rho_V^W(\gamma)) \leq E$.

		\item \textbf{(Large links.)} \label{axiom:large_link_lemma} 
		For all $W\in\mathfrak S$ and  $x,y\in\mc X$, there exists $\{V_1,\dots,V_m\}\subseteq\mathfrak S_W -\{W\}$ such that $m$ is at most $E d_{W}(\pi_W(x),\pi_W(y))+E$, and for all $U\in\mathfrak
		S_W - \{W\}$, either $U\in\mathfrak S_{V_i}$ for some $i$, or $d_{U}(\pi_U(x),\pi_U(y)) \leq E$.  
		
		\item \textbf{(Consistency.)}
		\label{axiom:consistency} For all $x \in\mc X$ and $V,W,U \in\mf{S}$:
		\begin{itemize}
			\item  if $V\trans W$, then $\min\left\{d_{
				W}(\pi_W(x),\rho^V_W),d_{
				V}(\pi_V(x),\rho^W_V)\right\}\leq E$,
			\item if $V \propnest W$, then $ \min\left\{ d_W(\pi_W(x), \rho_V^W), \diam\bigl(\pi_V(x) \cup \rho_W^V(\pi_V(x))\bigr) \right\} \leq E$,
			\item if $U\nest V$ and either $V\propnest W$ or $V\trans W$ and $W\not\perp U$, then $d_W(\rho^U_W,\rho^V_W)\leq E$.
		\end{itemize}

		\item \textbf{(Partial realization.)} \label{axiom:partial_realisation}  If $\{V_i\}$ is a finite collection of pairwise orthogonal elements of $\mathfrak S$ and $p_i\in  \mc{C}V_i$ for each $i$, then there exists $x\in \mc X$ so that:
		\begin{itemize}
			\item $d_{V_i}(\pi_{V_{i}}(x),p_i)\leq E$ for all $i$;
			\item for each $i$ and 
			each $W\in\mathfrak S$, if $V_i\propnest W$ or $W\trans V_i$, we have 
			$d_{W}(\pi_W(x),\rho^{V_i}_W)\leq E$.
		\end{itemize}
	\end{enumerate}

	We use $\mf{S}$ to denote the hierarchically hyperbolic space structure, including the index set $\mf{S}$, spaces $\{\mc{C}W : W \in \mf{S}\}$, projections $\{\pi_W : W \in \mf{S}\}$, and relations $\nest$, $\perp$, $\trans$. We call the elements of $\mf{S}$ the \emph{domains} of $\mf{S}$ and call the $\rho_W^V$ the \emph{relative projection} from $V$ to $W$. The number $E$ is called the \emph{hierarchy constant} for $\mf{S}$.
	
	We call a quasigeodesic space $\mc{X}$ a  \emph{hierarchically 
	hyperbolic space with constant $E$} if there exists a 
	hierarchically hyperbolic structure on $\mc{X}$ with constant $E$. We use the pair $(\mc{X},\mf{S})$ to denote a  hierarchically hyperbolic space equipped with the specific  HHS structure $\mf{S}$.
\end{defn}	

When writing the distances in the hyperbolic spaces $\mc{C}W$ between images of points under $\pi_W$, we will frequently suppress the $\pi_W$ notation. That is, we will use $d_W(x,y)$ to denote $d_{W}(\pi_W(x),\pi_W(y))$ for $x,y\in\mc{X}$. 

For a hierarchically hyperbolic space $(\mc{X},\mf{S})$, we are often most concerned with the domains $W \in \mf{S}$ whose associated hyperbolic spaces $\mc{C}W$ have infinite diameter. We often also restrict to HHSs with the following regularity condition.

\begin{defn}[Bounded domain dichotomy]
	Given an HHS $(\mc{X},\mf{S})$, we let $\mf{S}^\infty$ denote the set $\{W \in\mf{S}: \diam(\mc{C}W) = \infty\}$. We say that $(\mc{X},\mf{S})$ has the \emph{bounded domain dichotomy} if there is some $D \geq 0$ so that for all $W\in\mf{S} - \mf{S}^{\infty}$ we have  $\diam(\mc{C}W) \leq D$.
\end{defn}

The bounded domain dichotomy is a natural condition as it is satisfied
by all \emph{hierarchically hyperbolic groups (HHG)} which is a condition 
requiring equivariance of the HHS structure.  In this paper we work 
with a class of finitely generated groups which is slightly more 
general than being an HHG (see Remark~\ref{HHGvsNHHG}); these are groups which have an HHS structure compatible
with the action of the group in the following way.

\begin{defn}[$G$--HHS]\label{defn:nearlyHHG}
	Let $G$ be a finitely generated group.  A hierarchically
	hyperbolic space $(\mc{X},\mf{S})$ with constant $E$ that has the bounded domain dichotomy is a 
	\emph{$G$--HHS} if the following hold.
	\begin{enumerate}
		\item $\mc{X}$ is a proper metric space with a proper and cocompact action of $G$ by isometries.
		\item  $G$ acts on $\mf{S}$ by  $\nest$--, $\perp$--, and $\trans$--preserving bijections, and $\mf S^\infty$ has finitely many $G$--orbits.
		\item For each $W \in \mf{S}$ and $g\in G$, there exists an isometry $g_W \colon \mc{C}W \rightarrow \mc{C}gW$ satisfying the following for all $V,W \in \mf{S}$ and $g,h \in G$.
		\begin{itemize}
			\item The map $(gh)_W \colon \mc{C}W  \to \mc{C}ghW$ is equal to the map $g_{hW} \circ h_W \colon \mc{C}W \to \mc{C}ghW$.
			\item For each $x \in \mc{X}$, $g_W(\pi_W(x)) \asymp_E \pi_{gW}(g \cdot x)$.
			\item If $V \trans W$ or $V \propnest W$, then $g_W(\rho_W^V)  \asymp_E \rho_{gW}^{gV}$.
		\end{itemize}
	\end{enumerate}
We can and will assume that $\mc X$ is $G$ equipped with a finitely 
generated word metric. We say that $\mf{S}$ is a \emph{$G$--HHS structure} for  the group $G$ and use the pair $(G,\mf{S})$ the group $G$ equipped with the specific $G$--HHS structure $\mf{S}$.
\end{defn} 

\begin{rem}[HHG vs $G$--HHS]\label{HHGvsNHHG}
	The only difference between the above definition of a $G$--HHS and a hierarchically hyperbolic
	group is that a hierarchically hyperbolic group is required to
	have finitely many orbits in $\mf{S}$ and not just
	$\mf{S}^\infty$. (Both HHGs and $G$--HHSs satisfy the bounded 
	domain dichotomy, but for HHGs this is a theorem and for $G$--HHSs it is by definition.) 
	
	The primary reason we choose to work with the
	slightly broader definition of a $G$--HHS  is that we are ultimately interested
	in the boundary defined by an HHS structure.  Since the definition
	of the boundary does not involve uniformly bounded diameter
	domains, the natural class of structures to think about are those
	that only have restrictions on the set of infinite diameter
	domains.
 	
	We note that \cite[Corollary~3.8]{ABD} states that applying the
	maximization procedure that we will introduce in Section
	\ref{sec:maximization} to an HHG results in an HHG. However, the
	argument in \cite{ABD} does not explicitly address the
	co-finiteness of the action on the added finite diameter domains
	(``dummy domains").  Thus that result only shows that the result
	is a $G$--HHS. Since that argument addresses the infinite
	diameter domains, it follows that applying the maximization
	procedure to a $G$--HHS again results in a $G$--HHS.
\end{rem}

A hallmark of hierarchically hyperbolic spaces is that every pair of points can be joined by a  special family of quasigeodesics called \emph{hierarchy paths}, each of which projects to a quasigeodesic in each of the spaces $\mc{C}W$.

\begin{defn}
	A \emph{$\lambda$--hierarchy path} $\gamma$ in an HHS $(\mc{X},\mf{S})$
	is a $(\lambda,\lambda)$--quasigeodesic with the property that $\pi_W\circ
	\gamma$ is an unparametrized $(\lambda,\lambda)$--quasigeodesic
	for each $W \in \mf{S}$.
\end{defn}

\begin{thm}[{\cite[Theorem~4.4]{BHS_HHSII}}]
	Let $(\mc{X},\mf{S})$ be an HHS with constant $E$. There exist $\lambda\geq 1$
	depending only on $E$ so that every pair of point
	points in $\mc{X}$ is joined by a $\lambda$--hierarchy
	path.
\end{thm}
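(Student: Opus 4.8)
The plan is to deduce this from the \emph{Realization Theorem} together with an induction on the complexity of the structure (the length of the longest $\nest$--chain, which is at most $E$ by Axiom \ref{axiom:finite_complexity}). The first task is to establish Realization: there is a constant $\kappa=\kappa(E)$ such that any tuple $(b_W)_{W\in\mf S}$ with $b_W\in\mc CW$ that satisfies the consistency inequalities of Axioms \ref{axiom:bounded_geodesic_image} and \ref{axiom:consistency} up to additive error $\kappa$ is \emph{realized} by a point $z\in\mc X$, meaning $d_W(z,b_W)\le\kappa$ for every $W$. This is itself proved from the axioms by induction on complexity: one realizes the coordinate in the $\nest$--maximal domain $S$ using the Partial Realization axiom (\ref{axiom:partial_realisation}), then passes to the domains properly nested in $S$ and fills in the remaining coordinates, using the Containers axiom (\ref{axiom:containers}) and the third bullet of Axiom \ref{axiom:consistency} to keep the restricted tuple consistent.

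For the theorem itself, let $x,y\in\mc X$, let $S$ be the $\nest$--maximal domain, and let $\sigma\colon[0,L]\to\mc CS$ be a geodesic from $\pi_S(x)$ to $\pi_S(y)$ with $L=d_S(x,y)$; if $L$ is uniformly bounded the statement reduces to the inductive hypothesis, so assume $L$ is large. Every $W\ne S$ is properly nested in $S$, so $\rho^W_S\subseteq\mc CS$ is defined, and Axiom \ref{axiom:bounded_geodesic_image} tells us that the portions of $\sigma$ lying before and after the ``$W$--window'' $\sigma\cap\mc N_E(\rho^W_S)$ each have $\rho^S_W$--image of diameter at most $E$. Using this, for each integer $i\in[0,L]$ I define a tuple $\mathbf b^i$ by setting $\mathbf b^i_S=\sigma(i)$, $\mathbf b^i_W=\pi_W(y)$ when the $W$--window lies entirely before time $i$, $\mathbf b^i_W=\pi_W(x)$ when it lies entirely after, and $\mathbf b^i_W$ a suitable partial-realization point for $W$ when $i$ is inside the $W$--window. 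One checks that off the windows $\mathbf b^i$ is $\kappa$--consistent — here the key inputs are Axiom \ref{axiom:consistency} and the fact that the windows of $\nest$--nested domains are themselves nested — so Realization yields points $x=z_0,z_1,\dots,z_L=y$ with $\pi_S(z_i)$ uniformly close to $\sigma(i)$. When $[i,i+1]$ meets no window the points $z_i,z_{i+1}$ have uniformly close projections in every domain, hence are uniformly close in $\mc X$ by the Uniqueness axiom (\ref{axiom:uniqueness}), and I join them by a geodesic; when $[i,i+1]$ lies inside a window, all the relevant domains live inside a sub-structure of strictly smaller complexity, so the inductive hypothesis provides a hierarchy path there joining $z_i$ to $z_{i+1}$. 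Concatenating these pieces gives a path $\gamma$ from $x$ to $y$.

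It remains to verify that $\gamma$ is a $\lambda$--hierarchy path. Its $\mc CS$--projection shadows $\sigma$ by construction, so $\pi_S\circ\gamma$ is an unparametrized quasigeodesic. For $W\propnest S$: before the $W$--window $\pi_W\circ\gamma$ stays near $\pi_W(x)$, inside the window it runs along a hierarchy path in a lower-complexity sub-structure containing $W$ and hence is an unparametrized quasigeodesic there, and after the window it stays near $\pi_W(y)$; the only thing to exclude is backtracking of $\pi_W\circ\gamma$ between these phases, which is precisely what bounded geodesic image forbids. Finally, since $\gamma$ is a concatenation of boundedly many controlled pieces that make monotone progress in every domain, the Uniqueness axiom upgrades $\gamma$ to a genuine $(\lambda,\lambda)$--quasigeodesic; because the complexity is at most $E$ and the constants grow by a bounded factor at each level, $\lambda$ depends only on $E$.

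The step I expect to be the main obstacle is the combinatorics of the windows along $\sigma$: for each window one must pin down a sub-structure of strictly smaller complexity that captures \emph{all} domains in which $\gamma$ moves during the corresponding detour — controlling, via the Containers and consistency axioms, exactly which domains are ``active'' there — and one must check that the tuples $\mathbf b^i$ glue consistently as $\sigma$ enters and exits windows. Carrying out this bookkeeping with all constants uniform in $E$, simultaneously with the realization induction, is the technical heart of the argument.
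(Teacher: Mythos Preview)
This statement is not proved in the present paper; it is quoted from \cite[Theorem~4.4]{BHS_HHSII} and used as a black box. There is therefore no proof here to compare your proposal against.

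For what it is worth, your outline is broadly in the spirit of the argument in \cite{BHS_HHSII}: Realization, a walk along a $\mc CS$--geodesic $\sigma$, and bounded geodesic image to isolate the ``windows'' where proper domains matter. One essential ingredient you do not invoke is the Large Links axiom (Axiom~\ref{axiom:large_link_lemma}). Without it there is no reason the active windows along $\sigma$ are governed by \emph{finitely many} domains $V_1,\dots,V_m\propnest S$ into which one can recurse; a priori infinitely many $W$ have $\rho^W_S$ near a given $\sigma(i)$, and your sentence ``all the relevant domains live inside a sub-structure of strictly smaller complexity'' is exactly where this gap sits. The Containers and consistency axioms that you cite do not by themselves produce such a finite list --- Large Links does, and it is what makes the induction terminate with constants depending only on $E$.
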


In general, a quasigeodesic (or even geodesic) in an HHS can be arbitrarily far from being a hierarchy path. Moreover, a given space might have  different HHS structures, and the set of hierarchy paths with respect to each structure might be  different.

\subsection{Hierarchical quasiconvexity and standard product regions}

The analogue of quasiconvex subsets of a hyperbolic space in the
setting of hierarchical hyperbolicity are the following
\emph{hierarchically quasiconvex} subsets. We refer the reader to 
\cite[\S5]{BHS_HHSII} for details on any of the background material 
in this subsection.

\begin{defn}
	Let $k \colon [0,\infty) \to [0,\infty)$. A subset $\mc{Y}$ of an HHS $(\mc{X},\mf{S})$ is \emph{$k$--hierarchically quasiconvex} if 
	\begin{enumerate}
		\item for each $W \in \mf{S}$, $\pi_W(\mc{Y})$ is a $k(0)$--quasiconvex subset of $\mc{C}W$;
		\item if $x \in \mc{X}$ so that $d_W(x,\mc{Y}) \leq r$ for each $W\in\mf{S}$, then $d_{\mc{X}}(x,\mc{Y}) \leq k(r)$.
	\end{enumerate}
\end{defn}

As with hierarchy paths, whether or not a subset is hierarchically quasiconvex can depend on which HHS structure is put on the space, hence $\mc{Y}$ is a hierarchically quasiconvex subset of $(\mc{X},\mf{S})$ and not just $\mc{X}$. 

Each hierarchically quasiconvex subset $\mc Y$ comes equipped with a \emph{gate map} denoted $\gate_{\mc Y} \colon \mc{X} \to \mc{Y}$. While this map might not be the coarse closest point projection, it has a number of nice properties that we summarize below.

\begin{lem}[{\cite[Lemma 5.5]{BHS_HHSII} \text{plus} \cite[Lemma 1.20]{BHS_HHS_Quasiflats}}]\label{lem:hierarch_path_through_the_gate}
	Let $(\mc{X},\mf{S})$ be an HHS with constant $E$. Suppose $\mc{Y} \subseteq \mc{X}$ is $k$--hierarchically quasiconvex. There is a coarse map $\gate_{\mc{Y}}\colon \mc{X} \to \mc{Y}$ and a constant $\kappa \geq 1$ depending only on $k$ and $E$, so that the following hold.
	\begin{itemize}
		\item $\gate_{\mc{Y}}$ is coarsely the identity on $\mc{Y}$.
		\item $\gate_{\mc{Y}}$ is $(\kappa,\kappa)$--coarsely Lipschitz.
		\item For each $x \in \mc{X}$ and $W \in \mf{S}$ we have \[\pi_W(\gate_{\mc{Y}}(x)) \asymp_\kappa \mf{p}_{\pi_W(\mc{Y})}(\pi_W(x)).\]
		\item For each $x \in \mc{X}$ and $y \in \mc{Y}$, there is a
		$\kappa$--hierarchy path $\gamma$ from $x$ to $y$ with the 
		property that $\gamma \cap \mc{N}_\kappa \bigl( \gate_{\mc{Y}}(x) \bigr) \neq \emptyset$.
	\end{itemize}
\end{lem}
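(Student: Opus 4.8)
The plan is to build $\gate_{\mc Y}$ one coordinate at a time using closest--point projections and to assemble these coordinates into a point of $\mc X$ via the realization theorem for HHSs \cite{BHS_HHSII}, after which the four bulleted properties fall out of standard HHS machinery. Given $x\in\mc X$, set
\[
b_W \;=\; \mf p_{\pi_W(\mc Y)}\bigl(\pi_W(x)\bigr)\in\mc C W \qquad\text{for each } W\in\mf S,
\]
which is coarsely well defined because $\pi_W(\mc Y)$ is $k(0)$--quasiconvex in the $E$--hyperbolic space $\mc C W$. The first, and main, step is to check that the tuple $(b_W)_{W\in\mf S}$ satisfies the consistency inequalities needed to invoke the realization theorem. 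Granting this, realization supplies a point of $\mc X$, which we name $\gate_{\mc Y}(x)$, with $\pi_W(\gate_{\mc Y}(x))\asymp b_W$ for all $W$; since $b_W\in\pi_W(\mc Y)$ this makes $d_W(\gate_{\mc Y}(x),\mc Y)$ uniformly bounded for every $W$, so clause (2) of hierarchical quasiconvexity puts $\gate_{\mc Y}(x)$ within bounded distance of $\mc Y$ in $\mc X$, and we replace it by a nearby point of $\mc Y$. This already yields the third bullet, $\pi_W(\gate_{\mc Y}(x))\asymp_\kappa\mf p_{\pi_W(\mc Y)}(\pi_W(x))$.

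The consistency check is the part I expect to be the real obstacle. The geometric input is the classical fact that in a hyperbolic space the closest--point projection of a point $a$ onto a quasiconvex set $Z$ lies uniformly close to \emph{every} geodesic from $a$ to a point of $Z$, with constants depending only on the hyperbolicity and quasiconvexity constants. Fixing once and for all some $y\in\mc Y$ (which is nonempty), this shows $b_W$ lies within $O(E)$ of $[\pi_W(x),\pi_W(y)]$ for every $W$. One then verifies the transversal and nested consistency inequalities for $(b_W)$ by playing the bounded geodesic image axiom against the fact that $(\pi_W(x))_{W}$ and $(\pi_W(y))_{W}$ are the coordinate tuples of honest points of $\mc X$, and hence already consistent: for instance, when $V\propnest W$ and $[\pi_W(x),\pi_W(y)]$ stays $E$--far from $\rho^V_W$, bounded geodesic image forces $\rho^W_V$ to have uniformly bounded image on that geodesic, which combined with consistency of $x$ (and of $y$) pins $b_V$ near $\pi_V(x)$ up to the allowed error, while if the geodesic comes $E$--close to $\rho^V_W$ then $b_W$ is already close to $\rho^V_W$; the transversal case is similar, using that at most one of $d_W(b_W,\rho^V_W)$, $d_V(b_V,\rho^W_V)$ can be large once the analogous dichotomy holds along $[\pi_W(x),\pi_W(y)]$ in both $\mc C W$ and $\mc C V$. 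Carefully tracking these estimates across the cases, and keeping the output error independent of $\mc Y$, is the bookkeeping-heavy heart of the argument.

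It remains to extract the other three properties. If $x\in\mc Y$ then $\pi_W(x)\in\pi_W(\mc Y)$, so $b_W\asymp\pi_W(x)$ for all $W$, and the uniqueness axiom gives $\gate_{\mc Y}(x)\asymp x$; thus $\gate_{\mc Y}$ is coarsely the identity on $\mc Y$. For coarse Lipschitzness, closest--point projection onto a quasiconvex subset of a hyperbolic space is coarsely Lipschitz with constants depending only on the hyperbolicity and quasiconvexity constants, and $\pi_W$ is $(E,E)$--coarsely Lipschitz, so $d_W(\gate_{\mc Y}(x),\gate_{\mc Y}(x'))$ is bounded by a uniform linear function of $d_{\mc X}(x,x')$, independently of $W$; feeding this into the distance formula of \cite{BHS_HHSII} bounds $d_{\mc X}(\gate_{\mc Y}(x),\gate_{\mc Y}(x'))$ linearly in $d_{\mc X}(x,x')$, using the uniform control the distance formula provides on the number of contributing domains. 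Finally, for the hierarchy path property, given $x\in\mc X$ and $y\in\mc Y$ choose a $\lambda$--hierarchy path $\gamma$ from $x$ to $y$ via \cite[Theorem~4.4]{BHS_HHSII}; for each $W$ the map $\pi_W\circ\gamma$ is an unparametrized quasigeodesic from $\pi_W(x)$ to $\pi_W(y)\in\pi_W(\mc Y)$, hence passes uniformly close to $b_W=\pi_W(\gate_{\mc Y}(x))$ by the projection fact above. So $\gate_{\mc Y}(x)$ lies coarsely between $x$ and $y$ in every coordinate, and the standard consequence of realization and the distance formula that a hierarchy path stays uniformly close to any such intermediate point produces a point of $\gamma$ within bounded distance of $\gate_{\mc Y}(x)$ in $\mc X$. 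Choosing $\kappa$ to dominate all the constants that have appeared completes the proof.
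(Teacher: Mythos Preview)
The paper does not supply its own proof of this lemma: it is quoted verbatim as a known result, attributing the first three bullets to \cite[Lemma~5.5]{BHS_HHSII} and the fourth to \cite[Lemma~1.20]{BHS_HHS_Quasiflats}. So there is no in-paper argument to compare against; what can be said is that your sketch recapitulates the construction carried out in those references. Defining the coordinates $b_W=\mf p_{\pi_W(\mc Y)}(\pi_W(x))$, checking consistency, invoking realization, and then pushing into $\mc Y$ via clause~(2) of hierarchical quasiconvexity is exactly the BHS construction of the gate, and your derivations of the first three bullets are sound.

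One comment on the fourth bullet. Your argument chooses an \emph{arbitrary} hierarchy path $\gamma$ from $x$ to $y$ and asserts, as a ``standard consequence of realization and the distance formula,'' that $\gamma$ must pass close to $\gate_{\mc Y}(x)$ because $\pi_W(\gate_{\mc Y}(x))$ lies near $\pi_W(\gamma)$ for every $W$. That implication is true but is not an immediate corollary of realization or the distance formula: knowing closeness coordinatewise gives, for each $W$, some time $t_W$ with $\pi_W(\gamma(t_W))$ near $\pi_W(\gate_{\mc Y}(x))$, and one still has to produce a \emph{single} time that works uniformly. The route actually taken in \cite{BHS_HHS_Quasiflats} avoids this by \emph{constructing} the desired path: concatenate a hierarchy path from $x$ to $\gate_{\mc Y}(x)$ with one from $\gate_{\mc Y}(x)$ to $y$; since $\pi_W(\gate_{\mc Y}(x))$ lies uniformly close to $[\pi_W(x),\pi_W(y)]$ in every $\mc C W$, the concatenation projects to an unparametrized quasigeodesic in each coordinate, and the distance formula upgrades this to a quasigeodesic in $\mc X$. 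This yields existence of a hierarchy path through the gate directly, which is all the statement requires. Your stronger claim (every hierarchy path passes near the gate) is also true, but would need either a coarse-median argument or an appeal to hierarchical quasiconvexity of hierarchy paths to be complete.
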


Associated to each domain $W\in \mf{S}$ of an HHS $(\mc X, \mf S)$ is
a pair of hierarchically quasiconvex subspaces $\F_W$ and
$\mathbf{E}_W$.  Since we will not work directly with the definition of
these subsets,  we will just state the salient properties that we
will need.  For details, we direct the interested reader to
\cite[\S 5B]{BHS_HHSII} for the definition and proofs of their basic
properties.  In the sequel, we shall work primarily with the $\F_W$,
but we include the companion facts about $\E_W$ for context.

\begin{prop}\label{prop:properties_of_F}
	Let $(\mc{X},\mf{S})$ be an HHS with constant $E$. For each $W\in \mf{S}$, the subsets $\F_W,\mathbf{E}_W \subseteq \mc{X}$ have the following properties.
	\begin{enumerate}
		\item \label{F_prop:HQC}  There exists $k \colon [0,\infty) \to [0,\infty)$ depending only on $E$ so that $\F_W$ and $\mathbf{E}_W$ are $k$--hierarchically quasiconvex.
		\item \label{F_prop:projections} There exists $\kappa \geq 0$ depending only on $E$ so that for each $V \in \mf{S}$:
		\begin{itemize}
			\item $\pi_V(\F_W) \asymp_\kappa \rho_W^V$ and $\pi_V(\mathbf{E}_W) \asymp_\kappa \rho_V^W$ when  $W \propnest V$ or $W \trans V$;
			\item $\mc{C}V = \mc{N}_\kappa(\pi_V(\F_W))$ and $\diam(\pi_V(\mathbf{E}_W)) \leq \kappa$ when $ V\nest W$; and
			\item $\diam(\pi_V(\F_W) ) \leq \kappa$ and $\mc{C} V = \mc{N}_\kappa(\pi_V(\mathbf{E}_W))$ when $V \perp W$.
		\end{itemize} 
	Moreover, if $V \nest W$, then $\pi_W(x) \asymp_\kappa \pi_W(\gate_{\F_W}(x))$ for each $x \in \mc{X}$.
		\item \label{F_prop:unbounded} If $(\mc{X},\mf{S})$ has the bounded domain dichotomy, then $\diam(\F_W) =\infty$  if and only if $\mf{S}_W \cap \mf{S}^\infty \neq \emptyset$. Similarly, $\diam(\E_W) = \infty$ if and only if $\mf{S}_V^\perp \cap \mf{S}^\infty \neq \emptyset$ in this case. 
	\end{enumerate}
\end{prop}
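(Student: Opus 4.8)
The plan is to recall the construction of $\F_W$ and $\mathbf{E}_W$ from \cite[\S 5B]{BHS_HHSII} and to read the stated properties off the facts established there; since we never use the internal definitions of these sets, the argument is largely bookkeeping, the only point of real substance being part~\eqref{F_prop:unbounded}.

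First I would recall that, for $W\in\mf{S}$, Behrstock--Hagen--Sisto construct a \emph{standard product region} $P_W\subseteq\mc{X}$ that decomposes, up to uniformly bounded error, as a product $\F_W\times\mathbf{E}_W$: here $\F_W$ is the factor whose coordinates vary over the sub-structure $\mf{S}_W=\{V:V\nest W\}$, $\mathbf{E}_W$ is the factor whose coordinates vary over $\mf{S}_W^\perp=\{V:V\perp W\}$ (together with the container of $W$, when it exists), and every coordinate of $P_W$ in a domain $V$ with $W\propnest V$ or $W\trans V$ is coarsely pinned to the relative projection of $W$ into $\mc{C}V$.  Concretely this says: $\pi_V$ sends $\F_W$ coarsely onto all of $\mc{C}V$ when $V\nest W$, has uniformly bounded image on $\F_W$ when $V\perp W$, and has image coarsely equal to the relative projection of $W$ into $\mc{C}V$ when $W\propnest V$ or $W\trans V$; exchanging the roles of $\mf{S}_W$ and $\mf{S}_W^\perp$ gives the corresponding statements for $\mathbf{E}_W$.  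This is precisely part~\eqref{F_prop:projections}.  Part~\eqref{F_prop:HQC}---hierarchical quasiconvexity of $\F_W$ and $\mathbf{E}_W$ with constant depending only on $E$---is proved in \cite[\S 5B]{BHS_HHSII} as an application of the realization theorem: the first clause in the definition of hierarchical quasiconvexity holds because each coordinate description above is quasiconvex in the relevant $\mc{C}V$, and the second holds because a point all of whose coordinates lie uniformly near the corresponding coordinates of $\F_W$ is, by realization, uniformly close to $\F_W$.

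Next I would handle the ``moreover'' clause of \eqref{F_prop:projections}.  The third bullet of Lemma~\ref{lem:hierarch_path_through_the_gate} gives $\pi_V(\gate_{\F_W}(x))\asymp_\kappa\mf{p}_{\pi_V(\F_W)}(\pi_V(x))$ for every $V\in\mf{S}$ and $x\in\mc{X}$.  When $V\nest W$, the previous paragraph shows $\pi_V(\F_W)$ is $\kappa$--dense in $\mc{C}V$, so the closest point projection of $\pi_V(x)$ onto $\pi_V(\F_W)$ moves $\pi_V(x)$ by at most $\kappa+1$; combining the two estimates gives $\pi_V(x)\asymp\pi_V(\gate_{\F_W}(x))$ with constant depending only on $E$, and in particular this applies when $V=W$.

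Finally, for part~\eqref{F_prop:unbounded} I would combine the decomposition $P_W\asymp\F_W\times\mathbf{E}_W$ with the distance formula applied to the hierarchically quasiconvex set $\F_W$: for any threshold $s$ sufficiently large in terms of $E$, $d_{\F_W}(x,y)$ is coarsely equal to the sum of the quantities $d_V(x,y)$ taken over those $V\in\mf{S}_W$ with $d_V(x,y)\geq s$---the sum is supported on $\mf{S}_W$ because $\pi_V(\F_W)$ is uniformly bounded for every $V\notin\mf{S}_W$ by \eqref{F_prop:projections}.  If some $V\in\mf{S}_W\cap\mf{S}^\infty$, then $\pi_V(\F_W)$ is $\kappa$--dense in the unbounded space $\mc{C}V$ and $\pi_V$ is coarsely Lipschitz, so $\diam(\F_W)=\infty$.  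Conversely, if $\mf{S}_W\cap\mf{S}^\infty=\emptyset$ then the bounded domain dichotomy provides a uniform $D$ with $\diam(\mc{C}V)\leq D$ for all $V\in\mf{S}_W$; choosing $s$ larger than both $D$ and the distance-formula threshold makes every summand vanish, so $\diam(\F_W)<\infty$.  The same argument in the $\mathbf{E}_W$ direction, with $\mf{S}_W^\perp$ in place of $\mf{S}_W$, gives the statement for $\mathbf{E}_W$.  The one genuine subtlety is this last step: the equivalence really does require the bounded domain dichotomy, as it is precisely what allows a single threshold $s$ in the distance formula to be taken large enough to annihilate all of the (individually bounded) coordinates of $\F_W$ simultaneously.
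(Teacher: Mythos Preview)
Your proposal is correct and matches the paper's treatment: the paper does not give a proof of this proposition at all, instead directing the reader to \cite[\S 5B]{BHS_HHSII} for the construction and basic properties, with Remark~\ref{rem:properties_of_F} noting that the choices involved yield coarsely equal sets. Your write-up faithfully unpacks that reference---parts~\eqref{F_prop:HQC} and~\eqref{F_prop:projections} are read off the product-region construction and realization as you describe, the ``moreover'' clause follows from Lemma~\ref{lem:hierarch_path_through_the_gate} combined with the coarse density of $\pi_V(\F_W)$ in $\mc{C}V$ for $V\nest W$, and your distance-formula argument for part~\eqref{F_prop:unbounded} (where the bounded domain dichotomy is used to choose a threshold annihilating all terms) is the standard and correct way to extract that statement.
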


\begin{rem}\label{rem:properties_of_F}  The construction of $\F_W$ 
	and $\E_W$ in \cite{BHS_HHSII} involves some choices, but all choices will produce coarsely equal subsets that satisfy the above properties.
\end{rem}

While we will not use this structure directly, the $\F_W$ and $\E_W$ form natural product regions in $\mc{X}$ as follows: equipping each $\F_W$ and $\mathbf{E}_W$ with the metric induced from $\mc{X}$, there is a quasi-isometric embedding $\F_W \times \E_W \to \mc{X}$ that sends $\F_W \times \{e\}$ and $\{f\} \times \E_W$ onto $\F_W$ and $\E_W$ for some $e\in \E_W$ and $f \in \F_W$. The image of this quasi-isometric embedding is often called the \emph{standard product region} for $W$ and denoted $\P_W$.

\subsection{The boundary of a hierarchically hyperbolic space}
Durham, Hagen, and Sisto defined a boundary for an HHS
$(\mc{X},\mf{S})$ that is built from the boundaries of the hyperbolic
spaces in $\mf{S}$; \cite{HHS_Boundary} is the reference for this 
subsection.  

The points in the HHS boundary are organized in a simplicial
complex that we denote $\partial_\Delta(\mc{X},\mf{S})$.  The
vertex set of $\partial_\Delta(\mc{X},\mf{S})$ is the set of all
boundary points of all the hyperbolic spaces $\mc{C}U$ for $U \in
\mf{S}^{\infty}$.  That is, the set of vertices is the set of points 
 $\bigcup_{U\in \mf{S}^{\infty}} \partial \mc{C}U$.  The vertices
$p_1,\dots,p_n$ of $\partial_\Delta(\mc{X},\mf{S})$ will form an
$n$--simplex if each $p_i \in \partial \mc{C}U_i$ and $U_i \perp U_j$
for each $i \neq j$.  This means the set of points making up the HHS
boundary can equivalently be described as the set of all linear
combinations $\sum_{U\in\mf{U}} a_U p_U$ where
\begin{itemize}
	\item $\mf{U}$ is a pairwise orthogonal subset of $\mf{S}^{\infty}$,
	\item$p_U \in \partial \mc{C}U$  for each $U \in\mf{U}$,  and
	\item $\sum_{U\in\mf{U}} a_U = 1$ and each $a_U > 0$.
\end{itemize}

\begin{defn}\label{def:suppset}
For each $p \in \partial_\Delta (\mc{X},\mf{S})$, we define
$\supp(p)$, the \emph{support of $p$}, to be the pairwise orthogonal
set $\mf{U} \subseteq \mf{S}$ so that $p = \sum_{U\in\mf{U}} a_U p_U$.
Equivalently, the support of $p$ is the pairwise orthogonal set
$\mf{U} \subseteq \mf{S}$ so that the smallest dimensional simplex of
$\partial_\Delta (\mc{X},\mf{S})$ that contains $p$ has exactly one
vertex from $\partial \mc{C}U$ for each $U \in\mf{U}$
\end{defn}

 For clarity, we will often decorate the coefficients $a_U$ with the boundary point they are the coefficient for, that is, we write $p = \sum_{U \in \supp(p)} a_U^p p_U$ to emphasize $a_U^p$ are the coefficients for the point $p$.
 
Durham, Hagen, and Sisto equip the HHS boundary with a topology beyond that coming from the simplicial complex described above. The definition of this topology is  quite involved, combining the standard topology on the boundaries of the hyperbolic spaces $\mc{C}U$ with projections of boundary points onto certain domains of the HHS structure.  We will define these boundary projections in Section \ref{subsec:boundary_projections} and use these boundary projections to define the topology in  Section \ref{subsec:top_on_bounadry}. After defining the boundary, we will describe when certain maps that respect the  HHS structure extend to maps on the boundary  in Section \ref{subsec:boundary_maps}. When our HHS is in fact a $G$--HHS, this will produce a natural action of the group on the boundary by homeomorphisms and simplicial automorphisms. 

We use $\partial (\mc{X},\mf{S})$ to denote the HHS boundary equipped with the topology coming from the boundary projections while $\partial_\Delta(\mc{X},\mf{S})$ will denote the simplicial complex that is the underlying set of boundary points.

\subsubsection{Boundary projections}\label{subsec:boundary_projections}
The following is a slight modification of the definition of a boundary projection from \cite{HHS_Boundary}.  
\begin{defn}\label{def:boundaryprojections}
Fix a point $q = \sum_{W \in \supp(q)}a_W q_W \in \partial (\mc X,\s)$.  For each $U\in \s$ such that there exists $W\in \supp(q)$ with $U\not\perp W$, we define the \emph{boundary projection $\partial \pi_U(q)$} of $q$ into $\mc CU$ as follows.
	\begin{itemize}
	\item If $W = U$, then we define $\partial \pi_U(q) := q_U=q_W$.
	\item If $W\propnest U$ or $W\trans U$,  let $\mc{V} = \{V \in \supp(q) \mid  V \trans U \text{ or } V \propnest U \}$, then define $$\partial \pi_U(q): = \bigcup_{V \in \mc{V}} \rho^V_U.$$
	\item If $W\sqsupsetneq U$, then $U \perp V$ for each $V \in \supp(q)-\{W\}$. In this case,  let $Z\subseteq \mc C W$ be the set of  all points on all $(1,20E)$--quasigeodesics from a point in $\rho^U_W\in\mc CW$ to $q_W\in\partial \mc CW$ that are at distance at least $E+\sigma$ from $\rho^U_W$, where $\sigma$ is the Morse  constant  from Lemma \ref{lem:Morse} for a $(1,20E)$--quasigeodesics in a $E$--hyperbolic metric space.  We then define $\partial \pi_U(q): =\rho^W_U(Z)$.
	\end{itemize} 
\end{defn}

The difference between this definition and the original from \cite{HHS_Boundary} is that \cite{HHS_Boundary} only defines the boundary projection of $q$ to a certain set of domains related to the support set of another point in the boundary.  We define the boundary projection of $q$ to any domain for which the definition  makes sense. Because of this, our notation is different  from what is used in \cite{HHS_Boundary}: they use $(\partial \pi_{\overline S}(q))_U$, where $\overline S$ is the support set of some point in the boundary, while we use $\partial\pi_U(q)$.

\subsubsection{Topology on $\mc{X}\cup\partial (\mc{X},\s)$} \label{subsec:top_on_bounadry}

Before defining the topology, we define the notion of a remote point.  This is a slight modification of \cite[Definition~2.5]{HHS_Boundary}, where they define a point being remote to a support set.
\begin{defn}\label{def:remote}
Let $(\mc X,\mf{S})$ be a hierarchically hyperbolic space, and let $p\in \partial(\mc X,\s)$.  A point $q\in \partial (\mc X,\s)$ is \emph{remote to $p$} if:
	\begin{enumerate}
	\item $\supp(p)\cap \supp(q)=\emptyset$; and 
	\item for all $Q\in \supp(q)$, there exists $P\in \supp(p)$ so that $P$ and $Q$ are \emph{not} orthogonal.
	\end{enumerate}
Denote the set of points remote to $p$ by $\partial^{rem}_p (\mc X,\mf{S})$.
\end{defn}

We are now ready to define the topology on $\partial (\mc X,\frak 
S)$.  Fix a basepoint $x_0 \in \mc{X}$,  and, for each $W \in\mf{S}^{\infty}$, pick the basepoint for $\partial \mc{C}W$ to be a point in $\pi_W(x_0)$. Fix a point $p = \sum_{W \in \supp(p)} a_W^p p_W \in \partial(\mc X,\frak S)$. For each $r\geq 0$, each $\varepsilon>0$, and each $W\in \supp(p)$, let $M(r;p_W)$ be a neighborhood of $p_W$ in $\mc CW\cup \partial \mc CW$ as  in Definition~\ref{def:nbhdbasis}.   We first define three sets depending on $r$ and $\varepsilon$: the \emph{remote part}, the \emph{non-remote part}, and the \emph{interior part}. In what follows, if $U$ is \emph{not} in the support of a boundary point $q$, then $a_U^q =0$; if $U$ is in $\supp(q)$ then $a_U^q$ is the coefficient of $q$ in the domain $U$ so that $q = \sum_{U \in \supp(q)} a_U^q q_U$.
 
\begin{defn}\label{defn:remote_top}
Given any $q\in \partial(\mc X,\frak S)$, let $\mathcal S_q$ be the union of $\supp(p)$ and the set of domains $T\in\supp(p)^\perp$ such that there exists some $W_T\in\supp(q)$ with $T\not\perp W_T$. The \emph{remote part} $ \mc B^{rem}_{r,\varepsilon}(p) $ is the set of all points $q\in \partial^{rem}_p(\mc X,\mf{S})$ satisfying the following three conditions:
	\begin{enumerate}[(R1)]
	\item $\forall W\in\supp(p), \partial\pi_W(q) \subseteq M(r;p_W)$,
	\item $\displaystyle \forall W\in\mc S_q, V\in\supp(p), \left|\frac{d_W(x_0,\partial\pi_W(q))}{d_{V}(x_0,\partial\pi_{V}(q))}-\frac{a^p_W}{a^p_{V}}\right|<\varepsilon,$ and 
	\item $\sum_{T\in \supp(p)^\perp} a^q_T<\varepsilon$.
	\end{enumerate}
\end{defn}

\begin{defn}\label{defn:nonremote_top}
  The \emph{non-remote part} $\mc B^{non}_{r,\varepsilon}(p)$ is the set of all points $$q=\sum_{T \in \supp(q)}a^q_Tq_T\in\partial \mc X-\partial^{rem}_p(\mc X,\mf{S})$$ satisfying the following three conditions, where  $\mc{A}= \supp(p)\cap\supp(q)$. 
	\begin{enumerate}[(N1)]
	\item $ \forall T\in \mc{A}, q_T\in M(r;p_T) $,
	\item $\sum_{V\in \supp(q)-\mc{A}} a^q_V<\varepsilon,$
	\item $  \forall T\in \mc{A},  \left|a^q_T-a^p_T\right|<\varepsilon.$
	\end{enumerate}
\end{defn}

\begin{defn}\label{defn:internal_top}
Finally, the \emph{interior part} $\mc B^{int}_{r,\varepsilon}(p)$ is the set of all points $x\in \mc X$ satisfying the following three conditions:  
	\begin{enumerate}[(\textrm{I}1)]
	\item $\forall W \in \supp(p)$, $\pi_W(x) \subseteq M(r;p_W)$,
	\item $\forall W,V \in \supp(p)$,  $\displaystyle \left|\frac{a^p_W}{a^p_{V}}-\frac{d_W(x_0,x)}{d_{V}(x_0,x)}\right|<\varepsilon$, and 
	\item $\forall W \in \supp(T)$, $T \in \supp(p)^\perp$, $\displaystyle \frac{d_T(x_0,x)}{d_W(x_0,x)}<\varepsilon$.
	\end{enumerate}
\end{defn}

The disjoint union of these sets forms a basic set in the topology.

\begin{defn}\label{defn:basisset} For each $\varepsilon >0$ and each $r\geq 0$, a basic set in the topology on $\mc X\cup \partial (\mc X,\s)$ is the set $\mc B_{r,\varepsilon}(p)$ defined as follows: 
\[
\mc B_{r,\varepsilon}(p):=\mc B^{rem}_{r,\varepsilon}(p)\sqcup\mc B^{non}_{r,\varepsilon}(p)\sqcup \mc B^{int}_{r,\varepsilon}(p).
\]
\end{defn}

In \cite{HHS_Boundary}, the basic sets are defined slightly differently. We briefly describe their definition here; Lemma \ref{lem:sametop} shows that the two collections generate the same topology. Given $p\in \partial (\mc X,\frak S)$ and $W\in\supp(p)$, let $K_W$ be \emph{any} neighborhood of $p_W$ in $\mc CW\cup \partial\mc CW$.  Let $\mc N^{rem}_{\{K_W\},\varepsilon}(p)$, $\mc N^{non}_{\{K_W\},\varepsilon}(p)$, and $\mc N^{int}_{\{K_W\},\varepsilon}(p)$ be the sets of points satisfying (R1)--(R3), (N1)--(N3), and (I1)--(I3), respectively, using the neighborhoods $K_W$ in place of $M(r;p_W)$.  Then define $\mc N_{\{K_W\},\varepsilon}(p)$ to be the disjoint union of these three sets.

\begin{lem}\label{lem:sametop}
The collections of sets $\mc B=\{\mc B_{r,\varepsilon}(p)\mid p\in \partial (\mc X,\frak S), r\geq 0, \varepsilon\geq 0\}$ and $\mc N=\{\mc N_{\{K_W\},\varepsilon}(p)\mid p\in \partial (\mc X,\frak S), \varepsilon\geq 0, \textrm{ and $K_W$ is a neighborhood of $p_W$ when $W \in \supp(p)$}\}$ generate the same topology on $\mc X\cup \partial(\mc X,\frak S)$.
\end{lem}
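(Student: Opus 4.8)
The plan is to show the two collections are mutually refining: every $\mc B_{r,\varepsilon}(p)$ contains a set from $\mc N$ around each of its points, and vice versa. Since both $\mc B$ and $\mc N$ are indexed so that shrinking the parameters shrinks the set (for $\mc B$ this is the observation in Definition \ref{def:nbhdbasis} that $M(r';p_W) \subseteq M(r;p_W)$ when $r \leq r'$; for $\mc N$ one shrinks $\varepsilon$ and replaces each $K_W$ by a smaller neighborhood), it suffices to compare sets based at the \emph{same} point $p$. The key point is that the conditions (R1)--(R3), (N1)--(N3), (I1)--(I3) depend on the neighborhoods of the $p_W$ only through the membership conditions ``$\partial\pi_W(q) \subseteq (\text{nbhd})$'', ``$q_T \in (\text{nbhd})$'', and ``$\pi_W(x) \in (\text{nbhd})$''; the $\varepsilon$--conditions (R2)--(R3), (N2)--(N3), (I2)--(I3) are literally identical in both definitions. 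So the only thing to reconcile is the role of $M(r;p_W)$ versus an arbitrary neighborhood $K_W$.

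First I would handle the inclusion $\mc N \preceq \mc B$: given $\mc B_{r,\varepsilon}(p)$, set $K_W := M(r;p_W)$ for each $W \in \supp(p)$; then $\mc N_{\{K_W\},\varepsilon}(p) = \mc B_{r,\varepsilon}(p)$ on the nose, since the three defining sets coincide condition by condition. This shows every $\mc B$--set is an $\mc N$--set, so the topology generated by $\mc N$ is at least as fine as that generated by $\mc B$. For the reverse, fix $\mc N_{\{K_W\},\varepsilon}(p)$ and a point $z$ in it. For each $W \in \supp(p)$, the point $p_W$ lies in the interior of $K_W$ in $\mc CW \cup \partial\mc CW$, and since the sets $M(r;p_W)$ form a neighborhood basis at $p_W$ (Definition \ref{def:nbhdbasis}), there is $r_W$ with $M(r_W;p_W) \subseteq K_W$; take $r := \max_{W\in\supp(p)} r_W$, which is a finite max since $\supp(p)$ is finite (it is a pairwise orthogonal subset of $\mf{S}$, hence of cardinality at most $E$ by finite complexity). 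Then $M(r;p_W) \subseteq M(r_W;p_W) \subseteq K_W$ for every $W$, so $\mc B_{r,\varepsilon}(p) \subseteq \mc N_{\{K_W\},\varepsilon}(p)$, and this $\mc B$--set contains $z$ if $z \in \mc B_{r,\varepsilon}(p)$. The only subtlety is that enlarging the ``radius'' from the $K_W$ to a common $M(r;p_W)$ must not accidentally throw $z$ out: but $z$ satisfies each neighborhood-membership condition with respect to $K_W$, and strengthening to the smaller $M(r;p_W)$ could fail for $z$. To fix this, one instead argues pointwise: for the specific point $z$, each relevant projection ($\partial\pi_W(z)$, or $z_W$, or $\pi_W(z)$) lies in the open set $K_W$, hence in some $M(r_z;p_W)$ with $r_z$ possibly larger than the $r$ above; choosing $r' := \max(r, r_z)$ over the finitely many $W$ gives $z \in \mc B_{r',\varepsilon}(p) \subseteq \mc N_{\{K_W\},\varepsilon}(p)$, as needed.

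The main obstacle is bookkeeping the three cases (remote, non-remote, interior) uniformly, since the ``neighborhood of $p_W$'' enters each through a different avatar ($\partial\pi_W$, the coordinate $q_W$, or $\pi_W$), and one must check that the $\varepsilon$--clauses genuinely match between the two formulations — which they do, essentially by inspection, because $\mc B_{r,\varepsilon}$ is defined as the special case $K_W = M(r;p_W)$ of $\mc N_{\{K_W\},\varepsilon}$. I expect no analytic difficulty; the content is purely that $\{M(r;p_W)\}_{r\geq 0}$ is a neighborhood basis at $p_W$ and that $\supp(p)$ is finite, so a common radius can be extracted. I would present the argument in two short paragraphs: ``$\mc B \subseteq \mc N$'' (trivial, by specialization) and ``$\mc N$ refines to $\mc B$'' (the neighborhood-basis and finiteness argument above), then conclude the generated topologies agree.
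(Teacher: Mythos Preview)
The core of your argument is correct and essentially identical to the paper's: $\mc B\subseteq\mc N$ is immediate by specializing $K_W=M(r;p_W)$, and for the other direction you choose $r=\max_{W\in\supp(p)} r_W$ with $M(r_W;p_W)\subseteq K_W$ to obtain $\mc B_{r,\varepsilon}(p)\subseteq\mc N_{\{K_W\},\varepsilon}(p)$, using that $\supp(p)$ is finite. That is exactly the paper's proof, and nothing more is needed because these collections are being used as \emph{neighborhood bases} at boundary points, so cofinality at each $p$ suffices.

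Your added ``subtlety'' paragraph is both unnecessary and, as written, incorrect. You try to show that a given $z\in\mc N_{\{K_W\},\varepsilon}(p)$ also lies in some $\mc B_{r',\varepsilon}(p)$ still centered at $p$, but this can fail: if $\pi_W(z)\in K_W\setminus M(r;p_W)$ (perfectly possible, since $M(r;p_W)\subsetneq K_W$ in general), then $\gp{p_W}{\pi_W(z)}\leq r$, so there is no $r'\geq r$ with $\pi_W(z)\in M(r';p_W)$. Your sentence ``hence in some $M(r_z;p_W)$ with $r_z$ possibly larger than the $r$ above'' has the inequality the wrong way round --- such an $r_z$ would have to be \emph{smaller}, which then breaks the containment $M(r_z;p_W)\subseteq K_W$. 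Simply delete this paragraph; the argument before it already proves the lemma.
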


\begin{proof}
Since $\mc B\subseteq \mc N$, we need only show that for every $p\in \partial(\mc X,\frak S)$ and  every collection $\{K_W\}_{W\in \supp(p)}$ of neighborhoods of $p_W$, there is an $r\geq 0$ such that $\mc B_{r,\varepsilon}(p)\subseteq \mc N_{\{K_W\},\varepsilon}(p)$.  For each $W\in \supp(p)$, there is some $r_W$ such that $M(r_W;p_W)\subseteq K_W$.  Since $\supp(p)$ consists of finitely many domains, the result follows by setting $r=\max\{r_W\mid W\in \supp(p)\}$.
\end{proof}

The following technical lemma will be useful in verifying when points in the boundary lie in a particular basic set.
\begin{lem}\label{lem:changing_quotients_a_bounded_amount}
	Let $A_n$ and $B_n$ be sequences of positive numbers so that $A_n \to \infty$, $B_n \to \infty$ and $\lim_{n \to \infty} \frac{A_n}{B_n} = L.$ If there exists $E\geq0$ so that $|A_n-C_n| \leq E$ and $|B_n - D_n| \leq E$, then $\lim\limits_{n \to \infty} \frac{C_n}{D_n} = L$.
\end{lem}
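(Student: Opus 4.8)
The plan is to estimate the difference between $\frac{C_n}{D_n}$ and $\frac{A_n}{B_n}$ directly and show it tends to $0$; since $\frac{A_n}{B_n}\to L$ by hypothesis, this yields $\frac{C_n}{D_n}\to L$. First I would record that the quotients $\frac{C_n}{D_n}$ make sense for all large $n$: from $|B_n-D_n|\leq E$ we get $D_n\geq B_n-E$, and since $B_n\to\infty$ this forces $D_n\to\infty$, so in particular $D_n>0$ for all sufficiently large $n$. This is the only point that needs any care, and it is exactly what legitimizes the inequalities below.

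The key algebraic step is the identity $C_nB_n-A_nD_n=(C_n-A_n)B_n+A_n(B_n-D_n)$, which gives, for all large $n$,
\[
\left|\frac{C_n}{D_n}-\frac{A_n}{B_n}\right|=\frac{|C_nB_n-A_nD_n|}{B_nD_n}\leq\frac{|C_n-A_n|}{D_n}+\frac{A_n|B_n-D_n|}{B_nD_n}\leq\frac{E}{D_n}+\frac{E}{D_n}\cdot\frac{A_n}{B_n},
\]
where in the last inequality I used $|C_n-A_n|\leq E$, $|B_n-D_n|\leq E$, and the positivity of $A_n,B_n,D_n$.

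Finally I would let $n\to\infty$. The term $\frac{E}{D_n}\to 0$ because $D_n\to\infty$, and the term $\frac{E}{D_n}\cdot\frac{A_n}{B_n}\to 0\cdot L=0$ because $\frac{1}{D_n}\to 0$ and $\frac{A_n}{B_n}\to L$. Hence $\left|\frac{C_n}{D_n}-\frac{A_n}{B_n}\right|\to 0$, and combining this with $\frac{A_n}{B_n}\to L$ gives $\frac{C_n}{D_n}\to L$. There is no genuine obstacle in this argument; it is a routine $\varepsilon$--estimate, and the only subtlety is ensuring the denominators are eventually positive, handled in the first step.
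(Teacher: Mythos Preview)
Your proof is correct. The route differs from the paper's: the paper sandwiches $\frac{C_n}{D_n}$ between $\frac{A_n-E}{B_n+E}$ and $\frac{A_n+E}{B_n-E}$, then argues via an explicit $\varepsilon$--$r$--$s$ choice that each of these tends to $L$. You instead bound the difference $\bigl|\frac{C_n}{D_n}-\frac{A_n}{B_n}\bigr|$ directly using the identity $C_nB_n-A_nD_n=(C_n-A_n)B_n+A_n(B_n-D_n)$. Your approach is shorter and avoids the explicit parameter juggling; the paper's approach has the minor advantage that the multiplicative bounds make the role of $L$ visible at each step, but both arguments are equally elementary and yours is the cleaner one.
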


\begin{proof}
	Fix $\varepsilon >0$. There exists $s>0$ sufficiently large and $r >0$ sufficiently small so that \[L \cdot r + \frac{1}{s} + \frac{r}{s} <\varepsilon.\]
	Since $A_n$ and $B_n$ tend to $\infty$,  \[ \lim_{n\to \infty}\frac{1 + \frac{E}{A_n}}{1+\frac{E}{B_n}} = 1 .\]
	Therefore, for all sufficiently large $n$ we have \[	\frac{C_n}{D_n} \leq \frac{A_n+E}{B_n-E} \leq \frac{A_n}{B_n}(1+r)\] where $r$ is the number fixed above. Since $\frac{A_n}{B_n} \to L$ as $n \to \infty$, for every large enough $n$, we have \[\frac{A_n}{B_n}(1+r) \leq \left(L + \frac{1}{s}\right)(1+r) \leq L + L\cdot r + \frac{1}{s} + \frac{r}{s} \leq L+\varepsilon. \]
	Hence, we have $\frac{C_n}{D_n} \leq L+\varepsilon$. 
	
	A completely analogous argument beginning with the inequality 
	\[
	\frac{C_n}{D_n} \geq \frac{A_n-E}{B_n+E}
	\]
	gives the lower bound, completing the proof.
\end{proof}

The next lemma says that sequences that stay within uniformly bounded distance of each other in $\mc{X}$ converge to the same point in the boundary $\partial(\mc{X},\mf{S})$.
\begin{lem}\label{lem:bounded_difference_convergence}
	Let $(\mc{X},\mf{S})$ be an HHS. Let $(x_n)$ be a sequence of points in $\mc{X}$ that converges to  $p \in \partial (\mc{X},\mf{S})$. If $(y_n)$ is a  sequence in $\mc{X}$ with $ d_\mc{X}(x_n,y_n)$ uniformly bounded for all $n \in\mathbb{N}$, then $y_n$ also converges to $p$.
\end{lem}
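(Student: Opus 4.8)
\textbf{Proof proposal for Lemma~\ref{lem:bounded_difference_convergence}.}

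The plan is to argue directly from the definitions of the basic sets in Definitions~\ref{defn:remote_top}, \ref{defn:nonremote_top}, and \ref{defn:internal_top}, using the fact that a sequence $(x_n)$ in $\mc{X}$ converging to $p \in \partial(\mc X,\mf S)$ must eventually lie in the interior part $\mc B^{int}_{r,\varepsilon}(p)$ for every $r$ and $\varepsilon$, so it suffices to show that $(y_n)$ eventually lies in $\mc B^{int}_{r',\varepsilon'}(p)$ for every $r'$ and $\varepsilon'$. Fix $r' \geq 0$ and $\varepsilon' > 0$, and let $D$ be the uniform bound on $d_\mc{X}(x_n, y_n)$. The three conditions (I1), (I2), (I3) that define $\mc B^{int}$ all concern, for $W \in \supp(p)$ and $T \in \supp(p)^\perp$, the quantities $\pi_W(x)$ (together with the neighborhood $M(r;p_W)$) and the ratios $d_W(x_0,x)/d_V(x_0,x)$. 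Since each $\pi_W$ is $(E,E)$--coarsely Lipschitz (Axiom~\ref{axiom:projections}), we have $d_W(x_n, y_n) \leq ED + E$ for every $W \in \mf S$; call this bound $D'$. This is the only geometric input needed.

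For condition (I1): since $(x_n) \to p$, for each $W \in \supp(p)$ we have $\pi_W(x_n) \to p_W$ in $\mc CW \cup \partial \mc CW$, equivalently $\gp{\pi_W(x_n)}{p_W} \to \infty$. Since $d_W(x_n,y_n) \leq D'$, the Gromov product $\gp{\pi_W(y_n)}{p_W}$ differs from $\gp{\pi_W(x_n)}{p_W}$ by at most $D'$ (by the definition of the Gromov product, each term $d_W(x_0, \cdot)$ and $d_W(p_W, \cdot)$ moves by at most $D'$, after extending the product to boundary points as in Section~2.2), so $\gp{\pi_W(y_n)}{p_W} \to \infty$ as well, and eventually $\pi_W(y_n) \subseteq M(r'; p_W)$. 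For conditions (I2) and (I3): for $W, V \in \supp(p)$ (resp.\ $W \in \supp(p)$, $T \in \supp(p)^\perp$), apply Lemma~\ref{lem:changing_quotients_a_bounded_amount} with $A_n = d_W(x_0, x_n)$, $B_n = d_V(x_0, x_n)$, $C_n = d_W(x_0, y_n)$, $D_n = d_V(x_0, y_n)$, and $E = D'$: the hypotheses $A_n, B_n \to \infty$ hold because $(x_n) \to p$ forces these projections to diverge (this is implicit in (I1) holding for all $r$, since $p_W \in \partial \mc CW$), the ratio $A_n/B_n$ has a limit equal to $a^p_W/a^p_V$ (again because $(x_n) \to p$ and (I2) must hold for all $\varepsilon$), and $|A_n - C_n|, |B_n - D_n| \leq D'$. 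The conclusion gives that $d_W(x_0, y_n)/d_V(x_0,y_n) \to a^p_W/a^p_V$, so eventually (I2) holds for $(y_n)$ with constant $\varepsilon'$. Condition (I3) is handled identically, with the limit of the ratio being $0$ rather than $a^p_W/a^p_V$. Taking a common $N$ beyond which all of (I1)--(I3) hold (there are finitely many pairs from the finite set $\supp(p)$, and (I3) ranges over $T$ in $\supp(p)^\perp$ with $W$ running over $\supp(p)$ — here one uses that for $y_n \in \mc X$ the relevant quantity $d_T(x_0, y_n)$ need only be controlled relative to the diverging $d_W(x_0,y_n)$, which the same Lemma~\ref{lem:changing_quotients_a_bounded_amount} argument delivers) shows $y_n \in \mc B^{int}_{r',\varepsilon'}(p)$ for $n \geq N$.

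The main obstacle — really a bookkeeping point rather than a genuine difficulty — is making precise the claim that $(x_n) \to p$ forces all the projections $d_W(x_0, x_n)$ to diverge and the ratios to converge to the prescribed $a^p_W/a^p_V$; this is essentially built into the requirement that $(x_n)$ eventually lies in $\mc B^{int}_{r,\varepsilon}(p)$ for \emph{every} $r$ and $\varepsilon$, so one should phrase the argument as: ``for each $r, \varepsilon$ choose $N_{r,\varepsilon}$ with $x_n \in \mc B^{int}_{r,\varepsilon}(p)$ for $n \geq N_{r,\varepsilon}$,'' and then extract the divergence and ratio statements as needed to feed Lemma~\ref{lem:changing_quotients_a_bounded_amount}. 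One small subtlety worth flagging: if $p$ is a non-generic point (i.e.\ $\supp(p)$ has more than one element) the ratios in (I2) genuinely matter, and the finiteness of $\supp(p)$ guaranteed by the partial realization / finite complexity structure is what lets us pass to a single $N$; no infinitary issue arises.
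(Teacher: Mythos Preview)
Your proposal is correct and follows essentially the same approach as the paper's proof: both reduce to checking (I1)--(I3) for $y_n$, use the coarse Lipschitz property of the projections to bound $d_W(x_n,y_n)$ uniformly, handle (I1) via the fact that a bounded perturbation cannot escape $M(r;p_W)$ once the Gromov product diverges, and invoke Lemma~\ref{lem:changing_quotients_a_bounded_amount} for (I2) and (I3). Your discussion of the bookkeeping (why the projections diverge, uniformity over $T \in \supp(p)^\perp$) is a bit more explicit than the paper's, but the argument is the same.
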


\begin{proof}
	Let $R$ be the uniform bound on  $ d_\mc{X}(x_n,y_n)$ and $x_0$ be the basepoint of the HHS boundary $(\mc{X},\mf{S})$. The sequence $(y_n)$ will converge to $p = \sum a_W p_W \in \partial (\mc{X},\mf{S})$ if for each $r \geq 1$ and $\varepsilon \geq 0$, we have $y_n \in \mc{B}^{int}_{r,\varepsilon}(p)$ for all but finitely many $n$.  Since $x_n$  converges to $p$,   for each $r \geq 1$ and $\varepsilon \geq 0$, there exists $n_0 = n_0(r,\varepsilon)$ so that for all $n \geq n_0$, $x_n \in  \mc{B}^{int}_{r,\varepsilon}(p)$. Thus, \begin{itemize}
		\item for each $W \in \supp(p)$, $\pi_W(x_n) \in M(r;p_W)$;
		\item for each $W,V \in \supp(p)$, $\lim\limits_{n\to \infty}\frac{d_W(x_0,x_n)}{d_{V}(x_0,x_n)} =  \frac{a_W}{a_V}$; and 
		\item for each $W \in \supp(p)$ and $T \in \supp(p)^\perp$, $\lim\limits_{n\to \infty}\frac{d_T(x_0,x_n)}{d_W(x_0,x_n)}=0$.
	\end{itemize}
	
	Let $W \in \supp(p)$. Since $d_W(x_n,y_n) \leq ER+E$ and $d_W(x_0,x_n) \to \infty$ as $n \to \infty$, there must exist $n_1 \in \mathbb{N}$ so that $y_n \in  M(r;p_W)$ for all $n \geq n_1$. Since $d_V(x_n,y_n) \leq ER+E$ for every $V \in \mf{S}$, we have $|d_V(x_0,x_n)-d_V(x_0,y_n)| \leq ER+E$ for each $V \in \mf{S}$. Thus, Lemma \ref{lem:changing_quotients_a_bounded_amount} implies that 
	\begin{itemize}
		\item for each $W,V \in \supp(p)$, $\lim\limits_{n\to \infty}\frac{d_W(x_0,y_n)}{d_{V}(x_0,y_n)} =  \frac{a_W}{a_V}$; and 
		\item for each $W \in \supp(p)$ and $T \in \supp(p)^\perp$, $\lim\limits_{n\to \infty}\frac{d_T(x_0,y_n)}{d_W(x_0,y_n)}=0$.
	\end{itemize}
	Hence $y_n \in \mc{B}^{int}_{r,\varepsilon}(p)$ for all sufficiently large $n$.
\end{proof}

\subsubsection{Boundary maps induced by hieromorphisms and the group action on the boundary}\label{subsec:boundary_maps}

Given two hierarchically hyperbolic spaces, it is natural to wonder when maps between the spaces extend to maps between their respective HHS boundaries. A natural class of maps to consider for this question are the following \emph{hieromorphisms}.

\begin{defn}
	Let $(\mc{X},\mf{S})$ and $(\mc{Y},\mf{T})$ be HHSs and $\lambda \geq 1$. A $\lambda$--\emph{hieromorphism} from  $(\mc{X},\mf{S})$ to $(\mc{Y},\mf{T})$ consists of 
	\begin{itemize}
		\item a map $f \colon \mc{X} \to \mc{Y}$;
		\item an injective map $f^\mf{s} \colon \mf{S} \to \mf{T}$ that preserves nesting, transversality, and orthogonality; and
		\item a $(\lambda,\lambda)$--quasi-isometric embedding $f_V \colon \mc{C}_\mf{S}V \to \mc{C}_\mf{T} f^{\mf{s}}(V)$ for each $V \in \mf{S}$	
	\end{itemize}
satisfying the following properties:
	\begin{itemize}
		\item $f_V(\pi_V(x)) \asymp_\lambda \pi_{f^{\mf{s}}(V)}(f(x))$ for each $x \in \mc{X}$ and $V \in \mf{S}$;
		\item $f_V(\rho_V^W) \asymp_\lambda \rho_{f^{\mf{s}}(V)}^{f^{\mf{s}}(W)}$ whenever $W \trans V$ or $W \propnest V$; and
		\item whenever $W\propnest V$, $f_W(\rho_W^V(z)) \asymp_\lambda \rho_{f^{\mf{s}}(W)}^{f^{\mf{s}}(V)}(f_W(z))$ for each $z \in \mc{C}V  -\mc{N}_E(\rho_V^W)$.
	\end{itemize}
	Since $f$, $f^{\mf{s}}$, and $f_V$ all have different domains, is it often clear from context which is the relevant map. In these cases, we will abuse notation and  call all maps $f$; we denote the hieromorphism by $f \colon (\mc{X},\mf{S}) \to (\mc{Y},\mf{T})$.
\end{defn}

Give a hieromorphism $f \colon (\mc{X},\mf{S}) \to (\mc{Y},\mf{T})$, each quasi-isometric embedding $f_V \colon \mc{C}V \to \mc{C}f(V)$ induces a continuous inclusion $\partial f_V \colon  \partial\mc{C}V \to \partial\mc{C}f(V)$. Since $f$ respects the orthogonality relation on the index sets, there is  an induced injective simplicial map $\partial f \colon \partial_\Delta (\mc{X},\mf{S}) \to \partial_\Delta(\mc{X},\mf{T})$ defined by $$\partial f(p) = \partial f \left(\sum_{V \in \supp(p)} a_V p_V\right) = \sum_{V \in \supp(p)} a_V\partial f_V(p_V).$$

Work of Durham, Hagen, and Sisto implies the following sufficient conditions for  this map $\partial f$ to be continuous with respect to the topology on the HHS boundary.

\begin{thm}[{A special case of  \cite[Theorem 5.6]{HHS_Boundary}}]\label{thm:extending_hieromorphisms}
	Let $f \colon (\mc{X},\mf{S}) \to (\mc{Y}, \mf{T})$ be a  hieromorphism.  If for each $V \in \mf{S}$,
	$f_V$ is a $(1,\lambda)$--quasi-isometric embedding, then the map $\partial f \colon \partial_\Delta (\mc{X},\mf{S}) \to \partial_\Delta(\mc{Y},\mf{T})$ defines a continuous map from $\partial (\mc{X},\mf{S})$ to $\partial (\mc{Y},\mf{T})$.
\end{thm}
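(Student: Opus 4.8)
The plan is to verify continuity of $\partial f$ directly against the topology of Section~\ref{subsec:top_on_bounadry}; by Lemma~\ref{lem:sametop} it suffices to work with the flexible neighborhood basis $\{\mc N_{\{K_W\},\varepsilon}\}$. I would begin with the combinatorial bookkeeping forced by $f$ being a hieromorphism. Since $f^{\mf s}$ is injective and preserves nesting, orthogonality, and transversality --- three relations that, among pairs of distinct domains, are mutually exclusive and jointly exhaustive, so that an injective relation-preserving map also reflects them on its image --- one gets, for every $q \in \partial(\mc X,\mf S)$, that $\supp(\partial f(q)) = f^{\mf s}(\supp(q))$, that coefficients transfer via $a^{\partial f(q)}_{f^{\mf s}(U)} = a^q_U$, and that $q$ is remote to $p$ if and only if $\partial f(q)$ is remote to $\partial f(p)$. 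In particular $\partial f$ carries the remote and non-remote parts around $p$ into the corresponding parts around $\partial f(p)$ (the interior part being irrelevant, as $\partial f$ is applied only to boundary points), so the cases of Definitions~\ref{defn:remote_top} and \ref{defn:nonremote_top} can be handled in parallel. I would also take $f(x_0)$ as the basepoint in $\mc Y$, with basepoints in $\partial\mc C f^{\mf s}(W)$ chosen compatibly with $f_W$.

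Next, fixing $p$ and a basic neighborhood $\mc N_{\{K'_{W'}\},\varepsilon}(\partial f(p))$, I would produce some $\varepsilon' \in (0,\varepsilon)$ and neighborhoods $K_W$ of $p_W$ with $\partial f\bigl(\mc N_{\{K_W\},\varepsilon'}(p) \cap \partial(\mc X,\mf S)\bigr) \subseteq \mc N_{\{K'_{W'}\},\varepsilon}(\partial f(p))$; by Lemma~\ref{lem:sametop} this gives continuity. Continuity of the induced maps $\partial f_W \colon \partial\mc C W \to \partial\mc C f^{\mf s}(W)$ supplies the $K_W$ handling conditions (R1) and (N1); conditions (R3), (N2), (N3) and the coefficient ratios $a^p_W/a^p_V$ appearing in (R2) transfer verbatim, since coefficients and the orthogonal complement of the support are preserved. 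The substantive point is (R2), and the key lemma I would prove is that for each relevant domain $U$ the set $\partial\pi_{f^{\mf s}(U)}(\partial f(q))$ is coarsely equal to the $f_U$--image (resp.\ $\partial f_U$--image) of $\partial\pi_U(q)$: this is an equality when $U \in \supp(q)$; it follows from the hieromorphism axioms relating $f$ to the $\rho$'s in the transverse and nested-below cases; and in the one genuinely geometric case --- the third bullet of Definition~\ref{def:boundaryprojections}, with $\rho^W_U$ and a $(1,20E)$--quasigeodesic in $\mc C W$ --- it follows by pushing that quasigeodesic forward by $f_W$ and using the Morse Lemma (Lemma~\ref{lem:Morse}) to compare it with a defining quasigeodesic on the $\mc Y$ side. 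Granting this, since each $f_U$ is a $(1,\lambda)$--quasi-isometric embedding it changes the distances $d_U(x_0,\partial\pi_U(q))$ by a bounded additive amount, so Lemma~\ref{lem:changing_quotients_a_bounded_amount} shows the ratios in (R2) still converge to the same limits; taking $\varepsilon'$ small relative to $\varepsilon$, $\lambda$, and $E$ then makes all the strict inequalities hold.

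The main obstacle --- and the precise place where the hypothesis that each $f_V$ is a $(1,\lambda)$--quasi-isometric embedding rather than merely a $(\lambda,\lambda)$--quasi-isometric embedding is indispensable --- is condition (R2): a multiplicative constant larger than $1$ could rescale $d_{f^{\mf s}(W)}$ and $d_{f^{\mf s}(V)}$ by different effective factors along a convergent sequence and so shift the limiting ratio, which would break the argument, whereas with multiplicative constant $1$ the distortion is purely additive and Lemma~\ref{lem:changing_quotients_a_bounded_amount} applies cleanly. The remaining care needed is bookkeeping: $\partial\pi_U(q)$ is sometimes a point of $\partial\mc C U$ and sometimes a bounded subset of $\mc C U$, so the coarse-equality statement above must be phrased and checked separately across the three bullets of Definition~\ref{def:boundaryprojections}, and in the quasigeodesic case one must track how the $(1,20E)$--quasigeodesic constant changes under $f_W$ before invoking Lemma~\ref{lem:Morse}.
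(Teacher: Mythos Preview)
The paper does not give its own proof of this statement: it is quoted as a special case of \cite[Theorem~5.6]{HHS_Boundary} and used as a black box. Your outline is therefore not comparable to anything in the paper, but as a direct verification it is the right shape --- transferring (R1)--(R3) and (N1)--(N3) through the hieromorphism axioms --- and your identification of (R2) as the substantive condition, and of the $(1,\lambda)$ hypothesis as exactly what prevents multiplicative rescaling of the ratios, is correct.

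There is, however, a genuine gap. Your key lemma compares $\partial\pi_{f^{\mf s}(U)}(\partial f(q))$ with the $f_U$--image of $\partial\pi_U(q)$, i.e., it only treats domains in the image of $f^{\mf s}$. But the set $\mc S_{\partial f(q)}$ appearing in (R2) on the $\mf T$ side consists of $\supp_{\mf T}(\partial f(p))$ together with those $T' \in \supp_{\mf T}(\partial f(p))^\perp$ not orthogonal to some element of $\supp_{\mf T}(\partial f(q))$; since $f^{\mf s}$ is only assumed injective, such $T'$ need not lie in the image of $f^{\mf s}$, and for those there is no domain in $\mf S$ to pull back to. You must still verify the ratio bound for these $T'$ (with $a^{\partial f(p)}_{T'}=0$), and nothing in your sketch does so. In the paper's only application, Corollary~\ref{cor:extending_hieromorphisms}, the map on index sets is a bijection and the issue evaporates, but for the theorem as stated it is real work. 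A smaller point: Lemma~\ref{lem:changing_quotients_a_bounded_amount} is a statement about limits of sequences, while you invoke it inside a one-shot neighborhood-containment argument; either switch to a sequential proof of continuity (legitimate, since the boundaries are first countable) or extract the underlying elementary inequality directly rather than citing the lemma.
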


\noindent We will use Theorem \ref{thm:extending_hieromorphisms} only in the following special case.

\begin{cor}\label{cor:extending_hieromorphisms}
	Let $(\mc{X},\mf{S})$ be an HHS. If $\mf{T}$  is another HHS structure for $\mc{X}$ and there is a hieromorphism $f \colon (\mc{X},\mf{S}) \to (\mc{X},\mf{T})$ so that $f \colon \mf{S} \to \mf{T}$ is a bijection and  $f_V$ is a $(1,\lambda)$--quasi-isometry for each $V \in \mf{S}$, then $\partial f \colon \partial (\mc{X},\mf{S}) \to \partial(\mc{X},\mf{T}) $ is a homeomorphism.
\end{cor}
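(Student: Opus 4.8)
The plan is to deduce Corollary~\ref{cor:extending_hieromorphisms} from Theorem~\ref{thm:extending_hieromorphisms} by applying it to both $f$ and a hieromorphism in the reverse direction, and then checking that the two induced boundary maps are mutually inverse. First I would apply Theorem~\ref{thm:extending_hieromorphisms} directly to $f\colon(\mc X,\mf S)\to(\mc X,\mf T)$: since each $f_V$ is a $(1,\lambda)$--quasi-isometry, in particular a $(1,\lambda)$--quasi-isometric embedding, the simplicial map $\partial f\colon\partial_\Delta(\mc X,\mf S)\to\partial_\Delta(\mc X,\mf T)$ is continuous as a map $\partial(\mc X,\mf S)\to\partial(\mc X,\mf T)$. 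The remaining work is to produce a continuous inverse.

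The key point is that a hieromorphism whose index map $f^{\mf s}$ is a bijection and whose maps $f_V$ are quasi-isometries (not merely quasi-isometric embeddings) admits a coarse inverse that is again a hieromorphism of the same quality. Concretely, I would set $g^{\mf s}=(f^{\mf s})^{-1}\colon\mf T\to\mf S$, which preserves nesting, orthogonality, and transversality because $f^{\mf s}$ does and is a bijection; for each $U\in\mf T$ choose a coarse inverse $g_U\colon\mc CU\to\mc C(g^{\mf s}(U))$ to $f_{g^{\mf s}(U)}$, which is again a $(1,\lambda')$--quasi-isometry with $\lambda'$ depending only on $\lambda$; and choose $g\colon\mc X\to\mc X$ a coarse inverse to $f$. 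Then I would verify the three compatibility conditions in the definition of a hieromorphism for $g$ — that $g_U(\pi_U(\cdot))\asymp\pi_{g^{\mf s}(U)}(g(\cdot))$, that $g_U$ coarsely intertwines the $\rho$'s, and that it coarsely intertwines the $\rho$-maps — each of which follows from the corresponding statement for $f$ by pre- and post-composing with the coarse inverses and absorbing the resulting additive errors into the constant. Applying Theorem~\ref{thm:extending_hieromorphisms} to $g$ then gives a continuous map $\partial g\colon\partial(\mc X,\mf T)\to\partial(\mc X,\mf S)$.

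Finally I would check that $\partial g\circ\partial f$ and $\partial f\circ\partial g$ are the identity. On the underlying simplicial complexes this is essentially formal: $\partial g\circ\partial f$ is the simplicial map induced by $g^{\mf s}\circ f^{\mf s}=\mathrm{id}_{\mf S}$ together with the maps $\partial g_{f^{\mf s}(V)}\circ\partial f_V\colon\partial\mc CV\to\partial\mc CV$, and since $g_{f^{\mf s}(V)}\circ f_V$ is coarsely the identity on $\mc CV$, its boundary extension is literally the identity on $\partial\mc CV$ (boundary maps of quasi-isometric embeddings only depend on the map up to bounded error). Hence $\partial g\circ\partial f=\mathrm{id}$ on $\partial_\Delta(\mc X,\mf S)$, and symmetrically $\partial f\circ\partial g=\mathrm{id}$ on $\partial_\Delta(\mc X,\mf T)$; since both composites are continuous and equal the identity on the point-set, $\partial f$ is a continuous bijection with continuous inverse, i.e.\ a homeomorphism.

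The main obstacle I anticipate is the bookkeeping in the second paragraph: verifying carefully that the coarse inverse data $(g,g^{\mf s},\{g_U\})$ genuinely satisfies the hieromorphism axioms with uniform constants — in particular handling the last axiom, which only constrains $z\in\mc CV-\mc N_E(\rho_V^W)$, so one must be a little careful about domains of definition when composing with coarse inverses. Everything else is soft: the existence of the coarse inverses uses only that $f^{\mf s}$ is a bijection and the $f_V$ are quasi-isometries, and the identification of the composites with the identity uses only the standard fact that a quasi-isometric embedding's boundary extension is insensitive to bounded perturbation.
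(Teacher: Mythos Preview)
The paper gives no proof for this corollary, treating it as immediate from Theorem~\ref{thm:extending_hieromorphisms}. Your approach---building an inverse hieromorphism and applying Theorem~\ref{thm:extending_hieromorphisms} in both directions, then checking the boundary maps are mutual inverses---is the natural way to fill in the details, and the outline is correct.

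There is one point that needs more care than you give it. You write ``choose $g\colon\mc X\to\mc X$ a coarse inverse to $f$,'' but the stated hypotheses say nothing directly about $f\colon\mc X\to\mc X$: they only assert that $f^{\mf s}$ is a bijection and each $f_V$ is a $(1,\lambda)$--quasi-isometry. To invoke Theorem~\ref{thm:extending_hieromorphisms} for $g$ you genuinely need a map $g\colon\mc X\to\mc X$ making $(g,g^{\mf s},\{g_U\})$ a hieromorphism, and the first compatibility axiom forces $f\circ g$ to be close to the identity (via uniqueness), so you do need $f$ to be coarsely surjective. This can be justified---the distance formula gives that $f$ is a quasi-isometric embedding, and coarse surjectivity follows by pulling back the tuple $(\pi^{\mf T}_U(y))_U$ through the $f_V^{-1}$ and applying the realization theorem in $(\mc X,\mf S)$---but it is not automatic and you should say so. Alternatively, observe that in every use of the corollary in this paper $f\colon\mc X\to\mc X$ is either the identity or left multiplication by a group element, hence an isometry; one could harmlessly add ``$f\colon\mc X\to\mc X$ is a quasi-isometry'' to the hypotheses.

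Your identification of the domain-of-definition issue in the third hieromorphism axiom is apt; it is handled by enlarging constants (if $z'$ is at least $E'$ from $\rho^{U'}_U$ with $E'$ large in terms of $E$ and $\lambda$, then $g_U(z')$ is at least $E$ from $\rho^W_V$), so this is bookkeeping rather than an obstruction.
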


The definition of a $G$--HHS ensures that the action of an element $g \in
G$ on $G$ by left multiplication gives a hieromorphism $g \colon
(G,\mf{S}) \to (G,\mf{S})$ where for each $V \in \mf{S}$, the map $g_V
\colon \mc{C}V \to \mc{C}gV$ is an isometry.  Thus, we can use
Corollary \ref{cor:extending_hieromorphisms} to extend the action of
$G$ on itself to an action of $G$ on $\partial (G,\mf{S})$ that is
both a homeomorphism and a simplicial automorphism.

\subsection{Hyperbolic HHSs}

A hyperbolic space can itself have many different HHS structures \cite{Spriano_hyperbolic_I}, but being hyperbolic puts a number of restriction on all of these HHS structures. The following result summarize the facts about hyperbolic HHSs that we will need.

\begin{thm}\label{thm:hyperbolic_HHS}
	Let $(\mc{X},\mf{S})$ be an HHS with constant $E$, and suppose $\mc{X}$ is also an $E$--hyperbolic space. 
	\begin{enumerate}
		\item For all $W \in \mf{S}^\infty$, we have $\mf{S}_W^\perp \cap \mf{S}^\infty = \emptyset$  \cite[Lemma 4.1]{HHS_Boundary}. In particular, the simplicial HHS boundary $\partial_\Delta(\mc{X},\mf{S})$ is  a collection of $0$--simplices.
		\item For each $\ell \geq 1$ and $c \geq 0$, there exists $\lambda \geq 1$ (depending on $\ell$, $c$,  and $E$) so that every $(\ell,c)$--quasigeodesic in $\mc{X}$ is a $\lambda$--hierarchy path in $(\mc{X},\mf{S})$ \cite[Proposition 3.5]{Spriano_hyperbolic_I}.
		\item The identity map $\mc{X} \to \mc{X}$ continuously extends to a homeomorphism from the Gromov boundary $\partial \mc{X}$ to the HHS boundary $\partial(\mc{X},\mf{S})$ \cite[Theorem 4.3]{HHS_Boundary}. 
	\end{enumerate}
\end{thm}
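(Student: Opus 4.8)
The plan is to assemble the statement from three results already in the literature, supplying only the short bridging arguments. \emph{Part (1).} The equality $\mf{S}_W^\perp \cap \mf{S}^\infty = \emptyset$ is \cite[Lemma~4.1]{HHS_Boundary}, which I would cite; if a proof is wanted, recall the one-line reason: were there $V \in \mf{S}^\infty$ with $V \perp W$ and $W \in \mf{S}^\infty$, then Proposition~\ref{prop:properties_of_F} would make both $\F_W$ (since $W \in \mf{S}_W \cap \mf{S}^\infty$) and $\E_W$ (since $V \in \mf{S}_W^\perp \cap \mf{S}^\infty$) of infinite diameter, so the standard product region $\P_W$ would be a quasi-isometrically embedded product of two unbounded spaces inside the $E$--hyperbolic space $\mc{X}$; such a product contains coarse rectangles of unbounded size, violating quasigeodesic stability (Lemma~\ref{lem:Morse}). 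For the ``in particular'' clause: an $n$--simplex of $\partial_\Delta(\mc{X},\mf{S})$ with $n\ge 1$ is indexed by a pairwise-orthogonal set $\{U_0,\dots,U_n\}\subseteq\mf{S}^\infty$ with $U_0\ne U_1$, so $U_1\in\mf{S}_{U_0}^\perp\cap\mf{S}^\infty=\emptyset$, which is absurd; hence every simplex is a single vertex.

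\emph{Part (2).} By \cite[Proposition~3.5]{Spriano_hyperbolic_I}, geodesics of $\mc{X}$ are uniform hierarchy paths, and I would promote this to $(\ell,c)$--quasigeodesics using the Morse lemma. An $(\ell,c)$--quasigeodesic $\gamma$ from $x$ to $y$ is a $(\lambda_0,\lambda_0)$--quasigeodesic, hence by Lemma~\ref{lem:Morse} it fellow-travels a geodesic $[x,y]$ up to a coarsely monotone reparametrization with uniform tracking constant; since each $\pi_W$ is $(E,E)$--coarsely Lipschitz, $\pi_W\circ\gamma$ is, up to a coarsely monotone reparametrization and a bounded perturbation, the unparametrized quasigeodesic $\pi_W\circ[x,y]$, and is therefore itself an unparametrized uniform quasigeodesic. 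Thus $\gamma$ is a $\lambda$--hierarchy path with $\lambda$ depending only on $\ell,c,E$. The only point requiring care is converting ``uniformly close to an unparametrized quasigeodesic'' into an actual reparametrization witnessing the definition; this is routine but is where the bookkeeping lies, and I expect it to be the most fiddly piece of the argument.

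\emph{Part (3).} The homeomorphism $\partial\mc{X}\cong\partial(\mc{X},\mf{S})$ is \cite[Theorem~4.3]{HHS_Boundary}, which I would cite. For a self-contained route: represent $\xi\in\partial\mc{X}$ by a $(1,20E)$--quasigeodesic ray $\gamma$ from $x_0$; by part (2) it is a hierarchy path, so each $\pi_W\circ\gamma$ is an unparametrized quasigeodesic. Using the uniqueness axiom (at least one domain has unbounded projection along $\gamma$) together with bounded geodesic image and consistency (if two domains with unbounded projection were nested or transverse one reaches a contradiction), the set of domains along which $\gamma$ escapes is pairwise orthogonal, hence by part (1) a single domain $W$; set $\Phi(\xi):=\lim\pi_W(\gamma)\in\partial\mc{C}W$, a vertex of $\partial_\Delta(\mc{X},\mf{S})$. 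Independence of the representative follows from Lemma~\ref{lem:Morse} and coarse Lipschitzness of $\pi_W$. Since $\mf{S}$ is hyperbolic, every boundary point has singleton support, so only the interior part (Definition~\ref{defn:internal_top}) of a basic set is relevant, and continuity of the identity's extension is checked by feeding convergent sequences through these sets, much as in Lemma~\ref{lem:bounded_difference_convergence}. Bijectivity is immediate from this description, and $\Phi$ is a homeomorphism since $\mc{X}\cup\partial\mc{X}$ and $\mc{X}\cup\partial(\mc{X},\mf{S})$ are compact Hausdorff (for proper $\mc{X}$, which is the setting of our applications; in general one verifies continuity of $\Phi^{-1}$ directly). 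I expect assembling part (3) to be routine given parts (1) and (2), the only mild subtlety being the continuity check in the HHS-boundary topology.
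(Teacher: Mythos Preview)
The paper provides no proof of this theorem: each of the three parts is stated with a citation to the literature (\cite[Lemma~4.1]{HHS_Boundary}, \cite[Proposition~3.5]{Spriano_hyperbolic_I}, \cite[Theorem~4.3]{HHS_Boundary}) and left at that. Your proposal cites the same three results and adds optional proof sketches, so at the level of what the paper actually does, you are in complete agreement.

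Your sketches are reasonable but go well beyond what the paper asks for. A few minor comments if you want to keep them: in part~(2), the cited Spriano result already delivers the full quasigeodesic statement, so the Morse-lemma promotion you outline is not needed (though it is a correct alternative). In part~(3), your continuity discussion is slightly off: when checking that a sequence of \emph{boundary} points converges, the interior part of the basic set is irrelevant and one must use the remote/non-remote parts, so the claim that ``only the interior part\ldots\ is relevant'' is not right as stated; also, bijectivity (specifically surjectivity onto every $\partial\mc{C}W$ for $W\in\mf{S}^\infty$) requires a short argument using partial realization. None of this affects correctness of the overall approach, since you could simply cite \cite[Theorem~4.3]{HHS_Boundary} as the paper does.
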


The next lemma describes how basis neighborhoods in $\mc{C}W \cup \partial \mc{C}W$ are related to basis neighborhoods in $\mc{X} \cup \partial \mc{X}$ when $(\mc{X},\mf{S})$ is a hyperbolic HHS. In the statement, we use $M(r,p)$ to denote basis neighborhoods in $\mc{X} \cup \partial \mc{X}$ and $M_W(r,p)$ to denote basis neighborhoods in $\mc{C}W \cup \partial \mc{C}W$.

\begin{lem}\label{lem:basisnbhd}
	Let $(\mc{X},\mf{S})$ be an HHS with constant $E$, and suppose $\mc{X}$ is also a $E$--hyperbolic space. There exists $r_0 \geq0$ so that for each $r \geq r_0$ and each $W \in \mf{S}^\infty$, there exists  $r' \geq 0$  so that the following hold.
	\begin{enumerate}
		\item\label{HypHHS:increasing} $r'$ is an increasing linear function of $r$ with constant of linearity determined by $E$;
		\item\label{HypHHS:moving_up} for each $p \in \partial \mc{C}W \subseteq \partial \mc{X}$ and each $x\in \mc{X}$, if $\pi_W(x) \subseteq M_W(r,p)$, then $x \in M(r',p)$; and
		\item\label{HypHHS:moving_sideways}  for each $p \in \partial \mc{C}W \subseteq \partial \mc{X}$ and each $q \in \partial \mc{C}W$, if $q \in M_W(r,p)$, then $q \in M(r';p)$.
	\end{enumerate} 
\end{lem}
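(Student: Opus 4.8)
\textbf{Proof plan for Lemma~\ref{lem:basisnbhd}.}
The plan is to unwind both topologies through the Gromov boundary, exploiting part~(3) of Theorem~\ref{thm:hyperbolic_HHS}, which tells us the HHS boundary $\partial(\mc{X},\mf{S})$ coincides with the Gromov boundary $\partial\mc{X}$, and part~(2), which says quasigeodesics in $\mc{X}$ are hierarchy paths. Since $\mc{X}$ is $E$--hyperbolic, I would work with Gromov products: a basis neighborhood $M(r;p)$ in $\mc{X}\cup\partial\mc{X}$ is $\{x : \gp{p}{x} > r\}$ (Definition~\ref{def:nbhdbasis}), and similarly $M_W(r;p)$ inside the $E$--hyperbolic space $\mc{C}W$ is $\{u : (p\mid u)_{\pi_W(x_0)} > r\}$. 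The basepoint for $\partial\mc{C}W$ was chosen to lie in $\pi_W(x_0)$, so the two Gromov products are taken with respect to compatible basepoints. The key comparison I need is the following: for $x\in\mc{X}$ (or a boundary point), the Gromov product $\gp{p}{x}$ in $\mc{X}$ and the Gromov product $(p\mid\pi_W(x))_{\pi_W(x_0)}$ in $\mc{C}W$ differ by a bounded amount, and more precisely each is a linear function of the other up to additive constants controlled by $E$. The main obstacle is establishing this coarse linear relationship cleanly in both directions: projecting \emph{down} to $\mc{C}W$ can only decrease Gromov products (by the coarse Lipschitz property of $\pi_W$ from Axiom~\eqref{axiom:projections}, with the linear distortion constant $E$ entering here), while projecting \emph{up}—going from a bound in $\mc{C}W$ to a bound in $\mc{X}$—requires the bounded geodesic image axiom and the fact that a hierarchy path from $x_0$ toward $x$ maps to an unparametrized quasigeodesic in $\mc{C}W$.

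For part~\eqref{HypHHS:moving_up}: suppose $\pi_W(x)\subseteq M_W(r,p)$, i.e.\ the Gromov product of $p$ and $\pi_W(x)$ in $\mc{C}W$ exceeds $r$. By Lemma~\ref{lem:Gromov_product_is_distance_to_geodesic} this means a $\mc{C}W$--geodesic from $\pi_W(x_0)$ to $\pi_W(x)$, extended to represent $p\in\partial\mc{C}W$, stays far from $\pi_W(x_0)$ for a long initial segment; equivalently a $(1,20E)$--quasigeodesic ray in $\mc{C}W$ based at $\pi_W(x_0)$ representing $p$ first comes within bounded distance of $\pi_W(x)$ only after travelling roughly $r$. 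Now take a hierarchy path $\gamma$ in $\mc{X}$ from $x_0$ to a point representing $p$ in $\partial\mc{X}=\partial(\mc{X},\mf{S})$; such a ray exists because $p$ is a boundary point and, by Theorem~\ref{thm:hyperbolic_HHS}(2), we may take $\gamma$ to be a $(1,20E)$--quasigeodesic in $\mc{X}$ whose projection $\pi_W\circ\gamma$ is an unparametrized $\lambda$--quasigeodesic in $\mc{C}W$ converging to $p$. Since $\mc{C}W$ is $E$--hyperbolic, $\pi_W\circ\gamma$ is uniformly close to the quasigeodesic ray representing $p$ used above (Morse Lemma~\ref{lem:Morse}), so $\gamma$ must travel distance at least (roughly) $r/\lambda$ in $\mc{X}$ before $\pi_W(\gamma(t))$ gets close to $\pi_W(x)$; combined with the coarse Lipschitz property of $\pi_W$, this forces $\gamma$ to travel at least a definite linear function of $r$ before coming near $x$, whence $\gp{p}{x}$ in $\mc{X}$ is bounded below by a linear function of $r$. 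Set $r'$ to be (slightly less than) this linear bound, proving~\eqref{HypHHS:moving_up}, with the linearity constant determined by $E$ and $\lambda$, hence by $E$ alone, giving~\eqref{HypHHS:increasing}.

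For part~\eqref{HypHHS:moving_sideways}: this is the boundary-to-boundary version and follows by the same argument applied to a sequence—if $q\in\partial\mc{C}W$ with $(p\mid q)_{\pi_W(x_0)}>r$, pick a sequence $u_n\to q$ in $\mc{C}W$ with $\pi_W(x_0)$--Gromov products eventually exceeding $r$, lift each $u_n$ to a point $x_n\in\mc{X}$ with $\pi_W(x_n)$ close to $u_n$ (using surjectivity of $\pi_W$ up to $E$, from Axiom~\eqref{axiom:projections}), apply part~\eqref{HypHHS:moving_up} to conclude $x_n\in M(r';p)$ for large $n$, and pass to the limit, noting that $x_n$ converges in $\mc{X}\cup\partial\mc{X}$ to the boundary point which is the image of $q$ under the identification $\partial\mc{C}W\hookrightarrow\partial\mc{X}$—this last convergence is where one invokes that $\partial f_W$ embeds $\partial\mc{C}W$ in $\partial\mc{X}$ compatibly and that $M(r';p)$ is open. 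Since $r'$ was already chosen to work for~\eqref{HypHHS:moving_up} uniformly over all $x$, the same $r'$ works here, so we take a single $r'$ (the larger, or a common linear bound) for all three conclusions. I expect the genuinely delicate point to be bookkeeping the two competing distortions—coarse Lipschitz contraction under $\pi_W$ versus the quasigeodesic constants from Theorem~\ref{thm:hyperbolic_HHS}(2)—so that the resulting $r'$ is honestly an increasing linear function of $r$ with slope depending only on $E$, rather than merely monotone.
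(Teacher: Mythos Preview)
Your overall strategy matches the paper's: invoke Theorem~\ref{thm:hyperbolic_HHS}(2) so that quasigeodesics in $\mc{X}$ are hierarchy paths, and use the $(E,E)$--coarse Lipschitz property of $\pi_W$ to transfer a lower bound on the Gromov product in $\mc{C}W$ to one in $\mc{X}$.

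However, the argument you sketch for~\eqref{HypHHS:moving_up} has a gap. You claim that $\gamma$ must travel a linear-in-$r$ distance ``before $\pi_W(\gamma(t))$ gets close to $\pi_W(x)$,'' and then ``before coming near $x$,'' and deduce a lower bound on $\gp{p}{x}$. But neither closeness statement need ever hold: $\pi_W\circ\gamma$ is a ray toward $p$ and may stay far from $\pi_W(x)$, and $\gamma$ certainly need not approach $x$. More to the point, the quantity $\gp{p}{x}$ is (up to $\delta$) the distance from $x_0$ to a geodesic joining $x$ to a point far toward $p$, not a fellow-travelling time along $\gamma$, so the step from ``$\gamma$ has travelled far'' to ``$\gp{p}{x}$ is large'' is missing. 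The paper closes this gap by introducing an auxiliary segment: pick $y$ far enough along $\gamma$ that Lemma~\ref{lem:Gromov_product_along_qg} lets you replace $p$ by $y$ in both spaces, take a $(1,20E)$--quasigeodesic $\beta$ from $y$ to $x$ in $\mc{X}$ (again a hierarchy path), and use the single inequality $d_W(\pi_W(x_0),\pi_W\circ\beta)\leq E\,d_{\mc{X}}(x_0,\beta)+E$ together with Lemma~\ref{lem:Gromov_product_is_distance_to_geodesic} and the Morse Lemma to chain
\[
r \;\leq\; \gph{\pi_W(x)}{p} \;\leq\; d_W(\pi_W(x_0),\pi_W\circ\beta)+\text{const} \;\leq\; E\,d_{\mc{X}}(x_0,\beta)+\text{const} \;\leq\; E\,\gp{x}{p}+\text{const}.
\]
Your ingredients are all present; the geometry just has to be routed through $\beta$ rather than through $\gamma$ alone.

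For~\eqref{HypHHS:moving_sideways} your lifting argument is valid, but note that verifying the lifts $x_n$ converge to $q$ in $\partial\mc{X}$ is itself an application of~\eqref{HypHHS:moving_up} with $p$ replaced by $q$. The paper streamlines this by instead taking a sequence $q_n\in\mc{X}$ converging to $q$ in $\partial(\mc{X},\mf{S})=\partial\mc{X}$ and reading off from condition~(I1) of the HHS topology that $\pi_W(q_n)\to q$ in $\mc{C}W$, after which~\eqref{HypHHS:moving_up} applies directly.
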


\begin{proof}
Fix $W \in \mf{S}^\infty$ and a basepoint $x_0 \in \mc{X}$.  For the 
reader's 
convenience we will use $\gp{\cdot}{\cdot}$ to denote the Gromov
product in $\mc{X}$ and $\gph{\cdot}{\cdot}$ to denote the Gromov
product in $\mc{C}W$.  We first prove the second bullet point; 
the proof will determine the value of $r_0$ and calculate $r'$  in terms of $r$, which will establish the first bullet point.

 Fix $x \in \mc{X}$ and $p \in \partial \mc{C}W$. Let $\alpha$
be a $(1,20E)$--quasigeodesic ray from $x_0$ to $p$ in $\mc{X}$.  Since $\mc{X}$ is hyperbolic, Theorem~\ref{thm:hyperbolic_HHS} (2) provides 
$\lambda \geq 1$ so that each
$(1,20E)$--quasigeodesic (in particular $\alpha$) in $\mc{X}$ is a $\lambda$--hierarchy path
in $(\mc{X},\mf{S})$. 
Since the projections of hierarchy paths are unparametrized 
quasigeodesics, the projection $\pi_W \circ \alpha$ is an unparametrized
$(\lambda,\lambda)$--quasigeodesic in $\mc{C}W$. Thus Lemma
\ref{lem:Gromov_product_along_qg} provides a constant $B\ge
0$ that depends only on $E$ and a point $y \in \alpha$ so that \[|
\gp{x}{p} - \gp{x}{y}| \leq B \text{ and } |\gph{x}{p} - \gph{x'}{y'}|
\leq B,\] where $x'$ and $y'$ are any point in $\pi_W(x)$ and $\pi_W(y)$ respectively.  Let $\beta$ be a 
$(1,20E)$--quasigeodesic from $y$ to $x$ in $\mc{X}$, and consider the unparametrized
$(\lambda,\lambda)$--quasigeodesic
$\beta_W = \pi_W \circ \beta$  in $\mc{C}W$.  By the Morse Lemma
(Lemma \ref{lem:Morse}), $\beta$ (resp.  $\beta_W$) and any
$\mc{X}$--geodesic (resp.  $\mc{C}W$--geodesic) from $x$ to $y$ (resp. $x'$ to
$y'$) are each contained in the $\sigma$--neighborhood of each other
for some $\sigma$ determined by $E$.  Combining this with Lemma
\ref{lem:Gromov_product_is_distance_to_geodesic} yields \[|d_{\mc{X}}
(x_0, \beta) - \gp{x}{y}| \leq E+\sigma \text{ and } |d_{W}
(x_0,\beta_W) - \gph{x'}{y'}| \leq E+\sigma.  \] Since
$d_W(x_0,\beta_W) \leq E d_{\mc X}(x_0,\beta)+E$, we now have
\begin{align*}
				r \leq \gph{x'}{p} \leq & \gph{x'}{y'} +B \\
				\leq & d_{W}(x_0,\beta_W) +B+\sigma+E \\
			\leq & Ed_\mc{X}(x_0,\beta)+B+\sigma +2E\\
				\leq & E\gp{x}{y} + E(E+\sigma) +B+\sigma +2E \\
				\leq & E\gp{x}{p}+ EB + E(E+\sigma) +B+\sigma +2E. 
			\end{align*}
		Hence if $r'= \frac{1}{E}(r-B-\sigma)-B -\sigma -E - 2$ and  $r > (2B+2\sigma + E+2)E$, then $x \in M(r';p)$. 
		
		Now we establish the third bullet. Let $q \in \partial \mc{C}W \cap M_W(r;p)$. By Theorem \ref{thm:hyperbolic_HHS}, the inclusion map continuously extends to a homeomorphism between $\partial \mc{X}$ and $\partial(\mc{X},\mf{S})$. Hence, we can consider $q \in \partial \mc{X}$ and find a sequence $(q_n) \subseteq \mc{X}$ that converges to $q$ in both $\partial \mc{X}$ and $\partial(\mc{X},\mf{S})$. The definition of the topology on $\partial (\mc{X},\mf{S})$ ensures that $q_n \to q$ in $\mc{X}$ implies that for any choice $q'_n  \in \pi_W(q_n)$, $ q'_n \to q$ in $\mc{C}W$. Hence $\pi_W(q_n) \subseteq M_W(r;p)$ for all but finitely many $n$. The second bullet then says $q_n \in M(r';p)$ for all sufficiently large $n$. Hence $q \in M(r';p)$ as well.
\end{proof}

\section{Maximization and the boundary}\label{sec:maximization}
The goal of this section is to prove that the boundary of a proper
hierarchically hyperbolic space with the bounded domain dichotomy is
invariant under changing the structure by a procedure introduced in 
\cite{ABD} which we call
\emph{maximization}.   To simplify our arguments 
we break maximization into two steps. Given a hierarchically hyperbolic space $(\mc
X,\s)$ with the bounded domain dichotomy, we first replace $\s$ with the set of \emph{essential domains} (see
Section \ref{sec:maximizationsteps} for the definition), denoted
$\s_{ess}$.  Second, we apply the work of \cite{ABD} to obtain a new
hierarchical structure on $\mc X$, denoted $(\mc X,\frak T)$, that
satisfies several nice properties.  If we want to emphasize the
initial structure $\s$, we call this procedure \emph{maximizing the
structure $\frak S$}.  The resulting structure $\frak T$ is called the
\emph{maximized structure on $\mc X$ obtained from $\s$}, or simply
\emph{a maximized structure on $\mc X$}, if the structure $\s$ is
implicit or irrelevant.

Our main result is that the HHS boundary of a $G$--HHS  is invariant under maximization.

\begin{thm}\label{thm:BdryABDInvariant}
	If $(G,\mf{S})$ is a $G$--HHS and $\mf{T}$ is the maximized
	structure on $\mc X$ obtained from $\mf S$, then the identity map
	on $G$ extends to a $G$--equivariant map $ \partial(G,\mf{S}) \to
	\partial(G,\mf{T})$ that is both a simplicial isomorphism and a
	homeomorphism.
\end{thm}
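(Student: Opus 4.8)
I would use the two--step description of maximization and factor the required map as a composite $\partial(G,\mf S)\to\partial(G,\mf S_{ess})\to\partial(G,\mf T)$, building for each arrow a $G$--equivariant map that is at once a simplicial isomorphism and a homeomorphism. The first arrow should be essentially formal. Passing to essential domains retains every infinite--diameter domain of $\mf S$, its associated hyperbolic space, and all $\nest$--, $\perp$--, $\trans$--relations and relative projections among infinite domains, discarding only irrelevant bounded domains; since the vertex set of $\partial_\Delta$, every support $\supp(p)$, and every index set over which coefficients $a^q_U$ are summed lie in $\mf S^\infty$, the complexes $\partial_\Delta(G,\mf S)$ and $\partial_\Delta(G,\mf S_{ess})$ are literally equal. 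Running through Definitions~\ref{defn:remote_top}--\ref{defn:basisset}, the only place a domain outside $\mf S^\infty$ can enter a basic set is in the quantities indexed by $\supp(p)^\perp$ and by $\mathcal S_q$ in (R2)--(R3) and (I3), where a bounded domain has $d_\bullet(x_0,\cdot)$ uniformly bounded by the bounded domain dichotomy and so drops out of every ratio and coefficient sum. Hence the identity is a homeomorphism $\partial(G,\mf S)\to\partial(G,\mf S_{ess})$, and all the work is in the second arrow.

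\textbf{The simplicial isomorphism for the ABD step.} For the second arrow I would use the shape of $\mf T$ from \cite{ABD}, as recalled in Section~\ref{sec:maximization}. Let $\mf S^{prod}$ be the infinite domains $W$ with $\mf S_W^\perp\cap\mf S^\infty\neq\emptyset$ (equivalently $\diam(\E_W)=\infty$), and $\mf S^{irr}$ the remaining infinite domains together with the $\nest$--maximal domain of $\mf S_{ess}$. Then $\mf T$ keeps each $W\in\mf S^{prod}$ verbatim --- same hyperbolic space, relations, and relative projections --- and replaces all of $\mf S^{irr}$ by a single $\nest$--maximal domain $U$ whose hyperbolic space $\mc CU$ is the cone--off $\widehat{\mc X}$ of $\mc X$ along the genuine product regions $\{\P_W:W\in\mf S^{prod}\}$, with $\pi_U$ the canonical coarse map $\mc X\to\widehat{\mc X}$. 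The inputs I need from \cite{ABD} are that $\widehat{\mc X}$ is hyperbolic and that it carries a hierarchically hyperbolic structure whose infinite domains are exactly $\mf S^{irr}$ with their original hyperbolic spaces; by Theorem~\ref{thm:hyperbolic_HHS} this makes $\partial\mc C_{\mf T}U=\partial\widehat{\mc X}$ a $0$--complex whose vertex set is $\bigsqcup_{W\in\mf S^{irr}}\partial\mc CW$. Therefore the vertex set of $\partial_\Delta(G,\mf T)$ equals $(\bigsqcup_{W\in\mf S^{irr}}\partial\mc CW)\sqcup(\bigsqcup_{W\in\mf S^{prod}}\partial\mc CW)=\bigsqcup_{W\in\mf S^\infty}\partial\mc CW$, which is the vertex set of $\partial_\Delta(G,\mf S_{ess})$. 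This gives a $G$--equivariant bijection $\phi$ on vertices; since every domain of $\mf S^{irr}$ and $U$ is orthogonal to no infinite domain while the domains of $\mf S^{prod}$ keep all their orthogonalities, $\phi$ extends linearly to a $G$--equivariant simplicial isomorphism $\partial_\Delta(G,\mf S_{ess})\to\partial_\Delta(G,\mf T)$.

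\textbf{Continuity.} The substantive point is that $\phi$ and $\phi^{-1}$ are continuous, i.e.\ that $\phi$ carries basic sets $\mc B_{r,\varepsilon}(p)$ for $\mf S_{ess}$ and for $\mf T$ into one another after shrinking $r$ and $\varepsilon$; I would organize this by the type of $p$. When $\supp(p)\subseteq\mf S^{prod}$, no domain of $\supp(p)^\perp$ or $\mathcal S_q$ is irreducible (irreducible domains are orthogonal to no infinite domain) and $U\notin\supp(p)^\perp$, so the ratio and coefficient conditions (R2)--(R3), (N2)--(N3), (I2)--(I3) for $\phi(p)$ are literally those for $p$ up to bounded domains; what remains is to check that the boundary projections $\partial\pi_W(q)$ for $W\in\mf S^{prod}$ (Definition~\ref{def:boundaryprojections}) agree up to bounded error in the two structures, which follows from the Morse lemma and Lemma~\ref{lem:qgeonearY} applied to the quasiconvex embedding $\mc CV\hookrightarrow\widehat{\mc X}$ of an absorbed $V\in\supp(q)$, comparing the ``quasigeodesic in $\mc CV$'' recipe with the ``quasigeodesic in $\widehat{\mc X}$'' recipe. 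When $\supp(p)$ meets $\mf S^{irr}$ it is a single irreducible domain $W$, and $\phi(p)$ is supported on the top $U$ of $\mf T$; here (I2) is vacuous and (I3) sees only bounded domains, so the only condition is (I1), and I would use Lemma~\ref{lem:basisnbhd} (and its converse), applied to the hyperbolic HHS $\widehat{\mc X}$ with domain $W$, to translate an $M(r;p_W)$--neighbourhood in $\mc CW\cup\partial\mc CW$ into an $M(r';\phi(p)_U)$--neighbourhood in $\widehat{\mc X}\cup\partial\widehat{\mc X}$ and back, absorbing the bounded errors coming from the constants $\kappa$ of Proposition~\ref{prop:properties_of_F} with Lemmas~\ref{lem:Gromov_product_go_far_enough} and~\ref{lem:enlarging_boundary_neighborhoods}. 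That the identity genuinely extends $\phi$ then follows: a sequence $x_n\to p$ in $\partial(G,\mf S_{ess})$ satisfies, under this dictionary, conditions (I1)--(I3) for $\phi(p)$ in $\partial(G,\mf T)$, so $x_n\to\phi(p)$; Lemma~\ref{lem:bounded_difference_convergence} makes this representative--independent, and a continuous bijection with continuous inverse is a homeomorphism. Composing with the first arrow proves the theorem.

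\textbf{The main obstacle.} I expect the hard part to be the book--keeping in the case where $\supp(p)$ meets $\mf S^{irr}$ and, more generally, the reconciliation of the two recipes for boundary projections and of the two remote/non--remote/interior stratifications across the surgery. The definitions (Definition~\ref{def:boundaryprojections} and Definitions~\ref{def:remote}--\ref{defn:internal_top}) are written in terms of a fixed domain set, and after the surgery the single projection $\partial\pi_U^{\mf T}(q)$ must simultaneously play the role of the whole family $\{\partial\pi_S^{\mf S_{ess}}(q)\}\cup\{\partial\pi_W^{\mf S_{ess}}(q):W\in\mf S^{irr}\}$, with the ``$q_W\sqsupsetneq U$'' clause of Definition~\ref{def:boundaryprojections} now computed inside $\widehat{\mc X}$ rather than inside the various $\mc CV$. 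Moreover the remote/non--remote dichotomy is not preserved: a point remote to $p$ for $\mf S_{ess}$ becomes non--remote to $\phi(p)$ for $\mf T$ precisely when some domain of $\supp(q)$ is orthogonal to an absorbed domain of $\supp(p)$. One therefore has to verify that the packet (R1)--(R3) for $p$ matches, term by term and up to the $\varepsilon$'s, the correct packet among (R1)--(R3) and (N1)--(N3) for $\phi(p)$. The lemmas assembled in Sections~\ref{sec:hyp_background} and~\ref{sec:HHS_background} are arranged exactly to absorb the resulting bounded errors, but threading them through every case is where the effort lies; by comparison, the $\supp(p)\subseteq\mf S^{prod}$ cases and the first step are routine.
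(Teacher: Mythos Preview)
Your approach is essentially the same as the paper's: the two--step factorization through $\mf S_{ess}$, the construction of the simplicial isomorphism via the hyperbolic HHS structure on $\mc C_{\mf T}S$ (Theorem~\ref{thm:hyperbolic_HHS} applied to Proposition~\ref{prop:CTS_is_HHS}), the case split on whether $\supp(p)$ lies in $\mf T-\{S\}$ or meets $(\mf S-\mf T)\cup\{S\}$, and the identification of the boundary--projection comparison as the technical core (the paper formalizes this as Lemmas~\ref{lem:boundaryprojintersect}, \ref{lem:bdryprojopensets}, \ref{lem:projopensetsnon}) all match.

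Two differences worth noting. First, you plan to prove continuity of both $\phi$ and $\phi^{-1}$ directly; the paper instead proves only that $\Phi$ (the extension of the identity by $\phi$) is sequentially continuous in one direction, then invokes that $\mc X\cup\partial(\mc X,\mf S)$ and $\mc X\cup\partial(\mc X,\mf T)$ are compact Hausdorff (properness of $\mc X$ plus \cite[Theorem~3.4]{HHS_Boundary}) so that a continuous bijection is automatically a homeomorphism. This halves the case analysis. Second, the paper works with sequential continuity (using first countability of the boundary, \cite{Hagen_second_countable}) rather than direct basic--set containment; this lets one exploit Lemma~\ref{lem:changing_quotients_a_bounded_amount} cleanly for the ratio conditions (R2) and (I2), since one only needs the limits to agree rather than a uniform $\varepsilon$--to--$\varepsilon'$ correspondence. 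Your characterization of when remote becomes non--remote is also slightly off: when $\supp(p)=\{P\}$ with $P$ irreducible, $\overline q$ becomes non--remote to $\overline p$ exactly when $\supp(q)=\{Q\}$ for some irreducible $Q$ (so that $\supp_{\mf T}(\overline q)=\{S\}=\supp_{\mf T}(\overline p)$), not because of an orthogonality condition.
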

 
 While the case of $G$--HHSs is likely of primary interest, our proof
 will not use a group action in any way and will apply to any
 hierarchically hyperbolic space that is proper and has the bounded
 domain dichotomy; see Theorem \ref{thm:ABD_Invariant_Spaces} for this
 more general statement.

\subsection{The maximization procedure}\label{sec:maximizationsteps}

In this section, we will provide a detailed description of the two steps
of maximization and show that the first step does not change the
boundary of a hierarchically hyperbolic space. The proof that the
boundary is invariant under the second step is more involved, and we 
will prove that in Section~\ref{sec:pfofBdryABDInvariant}, after
first developing some technical preliminaries in
Sections~\ref{sec:hqcinvar}--\ref{sec:bdryproj}.

Fix a hierarchically hyperbolic space $(\mc X,\s)$ with the bounded domain dichotomy. As every quasigeodesic space is quasi-isometric to a geodesic space, we will assume for convenience that $\mc{X}$ is a geodesic metric space.  In the context of $G$--HHSs,  the space $\mc X$ can be taken to be a Cayley graph of the group with respect to a finite generating set. 

\subsubsection*{Step 1: Essential domains} Let $\frak S_{ess}\subseteq \frak S$ be the set of domains $U\in \s$ such that there exists some $V\nest U$ so that $\mathcal C V$ has infinite diameter, that is, $V \in \mf{S}^\infty$. We call elements of $\s_{ess}$ \emph{essential domains}. The first step of maximization is to replace $\mf{S}$ with the set of essential domains $\mf{S}_{ess}$.

\begin{lem}\label{lem:removebounded}
Let $(\mc X,\s)$ be a hierarchically hyperbolic space with the bounded domain dichotomy.  Then $(\mc X,\frak S_{ess})$ is a hierarchically hyperbolic space and the identity $\mc{X} \to \mc{X}$ extends to map  $\partial (\mc X,\s_{ess}) \to \partial (\mc X,\frak S)$ that is both a simplicial automorphism and a homeomorphism.
\end{lem}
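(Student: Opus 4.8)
The plan is to show that passing from $\mf S$ to $\mf S_{ess}$ changes neither the data entering the HHS axioms nor any of the data entering the definition of the boundary and its topology. First I would check that $(\mc X,\mf S_{ess})$ is an HHS. The set $\mf S_{ess}$ is an up-set for $\nest$ (if $U\nest U'$ and $U\in\mf S_{ess}$ then some $V\nest U\nest U'$ has $\mc CV$ infinite, so $U'\in\mf S_{ess}$), so the $\nest$--maximal element of $\mf S$ lies in $\mf S_{ess}$; restricting $\nest,\perp,\trans$, the projections $\pi_U$, the spaces $\mc CU$, and the relative projections $\rho$ to $\mf S_{ess}$ gives a candidate structure. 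Most axioms are inherited immediately because they are universally quantified over domains and we are only deleting domains. The two points needing a word are the \emph{containers} axiom and the \emph{uniqueness} axiom. For uniqueness: if $d_\mc X(x,y)\ge\theta(r)$ there is $W\in\mf S$ with $d_W(x,y)\ge r$; since $\mc CW$ is large (as it contains two points far apart), $W\in\mf S^\infty\subseteq\mf S_{ess}$, so uniqueness holds with the same $\theta$. For containers: given $W,U\in\mf S_{ess}$ with a nonempty $\mf S_W\cap\mf S_U^\perp$, the old container $Q\in\mf S_W$ might not be essential; but the key observation is that by the bounded domain dichotomy, \emph{if} $\mf S_W\cap\mf S_U^\perp$ contains a domain with infinite diameter hyperbolic space, then $Q\in\mf S_{ess}$ and works; and if every domain in $\mf S_W\cap\mf S_U^\perp$ has uniformly bounded $\mc CV$, then none of them survives in $\mf S_{ess}$, so the hypothesis of the containers axiom is vacuous in $\mf S_{ess}$ and there is nothing to prove. (More carefully: the container axiom in $\mf S_{ess}$ only needs to be checked when $\mf S_{ess}\cap\mf S_W\cap\mf S_U^\perp\ne\emptyset$, and in that case a domain in there has infinite diameter image under some nested domain, forcing the relevant $Q$ to be essential.)

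Second, I would observe that $\mf S^\infty$ is literally unchanged: $W\in\mf S^\infty$ means $\diam\mc CW=\infty$, which in particular forces $W\in\mf S_{ess}$ (take $V=W$), so $\mf S^\infty\subseteq\mf S_{ess}\subseteq\mf S$ and $(\mf S_{ess})^\infty=\mf S^\infty$. Since the vertex set of $\partial_\Delta(\mc X,\mf S)$ is $\bigcup_{U\in\mf S^\infty}\partial\mc CU$, which depends only on $\mf S^\infty$ and the spaces $\mc CU$ for $U\in\mf S^\infty$ (all retained verbatim), and since the simplex relation is determined by the orthogonality relation restricted to $\mf S^\infty$ (also retained verbatim), the identity is a simplicial isomorphism $\partial_\Delta(\mc X,\mf S_{ess})\to\partial_\Delta(\mc X,\mf S)$ — indeed it is the literal identity on the underlying set of points. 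The bounded domain dichotomy passes to $(\mc X,\mf S_{ess})$ trivially since $\mf S_{ess}-\mf S^\infty\subseteq\mf S-\mf S^\infty$.

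Third, for the homeomorphism, I would argue that the boundary projections and hence all three families of basic sets $\mc B^{rem}_{r,\varepsilon}$, $\mc B^{non}_{r,\varepsilon}$, $\mc B^{int}_{r,\varepsilon}$ of Definitions \ref{defn:remote_top}--\ref{defn:internal_top} are computed from the same data in both structures. The support of a boundary point, being a pairwise orthogonal subset of $\mf S^\infty$, is the same set in either structure; the numbers $a_W^q$ are the same; the boundary projection $\partial\pi_U(q)$ from Definition \ref{def:boundaryprojections} is defined whenever some $W\in\supp(q)\subseteq\mf S^\infty$ is not orthogonal to $U$ — and for such $U$ one has $U\in\mf S_{ess}$ (since $W\in\mf S^\infty$ is nested into, transverse to, or nests $U$, and in each case essentiality of $U$ follows: if $W\nest U$ then $W\nest U$ witnesses $U\in\mf S_{ess}$; if $W\sqsupsetneq U$ then... actually here one must note $\partial\pi_U(q)$ with $U\propnest W$ is still defined and lands in $\mc CU$, and $U$ may a priori be non-essential, but then it carries no boundary points and plays no role in any (R$i$)/(N$i$)/(I$i$) condition, since those quantify $W,V$ over $\supp(p)\subseteq\mf S^\infty$ and $T$ over $\supp(p)^\perp$ — and the only domains that matter are those appearing in supports or orthogonal complements of supports, all of which are essential). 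Consequently the conditions (R1)--(R3), (N1)--(N3), (I1)--(I3) are verbatim the same conditions in both structures, so $\mc B_{r,\varepsilon}(p)$ is the same subset of $\mc X\cup\partial(\mc X,\cdot)$ in both, and the topologies coincide. Therefore the identity map is a homeomorphism and a simplicial isomorphism, as claimed.

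\textbf{Main obstacle.} The only genuinely delicate point is bookkeeping the domains that appear in Definitions \ref{def:boundaryprojections} and \ref{defn:remote_top}--\ref{defn:internal_top} but are not in $\supp(p)$ or $\supp(p)^\perp$ — namely the intermediate domains $U$ into which one projects, and the set $\mc S_q$ in (R2) — and checking that either such a $U$ is automatically essential or else its contribution to every basic-set condition is identically zero/empty in both structures. This is a finite case-check against the trichotomy ($W=U$, $W\propnest U$ or $W\trans U$, $W\sqsupsetneq U$) in Definition \ref{def:boundaryprojections}, using only that $\supp(q)\subseteq\mf S^\infty$ and the definition of $\mf S_{ess}$; I expect it to be routine but it is where care is needed. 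Everything else — the HHS axioms for $\mf S_{ess}$ and the equality $(\mf S_{ess})^\infty=\mf S^\infty$ — is immediate.
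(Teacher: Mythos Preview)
Your proposal is correct and follows essentially the same route as the paper: both end at the observation that the basic sets $\mc B_{r,\varepsilon}(p)$ depend only on $\mf S^\infty=(\mf S_{ess})^\infty$, so the two topologies literally coincide. The paper is terser---it outsources the HHS verification to \cite[Proposition~2.4]{BHS_HHS_AsDim} and the distance formula, and first routes continuity through the hieromorphism extension result (Theorem~\ref{thm:extending_hieromorphisms}) before invoking ``identical basic sets'' for the inverse---whereas you verify the axioms and the equality of basic sets directly, which is more self-contained.

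One small correction to your uniqueness argument: from $d_W(x,y)\ge r$ you cannot conclude $W\in\mf S^\infty$ without the bounded domain dichotomy. Take instead $\theta'(r)=\theta(\max\{r,D+1\})$, where $D$ bounds the diameters of the non--infinite-diameter domains; then the witnessing $W$ satisfies $\diam(\mc CW)>D$, forcing $W\in\mf S^\infty\subseteq\mf S_{ess}$.
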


\begin{proof}
The set $\s-\s_{ess}$ is the set of domains $U\in \s$ such that
$\mathcal CV$ is uniformly bounded for every $V\nest U$.  Since this
set is clearly closed under nesting, it follows from \cite[Proposition
2.4]{BHS_HHS_AsDim} and the distance formula in hierarchically
hyperbolic spaces \cite[Theorem 4.5]{BHS_HHSII} that
$(\mc{X},\s_{ess})$ is a hierarchically hyperbolic space where all the
relations, hyperbolic spaces, and projections are the same as in
$(\mc{X},\mf{S})$.  This yields a hieromorphism $f\colon
(\mc{X},\s_{ess})\to (\mc{X},\s)$ where $f\colon \mc{X} \to \mc{X}$ is the identity, $f\colon \s_{ess}\to
\s$ is the inclusion, and $f_V$ is an isometry for all $V\in
\s_{ess}$.  Therefore by Theorem \ref{thm:extending_hieromorphisms},
there is an injective simplicial map $\partial f \colon
\partial_\Delta(\mc{X},\s_{ess})\to\partial_\Delta(\mc{X},\s)$ that is also  a continuous map $\partial
f\colon\partial (\mc{X},\s_{ess})\to \partial (\mc{X},\s)$.  Moreover,
since no domain in $\s-\s_{ess}$ contributes to $\partial
(\mc{X},\s)$, this map is a bijection and the basis neighborhoods
(given by Definition \ref{defn:basisset}) with respect to $\mf{S}$ and
$\mf{S}_{ess}$ will be identical.  Hence, the map $\partial f$ is 
a homeomorphism from $\partial (\mc{X},\s_{ess})\to \partial
(\mc{X},\s)$.
\end{proof}

We note that Lemma \ref{lem:removebounded} implies that if a group $G$ has two different $G$--HHS structures
$\s $ and $\s'$ such that $\mf{S}_{ess} =\mf{S'}_{ess}$, then
$\partial (G,\s)$ is homeomorphic to $\partial(G,\s')$.  More
generally, the two boundaries associated to $\s$ and $\s'$ are homeomorphic
if there exists a hieromorphism $(G,\s_{ess})\to(G,\s'_{ess})$ that
satisfies the condition of Corollary
\ref{cor:extending_hieromorphisms}.

\subsubsection*{Step 2: The new hierarchical structure}
We  describe the 
second and more involved step in the process of maximizing an HHS
structure $(\mc{X},\mf{S})$.  We refer the reader to \cite{ABD}
for the proof that this process in fact gives an HHS structure on $\mc{X}$.
We assume that we have already performed Step 1 so that $\mf{S}=\mf{S}_{ess}$.

Given $(\mc{X},\mf{S})$ an HHS with constant $E$ satisfying the bounded domain dichotomy, define $\mf{T}$ to be the subset of $\mf{S}$ containing the $\nest$--maximal element $S \in \mf{S}$ as well as all domains $W \in \mf{S}$ where $\F_W$ and $\E_W$ are both unbounded. Because  $\mf{S} =\mf{S}_{ess}$ and $\mf{S}$ has the bounded domain dichotomy, Proposition \ref{prop:properties_of_F} says $\F_W$ and $\E_W$ will both be unbounded if and only if $\mf{S}_W^\perp \neq \emptyset$. In particular, $W,V \in\mf{S}$ are orthogonal if and only if $W,V \in \mf{T}-\{S\}$ and are orthogonal in $\mf T$. 

The maximal structure on $\mc X$ obtained from $\s$ has index set $\mf T$. Before we
describe the full hierarchy structure associated to the set of domains
$\frak T$, we fix some notation to 
differentiate in which structure 
a domain is being considered.

\begin{notation}\label{not:twostructures} 
	To distinguish which structure we are working in ($\mf{S}$ vs 
	$\mf{T}$), we use the following convention. If nothing is 
	appended to the notation, it occurs in $(\mc X,\s)$; for example, 
	$\pi_W\colon \mc X\to \mc CW$ is the projection map in the 
	structure $(\mc X,\s)$.  For the hyperbolic spaces associated to 
	the structure $(\mc X,\frak T)$, we use the notation $\mc C_\frak 
	T W$ for each $W\in\frak T$.   For most other notation $*$ that 
	occurs in $(\mc X,\frak S)$, we will typically use $\overline{*}$ 
	to denote the corresponding object in $(\mc X,\frak T)$.  
	For example, $\overline\pi_W\colon \mc X\to \mc C_\frak T W$ is a 
	projection map in the structure $(\mc X,\frak T)$.  Similarly, 
	a point in $\partial (\mc X,\s)$ is simply denoted $p$, while
	a point in $\partial (\mc X,\frak T)$ is denoted $\ol p$. 
	Given a point $\ol p\in \partial (\mc X,\frak T)$, we denote its support in $\frak T$ by $\supp_\frak T(\ol p)$.
\end{notation}

The relations between domains in $\frak T$ are inherited from the relations in $\s$, i.e., the relation between $W,V \in \mf{T}$ is the same as the relation in $\mf{S}$.
 If $W\in  \frak T - \{S\}$, then $\mc CW = \mc C_\frak T W$ and the projection maps and relative projection maps are defined as in the original structure for  any $W\in \frak T-\{S\}$.
 
 Thus the only  associated hyperbolic space in the structure $(\mc X,\frak T)$ that is different is $\mc C_\frak TS$.   The hyperbolic space $\mc C_\frak TS$ is defined as follows.
 
 \begin{defn}
 	Let $\mc{C}_\mf{T}S$  be the space obtained from $\mc{X}$ by adding an edge of length 1 between every pair of points $x,y$ for which there is a $W \in \mf{T}-\{S\}$ so that $x,y \in \F_W$.
 \end{defn} 
 
  For the $\nest$--maximal domain $S\in \mf{T}$ and any $W\in \mf T-\{S\}$, we define $\ol\pi_S$
  to be the inclusion map $\mc X\to \mc C_\frak TS$, we define $\ol \rho^S_W$ 
  be the map $\ol\pi_W \circ \ol \pi_S^{-1}$, and we define $\ol\rho^W_S$ to be
  the subset $\ol \pi_S(\F_W)$ in $C_\mf{T}S$.
 
  \begin{rem}
	Technically, $(\mc X,\frak T)$ as described is not a
	hierarchically hyperbolic space, because it may not satisfy the
	containers axiom (Definition
	\ref{defn:HHS}(\ref{axiom:containers})).  In order to fix this
	problem, we actually define $\frak T$ to be the union of the set
	described above along with a collection of \emph{dummy domains},
	whose associated hyperbolic spaces are points.  These dummy
	domains essentially take the place of any containers that we may
	have removed when initially forming $\frak T$ from $\s$.  Since 
	to each dummy domain the associated hyperbolic space is defined 
	to be a point, these domains 
	do not contribute in any way to the HHS boundary, and hence we can
	ignore them in this paper.  We refer the reader to \cite{ABD} for
	a detailed description of how the dummy domains are incorporated
	into the full hierarchy structure on $(\mc X,\frak T)$.
 \end{rem}
  
  The fact that $\mc{C}_\mf{T} S$ is a hyperbolic space is a
  consequence of the \emph{factored space} construction in an HHS
  introduced in \cite[\S2]{BHS_HHS_AsDim}.  In addition to
  hyperbolicity, this construction yields that $\mc{C}_\mf{T}S$
  inherits an HHS structure as described in the next result.

  \begin{prop}
	[{\cite[Proposition 2.4]{BHS_HHS_AsDim} plus \cite[Corollary
	2.16]{BHS_HHS_Quasiflats}}]\label{prop:CTS_is_HHS} Given an HHS
	$(\mc{X},\mf{S})$, there exists $E'\geq 0$, depending only on the
	HHS constant $E$ of $(\mc{X},\mf{S})$, so that the space
	$\mc{C}_\mf{T}S$ is $E'$--hyperbolic and admits an HHS
	structure with constant $E'$ that has 
	index set $(\mf{S} -\mf{T}) \cup \{S\}$ and where the
	hyperbolic spaces, relations, and projections are all inherited
	from $(\mc{X},\mf{S})$.
  \end{prop}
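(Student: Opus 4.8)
The plan is to recognize $\mc C_\mf T S$ as a \emph{factored space} of $(\mc X,\mf S)$ in the sense of \cite[\S2]{BHS_HHS_AsDim} and then read off the conclusion from the general theory developed there and in \cite{BHS_HHS_Quasiflats}. By construction $\mc C_\mf T S$ is obtained from $\mc X$ by coning off each $\F_W$ with $W\in\mf T-\{S\}$, so it is (coarsely) the factored space associated to the subcollection $\mf U:=\mf T-\{S\}\subseteq\mf S$. First I would check that $\mf U$ is closed under nesting: if $W\in\mf U$ and $V\propnest W$, then, as $\mf S_W^\perp\neq\emptyset$, pick $U\perp W$; the orthogonality axiom (Definition~\ref{defn:HHS}(\ref{axiom:orthogonal})) gives $U\perp V$, so $\mf S_V^\perp\neq\emptyset$, and since $V\propnest W\propnest S$ we have $V\neq S$, hence $V\in\mf U$. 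With $\mf U$ downward closed, \cite[Proposition~2.4]{BHS_HHS_AsDim} (in the form sharpened by \cite[Corollary~2.16]{BHS_HHS_Quasiflats}) provides an HHS structure on the factored space whose index set is $\mf S\setminus\mf U=(\mf S-\mf T)\cup\{S\}$, in which every domain other than $S$ retains its original hyperbolic space, projection, and relative projections, while the $\nest$--maximal domain $S$ is assigned the factored space itself; moreover the hierarchy constant of this structure is bounded in terms of $E$ alone. (As in the preceding remark, one may need to append finitely many point-sized dummy domains to recover the container axiom, but these are irrelevant here.)

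It then remains to explain why $\mc C_\mf T S$ is actually \emph{hyperbolic}, with constant depending only on $E$. The key point is that the new index set $(\mf S-\mf T)\cup\{S\}$ supports no orthogonality whatsoever. Indeed, suppose $V,W$ are domains of $\mf S$ with $V\perp W$; by the orthogonality axiom they are not $\nest$--comparable, so neither is the $\nest$--maximal $S$. Since we have already replaced $\mf S$ by $\mf S_{ess}$, there are $U\nest V$ and $U'\nest W$ with $U,U'\in\mf S^\infty$; the orthogonality axiom applied to $U'\nest W\perp V$ gives $U'\perp V$, so $\mf S_V\cap\mf S^\infty\neq\emptyset$ and $\mf S_V^\perp\cap\mf S^\infty\neq\emptyset$. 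By Proposition~\ref{prop:properties_of_F}(\ref{F_prop:unbounded}), together with the bounded domain dichotomy, both $\F_V$ and $\E_V$ are unbounded, so $V\in\mf T-\{S\}$; symmetrically $W\in\mf T-\{S\}$. Thus every orthogonal pair of $\mf S$ was removed in passing to $(\mf S-\mf T)\cup\{S\}$, so the HHS structure on $\mc C_\mf T S$ has empty orthogonality relation. An HHS with no orthogonality is hyperbolic with hyperbolicity constant controlled by its hierarchy constant, which is precisely \cite[Corollary~2.16]{BHS_HHS_Quasiflats}; chaining the dependencies yields the desired $E'=E'(E)$.

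The substance of the proposition thus lies entirely in the two cited results, and the work is the bookkeeping needed to match their hypotheses. I expect the only step requiring genuine care to be the identification of ``coning off $\{\F_W:W\in\mf T-\{S\}\}$'' with the factored space construction of \cite{BHS_HHS_AsDim} and the verification that the inherited index set, relations, and projections are exactly the ones claimed---the two sources phrase the coning slightly differently (one in terms of the $\F_W$, one in terms of the product regions $\P_W$), but these produce coarsely the same space and hence the same HHS structure up to the constants. The hyperbolicity conclusion is then immediate from the no-orthogonality criterion, so there is no deeper obstacle beyond faithfully transporting the statements we are entitled to cite.
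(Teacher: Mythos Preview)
Your proposal is correct and matches the paper's approach: the paper does not give an independent proof of this proposition but simply cites \cite[Proposition~2.4]{BHS_HHS_AsDim} and \cite[Corollary~2.16]{BHS_HHS_Quasiflats}, and what you have written is exactly the verification that those citations apply. Your check that $\mf U=\mf T-\{S\}$ is downward closed under nesting, and that the remaining index set $(\mf S-\mf T)\cup\{S\}$ carries no orthogonality (so that the factored space is hyperbolic), are the bookkeeping the paper leaves implicit; the only redundancy is that once $\mf S=\mf S_{ess}$ the condition for $V\in\mf T-\{S\}$ reduces to $\mf S_V^\perp\neq\emptyset$, so in your no-orthogonality argument the existence of $W\perp V$ already suffices without chasing the infinite-diameter subdomains.
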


The domains, hyperbolic spaces, and relative projections for the HHS $(\mc{C}_\mf{T} S,(\mf{S} -\mf{T}) \cup \{S\})$ are all identical to their counterparts from $\mf{S}$. The projection maps need a little more illumination. Recall, $\mc{C}_\mf{T} S$ is the space $\mc{X}$ with additional edges attached. If $x \in \mc{C}_\mf{T}S$ is also a point of $\mc{X}$, then for each $W \in (\mf{S} - \mf{T}) \cup \{S\}$, the projection $\pi_W(x)$ is the same as the projection to $\mc{C}W$ in $\mf{S}$. If instead $x$ is a point on an edge $e$ that is added to $\mc{X}$ to make  $\mc{C}_{\mf{T}}S$, then $\pi_W(x)$ is the union of the images of two end points of $e$ under $\pi_W$. 
 
 There are two important consequences of Proposition
 \ref{prop:CTS_is_HHS} that we will use repeatedly for the remainder of
 the section.  First, Theorem \ref{thm:hyperbolic_HHS} applies to the
 hyperbolic HHS $(\mc{C}_\mf{T} S,(\mf{S} -\mf{T}) \cup \{S\})$, so we can identify the
 Gromov boundary of $\mc{C}_\mf{T} S$ with points in the Gromov
 boundaries of $\mc{C}W$ for $W \in (\mf{S} -\mf{T}) \cup \{S\}$.
 Second, we can use Lemma \ref{lem:basisnbhd} to relate neighborhoods
 in $\mc{C}W \cup \partial \mc{C}W$ to neighborhoods in $
 \mc{C}_\mf{T} S \cup \partial \mc{C}_\mf{T}S$ when $W \in (\mf{S}
 -\mf{T}) \cup \{S\}$.

In order to prove Theorem~\ref{thm:BdryABDInvariant}, it remains to
show that this second step in the maximization procedure (replacing $\mf{S}$ with $\mf{T}$) does not
change the boundary of a $G$--HHS.  The proof
of this fact is involved, and  we spend the next several subsections
developing the necessary machinery and establishing a number of 
preliminary results.
Theorem~\ref{thm:BdryABDInvariant} is then proven in
Section~\ref{sec:pfofBdryABDInvariant}.

\subsection{Invariance of hierarchy paths and hierarchical quasiconvexity under maximization}\label{sec:hqcinvar}

As in the previous subsection, $(\mc{X},\mf{S})$ is an HHS with constant $E$ and the
bounded domain dichotomy, and $(\mc{X},\mf{T})$ is the HHS produced
after maximizing $\mf{S}$.  We denote the $\nest$--maximal domain in
both structures by $S$.  The goal of this subsection is establish that
hierarchy paths and hierarchical quasiconvexity do not change under
the maximization procedure.  These results are used to prove that
Step 2 of the maximization procedure does not change the boundary, but 
we expect they will be of broader interest as well.

We start by quoting a result that says hierarchy paths with respect $\mf{S}$ are also hierarchy paths with respect to $\mf{T}$. This was established by the first two authors and Durham during the introduction of the maximization procedure.

\begin{lem}[{\cite[Special case of Lemma 3.6]{ABD}}]\label{lem:hierarchy_paths_under_ABD}
	For each $\lambda \geq 1$ there exists $\lambda' \geq \lambda$ for which the following holds: if $\gamma$ is a $\lambda$--hierarchy path in $(\mc{X},\mf{S})$, then $\overline{\pi}_{S} \circ \gamma$ is an unparametrized $(\lambda',\lambda')$--quasigeodesic of $\mc{C}_{\mf{T}} S$. In particular, every $\lambda$--hierarchy path of $(\mc{X},\mf{S})$ is also a  $\lambda'$--hierarchy path of $(\mc{X},\mf{T})$.
\end{lem}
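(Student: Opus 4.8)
The ``in particular'' clause follows immediately once the first assertion is established with a constant $\lambda'\ge\lambda$: for every $W\in\mf T-\{S\}$ we have $\mc C_{\mf T}W=\mc CW$ and $\overline{\pi}_W=\pi_W$, so $\overline{\pi}_W\circ\gamma=\pi_W\circ\gamma$ is an unparametrized $(\lambda,\lambda)$--quasigeodesic of $\mc CW$ because $\gamma$ is a $\lambda$--hierarchy path of $(\mc X,\mf S)$, and $\gamma$ is already a $(\lambda,\lambda)$--quasigeodesic of $\mc X$. Hence $\gamma$ meets every requirement to be a $\lambda'$--hierarchy path of $(\mc X,\mf T)$, and the whole lemma reduces to showing that $\overline{\pi}_S\circ\gamma$ is an unparametrized $(\lambda',\lambda')$--quasigeodesic of $\mc C_{\mf T}S$.

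To prove this I would check that $\overline{\pi}_S\circ\gamma$ is coarsely Lipschitz and coarsely monotone; it is standard that a coarsely Lipschitz, coarsely monotone map of an interval into a geodesic space is an unparametrized quasigeodesic, with constants depending only on the Lipschitz and monotonicity constants (collapse the near-constant stretches of the path, whose jump sizes are controlled by the Lipschitz bound, and reparametrize by $t\mapsto d_{\mc C_{\mf T}S}(\overline{\pi}_S\gamma(0),\overline{\pi}_S\gamma(t))$). Coarse Lipschitzness is immediate: $\mc C_{\mf T}S$ is obtained from $\mc X$ by only \emph{adding} edges, so $\overline{\pi}_S\colon\mc X\to\mc C_{\mf T}S$ is distance non-increasing, whence $d_{\mc C_{\mf T}S}(\gamma(s),\gamma(t))\le d_{\mc X}(\gamma(s),\gamma(t))\le\lambda|s-t|+\lambda$.

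The substantive step is coarse monotonicity: a constant $D=D(E,\lambda)$ with $d_{\mc C_{\mf T}S}(\gamma(s),\gamma(u))+d_{\mc C_{\mf T}S}(\gamma(u),\gamma(t))\le d_{\mc C_{\mf T}S}(\gamma(s),\gamma(t))+D$ for $s\le u\le t$. Here I would invoke the factored-space machinery underlying Proposition~\ref{prop:CTS_is_HHS}: since $\mf T-\{S\}$ is closed under nesting (a domain nested in one with nonempty orthogonal complement again has nonempty orthogonal complement, by the orthogonality/nesting axiom), $\mc C_{\mf T}S$ is the factored space of $(\mc X,\mf S)$ obtained by coning off the product regions $\F_W$, $W\in\mf T-\{S\}$, and the results of \cite{BHS_HHS_AsDim,BHS_HHS_Quasiflats} on factored spaces supply a distance formula for $\mc C_{\mf T}S$ in which the coned domains are simply deleted: up to constants depending only on $E$, $d_{\mc C_{\mf T}S}(x,y)$ is the thresholded sum of the $d_W(x,y)$ over $W\in(\mf S-\mf T)\cup\{S\}$, with $\mc CW$ the \emph{original} $\mf S$--space for every such $W$ (in particular the original top space $\mc CS$, not $\mc C_{\mf T}S$ itself — this is what keeps the argument from being circular at the $\nest$--maximal domain). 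For each such $W$ the path $\pi_W\circ\gamma$ is an unparametrized $(\lambda,\lambda)$--quasigeodesic of a hyperbolic space, hence additively coarsely monotone by the Morse Lemma (Lemma~\ref{lem:Morse}): $d_W(\gamma(s),\gamma(u))+d_W(\gamma(u),\gamma(t))\le d_W(\gamma(s),\gamma(t))+C_0$ with $C_0=C_0(E,\lambda)$. Feeding these inequalities into the factored-space distance formula term by term, using subadditivity of the truncation function and that only boundedly many terms (relative to the distances involved) are non-zero, produces the desired coarse monotonicity of $\overline{\pi}_S\circ\gamma$ in $\mc C_{\mf T}S$, and the previous paragraph then completes the proof.

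The main obstacle is exactly this $\nest$--maximal domain: that $\overline{\pi}_S\circ\gamma$ makes monotone progress in $\mc C_{\mf T}S$ is \emph{not} a formal consequence of $\gamma$ being a hierarchy path, since the intrinsic HHS structure on $\mc C_{\mf T}S$ sees its own metric tautologically at the top and the reduction to $\mf S$--projections breaks there. What rescues the argument is the concrete coning description of $\mc C_{\mf T}S$ together with the factored-space distance formula of \cite{BHS_HHS_AsDim}, which re-expresses $d_{\mc C_{\mf T}S}$ purely through the $\mf S$--projections that $\gamma$ already controls; the rest is the routine-but-delicate bookkeeping of transporting term-wise coarse monotonicity across a thresholded sum and extracting the reparametrization witnessing the unparametrized-quasigeodesic property, all with constants depending only on $E$ and $\lambda$.
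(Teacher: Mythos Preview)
The paper does not give its own proof of this lemma; it is quoted as a special case of \cite[Lemma 3.6]{ABD}. So there is no in-paper argument to compare against, and I will just assess your sketch on its merits.

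Your reduction of the ``in particular'' clause, the coarse Lipschitz step, and your identification of the key point --- that the HHS structure on $\mc C_{\mf T}S$ from Proposition~\ref{prop:CTS_is_HHS} uses the \emph{original} $\mc CS$ at the top, so the factored-space distance formula expresses $d_{\mc C_{\mf T}S}$ entirely through $\mf S$--projections that $\gamma$ already controls --- are all correct and well observed.

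The gap is in the monotonicity step. You claim an \emph{additive} bound $d_{\mc C_{\mf T}S}(\gamma(s),\gamma(u))+d_{\mc C_{\mf T}S}(\gamma(u),\gamma(t))\le d_{\mc C_{\mf T}S}(\gamma(s),\gamma(t))+D$, but the mechanism you describe cannot produce one. The term-wise Morse-lemma inequality $d_W(\gamma(s),\gamma(u))+d_W(\gamma(u),\gamma(t))\le d_W(\gamma(s),\gamma(t))+C_0$ is fine, but when you feed it through the thresholded distance formula you incur (i) the multiplicative constants of the formula and (ii) an additive $C_0$ for every domain above threshold, and the number of such domains grows linearly with $d_{\mc C_{\mf T}S}(\gamma(s),\gamma(t))$. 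What falls out is only a \emph{multiplicative} bound $d(\gamma(s),\gamma(u))+d(\gamma(u),\gamma(t))\le K\, d(\gamma(s),\gamma(t))+K$. And that, together with Lipschitz, is \emph{not} enough to force an unparametrized quasigeodesic, even in a hyperbolic space: a unit-speed horocycle in $\mathbb H^2$ is $1$--Lipschitz and satisfies this multiplicative inequality with $K=2$ (since $d((0,1),(n,1))\sim 2\ln n$), yet lies at distance $\sim\ln n$ from the geodesic between its endpoints and is certainly not an unparametrized quasigeodesic. So the ``routine-but-delicate bookkeeping'' you allude to does not close on its own; one needs an additional idea --- for instance, showing directly that $\overline\pi_S(\gamma(u))$ lies within bounded distance of a $\mc C_{\mf T}S$--geodesic from $\overline\pi_S(\gamma(s))$ to $\overline\pi_S(\gamma(t))$ by gating to that geodesic in the HHS $(\mc C_{\mf T}S,(\mf S-\mf T)\cup\{S\})$ and invoking uniqueness, or arguing via the ``passing-up/large links'' structure that $\gamma$ traverses the coned regions $\F_W$ in a controlled order --- to upgrade to the additive statement (or to bypass monotonicity entirely).
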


Next we establish the converse of Lemma \ref{lem:hierarchy_paths_under_ABD}, that hierarchy paths with respect to $\mf{T}$ are also hierarchy paths with respect to $\mf{S}$. This establishes that $(\mc{X},\mf{S})$ and $(\mc{X},\mf{T})$ have the same set of hierarchy paths (with possibly different constants).

\begin{lem}\label{lem:hierarchy_paths_in_T_are_hierarchy_paths_in_S}
	For each $\lambda \geq 1$ there exists $\lambda' \geq \lambda$ for which the following holds: if $\gamma$ is a $\lambda$--hierarchy path in $(\mc{X},\mf{T})$, then $\gamma$ is also a $\lambda'$--hierarchy path of $(\mc{X}, \mf{S})$.
\end{lem}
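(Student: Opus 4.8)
The plan is to verify the two defining conditions of a $\lambda'$--hierarchy path for $\gamma$ in $(\mc X, \s)$: that $\gamma$ is a $(\lambda',\lambda')$--quasigeodesic of $\mc X$, and that $\pi_W \circ \gamma$ is an unparametrized $(\lambda',\lambda')$--quasigeodesic for every $W \in \s$. The second condition naturally splits according to whether $W \in \mf T - \{S\}$ or $W \in \s - (\mf T - \{S\})$, and the first condition will come out of the analysis of the maximal domain. First I would handle the domains $W \in \mf T - \{S\}$: for these, Proposition~\ref{prop:CTS_is_HHS} and Notation~\ref{not:twostructures} tell us that $\mc CW = \mc C_\mf T W$ and $\pi_W = \ol\pi_W$, so the hypothesis that $\gamma$ is a $\lambda$--hierarchy path in $(\mc X, \mf T)$ immediately gives that $\pi_W \circ \gamma$ is an unparametrized $(\lambda,\lambda)$--quasigeodesic of $\mc CW$, with no change of constant needed.

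The heart of the argument is the domains $W \in \s$ with $W \notin \mf T - \{S\}$; for these $W$, the pair $(W, \mc CW)$ (or rather $S$ together with $W$) lives inside the hyperbolic HHS $(\mc C_\mf T S, (\s - \mf T)\cup\{S\})$ supplied by Proposition~\ref{prop:CTS_is_HHS}. By hypothesis $\ol\pi_S \circ \gamma$ is an unparametrized $(\lambda,\lambda)$--quasigeodesic of $\mc C_\mf T S$; since $\mc C_\mf T S$ is $E'$--hyperbolic, Theorem~\ref{thm:hyperbolic_HHS}(2) promotes $\ol\pi_S \circ \gamma$ to a $\lambda''$--hierarchy path of the \emph{hyperbolic} HHS $(\mc C_\mf T S, (\s - \mf T)\cup\{S\})$, where $\lambda''$ depends only on $\lambda$ and $E'$ (hence only on $\lambda$ and $E$). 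Being a hierarchy path in that structure means its projection to every domain's hyperbolic space is an unparametrized quasigeodesic, and for $W \in \s - (\mf T - \{S\})$ the relevant hyperbolic space and projection are exactly $\mc CW$ and $\pi_W$ (inherited from $\s$, per Proposition~\ref{prop:CTS_is_HHS}). One only has to observe that $\pi_W \circ \gamma$ and $\pi_W^{\mc C_\mf T S} \circ (\ol\pi_S \circ \gamma)$ coarsely agree: on points of $\mc X \subseteq \mc C_\mf T S$ they agree exactly by the description following Proposition~\ref{prop:CTS_is_HHS}, and since $\gamma$ takes values in $\mc X$ this identification is literal. So $\pi_W \circ \gamma$ is an unparametrized $(\lambda'',\lambda'')$--quasigeodesic of $\mc CW$.

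For the quasigeodesic condition on $\gamma$ itself in $\mc X$: $\gamma$ is a $(\lambda,\lambda)$--quasigeodesic of $\mc X$ with respect to the metric $\mc X$ — wait, a priori we only know it is a $(\lambda,\lambda)$--quasigeodesic of $(\mc X, \mf T)$, which uses the metric on $\mc X$, the \emph{same} metric, since $\mf T$ is an HHS structure on the space $\mc X$. Thus $\gamma$ is already a $(\lambda,\lambda)$--quasigeodesic of $\mc X$ with the correct metric, and no work is needed here. Finally I would set $\lambda' = \max\{\lambda, \lambda''\}$ (or absorb a small additive loss from the coarse identifications), and assemble: $\gamma$ is a $(\lambda',\lambda')$--quasigeodesic whose projection to $\mc CW$ is an unparametrized $(\lambda',\lambda')$--quasigeodesic for every $W \in \s$, i.e.\ a $\lambda'$--hierarchy path of $(\mc X, \s)$.

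The step I expect to be the main obstacle is the clean identification of $\pi_W \circ \gamma$ with the projection-to-$W$ of $\ol\pi_S\circ\gamma$ inside the auxiliary HHS on $\mc C_\mf T S$, together with bookkeeping of which $W\in\s$ are ``inside'' that auxiliary index set versus in $\mf T -\{S\}$; one must be careful that the two cases together cover all of $\s$ and that the hyperbolic-HHS machinery (Theorem~\ref{thm:hyperbolic_HHS}(2)) is being applied to the \emph{right} space. Everything else is a matter of tracking constants through Proposition~\ref{prop:CTS_is_HHS} and Theorem~\ref{thm:hyperbolic_HHS}.
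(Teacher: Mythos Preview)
Your proposal is correct and follows essentially the same route as the paper: split into $W\in\mf T-\{S\}$ (where $\pi_W=\ol\pi_W$ and there is nothing to do) and $W\in\{S\}\cup(\mf S-\mf T)$, and for the latter use that $\ol\pi_S\circ\gamma$ is an unparametrized quasigeodesic in the hyperbolic HHS $(\mc C_\mf T S,(\mf S-\mf T)\cup\{S\})$, hence a hierarchy path there by Theorem~\ref{thm:hyperbolic_HHS}(2), so its projection to $\mc CW$ is an unparametrized quasigeodesic; since $\ol\pi_S$ is the inclusion, this projection equals $\pi_W\circ\gamma$. The paper's proof is exactly this, only terser---it does not pause on the quasigeodesic condition for $\gamma$ in $\mc X$ (same metric, as you noted) or on the identification of the two projections.
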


\begin{proof}
	Let $\gamma$ be a $\lambda$--hierarchy path in $(\mc{X},\mf{T}$).
	For each $W \in  \mf{T} - \{S\}$, the projection $\pi_W \circ \gamma$
	is an unparametrized $(\lambda,\lambda)$--quasigeodesic because
	$\pi_W = \overline{\pi}_W$.  Now assume $W = S$ or $W \in \mf{S} -
	\mf{T}$. By Proposition \ref{prop:CTS_is_HHS}, the space $\mc{C}_\mf{T}S$ is a hierarchically
	hyperbolic space with respect to $\{S\} \cup (\mf{S} - \mf{T})$,
	where the projection maps are the projection maps in the structure
	$\mf{S}$.  Because $\mc{C}_\mf{T}S$ is hyperbolic, every
	quasigeodesic in $\mc{C}_\mf{T}S$ is a hierarchy path in every
	HHS structure by Theorem \ref{thm:hyperbolic_HHS}.
	Thus $\pi_W \circ \overline{\pi}_S \circ \gamma$ is an
	unparametrized $(\lambda',\lambda')$-quasigeodesic in $\mc{C}W$.
	Because $\overline{\pi}_S$ is the inclusion map, we have that
	$\pi_W \circ \gamma$ is an unparametrized
	$(\lambda',\lambda')$--quasigeodesic in $\mc{C}W$.
\end{proof}

Using a result of the third author with Spriano and Tran, the above lemmas imply that the sets of hierarchically quasiconvex subsets of $\mc{X}$  with respect to $\mf{S}$ and $\mf{T}$ are the same.

\begin{prop}\label{prop:hqcequivalence}
	A subset $\mc{Y} \subseteq \mc{X}$ is hierarchically quasiconvex with respect to $\mf{S}$ if and only if it is hierarchically quasiconvex with respect to $\mf{T}$. Further, the function of hierarchical quasiconvexity in either $\mf{S}$ or $\mf{T}$ will determine the function in the other. 
\end{prop}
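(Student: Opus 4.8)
The plan is to reduce the statement to a characterization of hierarchical quasiconvexity that only refers to hierarchy paths, and then invoke the fact (due to the third author with Spriano and Tran) that a subset is hierarchically quasiconvex if and only if it is ``coarsely closed under hierarchy paths,'' i.e., every hierarchy path between two points of $\mc{Y}$ stays uniformly close to $\mc{Y}$. Since Lemmas \ref{lem:hierarchy_paths_under_ABD} and \ref{lem:hierarchy_paths_in_T_are_hierarchy_paths_in_S} together show that the collection of hierarchy paths of $(\mc{X},\mf{S})$ and of $(\mc{X},\mf{T})$ coincide (up to adjusting the quasigeodesic constants), the coarse-closure condition is literally the same condition in both structures, and the biconditional follows immediately. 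The only subtlety is bookkeeping of constants: one must check that the quantitative dependence is uniform, so that the hierarchical quasiconvexity \emph{gauge} (the function $k$) in one structure determines a gauge in the other.

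Concretely, I would proceed as follows. First, recall the characterization: there is a function assigning to each $\lambda \geq 1$ a constant $\nu(\lambda)$ (depending also on the HHS constant) such that $\mc{Y}$ is $k$--hierarchically quasiconvex in an HHS with hierarchy-path constant $\lambda$ if and only if every $\lambda$--hierarchy path with endpoints in $\mc{Y}$ lies in $\mc{N}_{\nu'}(\mc{Y})$ for an appropriate $\nu'$ tied to $k$, and conversely such a neighborhood bound yields a hierarchical quasiconvexity gauge $k$. Second, suppose $\mc{Y}$ is $k$--hierarchically quasiconvex with respect to $\mf{S}$. By the characterization, every $\lambda$--hierarchy path of $(\mc{X},\mf{S})$ between points of $\mc{Y}$ stays in a bounded neighborhood of $\mc{Y}$. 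By Lemma \ref{lem:hierarchy_paths_in_T_are_hierarchy_paths_in_S}, every $\lambda$--hierarchy path of $(\mc{X},\mf{T})$ is a $\lambda'$--hierarchy path of $(\mc{X},\mf{S})$, hence also stays in a bounded neighborhood of $\mc{Y}$; applying the characterization in $(\mc{X},\mf{T})$ produces a gauge $k'$ for which $\mc{Y}$ is $k'$--hierarchically quasiconvex with respect to $\mf{T}$. The reverse implication is symmetric, using Lemma \ref{lem:hierarchy_paths_under_ABD} in place of Lemma \ref{lem:hierarchy_paths_in_T_are_hierarchy_paths_in_S}. Throughout, all constants depend only on $k$, $\lambda$, $E$, and $E'$, which gives the ``further'' clause about the gauges determining one another.

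The main obstacle is that the definition of hierarchical quasiconvexity given in the excerpt is the ``projection + coarse retraction'' definition, not the hierarchy-path definition, so one genuinely needs the external characterization theorem to make the hierarchy-path argument go through. An alternative route that avoids citing that theorem would be to argue directly from the definition: condition (1) (quasiconvexity of each $\pi_W(\mc{Y})$) is automatic for $W \in \mf{T}-\{S\}$ since $\mc{C}W = \mc{C}_\mf{T}W$ with the same projection, and for $W = S$ one would need to show $\ol{\pi}_S(\mc{Y})$ is quasiconvex in $\mc{C}_\mf{T}S$; condition (2) would require comparing ``$d_W(x,\mc{Y})$ small for all $W$'' across the two index sets, which is exactly where the distance formula and the relation between $\mf{S}$-projections and $\mc{C}_\mf{T}S$-projections (Proposition \ref{prop:CTS_is_HHS}) enter. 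This direct approach is more delicate precisely at the $S$-domain, so I would prefer the hierarchy-path characterization, treating the constant-tracking as the routine—if tedious—part.
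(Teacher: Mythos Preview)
Your proposal is correct and is essentially identical to the paper's own proof: both invoke the hierarchy-path characterization of hierarchical quasiconvexity from Russell--Spriano--Tran (\cite[Proposition~5.7]{RST_Quasiconvexity}) and then apply Lemmas~\ref{lem:hierarchy_paths_under_ABD} and~\ref{lem:hierarchy_paths_in_T_are_hierarchy_paths_in_S} to conclude that the hierarchy paths of the two structures coincide up to constants. The paper's argument is slightly terser but the logic and the constant-tracking are the same.
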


\begin{proof}
	Lemmas \ref{lem:hierarchy_paths_under_ABD} and
	\ref{lem:hierarchy_paths_in_T_are_hierarchy_paths_in_S} show that
	every hierarchy path of $(\mc{X},\mf{S})$ is a hierarchy path of
	$(\mc{X},\mf{T})$ and vice-versa.  The proposition then follows
	from \cite[Proposition 5.7]{RST_Quasiconvexity}, which states that a subset $\mc{Y}$ of an HHS is hierarchically quasiconvex if and only if there is a function $F\colon [1,\infty) \to [0,\infty)$ so that for every $\lambda \geq 1$, every $\lambda$--hierarchy path based on $\mc{Y}$ is contained in the $F(\lambda)$--neighborhood of $\mc{Y}$. The statement on the function of hierarchical quasiconvexity also follows from \cite[Proposition 5.7]{RST_Quasiconvexity}, which additionally shows that the function $k$  of hierarchical quasiconvexity and the function $F$ each determine the other.
\end{proof}

Proposition \ref{prop:hqcequivalence} is most relevant for us in the
case of the sets $\F_W$ in $(\mc{X},\mf{S})$ and $\overline{\F}_W$ in $(\mc{X},\mf{T})$. While  $\F_W$ might not equal $\overline \F_W$ even when $W \in \mf{S} \cap \mf{T}$,  they are each hierarchically quasiconvex with respect to their respective structures (Proposition \ref{prop:properties_of_F}\eqref{F_prop:HQC}). By Proposition \ref{prop:hqcequivalence}, there is thus some  $k$ depending only on
$(\mc{X},\mf{S})$ such that $\F_W$ and $\overline \F_W$ are each $k$--hierarchically quasiconvex 
with respect to both $\mf{S}$ and $\mf{T}$.
 In particular, the projection $\ol\pi_S(\F_W)$ is
a $k(0)$--quasiconvex subspace of the hyperbolic space $\mc C_{\frak
T}S$.

Since a subset $\mc{Y}$ is hierarchically quasiconvex with respect to
$\mf{S}$ if and only if it is hierarchically quasiconvex with respect
to $\mf{T}$, such a subset has a gate map with respect to each
structure.  We denote the gate map in $\mf{S}$ by $\gate_\mc{Y}$ and
the gate map in $\mf{T}$ by $\ol \gate_{\mc{Y}}$.  Our final lemma
says these two gate maps are coarsely the same.  The key step is
relating the the gate map in $\mf{S}$ to the 
closest point projection onto the image of a hierarchically
quasiconvex subset in $\mc{C}_\mf{T}S$.

\begin{lem}\label{lem:gate_in_S_is_cpp_in_T}
	Suppose $\mc{Y} \subseteq \mc{X}$ is $k$--hierarchically quasiconvex with respect to $\mf{S}$. There exists $C_1,C_2\geq 0$ depending on $k$ and $\mf{S}$ so that for all $x \in \mc{X}$  we have \[ \mf{p}_{\overline{\pi}_S(\mc{Y})}(\ol\pi_S (x))  \asymp_{C_1} \overline{\pi}_S(\gate_{\mc{Y}}(x)) \text{ and } \gate_{\mc{Y}}(x) \asymp_{C_2} \overline{\gate}_\mc{Y}(x).\] 
\end{lem}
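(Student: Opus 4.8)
The plan is to establish the first coarse equality — that $\mf{p}_{\overline{\pi}_S(\mc{Y})}(\ol\pi_S(x)) \asymp_{C_1} \overline{\pi}_S(\gate_{\mc{Y}}(x))$ — and then deduce the second from it using the uniqueness axiom together with the known behavior of the gate map in $\mf{T}$. For the first statement, I would argue that $\overline{\pi}_S(\gate_{\mc{Y}}(x))$ satisfies the defining property of the closest point projection in $\mc{C}_\mf{T}S$: namely, it is a point of $\overline{\pi}_S(\mc{Y})$ that is coarsely closest to $\ol\pi_S(x)$. To see this, recall from Lemma~\ref{lem:hierarch_path_through_the_gate} that there is a $\kappa$--hierarchy path $\gamma$ in $(\mc{X},\mf{S})$ from $x$ to $\gate_{\mc{Y}}(x)$ passing $\kappa$--close to $\gate_{\mc{Y}}(x)$; by Lemma~\ref{lem:hierarchy_paths_under_ABD}, $\overline{\pi}_S \circ \gamma$ is an unparametrized quasigeodesic in $\mc{C}_\mf{T}S$ with uniform constants. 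More usefully, given any $y \in \mc{Y}$, take a $\kappa$--hierarchy path $\gamma'$ from $x$ to $y$; it passes $\kappa$--close to $\gate_{\mc{Y}}(x)$, and its image under $\overline{\pi}_S$ is an unparametrized quasigeodesic in $\mc{C}_\mf{T}S$ from $\ol\pi_S(x)$ to $\ol\pi_S(y)$ passing uniformly close to $\ol\pi_S(\gate_{\mc{Y}}(x))$. Since $\ol\pi_S(\gate_{\mc{Y}}(x)) \in \overline{\pi}_S(\mc{Y})$ and quasigeodesics in a hyperbolic space fellow-travel geodesics (Lemma~\ref{lem:Morse}), this forces $d_{\mc{C}_\mf{T}S}(\ol\pi_S(x), \ol\pi_S(\gate_{\mc{Y}}(x))) \leq d_{\mc{C}_\mf{T}S}(\ol\pi_S(x), \ol\pi_S(y)) + O(E')$ for all $y \in \mc{Y}$, which (using that $\overline{\pi}_S(\mc{Y})$ is $k(0)$--quasiconvex in $\mc{C}_\mf{T}S$, as noted after Proposition~\ref{prop:hqcequivalence}) characterizes $\ol\pi_S(\gate_{\mc{Y}}(x))$ as the coarse closest-point projection, giving a uniform $C_1$.

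For the second coarse equality, the strategy is to show $\gate_{\mc{Y}}(x)$ and $\overline{\gate}_\mc{Y}(x)$ have uniformly close projections to every domain of $\mf{T}$, and then invoke the uniqueness axiom (Definition~\ref{defn:HHS}\eqref{axiom:uniqueness}) for the structure $\mf{T}$. For a domain $W \in \mf{T} - \{S\}$, we have $\pi_W = \overline{\pi}_W$ and $\pi_W(\mc{Y})$ equals $\overline{\pi}_W(\mc{Y})$, so by the projection property of gates (Lemma~\ref{lem:hierarch_path_through_the_gate}, third bullet) both $\pi_W(\gate_{\mc{Y}}(x))$ and $\overline{\pi}_W(\overline{\gate}_{\mc{Y}}(x))$ are uniformly close to $\mf{p}_{\pi_W(\mc{Y})}(\pi_W(x))$, hence to each other. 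For $W = S$: applying the same projection property in the structure $\mf{T}$ gives $\overline{\pi}_S(\overline{\gate}_\mc{Y}(x)) \asymp \mf{p}_{\overline{\pi}_S(\mc{Y})}(\ol\pi_S(x))$, and the first part of the lemma gives $\overline{\pi}_S(\gate_{\mc{Y}}(x)) \asymp \mf{p}_{\overline{\pi}_S(\mc{Y})}(\ol\pi_S(x))$ as well, so these agree coarsely. Since the two points $\gate_{\mc{Y}}(x)$ and $\overline{\gate}_\mc{Y}(x)$ have uniformly close $\overline{\pi}_W$--images for all $W \in \mf{T}$, the contrapositive of the uniqueness axiom for $(\mc{X},\mf{T})$ bounds $d_\mc{X}(\gate_{\mc{Y}}(x), \overline{\gate}_\mc{Y}(x))$, yielding $C_2$.

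The main obstacle I anticipate is the first statement, specifically the careful bookkeeping needed to turn "hierarchy paths project to quasigeodesics in $\mc{C}_\mf{T}S$ passing near the gate" into the crisp closest-point-projection inequality, because the gate map $\gate_{\mc{Y}}$ is only coarsely the closest point projection in $\mc{X}$ (not exactly), and one must transfer this coarseness through $\overline{\pi}_S$ without losing uniformity of constants — all constants must depend only on $k$ and the hierarchy constant $E$ (equivalently $E'$). The key technical point is that every hierarchy path from $x$ to a point of $\mc{Y}$ passes near $\gate_{\mc{Y}}(x)$, which is exactly the content of the last bullet of Lemma~\ref{lem:hierarch_path_through_the_gate}; combined with quasigeodesic stability in the hyperbolic space $\mc{C}_\mf{T}S$ and the quasiconvexity of $\overline{\pi}_S(\mc{Y})$ there, this should close the argument, but the constant-tracking is where the care lies.
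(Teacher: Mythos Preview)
Your proposal is correct and follows essentially the same route as the paper: both arguments use that a $\kappa$--hierarchy path from $x$ to a point $y\in\mc{Y}$ passes near $\gate_{\mc{Y}}(x)$ (Lemma~\ref{lem:hierarch_path_through_the_gate}), push it to an unparametrized quasigeodesic in $\mc{C}_\mf{T}S$ via Lemma~\ref{lem:hierarchy_paths_under_ABD}, apply the Morse lemma, and then invoke uniqueness in $\mf{T}$ for the second claim. The only cosmetic difference is that the paper picks $y$ to realize the closest point projection and bounds $\ol d_S(\ol\pi_S(y),\ol\pi_S(\gate_{\mc{Y}}(x)))$ directly, whereas you quantify over all $y\in\mc{Y}$ to show $\ol\pi_S(\gate_{\mc{Y}}(x))$ is a coarse minimizer; your version of the second part is also slightly more explicit about why the non-maximal domains agree.
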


\begin{proof} Let $E$ be the hierarchy constant for $\mf{S}$ and $\mf{T}$. Fix $x\in \mc{X}$, and  
	let $y$ be any point of $\mc{Y}$ satisfying $\overline{\pi}_S(y) \in
	\mf{p}_{\overline{\pi}_S(\mc{Y})}(\overline{\pi}_S(x))$.  Since   $\ol \pi_S(y), \ol\pi_S (\gate_\mc{Y}(x))$ and   $\gate_{\mc{Y}}(x),\overline{\gate}_\mc{Y}(x)$  all have diameter uniformly bounded in terms of $E$, the two coarse equalities will follow if we can bound the distances $\ol d_S(\ol \pi_S(y), \ol\pi_S (\gate_\mc{Y}(x)))$ and $d_\mc{X}(\gate_{\mc{Y}}(x),\overline{\gate}_\mc{Y}(x))$, respectively.
	
	By Lemma
	\ref{lem:hierarch_path_through_the_gate}, there exists  $\lambda \geq 1$ depending only on $k$ and $\mf{S}$ so that there is a
	$\lambda$--hierarchy path $\gamma$ in $(\mc{X},\mf{S})$ that connects
	$x$ and $y$ and passes within $\lambda$ of $\gate_{\mc{Y}}(x)$. Let $y'$ be a point on $\gamma$ with $d_\mc{X}(y',\gate_\mc{Y}(x)) \leq \lambda$. 
	
	  By Lemma	\ref{lem:hierarchy_paths_under_ABD}, $\gamma$ is also a hierarchy
	path in $(\mc{X},\mf{T})$, and so $\overline{\pi}_S \circ \gamma$ is
	an unparametrized $(\lambda',\lambda')$--quasigeodesic in
	$\mc{C}_{\mf{T}}S$ for some $\lambda'$ ultimately depending only
	on $\mf{S}$ and $k$. 
	By the Morse Lemma (Lemma \ref{lem:Morse}), there is  $\sigma \geq0$, depending ultimately only on $\mf{S}$ and $k$, so that $\ol\pi_S ( \gamma)$ is contained in the $\sigma$--neighborhood of any $\mc{C}_\mf{T}S$--geodesic  $[\ol \pi_S(x),\ol \pi_S(y)]$. 
	Since $y' \in \gamma$ and $\ol \pi_S$ is $1$--Lipschitz, we know $\ol \pi_S (\gate_\mc{Y}(x))$ is within $\sigma' = \lambda+\sigma$ of $[\ol \pi_S(x),\ol \pi_S(y)]$. Since $\overline{\pi}_S(y) \in
	\mf{p}_{\overline{\pi}_S(\mc{Y})}(\overline{\pi}_S(x))$, we have $\ol d_S(\ol \pi_S(y), \ol\pi_S (\gate_\mc{Y}(x))) \leq 2\sigma ' +1$, where $\sigma'$ depends only on $\mf{S}$ and $k$. This establishes the first coarse equality.
	
	Since $\mf{p}_{\overline{\pi}_S(\mc{Y})}(\overline{\pi}_S(x)) \asymp_\lambda \ol\pi_S(\gate_\mc{Y}(x))$, the uniform bound on $d_\mc{X}(\gate_{\mc{Y}}(x),\overline{\gate}_\mc{Y}(x))$ now follows from the uniqueness axiom in $\mf{T}$ (Definition \ref{defn:HHS}\eqref{axiom:uniqueness}), because
	$\pi_U(\gate_{\mc Y}(x))$ will be uniformly close to
	$\overline{\pi}_U(\overline{\gate}_{\mc{Y}}(x))$ for all $U \in
	\mf{T} -\{S\}$. 
\end{proof}

\subsection{A bijection from $\partial_\Delta(\mc{X},\mf{S})$ to $\partial_\Delta(\mc{X},\mf{T})$}\label{subsec:defn_phi}
In this section, we define a simplicial isomorphism $$\phi\colon
\partial_\Delta(\mc{X},\mf{S}) \to \partial_\Delta(\mc{X},\mf{T}).$$
In Section \ref{sec:pfofBdryABDInvariant}, we will prove that this
map is a homeomorphism from $\partial(\mc{X},\mf{S})$ to
$\partial(\mc{X},\mf{T}).$ By Lemma \ref{lem:removebounded}, we may
assume that the first step of the maximization procedure has already
been applied to $(\mc{X},\mf{S})$.  Thus we have a standing assumption
for the remainder of this section that $\mf{S} = \mf{S}_{ess}$.

 We first define $\phi$ for points $p \in \partial_\Delta(\mc{X},\mf{S})$ whose support is contained in $\mf{T} -\{S\}$.  Recall,  if $W \in \mf{T} - \{S\}$, then $\mc{C}W = \mc{C}_\mf{T} W$. Moreover,  because $\mf{S} = \mf{S}_{ess}$, we have $W,V \in\mf{S}$ are orthogonal if and only if $W,V \in \mf{T}-\{S\}$ and are orthogonal in $\mf T$. Thus,  each point $p \in \partial_\Delta(\mc{X},\mf{S})$ with $\supp(p) \subseteq \mf{T} -\{S\}$  is also a point in $\partial_\Delta(\mc{X},\mf{T})$ with the same support. For such points we define $\phi(p) =p$.

Now consider $p \in \partial_\Delta(\mc{X},\mf{S})$ with $\supp(p)
\not \subseteq \mf{T}- \{S\}$.  As supports are pairwise orthogonal
collections of domains and the only non-singleton sets of
orthogonal domains of $\mf{S}$ are
contained in $\mf{T}-\{S\}$, this implies $\supp(p) = \{P\}$ for some
$P \in \{S\} \cup (\mf{S}-\mf{T})$.  In this case, we define $\phi$
using the fact given by Proposition \ref{prop:CTS_is_HHS} that
$(\partial \mc{C}_\mf{T}S,(\mf{S} - \mf{T}) \cup\{S\})$ is a
hyperbolic HHS. By Theorem~\ref{thm:hyperbolic_HHS}, the identity map
on $\mc{C}_\mf{T}S$ extends to a homeomorphism from the Gromov
boundary $\partial \mc{C}_\mf{T}S$ to the HHS boundary $\partial
(\mc{C}_\mf{T}S,(\mf{S} - \mf{T}) \cup\{S\})$.  This homeomorphism
gives a bijection from $\{ p \in \partial \mc{C} W \mid W \in \{S\}
\cup (\mf{S} - \mf{T})\}$ to $\partial \mc{C}_\mf{T}S$.  Hence, if $p
\in \partial \mc C W$ for some $W \in \{S\} \cup (\mf{S} -\mf{T})$,
then $\phi(p)$ will be the image of $p$ under this identification.
	
For each $p \in \partial_\Delta(\mc{X},\mf{S})$, we will denote $\phi(p)$ by $\ol{p}$.  For each $P \in \mf{S}$, we also define a corresponding domain $\ol P \in \mf{T}$ by $\ol P = P$ if $P \in \mf{T}$ and $\ol P = S$  if $P \in \mf{S} - \mf{T}$. This definition ensures the following basic fact.

\begin{lem}
	 If $p \in \partial_\Delta(\mc{X},\mf{S})$ and $P \in \supp(p)$, then $\ol P \in \supp_\mf{T}(\ol p)$. Moreover, $|\supp(p)| = |\supp_\mf{T}(\ol p)|$.
\end{lem}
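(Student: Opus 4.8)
The plan is to unwind the definition of $\phi$ case by case and check that the support is transported correctly and the cardinality is preserved. First I would recall the two cases in the construction of $\phi$ from Section~\ref{subsec:defn_phi}: either $\supp(p) \subseteq \mf{T} - \{S\}$, in which case $\phi(p) = p$ and every $P \in \supp(p)$ already lies in $\mf{T} - \{S\}$ so $\ol P = P$; or $\supp(p) = \{P\}$ is a singleton with $P \in \{S\} \cup (\mf{S} - \mf{T})$, in which case $\phi(p)$ is the image of $p$ under the homeomorphism from $\bigcup_{W \in \{S\} \cup (\mf{S}-\mf{T})} \partial\mc{C}W$ to $\partial\mc{C}_\mf{T}S$ coming from Theorem~\ref{thm:hyperbolic_HHS} applied to the hyperbolic HHS $(\mc{C}_\mf{T}S, (\mf{S}-\mf{T})\cup\{S\})$.

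In the first case the statement is immediate: $\supp_\mf{T}(\ol p) = \supp(p)$ as sets, and $\ol P = P \in \supp_\mf{T}(\ol p)$ for each $P \in \supp(p)$, so both conclusions hold with equality of supports. In the second case, $\supp(p) = \{P\}$ with $P \in \{S\} \cup (\mf{S}-\mf{T})$, so $\ol P = S$. Here I would invoke Theorem~\ref{thm:hyperbolic_HHS}(1): since $\mc{C}_\mf{T}S$ is hyperbolic, the simplicial boundary $\partial_\Delta(\mc{C}_\mf{T}S, (\mf{S}-\mf{T})\cup\{S\})$ consists only of $0$--simplices, so the boundary point $\ol p$, viewed in $\partial_\Delta(\mc{X},\mf{T})$, has support a single domain of $\mf{T}$; that domain must be $S$ because $\ol p$ is a point of $\partial \mc{C}_\mf{T}S$ under the identification. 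Hence $\supp_\mf{T}(\ol p) = \{S\} = \{\ol P\}$, giving both $\ol P \in \supp_\mf{T}(\ol p)$ and $|\supp(p)| = 1 = |\supp_\mf{T}(\ol p)|$.

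Finally, I would note that these two cases exhaust all possibilities for $p \in \partial_\Delta(\mc{X},\mf{S})$: since $\mf{S} = \mf{S}_{ess}$ by the standing assumption, the only non-singleton pairwise orthogonal subsets of $\mf{S}$ lie inside $\mf{T}-\{S\}$ (as recorded in Section~\ref{sec:maximizationsteps}, $W,V \in \mf{S}$ are orthogonal if and only if $W,V \in \mf{T}-\{S\}$ and are orthogonal in $\mf{T}$), so any $p$ with $\supp(p) \not\subseteq \mf{T}-\{S\}$ must have singleton support contained in $\{S\}\cup(\mf{S}-\mf{T})$. In both cases the map $P \mapsto \ol P$ sends $\supp(p)$ onto $\supp_\mf{T}(\ol p)$, and it is injective on $\supp(p)$: in the first case it is the identity, and in the second case $\supp(p)$ is a singleton. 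This gives $|\supp(p)| = |\supp_\mf{T}(\ol p)|$ in all cases.

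This is essentially a bookkeeping argument with no real obstacle; the only point requiring a little care is confirming in the second case that the support of $\ol p$ in $\mf{T}$ is exactly $\{S\}$ rather than some other domain of $\mf{T}$, which follows from the construction of $\phi$ together with the fact that $\partial\mc{C}_\mf{T}S$ is identified (via Theorem~\ref{thm:hyperbolic_HHS}(3)) with vertices of the simplicial boundary coming from domains of the HHS $(\mc{C}_\mf{T}S, (\mf{S}-\mf{T})\cup\{S\})$, and $S$ is the $\nest$--maximal such domain whose associated hyperbolic space is $\mc{C}_\mf{T}S$ itself.
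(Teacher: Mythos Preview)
Your proof is correct and follows essentially the same approach as the paper: a two-case analysis according to whether $\supp(p) \subseteq \mf{T}-\{S\}$ or $\supp(p)$ is a singleton in $\{S\}\cup(\mf{S}-\mf{T})$, with exhaustiveness coming from the fact that orthogonality in $\mf{S}$ occurs only among domains of $\mf{T}-\{S\}$. You supply a bit more justification than the paper (invoking Theorem~\ref{thm:hyperbolic_HHS}(1) to pin down $\supp_\mf{T}(\ol p)=\{S\}$ in the second case), but the argument is the same.
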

\begin{proof}
	As described in the preceding paragraphs, either $\supp(p)
	\subseteq \mf{T}-\{S\}$ or $\supp(p) \subseteq \{S\} \cup (\mf{S}
	- \mf{T})$. If $\supp(p) \subseteq \mf{T} -\{S\}$, then $\supp(p) =
	\supp_\mf{T}(\ol p)$ and $\ol P = P$ for each $P \in \supp(p)$.
	Since $(\mf{S} -\mf{T}) \cup \{S\}$ does not contain any 
	pairwise orthogonal
	domains, if $\supp(p) \subseteq (\mf{S}-\mf{T}) \cup \{S\}$, then
	$\supp(p) =\{P\}$ for some $P \in (\mf{S}-\mf{T}) \cup \{S\}$.  In
	this case $\supp_\mf{T}(\ol p) = \{S\}$ and $\ol P = S$.
\end{proof}

Note that if $p = \sum_{W \in \supp(p)} a_W p_W$, then $\ol p = \sum_{W \in \supp(p)} a_{\ol{W}} \ol p_{\ol W}$ where $a_W = a_{\ol W}$ and $\ol p_{\ol{W}} = \phi(p_W)$.

\subsection{Defining neighborhoods in $(\mc{X},\frak S)$ and $(\mc{X},\frak T)$}\label{sec:nbhds}

The key step in proving that Step 2 of the maximization procedure does
not change the boundary is to understand how basis neighborhoods in
$\partial (\mc{X},\mf{T})$ relate to those in $\partial
(\mc{X},\mf{S})$.  

In addition to assuming that $\mf{S} = \mf{S}_{ess}$, we make the
following standing assumption to simplify notation.  One 
consequence of 
this assumption is that the projection map associated to any $W$ now 
has codomain
$\mc{C}W$; this ensures that the preimage $\pi_W^{-1}(X)$ is well defined for a
subset $X \subseteq \mc{C}W$.

\begin{StandingAssumption}\label{assumption:projections_are_points}
	Given an HHS $(\mc{X},\mf{S})$,  we will assume that for each $W \in
	\mf{S}$ and $x \in \mc{X}$, $\pi_W(x)$ is a single point instead
	of a bounded diameter set.  This can always be done by replacing
	the image $\pi_W(x)$ with a choice of a single point in
	$\pi_W(x)$.  This modification gives a hieromorphism where the map
	on index sets is bijection and the maps between hyperbolic spaces
	are isometries.  Hence, Corollary
	\ref{cor:extending_hieromorphisms} ensures that this assumption
	does not affect the HHS boundary.  Note this assumption may
	increase the hierarchy constant from $E$ to $3E$.
\end{StandingAssumption}

We also fix the following constant for the remainder of the section.

\begin{notation}\label{notation:big_HHS_constant}
	 We first fix a constant $E$ larger than twice all the HHS constants for $(\mc{X},\s)$, $(\mc{X},\frak T)$, and $(\mc{C}_\mf{T} S,(\mf{S} - \mf{T}) \cup \{S\})$, as well as the hyperbolicity constant of $\mc{C}_\mf{T}S$. This includes making $E$ larger than the diameter of the boundary projections in each structure. Next, we define a constant $C$  to be
	 \[C = 2\kappa + 8E +2B +1,\] 
	 where $E$ is the  constant fixed above,  $\kappa$ is the maximum of the constants for the gate map and the $\F_W$'s from Lemma~\ref{lem:hierarch_path_through_the_gate} and Proposition~\ref{prop:properties_of_F}\eqref{F_prop:projections}, and $B$ is the constant from Lemma \ref{lem:enlarging_boundary_neighborhoods} for an $E$--hyperbolic space. (Essentially, $C$ is chosen large enough to accommodate any coarseness from the HHS properties and to apply Lemma \ref{lem:enlarging_boundary_neighborhoods}.) 
\end{notation}  

For convenience, we also define the following subsets of $\mc{C}_\mf{T}S$. 
\begin{defn}\label{defn:YA}
	For each $W\in \s - \mf{T}$, define
	$Y_W$ to be the subspace of $\mc C_\frak TS$ given by  
	\[Y_W:=\overline\pi_S(\F_W).\]
\end{defn}

\begin{rem}[Quasiconvexity and boundary of $Y_W$]\label{rem:boundary_YQ}
	Because $\F_W$ is hierarchically quasiconvex in
	$(\mc X,\s)$, it is also hierarchically quasiconvex in $(\mc X,\frak T)$ by Proposition \ref{prop:hqcequivalence}. Let $k$ be a function so  each $\F_W$ is $k$-hierarchically quasiconvex with respect to both $\mf{S}$ and $\mf{T}$.  By the definition of hierarchical quasiconvexity, the space
	$Y_W$ is  a $k(0)$--quasiconvex subspace of the hyperbolic space $\mc C_\frak TS$.
	
	If $q \in \partial(\mc{X},\mf{S})$ with $\supp(q) = \{Q\}$ for some $Q \in \mf{S} -\mf{T}$, then Proposition \ref{prop:properties_of_F}\eqref{F_prop:projections} ensures that there is a sequence of points $(x_n)$ in $\F_Q$ so that the sequence $(\pi_Q(x_n))$ converges to $q$ in $\mc{C}Q \cup \partial \mc{C}Q$.     Since $Q \in \mf{S} -\mf{T}$, the point $\ol q$ is in $\partial \mc{C}_\mf{T}S$ and the first two parts of Lemma \ref{lem:basisnbhd} ensure that the sequence $(\ol\pi_S(x_n))$ will then converge to $\ol q$ in $\mc{C}_\mf{T}S \cup \partial \mc{C}_\mf{T}S$. Hence $\ol q \in \partial Y_Q$.
\end{rem}

Fix a basepoint $x_0 \in \mc{X}$, and let $\pi_W(x_0)$ be the 
basepoint with respect to which the boundary $\partial \mc{C}W$ is 
constructed for each $W\in\s$.  Given a point
$p=\sum_{i=1}^n a_{U_i}p_{U_i}\in \partial (\mc{X},\frak S)$ and a
basic set $\mc{B}_{r,\varepsilon}(p)$ in the topology on $\partial
(\mc{X},\frak S)$, there is an associated collection of neighborhoods
$M(r;p_{U_i})$ of $p_{U_i}$ in $\mc C U_i$.  The goal of this
subsection is to define an associated collection of sets in the
hyperbolic spaces in the structure $\frak T$.  In Section
\ref{sec:pfofBdryABDInvariant}, we will discuss how this associated
collection of sets is related to a basic set in the topology on
$\partial (\mc{X},\frak T)$.
Given a neighborhood $M(r;p_W)$ of $p_W \in  \partial \mc CW$, we will define a corresponding neighborhood  $ M(R_r;\ol p_{\ol W})$ in  $\mc C_\frak T \ol W\cup\partial \mc C_\frak T\ol W$. In what follows, $M^\circ( \ast;p_W)$ denotes the set $M(\ast;p_W) \cap \mc{C}W$,  that is, $M^\circ( \ast;p_W)$ is the subset of the neighborhood that is  \emph{not} in the boundary of $\mc{C}W$.
	
 \begin{defn}[Neighborhoods in $\frak T$] \label{def:TnbhdsfromSnbhds}
Let $M(r;p_W)$ be a neighborhood in $\mc CW\cup \partial \mc CW$ of $p_W \in \partial \mc CW$ for some $W \in \mf{S}$. Let $E$ and $C$ be the constants from Notation \ref{notation:big_HHS_constant} and assume $r \geq r_0$, where $r_0$ is the constant from Lemma \ref{lem:basisnbhd} for $E$.
 
 First, we define an intermediate subset $\widetilde M(r;\ol p_{\ol W})$ as
 \[
  \widetilde M(r;\ol p_{\ol W}):=\mc N_C\left(\overline\pi_{\ol W}\left(\pi_W^{-1}(\mc N_{C}(M^\circ(r;p_W)))\right)\right).
  \]   
  
  \noindent We now use $\widetilde M(r;\ol p_{\ol W})$ to define a neighborhood of $\ol p_{\ol W}$ in $\mc{C}_\mf{T} \ol W$. Our choice of
  $C$ is large enough that Lemma \ref{lem:enlarging_boundary_neighborhoods} gives $\mc N_{C}(M^\circ(r;p_W))\subseteq M(r-2C;p_W)$.   
  If $\ol W=W$, then $\mc CW = \mc C_\frak T W$ and $p_W = \ol p_{\ol W}$. Thus, we have $$ \widetilde M(r;\ol p_{\ol W}) \subseteq \mc{N}_{2C} (M^\circ(r,p_W)) \subseteq M(r-4C;\ol p_{\ol W}).$$  
   If instead $\ol W=S$, then $\ol p_{\ol W}$ is a point in $\partial \mc{C}_\mf{T} S$. Since  $r\geq r_0$, we can therefore apply Lemma \ref{lem:basisnbhd}\eqref{HypHHS:moving_up} to the $E$--hyperbolic HHS $(\mc{C}_\mf{T} S,\{S\} \cup (\mf{S} -\mf{T}))$ to see that $$\widetilde M(r;\ol p_{\ol W}) \subseteq M (r'; \ol p_{\ol W})$$ for some $r'$ determined by $r$ and $E$. 
   Setting $R_r = \max \{r', r-4C\}$, the desired neighborhood is  $M(R_r;\ol p_{\ol W})$.
\end{defn}

The next lemma verifies that $R_r$  is an increasing function of $r$. 
\begin{lem}\label{lem:nbhdsinnbhds}
Given a neighborhood $M(r;p_W)$ in $\mc CW\cup \partial\mc CW$ (where 
$r \geq r_0$)  and its associated neighborhood $M(R_r;\ol p_{\ol W})$ 
in $\mc C_\frak T \ol W\cup \partial\mc C_\frak T \ol W$ as in 
Definition~\ref{def:TnbhdsfromSnbhds}, the quantity $R_r$ is an 
increasing linear function of $r$ with the constant of linearity determined by $E$.
\end{lem}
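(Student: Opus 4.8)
The plan is to unwind Definition~\ref{def:TnbhdsfromSnbhds} and check that each quantity feeding into $R_r$ grows at most linearly in $r$ with slope and additive constant controlled by $E$. The first point is that the constant $C$ of Notation~\ref{notation:big_HHS_constant} depends only on $E$: by definition $C = 2\kappa + 8E + 2B + 1$, where $B$ depends only on $E$ by Lemma~\ref{lem:enlarging_boundary_neighborhoods}, and $\kappa$ is the larger of the gate constant of Lemma~\ref{lem:hierarch_path_through_the_gate} and the projection constant of Proposition~\ref{prop:properties_of_F}\eqref{F_prop:projections} for the subsets $\F_W$; both of the latter depend only on $E$ and on the hierarchical quasiconvexity gauge of the $\F_W$, which itself depends only on $E$ by Proposition~\ref{prop:properties_of_F}\eqref{F_prop:HQC}. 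Hence $r \mapsto r - 4C$ is an increasing linear function of $r$ of slope $1$ whose additive constant is determined by $E$.

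Next I would treat the two cases in the construction of $R_r$. If $\ol W = W$, then by Definition~\ref{def:TnbhdsfromSnbhds} one may take $R_r = r - 4C$, which is linear by the previous paragraph. If $\ol W = S$, the number $r'$ is obtained by applying Lemma~\ref{lem:basisnbhd}\eqref{HypHHS:moving_up} to the hierarchically hyperbolic space $(\mc{C}_\mf{T} S, \{S\} \cup (\mf{S} - \mf{T}))$ with constant $E$; this is permitted because, by Notation~\ref{notation:big_HHS_constant}, $E$ exceeds both the hierarchy constant of this structure and the hyperbolicity constant of $\mc{C}_\mf{T} S$, and because the hypothesis $r \geq r_0$ of the present lemma is precisely what is needed to invoke that lemma, with $r_0$ its constant for $E$. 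Part~\eqref{HypHHS:increasing} of Lemma~\ref{lem:basisnbhd} then states exactly that $r'$ is an increasing linear function of $r$ with constant of linearity determined by $E$.

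Finally, since $R_r = \max\{r', r-4C\}$ is the maximum of two nondecreasing functions, it is nondecreasing, and since each of $r'$ and $r-4C$ lies between two affine functions of $r$ with slopes and additive constants determined by $E$, the same holds for $R_r$; this is the asserted linear control. There is no substantive obstacle in the argument: the only points requiring care are the dependence argument showing that $\kappa$, and hence $C$, is a function of $E$ alone, and confirming that the constant $r_0$ appearing in the hypothesis is the same one produced by Lemma~\ref{lem:basisnbhd} when applied to the auxiliary hyperbolic HHS on $\mc{C}_\mf{T} S$.
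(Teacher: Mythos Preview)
Your proposal is correct and follows essentially the same approach as the paper: unwinding Definition~\ref{def:TnbhdsfromSnbhds} to write $R_r = \max\{r', r-4C\}$, noting that $C$ depends only on $E$, and invoking Lemma~\ref{lem:basisnbhd}\eqref{HypHHS:increasing} for the linearity of $r'$. Your argument is in fact more detailed than the paper's (which is only two sentences), particularly in tracing the dependence of $\kappa$ and $C$ on $E$; the paper simply asserts this.
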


\begin{proof}
From Definition \ref{def:TnbhdsfromSnbhds},  $R_r = \max\{r', r-4C\}$. 
Since $C$ is determined by the hierarchy constant $E$, the result 
follows  from Lemma \ref{lem:basisnbhd}\eqref{HypHHS:increasing}, as
$r'$ is an increasing linear function of $r$ 
with the constant of linearity determined by $E$.  
\end{proof}

\subsection{How boundary projections behave when switching structures}
\label{sec:bdryproj}
We now prove three technical lemmas that let us  understand how the boundary projections change when switching from $\mf{S}$ to its maximization $\mf{T}$.  These  lemmas will be essential in the proof of Theorem~\ref{thm:BdryABDInvariant}. 

The first lemma describes a specific situation when the boundary projection changes by only a uniformly bounded amount.

\begin{lem}\label{lem:boundaryprojintersect}
	Let $q,p\in\partial(G,\s)$, and suppose $q$ is remote to $p$ and $\overline q$ is remote to $\overline p$. Suppose $\supp(p) = \supp_\mf{T}(\ol p) \neq \{S\}$ and $\supp(q) = \{Q\}$ for some $Q  \in \{S\} \cup (\mf{S} - \mf{T})$. If $W\in\supp(p)$ or $W \in \supp(p)^{\perp}$ with $W \not \perp Q$, then we have $$\diam_{\mc CW}\left(\partial \ol \pi_W(\ol q) \cup \partial\pi_W(q)\right)\leq C-2E.$$ 
\end{lem}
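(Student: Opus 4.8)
The statement is about comparing the $\mc C W$--boundary-projection of a boundary point $q$ (with support a single domain $Q$ in $\{S\}\cup(\mf S-\mf T)$) computed in the two structures $\mf S$ and $\mf T$. The key observation is that the support sets $\supp(p)$ and $\supp_{\mf T}(\ol p)$ coincide and equal a pairwise-orthogonal collection in $\mf T-\{S\}$; in particular each $W$ in question lies in $\mf T-\{S\}$, so $\mc C W=\mc C_{\mf T}W$ and the projection data $\pi_W=\ol\pi_W$ and the relative projections $\rho^{\,\cdot}_W$ agree on the nose between the two structures. So the only thing that can differ between $\partial\pi_W(q)$ and $\partial\ol\pi_W(\ol q)$ is which ``case'' of Definition~\ref{def:boundaryprojections} is used to compute them and what is being plugged in, since $q$ is supported on $Q$ while $\ol q$ is supported on $S$ (recall $Q\in\{S\}\cup(\mf S-\mf T)$ forces $\ol Q=S$).

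First I would dispose of the degenerate possibility $Q=S=$ the $\nest$--maximal domain: then for any $W\ne S$ we have $Q\sqsupsetneq W$ on the $\mf S$ side, $\ol Q=S\sqsupsetneq W$ on the $\mf T$ side, and both boundary projections are computed by the third bullet of Definition~\ref{def:boundaryprojections}, using $\rho^W_S$ in each structure. Since $\F_W$ is hierarchically quasiconvex in both structures with a uniform function (Proposition~\ref{prop:hqcequivalence}), and $\rho^W_S\asymp_\kappa\pi_S(\F_W)=Y_W=\ol\rho^W_S$ by Proposition~\ref{prop:properties_of_F}\eqref{F_prop:projections} and the definition of $\ol\rho^W_S$, the relevant $(1,20E)$--quasigeodesics in $\mc C S$ and in $\mc C_{\mf T}S$ from $Y_W$ toward the point $\ol q_S=q_Q$ track each other (Morse Lemma), and $\rho^S_W=\ol\pi_W\circ\ol\pi_S^{-1}$ is $1$--Lipschitz, so the images differ by a bounded amount; the constant $C$ was chosen (Notation~\ref{notation:big_HHS_constant}) precisely to absorb these $\kappa$'s, $E$'s, $B$'s and Morse constants, yielding the bound $C-2E$.

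The main case is $Q\in\mf S-\mf T$, so $\ol Q=S$. On the $\mf T$ side, since $S$ is $\nest$--maximal and $W\in\mf T-\{S\}$, we have $\ol Q=S\sqsupsetneq W$, and $\partial\ol\pi_W(\ol q)=\ol\rho^S_W(Z)$ where $Z$ consists of the far-enough points on $(1,20E)$--quasigeodesics in $\mc C_{\mf T}S$ from $\ol\rho^W_S=Y_W$ to $\ol q_S\in\partial Y_S$ (using Remark~\ref{rem:boundary_YQ}, $\ol q\in\partial Y_Q$). On the $\mf S$ side, the relation between $Q$ and $W$ is either $Q\sqsupsetneq W$, $W\sqsupsetneq Q$, $Q\pitchfork W$, or $W\perp Q$; the hypothesis excludes $W\perp Q$, and $W\sqsupsetneq Q$ is impossible since $W\in\mf S_{ess}$ has $\mf S^\perp_W\ne\emptyset$ would force... actually the cleaner route is: whatever the relation, $\partial\pi_W(q)$ is computed from $\rho^Q_W$ (the transverse/nested case) or from $\rho^W_Q$ of a quasigeodesic ray representing $q_Q$ (the $W\sqsupsetneq Q$ case). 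I would show in each subcase that the output is uniformly close to $\ol\rho^S_W(Z)$ by passing through $\F_Q$: there is a sequence $x_n\in\F_Q$ with $\pi_Q(x_n)\to q_Q$ and $\ol\pi_S(x_n)\to\ol q_S$ (Remark~\ref{rem:boundary_YQ}), a hierarchy path in $(\mc X,\mf S)$ from $x_0$ toward this end projects (by Lemma~\ref{lem:hierarchy_paths_under_ABD}) to an unparametrized quasigeodesic in $\mc C_{\mf T}S$ tracking a $(1,20E)$--quasigeodesic to $\ol q_S$, and $\pi_W$ of this hierarchy path is an unparametrized quasigeodesic in $\mc C W$ whose ``limit'' is exactly $\partial\pi_W(q)$ on one hand and, via $\ol\rho^S_W=\ol\pi_W\circ\ol\pi_S^{-1}$, is coarsely $\ol\rho^S_W(Z)$ on the other hand. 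Bounded geodesic image (Definition~\ref{defn:HHS}\eqref{axiom:bounded_geodesic_image}) controls how the $W$--coordinate behaves along the part of the quasigeodesic near $\rho^W_S$, which is why the ``far from $\rho^W_S$'' condition built into $Z$ matters, and this is where I expect the main technical obstacle to lie: carefully matching the $\sigma+E$ threshold in the definition of $Z$ with the consistency/bounded-geodesic-image thresholds so that the two descriptions of the limit agree up to the allotted constant $C-2E$. Once the coarse equality of each description with $\ol\rho^S_W(Z)$ is established, the triangle inequality for diameters gives $\diam_{\mc CW}(\partial\ol\pi_W(\ol q)\cup\partial\pi_W(q))\le C-2E$, the slack $-2E$ being available because $C$ was defined with room to spare.
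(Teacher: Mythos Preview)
Your overall shape is right---reduce to the two relations $Q\trans W$ and $W\propnest Q$, and in each case compare $\partial\pi_W(q)$ and $\partial\ol\pi_W(\ol q)$ through the geometry of a quasigeodesic heading to $\ol q$---but two things need fixing.

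First, your separate treatment of $Q=S$ does not work as written. You propose to compare a $(1,20E)$--quasigeodesic in $\mc CS$ with one in $\mc C_{\mf T}S$ via the Morse Lemma, but these live in different hyperbolic spaces and the Morse Lemma gives you nothing across spaces. Relatedly, you write ``$\rho^S_W=\ol\pi_W\circ\ol\pi_S^{-1}$'', but the left side is the $\mf S$--structure relative projection $\mc CS\to 2^{\mc CW}$ from Definition~\ref{defn:HHS}\eqref{axiom:nesting}, while the right side is the $\mf T$--structure map $\ol\rho^S_W$; these are genuinely different maps with different domains. In the paper there is no separate $Q=S$ case: once one observes $Q\not\propnest W$ and $Q\not\perp W$ (since $W\in\mf T-\{S\}$ forces $\mf S_W^\perp\ne\emptyset$, so anything nested in $W$ would also lie in $\mf T-\{S\}$), the possibility $Q=S$ is absorbed into the case $W\propnest Q$.

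Second, the device you are missing, and which replaces your vague ``hierarchy path in $(\mc X,\mf S)$'' approach, is Proposition~\ref{prop:CTS_is_HHS}: $\mc C_{\mf T}S$ is itself a hyperbolic HHS with index set $(\mf S-\mf T)\cup\{S\}$ and the \emph{same} projections $\pi_Q$, $\rho^Q_W$, etc. So a $(1,20E)$--quasigeodesic $\alpha$ in $\mc C_{\mf T}S$ from $\ol\rho^W_S=Y_W$ to $\ol q$ is automatically a hierarchy path there (Theorem~\ref{thm:hyperbolic_HHS}), hence $\pi_Q\circ\alpha$ is an unparametrized quasigeodesic in $\mc CQ$ from near $\rho^W_Q$ to $q$. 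This gives a single object whose $\mc CW$--shadow is simultaneously (i) $\ol\rho^S_W$ of a far-out subray $\alpha_0$, which is contained in $\partial\ol\pi_W(\ol q)$ by definition, and (ii) via consistency in $\mf S$ at a point $x\in\alpha_0$ with $d_Q(\rho^W_Q,x)>E$, within $E$ of $\rho^Q_W(\pi_Q(x))\subseteq\mc N_E(\partial\pi_W(q))$. One more application of consistency, now in $\mf T$, at the same $x$ (using $\ol d_S(\ol\rho^W_S,x)>E$) pins $\ol\rho^S_W(x)$ to $\pi_W(x)$, and the triangle inequality gives the bound $6E$. For $Q\trans W$ the paper instead uses Lemma~\ref{lem:qgeonearY} to get a quasigeodesic to $\ol q$ eventually in $Y_Q$, then a single application of bounded geodesic image in $\mf T$ between a point $x\in Y_Q$ and a point $y\in Z$; this yields $3E+\kappa$. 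Your idea of approaching via a sequence $x_n\in\F_Q$ can be made to work, but it is really a proxy for this same quasigeodesic-in-$\mc C_{\mf T}S$ argument, and without invoking the HHS structure on $\mc C_{\mf T}S$ you will not get the clean link between $\pi_Q$ and $\ol\rho^S_W$ that makes the constants come out.
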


\begin{proof}
	Let $\sigma$ be the Morse constant (Lemma \ref{lem:Morse}) for a $(1,20E)$--quasigeodesic in an $E$--hyperbolic space. Since $\supp(q) = \{Q\} \subset  \{S\} \cup (\mf{S} - \mf{T})$, we have $\supp_\mf{T}(\ol q) = \{S\}$.  Thus for any $W\in\mf T$, the boundary projection $\partial \ol \pi_W(\ol q)$ is defined as $\ol \rho_W^S(Z)$ where $Z$ is the set of all points of $\mc{C}_\mf{T}S$ that  are at least $E+\sigma$ far from  $\ol \rho_S^W = \ol \pi_S(\F_W)$ and lie on a $(1,20E)$--quasigeodesic from a point in $\ol \rho_S^W$ to $\ol q$. 
	
	On the other hand, the boundary projection $\partial \pi_W(q)$ depends on the relation between $W$ and $Q$.
	The only way  $\supp(p)= \supp_\mf{T}(\ol p) \neq \{S\}$ is if $\supp(p) \subseteq \mf{T}-\{S\}$. Since the only orthogonality of $\mf{S}$ or $\mf{T}$ happens in $\mf{T} -\{S\}$, we also have $\supp(p)^\perp \subseteq \mf{T}-\{S\}$.  This means $Q \not \nest W$ as  $Q \not \in \mf{T} - \{S\}$. Similarly, $Q \not \perp W$ as the only orthogonality occurs among domains of $\mf{T}-\{S\}$.
    Hence, we must have $Q \trans W$ or $W \propnest Q$.  We consider each case separately, because the definition of the boundary projection $\partial \pi_P(q)$ depends on which relation holds. 
	
	If $Q\trans W$,  the boundary projection of $q$ to $W$ is defined as $\partial\pi_W(q)=\rho^Q_W$.
	Since $\supp(q)=\{Q\}$ and $Q\not\in \frak T$, consider the subspace $Y_Q=\ol \pi_S(\F_Q)$ of $\mc C_\frak T S$.  By Remark \ref{rem:boundary_YQ}, $Y_Q$ is $k(0)$--quasiconvex subset of $\mc{C}_\mf{T} S$ and  $\ol q\in \partial Y_Q$. Hence, Lemma \ref{lem:qgeonearY} provides a constant $A \geq 0$ and an  $(1,20E+2A)$--quasigeodesic from a point in $\ol\rho^W_S$ to $\ol q$ that eventually lies in $Y_Q$. Denote this quasigeodesic by $\alpha$.  By the Morse Lemma (Lemma \ref{lem:Morse}), $\alpha$ is contained in a $\sigma'$--neighborhood of any $(1,20E)$--quasigeodesic from $\ol \rho_S^W$ to $\ol q$, where $\sigma'$ is determined by $E$. In particular, by going sufficiently far along $\alpha$,  we can find a point $x \in \alpha \cap Y_Q$ and a point $y \in Z$ so that the $\mc{C}_\mf{T}S$--geodesic from $x$ to $y$ avoids $\mc{N}_{2E}(\ol \rho_S^W)$. Moreover, we can choose $x$ and $y$ so that they are points in $\mc{X}$ in addition to points in $\mc{C}_\mf{T} S$. The bounded geodesic image axiom (Definition \ref{defn:HHS}\eqref{axiom:bounded_geodesic_image}) in $\mf{T}$ now says $\diam_{\mc CW}(\ol \rho_W^S(x) \cup \ol \rho_W^S(y)) \leq E$.  As $y\in Z$,  it follows that $\partial\ol\pi_W(\ol q)\asymp_E \ol\rho^W_S(y)$.
	
	By Proposition \ref{prop:properties_of_F}, $\pi_W(\F_Q) \asymp_\kappa \rho_W^Q$.    Since $x \in \alpha \cap Y_Q$, we have $\ol{\rho}_W^S(x) = \pi_W (\ol\pi_S^{-1}(x)) \subseteq \pi_W(\F_Q).$ We have shown that \[\partial\ol\pi_W(\ol q)=\ol\rho^S_W(Z)\asymp_E\ol\rho^S_W(y)\] and \[\partial \pi_W(q)=\rho^Q_W\asymp_\kappa\pi_W(\mathbf F_Q)\asymp_E \ol\rho^S_W(x). \] 
	
	As we have also shown that $\diam_{\mc CW}(\ol \rho_W^S(x) \cup \ol \rho_W^S(y)) \leq E$, we conclude that $$\diam_{\mc CW}\left(\partial \ol \pi_W(\ol q) \cup \partial\pi _W(q)\right) \leq \diam_{\mc CW}(\ol \rho_W^S(x) \cup \ol \rho_W^S(y)) +  2E+\kappa \leq 3E + \kappa.$$
	The definition of $C$ ensures $3E +\kappa \leq C -2E$, finishing the proof in this case.

	If $W\propnest Q$, the  boundary projection $\partial \pi_W(q)$ is defined in terms of projections of quasigeodesic rays.  
	Since $Q  \not\in \mf{T} -\{S\}$, $\partial \mc{C} Q$ is a subset of $\partial \mc{C}_\mf{T} S$. Pick $z \in \F_W$, and let   $\alpha$ be a $(1,20E)$--quasigeodesic in $\mc C_\mf{T} S $  from $\ol \pi_S(z) \subseteq \ol \rho^W_S$ to $ \ol q\in\partial \mc C Q \subset \partial \mc{C}_\mf{T} S$.  By Theorem \ref{thm:hyperbolic_HHS}, the quasigeodesic $\alpha$ is a hierarchy path in the hyperbolic HHS $\mc C_\frak T S$.  Thus, $\pi_Q \circ \alpha$ is an unparametrized $(\lambda,\lambda)$--quasigeodesic in $\mc{C}Q$ for some $\lambda$ determined by $E$, where here $\pi_Q$ is the projection map in $(\mc C_\frak T S,(\frak S-\frak T) \cup \{S\})$. Since $\rho_Q^W \asymp_C \pi_Q(\F_W)$ by Proposition \ref{prop:properties_of_F}, $\pi_Q \circ \alpha$ gives a quasigeodesic ray in $\mc{C}Q$ that goes from $ \pi_Q(z) \in \mc{N}_C(\rho_Q^W)$ to $q$. Hence, $\pi_Q (\alpha)$ will be contained  in a uniform neighborhood of any $(1,20E)$--quasigeodesic from $\rho_Q^W$ to $q$. Therefore, there is  a point $x \in \alpha$ so that if $\alpha_0$ is the subray of $\alpha$ starting at $x$, then $\ol \rho_W^S(\alpha_0) \subseteq  \partial \ol\pi_W (\ol q)$ and $\rho_W^Q(\pi_Q  (\alpha_0)) \subseteq \mc{N}_E(\partial \pi_W(q))$ (this second inclusion follows from the bounded geodesic image axiom in $\mf{S}$). In particular, $d_Q(\rho_Q^W, \alpha_0)$ and $\ol d_S(\ol \rho_S^W,\alpha_0)$ are both strictly larger than $E$. See Figure \ref{fig:boundaryprojclaim} for a summary of the situation.
	
	\definecolor{FigGray}{gray}{.4}
	\begin{figure}[h]
		\begin{tikzpicture}
			\node at (0,0) 	{\includegraphics[scale=.5]{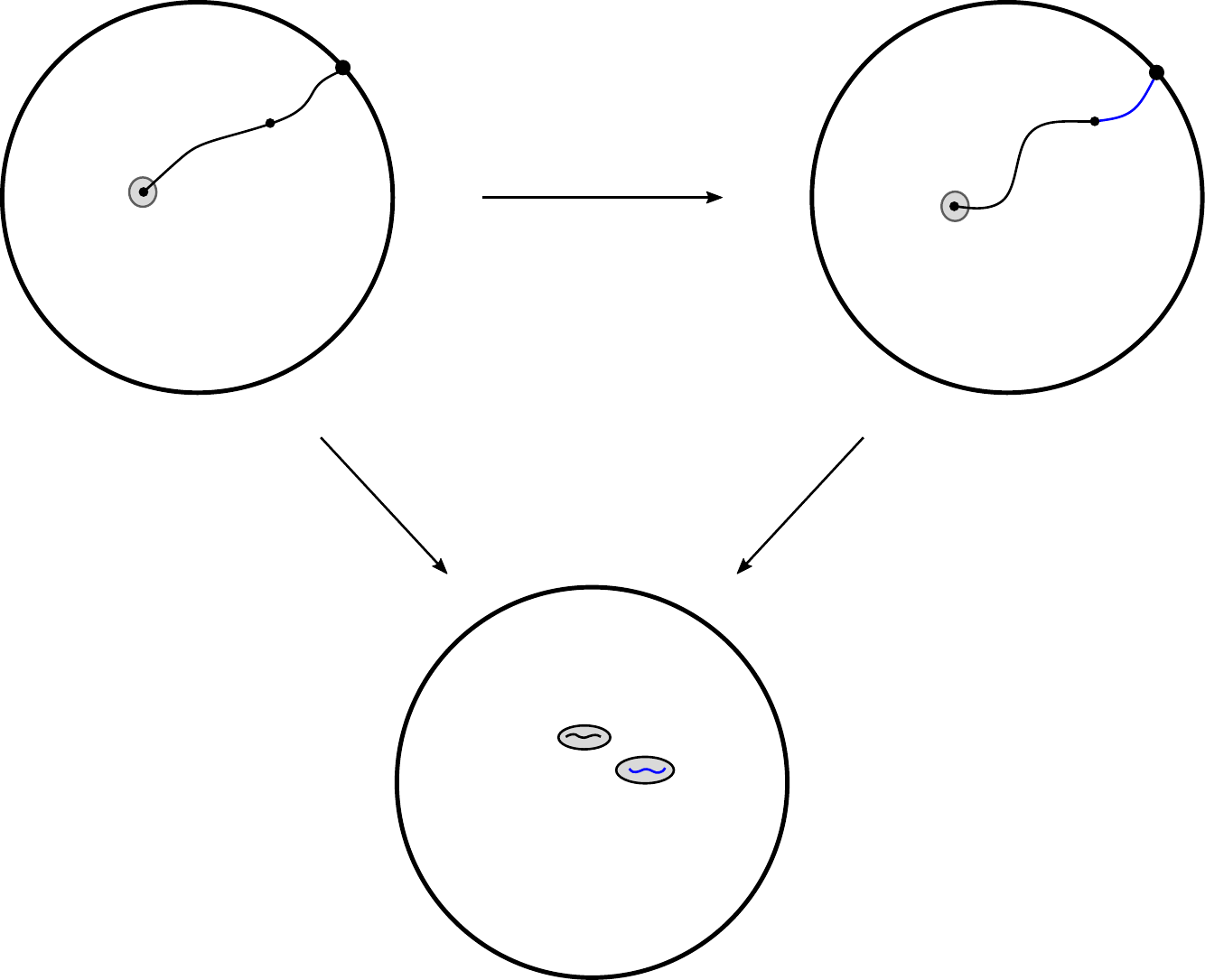}};
			\node at (-2.25,2.9) {\footnotesize $\mc{C}_\mf{T} S$};
			\node at (2.25,2.9) {\footnotesize $\mc{C}Q$};
			\node at (0, -3) {\footnotesize$\mc{C}W =  \mc{C}_\mf{T}W$};
			\node at (0,1.8) {\footnotesize $\pi_Q$};
			\node at (-1.45,-.3) {\footnotesize $\ol \rho_W^S$};
			\node at (1.3,-.3) {\footnotesize $\rho_W^Q$};
			\node at (-2.5,1.4) {\footnotesize \color{FigGray} $\ol \rho_S^P$};
			\node at (2.25, 1.2) {\footnotesize \color{FigGray} $\mc{N}_C(\rho_Q^W)$};
			\node at (-2.15, 1.8) {\footnotesize $\alpha$};
			\node at (-1.5, 2) {\footnotesize$\alpha_0$};
			\node at (-1.9,2.2) {\footnotesize $x$};
			\node at (-1.25, 2.55) { \footnotesize $\ol q$};
			\node at (4,1.9) {\footnotesize \color{blue} $\pi_Q(\alpha_0)$};
			\node at (3.2,2.55) {\footnotesize $q$};
			\draw [-stealth,blue,densely dotted] (3.45,1.9) -- (3,2.1);
			\node at (-.3,-1.1) {\footnotesize $\ol \rho_W^S(\alpha_0)$};
			\node at (.1,-1.85) {\footnotesize \color{blue} $\rho_W^Q(\pi_Q(\alpha_0))$};

		\end{tikzpicture}
		\caption{Proof of Lemma \ref{lem:boundaryprojintersect} when $W\propnest Q$.}
		\label{fig:boundaryprojclaim}
	\end{figure}

	Since $\ol \pi_S$ is the inclusion map, we can further select $x$ so that $\ol \pi_S^{-1} (x) = x$.  Since $d_Q(\rho_Q^W, x) >E$, the consistency axiom (Definition \ref{defn:HHS}\eqref{axiom:consistency}) in $\mf{S}$ says 
	\begin{equation}\label{eq:nesting_consistency}
		\diam_{\mc CW}\left(\rho^Q_W(\pi_Q(\ol\pi_S^{-1}( x))) \cup \pi_W(\ol\pi_S^{-1}( x))\right) \leq E. 
	\end{equation}
	By our choice of $x$, we have $\pi_Q(\ol\pi_S^{-1}(x))\subseteq \pi_Q(\alpha_0)$, and hence  $$\rho^Q_W(\pi_Q(\ol \pi_S^{-1}( x))) \subseteq \rho_Q^W(\pi_Q(\alpha_0)) \subseteq \mc{N}_E(\partial\pi_W(q)).$$ Equation \eqref{eq:nesting_consistency} then implies
	\begin{equation}\label{eqn:piP(q)}
		\diam_{\mc CW}\left(\partial\pi_W(q) \cup \pi_W(\ol\pi_S^{-1}(x))\right) \leq 4E.
	\end{equation}
	
	Similarly, because $\ol d_S(\ol \rho_S^W,x) >E$, the consistency axiom in $\mf{T}$ says
	
	\[ \diam_{\mc CW}\left(\ol\rho^S_W( x) \cup \pi_W(\ol\pi_S^{-1}(x))\right) \leq  E.
	\]
	Since $x$ was chosen so that $\ol \rho_W^S(x) \subseteq \partial \ol \pi_W(\ol q)$, this implies 
	\begin{equation}\label{eqn:piPqinT}
		\diam_{\mc CW}\left(\partial \ol \pi_W(\ol q) \cup \pi_W(\ol\pi_S^{-1}(\ol x))\right) \leq  2E.
	\end{equation}

	Applying the triangle inequality to  \eqref{eqn:piP(q)} and \eqref{eqn:piPqinT}, we obtain
	\begin{align*}
		\diam_{\mc CW}\left(\partial\pi_W(q) \cup \partial\ol\pi_W(\ol q)\right) & \leq 
		\diam_{\mc CW}\left(\partial \pi_W(q) \cup \pi_W(\ol\pi_S^{-1}(\ol x))\right) + \diam_{\mc CW}\left(\pi_W(\ol\pi_S^{-1}(\ol x))\cup \partial\ol\pi_W(\ol q)\right) \\ &
		\leq 4E + 2E =6E.
	\end{align*}
	As $6E < C- 2E$, this completes the proof of Lemma \ref{lem:boundaryprojintersect}. 
\end{proof}

The next two lemmas describe how switching structures affects the interaction of boundary projections  with neighborhoods.  Roughly, the lemmas state that if we have two points $p,q\in \partial (\mc{X},\frak S)$ with $q$ remote to $p$ and a domain $P\in\supp(p)$, then if the boundary projection in $(\mc{X},\frak S)$ of $q$ to $P$ is contained in the neighborhood $M(r;p_P)$ of $p_P$ in $\mc CP$, then the boundary projection in $(\mc{X},\frak T)$ of $\ol q$ to $\ol P$ (or $\ol q$ itself) is contained in the associated set $M(R_r;p_{\ol P})$ in $\mc C_\frak T \ol P$.  Here, $\ol P$ is as defined in Section \ref{sec:nbhds}; see Definitions~\ref{def:nbhdbasis} and \ref{def:TnbhdsfromSnbhds} for the definitions of $M(r;p_P)$ and $M(R_r;\ol p_{\ol p})$, respectively.  The statements are made  precise by considering how $\ol q$ and $\ol p$ are related.  Lemma~\ref{lem:bdryprojopensets} handles the case when  $\ol q$ is remote to $\ol p$ and is broken into two subcases depending on whether $\supp(p)=\supp_{\frak T}(\ol p)$ or not.  Lemma~\ref{lem:projopensetsnon} handles the case when  $\ol q$ is not remote to $\ol p$ and assumes that $\supp(p)\neq\supp_{\frak T}(\ol p)$, which is the only case we will need for the proof of Theorem~\ref{thm:BdryABDInvariant}.  

Recall, we are still operating under the standing assumptions that $\mf{S} = \mf{S}_{ess}$ (Lemma \ref{lem:removebounded}) and that $\pi_W(x)$ is a single point for each $x\in \mc{X}$ and $W \in \mf{S}$ (Standing Assumption \ref{assumption:projections_are_points}).

\begin{lem}\label{lem:bdryprojopensets}
Let $q,p\in\partial(G,\s)$, and suppose $q$ is remote to $p$ and $\overline q$ is remote to $\overline p$. Let $r \geq r_0$, where $r_0$ is the lower bound on $r$ required in Definition \ref{def:TnbhdsfromSnbhds}. 
\begin{enumerate}
\item \label{item:diffsupp} If $\supp(p)=\{P\}$ and $\supp_\frak T(\overline p)=\{S\}$ (including the possibility that $P= S$), then 
\[
\partial\pi_P(q) \subseteq M(r;p_P) \qquad \implies \qquad \partial\overline\pi_S(\overline q)\subseteq   M(R_r;\ol p_S).
\]
\item \label{item:samesupp} If $\supp(p)=\supp_\frak T(\overline p) \neq \{S\}$, then for any $P\in\supp(p)$
\[
\partial\pi_P(q)\subseteq  M(r;p_P) \qquad \implies \qquad \partial\overline\pi_P(\overline q)\subseteq   M(R_r;\ol p_P).
\]
\end{enumerate}
\end{lem}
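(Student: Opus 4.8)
The plan is to reduce both parts to a single containment and then split according to whether $\overline P=P$ or $\overline P=S$, which is exactly the split between case~\eqref{item:samesupp} and case~\eqref{item:diffsupp}. First I would use the two remoteness hypotheses together with $\mf{S}=\mf{S}_{ess}$ to pin down the shape of $\supp(q)$ and $\supp_{\mf{T}}(\overline q)$: since the only nontrivial pairwise orthogonal subsets of $\mf{S}$ lie inside $\mf{T}-\{S\}$, either $\supp(q)\subseteq\mf{T}-\{S\}$ (so that $\overline q=q$ and $\supp_{\mf{T}}(\overline q)=\supp(q)$) or $\supp(q)=\{Q\}$ for a single $Q\in\{S\}\cup(\mf{S}-\mf{T})$. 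In case~\eqref{item:diffsupp}, where $\supp_{\mf{T}}(\overline p)=\{S\}$, remoteness of $\overline q$ to $\overline p$ forces $S\notin\supp_{\mf{T}}(\overline q)$, so only the first possibility occurs; in case~\eqref{item:samesupp} both occur. Finally, since $\partial\pi_P(q)\subseteq\mc CP$, the hypothesis gives $\partial\pi_P(q)\subseteq M^\circ(r;p_P)$.

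For case~\eqref{item:samesupp} I would argue directly, using $\overline P=P$ and $\mc CP=\mc C_{\mf{T}}P$. If $\supp(q)\subseteq\mf{T}-\{S\}$, then $\partial\overline\pi_P(\overline q)=\partial\pi_P(q)$, because all relations, relative projections, hyperbolic spaces, and Morse constants entering the three bullets of Definition~\ref{def:boundaryprojections} coincide on $\mf{T}-\{S\}$ and $q_V=\overline q_V$ for $V\in\supp(q)$; hence $\partial\overline\pi_P(\overline q)\subseteq M^\circ(r;p_P)\subseteq M(r-4C;p_P)=M(R_r;\overline p_P)$. If instead $\supp(q)=\{Q\}$ with $Q\in\{S\}\cup(\mf{S}-\mf{T})$, then Lemma~\ref{lem:boundaryprojintersect} applies with $W=P\in\supp(p)$ and gives $\diam_{\mc CP}\bigl(\partial\overline\pi_P(\overline q)\cup\partial\pi_P(q)\bigr)\le C-2E$, so $\partial\overline\pi_P(\overline q)\subseteq\mc N_C(M^\circ(r;p_P))$; by Lemma~\ref{lem:enlarging_boundary_neighborhoods} (applicable since $C$ is large enough by Notation~\ref{notation:big_HHS_constant}) this sits in $M(r-2C;p_P)\subseteq M(r-4C;p_P)=M(R_r;\overline p_P)$.

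For case~\eqref{item:diffsupp} we have $\overline P=S$ and $\supp(q)=\supp_{\mf{T}}(\overline q)\subseteq\mf{T}-\{S\}$, so every $V\in\supp(q)$ is $\propnest S$ and $\partial\overline\pi_S(\overline q)=\bigcup_{V\in\supp(q)}\overline\rho^V_S=\bigcup_{V\in\supp(q)}\overline\pi_S(\F_V)$. Because $\widetilde M(r;\overline p_S)$ is defined to contain $\overline\pi_S\bigl(\pi_P^{-1}(\mc N_C(M^\circ(r;p_P)))\bigr)$ and $\widetilde M(r;\overline p_S)\subseteq M(R_r;\overline p_S)$ by construction, it suffices to show $\pi_P(\F_V)\subseteq\mc N_C(M^\circ(r;p_P))$ for each $V\in\supp(q)$ (where $\pi_P$ denotes $\pi_S\colon\mc X\to\mc CS$ when $P=S$). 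The crucial structural observation is that $P\propnest V$ is impossible: if $P\in\mf{S}-\mf{T}$ then $\mf{S}_P^\perp=\emptyset$ while $\mf{S}_V^\perp\neq\emptyset$, and $P\propnest V$ together with the nesting/orthogonality axiom would force $\mf{S}_P^\perp\neq\emptyset$; and if $P=S$ then $V\propnest S$ anyway. Since remoteness of $q$ to $p$ gives $P\not\perp V$ and $P\notin\supp(q)$, we get $V\propnest P$ or $V\trans P$, so $\partial\pi_P(q)$ is computed by the second bullet of Definition~\ref{def:boundaryprojections} and $\rho^V_P\subseteq\partial\pi_P(q)\subseteq M^\circ(r;p_P)$. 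Then $\pi_P(\F_V)\asymp_\kappa\rho^V_P$ from Proposition~\ref{prop:properties_of_F}\eqref{F_prop:projections} and $\kappa\le C$ give $\pi_P(\F_V)\subseteq\mc N_C(M^\circ(r;p_P))$, whence $\overline\pi_S(\F_V)\subseteq\widetilde M(r;\overline p_S)\subseteq M(R_r;\overline p_S)$ and the argument closes.

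The step I expect to be the main obstacle is the structural case analysis, not the estimates: correctly enumerating the possible shapes of $\supp(q)$ and $\supp_{\mf{T}}(\overline q)$ from the two remoteness hypotheses, and ruling out $P\propnest V$ using the combinatorics of which domains survive maximization (this is where $\mf{S}=\mf{S}_{ess}$ and the description $\mf{T}=\{S\}\cup\{W:\mf{S}_W^\perp\neq\emptyset\}$ are essential), so that in case~\eqref{item:samesupp} the only leftover configuration is precisely the one handled by Lemma~\ref{lem:boundaryprojintersect}. The remaining work—propagating the additive constants through the nested $\mc N_C$'s and passing between $M^\circ$-neighborhoods and $M$-neighborhoods via Lemma~\ref{lem:enlarging_boundary_neighborhoods}—is routine given how large $C$ was chosen in Notation~\ref{notation:big_HHS_constant}.
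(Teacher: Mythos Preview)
Your proposal is correct and follows essentially the same approach as the paper: the same support dichotomy for $q$, the same reduction of case~\eqref{item:diffsupp} to the containment $\pi_P(\F_V)\subseteq\mc N_C(M^\circ(r;p_P))$ via $\rho^V_P\asymp_\kappa\pi_P(\F_V)$, and the same appeal to Lemma~\ref{lem:boundaryprojintersect} for the nontrivial subcase of~\eqref{item:samesupp}. Your explicit justification for why $P\propnest V$ is impossible (using the orthogonality axiom to propagate $\mf S_V^\perp\neq\emptyset$ down to $P$) is actually more detailed than the paper's one-line remark, and your use of remoteness to get $P\not\perp V$ is an equally valid alternative to the paper's observation that orthogonality only occurs in $\mf T-\{S\}$.
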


\begin{proof}

\noindent{\bf Proof of (\ref{item:diffsupp}).} 
Suppose $\supp(p)=\{P\}$ and
$\supp_\frak T(\overline p)=\{S\}$.  We first determine how the boundary projections $\partial\ol \pi_S(\ol q)$ and $\partial\pi_P(q)$ are defined.  Since $\overline
q$ is remote to $\overline p$, we must have $\supp_\frak T(\ol p)\cap \supp_\frak T(\ol q)=\emptyset$ (see Definition~\ref{def:remote}), and so $S\not \in\supp_\frak T(\overline
q)$.  Thus $Q \propnest S$ for all $Q \in \supp_{\mf{T}}(\ol q)$, and the boundary projection (Definition~\ref{def:boundaryprojections}) of $\ol q$ to $S$ is defined as  $$\partial\ol \pi_S(\ol
q)= \bigcup_{Q \in \supp_\mf{T}(\ol q)} \ol \rho^Q_S.$$  Moreover, $\supp_\frak T(\ol q)\neq \{S\}$ implies that $\supp(q)=\supp_\frak T(\ol q)$, and so $Q\in\frak T -\{S\}$ for all $Q\in \supp(q)$.  However, since $P\not\in
\frak T - \{S\}$, it is not possible that $P\perp Q$ or $P\sqsubsetneq Q$ for
any $Q\in \supp(q)$.  Thus, for each $Q\in \supp(q)$, either
$Q\propnest P$ or $Q\trans P$.  In either case, the boundary projection of $q$ to $P$ is defined as $$\partial
\pi_P(q)= \bigcup_{Q \in \supp(q)} \rho^Q_P.$$

Fix $r\geq r_0$ where $r_0$ is the lower bound on $r$ require by Definition \ref{def:TnbhdsfromSnbhds}. Assume  $\partial\pi_P(q) \subseteq M(r;p_P)$ and let $Q \in \supp(q) = \supp_{\mf{T}}(\ol q)$. Proposition \ref{prop:properties_of_F}\eqref{F_prop:projections} says $\rho^Q_P \asymp_C \pi_P( \F_Q)$, which implies $$\F_Q\subseteq \pi_P^{-1} \left(\mc{N}_C(M^\circ(r;p_P)) \right).$$ However, $R_r$ was chosen so that this implies  $\ol \pi_S(\F_Q) = \ol \rho_S^Q \subseteq M(R_r, \ol p_{\ol P})$. Thus $\partial \ol \pi_S(q) \subseteq M(R_r, \ol p_{\ol P})$ as desired.

\medskip

\noindent{\bf Proof of \eqref{item:samesupp}.}
Suppose $\supp(p)=\supp_\frak T(\overline p) \neq \{S\}$. This only occurs when $\supp(p) \subseteq \mf{T} -\{S\}$.
 If $\supp_{\frak T}(\ol q) \neq \{S\}$, then the result is immediate because $\supp(q) = \supp_{\mf{T}}(\ol q) \subseteq \mf{T}-\{S\}$ and we have  $\partial \pi_P(q)=\partial\ol \pi_P(\ol q)$ and $\mc{N}_{2C}(M^\circ(r;p_P)) \subseteq M(R_r;\ol p_{\ol P})$; see Definition \ref{def:TnbhdsfromSnbhds}. So suppose $\supp_\frak T(\ol q) = \{S\}$.  This only occurs when  $\supp(q)=\{Q\}$ for some $Q \in \{S\} \cup (\mf{S} - \mf{T})$.  Since $Q \not \in \mf{T} -\{S\}$ but $\supp(p) \subseteq \mf{T}-\{S\}$, we know $Q \neq P$ and $Q \not \perp P$ for each $P \in \supp(p)$.

Because $\supp_\mf{T}( \ol p) = \supp(p) \neq \{S\}$, we have  $P\propnest S$ for all $P \in \supp(p)$. Let $\sigma$ be the Morse constant for a $(1,20E)$--quasigeodesic in an $E$--hyperbolic space and fix $P\in\supp(p)$.  Let $Z$ be the set of all points in $\mc{C}_\mf{T}S$ that are at least $E +\sigma$ far from $\ol \rho_S^P$ and are contained in a $(1,20E)$--quasigeodesic from a point in $\ol \rho^P_S$ to $\ol q_S=\ol q$. 
 The boundary projection of $\ol q$ to each $P \in \supp(p)$ is then defined as $\partial \ol \pi_P(\ol q)=\ol\rho^S_P(Z)$; see Definition \ref{def:boundaryprojections}.  

 Since $P$ is in both $\frak S$ and $\frak T$, we have  $\mc CP = \mc C_{\frak T}P$, and so we  consider $\partial \ol \pi_P(\ol q)$ as a subset of $\mc CP$.
Since $\partial\pi_P(q)$ has diameter at most $E$ and $\partial\pi_P(q)\subseteq M^\circ(r;p_P)$ by assumption, it follows from Lemma \ref{lem:boundaryprojintersect}   (with $W =P$)  that 
$\partial \ol \pi_P(\ol q)\subseteq \mc N_{C}(M^\circ(r;p_P)).$
Because $P = \ol P$,  the set $M(R_r,\ol p_P)$ is defined so that $\mc N_{C}( M(r;p_P))\subseteq M(R_r;\ol p_P)$, and the result follows.
\end{proof}

\begin{lem}\label{lem:projopensetsnon}
Let $p,q\in\partial(G,\s)$, and suppose $q$ is remote to $p$ but $\overline q$ is not remote to $\overline p$.   
Suppose $\supp(p)=\{P\}$ and $\supp_\frak T(\overline p)=\{S\}$ where $S\neq P$. If $r_0$ is the lower bound for $r$ from Definition \ref{def:TnbhdsfromSnbhds}, then for any $r>r_0$
\[
\partial\pi_P(q)\subseteq M(r;p_P)  \qquad \implies \qquad \overline q\in M(R_r; \ol p_S).
\]
\end{lem}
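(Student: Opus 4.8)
The plan is to follow the proof of Lemma~\ref{lem:bdryprojopensets}\eqref{item:diffsupp}, the difference being that here $\ol q$ is itself a point of $\partial\mc C_\mf T S$ rather than the target of a boundary projection, so instead of bounding a subset of $\mc C_\mf T S$ we must exhibit a sequence converging to $\ol q$. First I would unwind the hypotheses. Since $\ol q$ is not remote to $\ol p$, since $\supp_\mf T(\ol p)=\{S\}$, and since $S$ is the $\nest$--maximal domain (hence orthogonal to no domain), Definition~\ref{def:remote} forces $S\in\supp_\mf T(\ol q)$, and therefore $\supp_\mf T(\ol q)=\{S\}$. Consequently $\supp(q)\not\subseteq\mf T-\{S\}$, so (as at the start of Section~\ref{subsec:defn_phi}) $\supp(q)=\{Q\}$ for some $Q\in(\mf S-\mf T)\cup\{S\}$, with $\ol q=\phi(q_Q)\in\partial\mc C_\mf T S$. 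Remoteness of $q$ to $p$ gives $Q\neq P$ and $Q\not\perp P$; since $P\notin\mf T-\{S\}$ and the only orthogonality of $\mf S$ lives inside $\mf T-\{S\}$, the $\mf S$--relation between $P$ and $Q$ is $Q\trans P$, $Q\propnest P$, or $P\propnest Q$, the last case including $Q=S$ (as $P\propnest S$ always holds).

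It then suffices to construct a sequence $(w_n)$ in $\mc X$ with \textbf{(a)} $w_n\to\ol q$ in $\mc C_\mf T S\cup\partial\mc C_\mf T S$ and \textbf{(b)} $\pi_P(w_n)\in\mc N_C(M^\circ(r;p_P))$ for all large $n$. Granting this: by (b) and Standing Assumption~\ref{assumption:projections_are_points}, $w_n\in\pi_P^{-1}(\mc N_C(M^\circ(r;p_P)))$, so by Lemma~\ref{lem:enlarging_boundary_neighborhoods} $\pi_P(w_n)\in M^\circ(r-2C;p_P)$; since the domain $P$, its projection map, and the boundary point $\ol p_S$ all transfer verbatim to the hyperbolic HHS $(\mc C_\mf T S,(\mf S-\mf T)\cup\{S\})$ (Proposition~\ref{prop:CTS_is_HHS} and Theorem~\ref{thm:hyperbolic_HHS}), Lemma~\ref{lem:basisnbhd} places $w_n$ in a neighborhood $M(r'';\ol p_S)$ of $\ol p_S$ in $\mc C_\mf T S\cup\partial\mc C_\mf T S$, with $r''$ an increasing linear function of $r$ determined by $E$. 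Letting $n\to\infty$ and using the hyperbolicity inequalities for the Gromov product extended to $\partial\mc C_\mf T S$, one gets a lower bound for the Gromov product of $\ol p_S$ and $\ol q$ in $\mc C_\mf T S$ of the form $r''-c$ with $c$ depending only on $E$; the generous choice of $C$ in Notation~\ref{notation:big_HHS_constant} and the definition of $R_r$ in Definition~\ref{def:TnbhdsfromSnbhds} were made precisely so that this bound exceeds $R_r$, i.e.\ $\ol q\in M(R_r;\ol p_S)$. (This last step is the same bookkeeping that underlies Definition~\ref{def:TnbhdsfromSnbhds} and Lemma~\ref{lem:nbhdsinnbhds}, carried out for a limit of points rather than a subset.)

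To build $(w_n)$: if $Q\trans P$ or $Q\propnest P$, then $Q\neq S$, so $Q\in\mf S-\mf T$ and $\partial\pi_P(q)=\rho^Q_P$, while Proposition~\ref{prop:properties_of_F}\eqref{F_prop:projections} gives $\pi_P(\F_Q)\asymp_\kappa\rho^Q_P$; take the sequence $w_n\in\F_Q$ furnished by Remark~\ref{rem:boundary_YQ}, so $\pi_Q(w_n)\to q_Q$ in $\mc C Q$ and $\ol\pi_S(w_n)\to\ol q$ in $\mc C_\mf T S$, and the hypothesis $\rho^Q_P\subseteq M^\circ(r;p_P)$ forces $\pi_P(w_n)\in\pi_P(\F_Q)\subseteq\mc N_\kappa(M^\circ(r;p_P))\subseteq\mc N_C(M^\circ(r;p_P))$. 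If instead $P\propnest Q$ (with $Q\in(\mf S-\mf T)\cup\{S\}$), then $\partial\pi_P(q)=\rho^Q_P(Z)$, where $Z\subseteq\mc C Q$ consists of the points at distance at least $E+\sigma$ from $\rho^P_Q$ on a $(1,20E)$--quasigeodesic from $\rho^P_Q$ to $q_Q$, with $\sigma$ the Morse constant of Lemma~\ref{lem:Morse}. Choose $z_n\in Z$ with $d_Q(z_n,\rho^P_Q)\to\infty$, so $z_n\to q_Q$ in $\mc C Q\cup\partial\mc C Q$, and using $\mc C Q=\mc N_\kappa(\pi_Q(\F_Q))$, resp.\ $\mc C S\subseteq\mc N_E(\pi_S(\mc X))$ when $Q=S$, pick $w_n\in\F_Q$, resp.\ $w_n\in\mc X$, with $\pi_Q(w_n)$ uniformly close to $z_n$. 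For all large $n$, $d_Q(\pi_Q(w_n),\rho^P_Q)>E$ and the $\mc C Q$--geodesic from $\pi_Q(w_n)$ to $z_n$ misses $\mc N_E(\rho^P_Q)$, so the consistency axiom and bounded geodesic image in $\mf S$ (Definition~\ref{defn:HHS}\eqref{axiom:consistency},\eqref{axiom:bounded_geodesic_image}) give $\pi_P(w_n)\asymp_E\rho^Q_P(\pi_Q(w_n))\asymp_E\rho^Q_P(z_n)\subseteq\partial\pi_P(q)\subseteq M^\circ(r;p_P)$, hence $\pi_P(w_n)\in\mc N_{2E}(M^\circ(r;p_P))\subseteq\mc N_C(M^\circ(r;p_P))$. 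That $w_n\to\ol q$ follows from the argument of Remark~\ref{rem:boundary_YQ} when $Q\in\mf S-\mf T$, and when $Q=S$ from $\pi_S(w_n)\to q_S$ together with the facts that $S$ is $\nest$--maximal in $(\mc C_\mf T S,(\mf S-\mf T)\cup\{S\})$ (so convergence to $q_S$ in that HHS boundary is detected by the $\pi_S$--condition of Definition~\ref{defn:internal_top} alone) and the identification of Theorem~\ref{thm:hyperbolic_HHS}.

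I expect the main obstacle to be the case $P\propnest Q$: there $\partial\pi_P(q)$ is defined through quasigeodesics in $\mc C Q$, and one must choose the approximating sequence so that its $\mc C Q$--projections march out along such a quasigeodesic, staying far enough from $\rho^P_Q$ for the consistency axiom to apply while still converging to $q_Q$, and simultaneously remaining inside $\F_Q$ so that Remark~\ref{rem:boundary_YQ} delivers $w_n\to\ol q$ in $\mc C_\mf T S$. The remaining work — converting the resulting lower bound on the Gromov product of $\ol p_S$ and $\ol q$ into the membership $\ol q\in M(R_r;\ol p_S)$ — is routine once the constants in Notation~\ref{notation:big_HHS_constant} are chosen with enough slack.
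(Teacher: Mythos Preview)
Your proposal is correct; the unwinding of supports and the case split on the $\mf S$--relation between $P$ and $Q$ match the paper, and your treatment of the case $Q\trans P$ or $Q\propnest P$ is essentially identical to the paper's (both amount to observing $\F_Q\subseteq\pi_P^{-1}(\mc N_C(M^\circ(r;p_P)))$ and then invoking Definition~\ref{def:TnbhdsfromSnbhds}).

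In the case $P\propnest Q$ you take a genuinely different route. The paper works ``top--down'': it fixes a $(1,20E)$--quasigeodesic $\alpha$ in $\mc C_\mf T S$ from a point of $Y_P$ to $\ol q$, uses Theorem~\ref{thm:hyperbolic_HHS}(2) to make $\alpha$ a hierarchy path in the hyperbolic HHS $(\mc C_\mf T S,(\mf S-\mf T)\cup\{S\})$, and then projects $\alpha$ to $\mc CQ$; a far--out subray $\alpha_0$ then has $\pi_P(\alpha_0)\subseteq\mc N_{2E}(M^\circ(r;p_P))$, and since $\alpha_0$ already lives in $\mc C_\mf T S$ and represents $\ol q$, one concludes directly. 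You instead work ``bottom--up'': you pick points $z_n\in Z\subseteq\mc CQ$ marching to $q_Q$, lift them to $w_n\in\F_Q$ (or $\mc X$ when $Q=S$), and use consistency plus bounded geodesic image to control $\pi_P(w_n)$. Your approach avoids building a quasigeodesic in $\mc C_\mf T S$ but trades this for an extra limiting step at the end (passing from $w_n\in M(r'';\ol p_S)$ to $\ol q\in M(R_r;\ol p_S)$); the paper's approach also quietly needs this passage (from $\alpha_0\subseteq M(R_r;\ol p_S)$ to $\ol q\in M(R_r;\ol p_S)$), so neither is cleaner there. Both arguments are valid and rely on the same ingredients; the paper's has the slight advantage that the object representing $\ol q$ is produced directly in $\mc C_\mf T S$, while yours keeps everything in $\mc X$ and $\mc CQ$ until the final Lemma~\ref{lem:basisnbhd} step. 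As a minor simplification: your justification that $w_n\to\ol q$ when $Q=S$ can be replaced by a direct appeal to Lemma~\ref{lem:basisnbhd}\eqref{HypHHS:moving_up} rather than the detour through Definition~\ref{defn:internal_top}.
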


\begin{proof}
We have $\supp(p)=\{P\}$ and $\supp_\frak T(\overline p)=\{S\}$ where $S\neq P$. We first determine the supports of $q$ and $\ol q$.  Because there are no domains orthogonal to $S$, no domain in $\supp_\frak T(\ol q)$ is orthogonal to $S$.  Since we are assuming that $\overline q$ is not remote to $\overline p$, we therefore must have $\supp_\frak T(\overline q)\cap \supp_\frak T(\overline p)\neq\emptyset$; see Definition~\ref{def:remote}. 
It follows that $\supp_\frak T(\overline q)=\{S\}$, and so $\supp(q)=\{Q\}$ for some $Q\in\s$ with $Q \not \in \mf{T}  -\{S\}$. Since we are assuming $p$ and $q$ are remote, we also have $P \neq Q$.

Assume $\partial \pi_P(q)\subseteq M(r;p_P)$.  We consider two cases, depending on how the boundary projection $\partial \pi_P(q)$ is defined:  we first handle the case where either $Q\propnest P$ or $Q\trans P$, then we address the case where $Q\sqsupsetneq P$. 

\textbf{Case $Q \propnest P$ or $Q \trans P$:} When $Q\propnest P$ or $Q\trans P$, the boundary projection is defined as $\partial\pi_P(q)=\rho^Q_P$.  Since $\partial\pi_P(q) = \rho_P^Q\subseteq M(r;p_P)$ by assumption and $\rho_P^Q \asymp_C \pi_P(\F_Q$) by Proposition \ref{prop:properties_of_F}\eqref{F_prop:projections}, our choice of $R_r$ yields 
\begin{equation*}\label{eqn:rhoQP}
\ol\rho_S^Q =	\ol \pi_S(\F_Q)\subseteq   \ol \pi_S(\pi_P^{-1}( \mc N_C(M(r;p_P))) \subseteq M(R_r;\ol p_S)
\end{equation*}
 as desired.

\textbf{Case $Q\sqsupsetneq P$:}  Let $\sigma$ be the Morse constant (Lemma \ref{lem:Morse}) for a $(1,20 E)$--quasigeodesic in an $E$--hyperbolic space. Because $Q\sqsupsetneq P$, the boundary projection $\partial \pi_P(q)$ is defined as $\rho^Q_P(Z)$, where $Z$ is the collection of all points on $(1,20E)$--quasigeodesics in $\mc CQ$ from a point in $\rho^P_Q$ to $q$ that are at distance at most $E+\sigma$ from $\rho^P_Q$. 
Let $\alpha$ be a  $(1,20E)$--quasigeodesic ray $\alpha$ in $\mc{C}_\mf{T} S$ from a point in $\ol \pi_S(\F_P) = Y_P$ to $\ol q$. By Theorem \ref{thm:hyperbolic_HHS}, there is $\lambda \geq 1$ determined by $E$ so that $\alpha$ is $\lambda$--hierarchy path in the HHS $(\mc{C}_\mf{T} S,(\mf{S} -\mf{T}) \cup \{S\})$. In particular,  $\pi_Q \circ \alpha$ is an unparameterized $(\lambda,\lambda)$--quasigeodesic ray from a point in  $\mc{N}_E(\rho_Q^P)$ to $q$.  By the Morse Lemma (Lemma \ref{lem:Morse}), $\pi_Q(\alpha)$ is contained in a uniform neighborhood of any $(1,20E)$--quasigeodesic from a point in $\rho_Q^P$ to $q$. Hence, by going far enough along $\alpha$, we can find a subray $\alpha_0$ so that the consistency and bounded geodesic image axioms in $(\mc{C}_\mf{T} S, (\mf{S} -\mf{T})\cup \{S\})$ imply $\rho_P^Q(\pi_Q(\alpha_0)) \subseteq \mc{N}_E(\partial \pi_Q(q))$ and $\pi_P(\alpha_0) \asymp_E \rho_P^Q(\pi_Q(\alpha_0))$.
As a result, $$\pi_P(\alpha_0) \subseteq \mc{N}_{2E}(M(r;p_P)) \subseteq M(r -4E;p_P) \subseteq M(r-4C;p_P).$$
Lemma \ref{lem:basisnbhd}\eqref{HypHHS:moving_up} and our choice of $R_r$ (Definition \ref{def:TnbhdsfromSnbhds}) then imply $\alpha_0 \subseteq M(R_r;\ol p_{\ol {P}})$. Since $\alpha_0$ represents $\ol q$, this implies $\ol q \in M(R_r;\ol p_{\ol p})$ as desired.
\end{proof}

\subsection{Invariance of the boundary under maximization}\label{sec:pfofBdryABDInvariant}
We are now ready to prove that the maximization procedure does not
change the HHS boundary for proper HHSs with the bounded domain
dichotomy.  Since every finitely generated group is a proper metric
space and every $G$--HHS structure has the bounded domain dichotomy,
Theorem \ref{thm:BdryABDInvariant} is a special case of this result.

\begin{thm}\label{thm:ABD_Invariant_Spaces}
	Let $\mc{X}$ be a proper geodesic space and $\mf{S}$  an HHS structure for $\mc{X}$ with the bounded domain dichotomy. Suppose $\mf{T}$ is  the HHS structure produced by maximizing $\mf{S}$. The map $\phi\colon \partial ( \mc{X},\mf{S}) \to \partial (\mc{X},\mf{T})$ defined in Section \ref{subsec:defn_phi} is both a simplicial isomorphism $\partial_\Delta( \mc{X},\mf{S}) \to \partial_\Delta(\mc{X},\mf{T})$ and a homeomorphism $\partial ( \mc{X},\mf{S}) \to \partial (\mc{X},\mf{T})$. Moreover, the identity map $\mc{X} \to \mc{X}$ extends continuously to $\phi$.
\end{thm}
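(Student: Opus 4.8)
The plan is to verify that the map $\phi$ constructed in Section~\ref{subsec:defn_phi} is simultaneously a simplicial isomorphism and a homeomorphism, and that it extends the identity on $\mc{X}$. The simplicial isomorphism statement is essentially immediate from the construction: on points $p$ with $\supp(p) \subseteq \mf{T} - \{S\}$ the map is the identity, orthogonality of domains in $\mf{S}$ coincides with orthogonality in $\mf{T}$ (as noted after Notation~\ref{not:twostructures}, using $\mf{S} = \mf{S}_{ess}$), and on the remaining vertices $\phi$ is induced by the homeomorphism $\partial\mc{C}_\mf{T}S \to \partial(\mc{C}_\mf{T}S, (\mf{S}-\mf{T})\cup\{S\})$ of Theorem~\ref{thm:hyperbolic_HHS}, which is a bijection $\{p \in \partial\mc{C}W : W \in \{S\}\cup(\mf{S}-\mf{T})\} \to \partial\mc{C}_\mf{T}S$. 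Since no simplex of $\partial_\Delta(\mc{X},\mf{S})$ uses more than one vertex from $\{S\}\cup(\mf{S}-\mf{T})$ (these domains are pairwise non-orthogonal), $\phi$ is a bijection on vertices that preserves and reflects simplices, hence a simplicial isomorphism; the lemma at the end of Section~\ref{subsec:defn_phi} records that $|\supp(p)| = |\supp_\mf{T}(\ol p)|$.

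The substantive work is the homeomorphism claim, and by symmetry of the roles it suffices to show $\phi$ is continuous; I would then observe that $\phi^{-1}$ can be handled by the same machinery (the maximization hypotheses are not literally symmetric, so in practice one proves continuity of $\phi$ directly and continuity of $\phi^{-1}$ using that $\phi$ is a continuous bijection between the relevant pieces, or reruns the neighborhood comparison in the reverse direction). To prove continuity of $\phi$ at a point $p \in \partial(\mc{X},\mf{S})$, I fix a basic neighborhood $\mc{B}_{R,\varepsilon}(\ol p)$ of $\ol p$ in $\partial(\mc{X},\mf{T})$ (using Lemma~\ref{lem:sametop} to work with the $\mc{B}$-type basic sets) and must produce $r \geq r_0$ and $\varepsilon' > 0$ with $\phi(\mc{B}_{r,\varepsilon'}(p)) \subseteq \mc{B}_{R,\varepsilon}(\ol p)$. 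The strategy is to use the neighborhood correspondence $M(r;p_W) \leftrightarrow M(R_r;\ol p_{\ol W})$ of Definition~\ref{def:TnbhdsfromSnbhds}: by Lemma~\ref{lem:nbhdsinnbhds}, $R_r \to \infty$ as $r \to \infty$, so given the target radius $R$ one can choose $r$ large enough that $R_r \geq R$. Then I must check that a point $q$ lying in the $(r,\varepsilon')$-basic set around $p$ has $\ol q = \phi(q)$ satisfying the defining inequalities (R1)--(R3), (N1)--(N3), or (I1)--(I3) for $\mc{B}_{R,\varepsilon}(\ol p)$, splitting into cases according to whether $q$ is remote/non-remote to $p$ and whether $\ol q$ is remote/non-remote to $\ol p$.

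This case analysis is exactly what Lemmas~\ref{lem:boundaryprojintersect}, \ref{lem:bdryprojopensets}, and \ref{lem:projopensetsnon} were built for: they translate control of $\partial\pi_P(q)$ inside $M(r;p_P)$ into control of $\partial\ol\pi_{\ol P}(\ol q)$ (or of $\ol q$ itself) inside $M(R_r;\ol p_{\ol P})$, covering respectively the "same support" and "different support" subcases when $\ol q$ is remote, and the subcase when $\ol q$ is non-remote. The interior part (points of $\mc{X}$) is comparatively easy: since $\ol\pi_S$ is the inclusion and $\pi_W = \ol\pi_W$ for $W \in \mf{T}-\{S\}$, conditions (I1)--(I3) transfer almost verbatim, with Lemma~\ref{lem:basisnbhd} handling the passage from $\mc{C}W$-neighborhoods to $\mc{C}_\mf{T}S$-neighborhoods when a support domain of $p$ lies in $\mf{S}-\mf{T}$; this also gives the "extends the identity" clause, since a sequence in $\mc{X}$ converging to $p$ has the same coarse projection data regardless of structure (Lemma~\ref{lem:bounded_difference_convergence} is useful here). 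The ratio conditions (R2), (N2)--(N3), (I2)--(I3) require that the quantities $d_W(x_0, \partial\pi_W(q))$ change by only a bounded amount when passing to $\mf{T}$, which is supplied by the bounded-diameter conclusions of Lemmas~\ref{lem:boundaryprojintersect}--\ref{lem:projopensetsnon} together with Lemma~\ref{lem:changing_quotients_a_bounded_amount}; the orthogonal-mass conditions (R3), (N2) are preserved because $\sum a^q_T$ over $T \in \supp(p)^\perp$ is literally the same sum for $\ol q$ over $\supp_\mf{T}(\ol p)^\perp$, using again that orthogonality is unchanged. The main obstacle, and where most of the care goes, is bookkeeping the remote/non-remote dichotomy consistently on both sides — in particular handling the possibility that $q$ is remote to $p$ while $\ol q$ fails to be remote to $\ol p$ (the scenario of Lemma~\ref{lem:projopensetsnon}), where one must show $\ol q$ lands in the non-remote part $\mc{B}^{non}_{R,\varepsilon}(\ol p)$ rather than the remote part, and conversely ruling out spurious transitions in the other direction.
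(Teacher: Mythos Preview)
Your outline matches the paper's approach closely: the same case split over the remote, non-remote, and interior parts of the basic sets, the same invocation of Lemmas~\ref{lem:boundaryprojintersect}, \ref{lem:bdryprojopensets}, and \ref{lem:projopensetsnon} to push neighborhood membership from $\mf{S}$ to $\mf{T}$, and the same appeal to Lemma~\ref{lem:changing_quotients_a_bounded_amount} for the ratio conditions. (The paper phrases continuity sequentially---taking a sequence $p_n \to p$ and showing $\ol p_n \to \ol p$, then invoking first countability of the boundary---rather than via explicit neighborhood inclusions as you do, but this is cosmetic.)

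There is one genuine gap: your treatment of why $\phi^{-1}$ is continuous. Neither of the two options you sketch actually works. ``Rerunning the neighborhood comparison in the reverse direction'' would require $\mf{T} \to \mf{S}$ analogues of Lemmas~\ref{lem:bdryprojopensets} and \ref{lem:projopensetsnon}, which are not proved and are not obviously symmetric (the boundary projection $\partial\ol\pi_S$ in $\mf{T}$ involves quasigeodesic rays in $\mc{C}_\mf{T}S$, and translating these back to $\mf{S}$-data is a different problem). And ``$\phi$ is a continuous bijection'' is of course not enough on its own. The paper's resolution uses the one hypothesis you never invoke: properness of $\mc{X}$. By \cite[Theorem~3.4]{HHS_Boundary}, properness forces both $\mc{X} \cup \partial(\mc{X},\mf{S})$ and $\mc{X} \cup \partial(\mc{X},\mf{T})$ to be compact Hausdorff, so once the extended map $\Phi$ (identity on $\mc{X}$, $\phi$ on the boundary) is shown to be a continuous bijection, it is automatically a homeomorphism. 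This is the only place properness enters the argument.
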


\begin{proof}

By Lemma \ref{lem:removebounded}, we can assume  $\mf{S} =\mf{S}_{ess}$ without loss of generality. We also assume $\pi_W(x)$ is a single point for each $x \in \mc{X}$ and $W \in \mf{S}$ by Standing Assumption \ref{assumption:projections_are_points}.

The fact that $\phi$ is a simplicial automorphism follows from the 
fact that we are assuming all the domains of $\mf{S}$ are essential 
and thus $\mf{S}$ and $\mf{T}$ have  identical sets of pairwise orthogonal
domains.

Define the map $\Phi\colon \mc{X}\cup\partial (\mc{X},\s)\to \mc{X}\cup\partial (\mc{X},\frak 
T)$ to be the identity on $\mc{X}$ and $\phi$ on the boundary.  As with $\phi$, we will denote the image $\Phi(p)$ by $\ol p$. 

The map $\Phi$ is a bijection, and we will show it is sequentially
continuous.  Since $\partial(\mc{X},\s)$ and $\partial(\mc{X},\frak
T)$ are first countable (this is implicit in \cite{HHS_Boundary} and 
explicitly proven in \cite[Proposition
1.5]{Hagen_second_countable}), sequential continuity implies that $\phi$
is continuous and is a continuous extension of the identity on 
$\mc{X}$.  Because
$\mc{X}$ is proper, $\mc{X}\cup\partial (\mc{X},\s)$ and
$\mc{X}\cup\partial (\mc{X},\frak T)$ are compact and Hausdorff
\cite[Theorem~3.4]{HHS_Boundary}.  Hence, proving $\phi$ is a continuous 
bijection implies it is a homeomorphism.

Let $p \in \partial (\mc{X},\s)$ and suppose $(p_n)$ is a sequence in $\mc{X} \cup \partial(\mc{X},\mf{S})$ that converges to $p$. To prove $\Phi$ is sequentially continuous, it suffices to prove
$\Phi(p_n)=\ol p_n\to \ol p=\Phi(p)$.

Continuing the convention from Notation \ref{not:twostructures}, we
let $\ol{\mc{B}}_{r,\varepsilon}(\cdot)$ denote a basis neighborhood
in $\mf{T}$ and $ \mc{B}_{r,\varepsilon}(\cdot)$ denote a basis
neighborhood in $\mf{S}$ as described in Definition \ref{defn:basisset}.  Recall, Definition
\ref{def:TnbhdsfromSnbhds} takes any neighborhood $M(r;p_W)$ of
$p_W\in \partial CW$ with $r \geq r_0$ and produces a neighborhood
$M(R_r;\ol p_{\ol W})$ of $\ol p_{\ol W} \in
\partial \mc{C} \ol W$ so that Lemmas \ref{lem:bdryprojopensets} and
\ref{lem:projopensetsnon} hold.  We let the constants $E$ and $C$
be as described in Notation \ref{notation:big_HHS_constant}.

Fix a basis neighborhood $\ol{\mathcal B}_{\ol r,\varepsilon}(\ol p)$
for some $\ol r\geq 0$ and $\varepsilon\geq 0$.  To prove $\ol p_n \to \ol p$, it suffices to show
that $\ol p_n\in \ol{\mathcal B}_{\ol r,\varepsilon}(\ol p)$ for all but
finitely many $n$.

By Lemma \ref{lem:nbhdsinnbhds}, there exists an $r$ sufficiently large   so
that the constant $R_r$ is defined and is large
enough  to ensure $\ol{\mc B}_{R_r,\varepsilon}(\ol p)\subseteq \ol{\mc
B}_{\ol r,\varepsilon}(\ol p)$.   Fixing this $r$,  it suffices to show that $\ol
p_n\in \ol{\mc B}_{R_r,\varepsilon}(\ol p)$ for all but finitely many
$n$.

Since $p_n\to p$, for each $s\geq 1$ and all but finitely many $n$, we have 
\[
p_n\in \mc B_{r+s,\frac1s}(p).
\] 

Notice that $M(r+s;p_W)\subseteq M(r;p_W)$ for all $s \geq 1$, and so $\mc B_{r+s,\frac1s}(p)\subseteq \mc B_{r,\varepsilon}(p)$ for all sufficiently large $s$.  In fact, it is clear from the definition of the decomposition of the neighborhoods that $\mc B^*_{r+s,\frac1s}(p)\subseteq \mc B^*_{r,\varepsilon}(p)$, where $*\in \{rem,int,non\}$.  Moreover, since $r+s\to \infty$ as $s\to\infty$, we have that $d_W(x_0,M(r+s;p_W))\to\infty$ as $s\to\infty$ for each $W\in \supp(p)$.

We divide the sequence $(\ol p_n)$ into three disjoint subsequences, and analyze each in a separate step of the proof.  We will show that for each such subsequence, $\ol p_n\in \ol{\mc B}_{R_r,\varepsilon}(\ol p)$ for all but finitely many $n$. \\

\noindent{\bf Step 1.} Consider the subsequence  consisting of all $n$ so that $p_n\in \mc B^{rem}_{r+s,\frac{1}{s}}(p)\subseteq \mc B^{rem}_{r,\varepsilon}(p)$.  If this subsequence is finite, we are done and move on to step two.  So suppose it is infinite.  There are two further subcases, depending on the support of $p$ and $\ol p$. Note that since $p_n$ is remote to $p$ in this case, we can apply Lemmas \ref{lem:bdryprojopensets} and \ref{lem:projopensetsnon} with $q = p_n$.  \\
 
	{\bf Step 1(a):} Assume $\supp(p)=\supp_{\frak T}(\ol p)$.  We  divide the elements of our subsequence of $p_n$ into two sets: those that satisfy $\supp(p_n)=\supp_\frak T(\overline p_n)$ and those which do not.    For the elements satisfying the first condition,  we will show that $\overline p_n\in \ol{\mc
	B}^{rem}_{R_r,\varepsilon}(\overline p)$.  For the remaining elements, we consider two further subcases, depending on whether $\supp(p)=\{S\}$ or not.  If it does, then we show that $\overline p_n\in \ol{\mc
	B}^{non}_{R_r,\varepsilon}(\overline p)$, and if it does not, then we show that $\overline p_n\in \ol{\mc
	B}^{rem}_{R_r,\varepsilon}(\overline p)$.  In any of the cases in this substep, we will have shown that $\overline p_n\in \ol{\mc
	B}_{R_r,\varepsilon}(\overline p)$, as desired.
	
	For
	any $n$ such that $\supp(p_n)=\supp_\frak T(\overline p_n)$, we
	must have $\ol p_n$ is remote to $\ol p$, because $p_n$ is remote
	to $p$ and remoteness is determined by the support set.  Hence, we
	can apply Lemma~\ref{lem:bdryprojopensets}\eqref{item:samesupp} to
	see that condition (R1) from Definition \ref{defn:remote_top} for
	$\ol{\mc{B}}^{rem}_{R_r,\varepsilon}(\ol p)$ is satisfied.  Since
	$\supp(p)^\perp = \supp_\mf{T}(\ol p)^\perp$, the fact that $p_n
	\in \mc{B}^{rem}_{r,\varepsilon}(p)$ implies (R2) and (R3) are
	also satisfied for $\ol{\mc{B}}^{rem}_{R_r,\varepsilon}(\ol p)$
	(to see this, note (R2) and (R3) involve $\varepsilon$ but not
	$r$).  This implies $\ol p_n\in \ol{\mc
	B}^{rem}_{R_r,\varepsilon}(\overline p)$, as desired.  
	
	We  now turn our attention to those $p_n$ in our subsequence which satisfy $\supp(p_n) \neq \supp_\frak T(\overline p_n)$.  In this case, we have $\supp_\frak T(\overline
	p_n)=\{S\}$ and $\supp(p_n)=\{Q_n\}$ for some $Q_n \in \mf{S} -
	\mf{T}$; in particular $Q_n \neq S$.  There are two cases to consider, depending on whether $\supp(p)=\{S\}$ or not.

First assume  $\supp(p)=\{S\}$.   Since, in this case,  $\supp_\frak T(\overline p)=\{S\}$,  it follows that $\overline p_n$ is not remote to $\overline p$.  We will check that $\overline p_n\in \ol{\mc B}^{non}_{R_r,\varepsilon}(\overline p)$.  Condition (N1) of Definition \ref{defn:nonremote_top} holds by  Lemma~\ref{lem:projopensetsnon}.   Condition (N2) holds as $a^{\ol p}_S=a^p_S=1$ and $a^{\ol p_n}_S=a^{p_n}_Q=1$.  Finally, (N3)  vacuously holds, as $$\supp_\frak T(\ol p_n)-(\supp_\frak T(\ol p_n)\cap \supp_\frak T(\ol p))=\emptyset.$$

Now assume $\supp(p)\neq\{S\}$. Since $\supp(p)=\supp_{\frak T}(\ol p)$, we also have $\supp_\frak T(\overline p)\neq \{S\}$.  This means $\overline p_n$ is remote to $\overline p$ because $\supp_\mf{T}(\ol p_n) = \{S\}$.  We will show that
$\overline p_n\in\mc B^{rem}_{R_r,\varepsilon}(\overline p)$ in this case.

  Since
$p_n\in \mc B^{rem}_{r,\varepsilon}(p)$, we have $\partial
\pi_P(p_n) \subseteq M(r;p_P)$ for all $P\in \supp(p)$.  By Lemma \ref{lem:bdryprojopensets}\eqref{item:samesupp}, we  therefore have
$\partial\overline\pi_P(\overline p_n) \subseteq M(R_r; \ol p_P)$
for all $P\in\supp(p)$, satisfying  (R1).  The condition (R3) is satisfied because $a^{\overline p_n}_W=0$ for all $W\in\supp_\frak T(\overline p_n)^\perp$, since $\supp_\frak T(\overline p_n)=\{S\}$ and $S$ is not orthogonal to any domain.

For the remaining condition (R2), note that  $\supp(p)^\perp = \supp_\frak T(\overline p)^\perp$. Because $\supp_\mf{T}(p)\neq\{S\}$, we have $\supp(p) = \supp_\mf{T}(\ol p) \subseteq \mf{T}$.   Thus, $\mc{C}_\mf{T}W = \mc{C}W$ for each $W \in \supp_\mf{T}(\ol p)$,  and  we can
think of both $\partial \pi_W(p_n)$ and $\partial\ol\pi_W(\ol p_n)$ as
subsets of $\mc CW$. Recall $ \supp_\mf{T}(\ol p_n) = \supp(p_n)$ contains only the domain $Q_n$. As in Definition \ref{def:remote}, let $\mc{S}_{\ol p_n}$ be the union of $\supp_\mf{T}(\ol p) = \supp(p)$ with the set of domains in $\supp(p)^\perp$ that are not orthogonal to $Q_n$ (this is the set of domains for which (R2) needs to be verified). By Lemma \ref{lem:boundaryprojintersect},   we have
\begin{equation}\label{eqn:partialbarpibd}
d_W(x_0,\partial \pi_W(p_n))-C \leq d_W(x_0,\partial\ol \pi_W(\ol p_n))\leq d_W(x_0,\partial \pi_W(p_n))+C
\end{equation}
for each $W \in  \mc{S}_{\ol p_n}$.
The following claim uses \eqref{eqn:partialbarpibd}  to complete the proof that (R2) holds for all but finitely many $n$.
\begin{claim}\label{claim:slopebound}
For any $W\in  \mc{S}_{\ol p_n}$ and $P\in \supp(p)$, we have 
\begin{equation}\label{eqn:slopebound}
\left|\frac{d_W(x_0,\partial\ol\pi_W(\ol p_n))}{d_P(x_0,\partial\ol\pi_P(\ol p_n))}-\frac{a^{\ol p}_W}{a^{\ol p}_P}\right|<\varepsilon
\end{equation}
for all but finitely many $n$.
\end{claim}

\begin{proof}
Recall that for any $s \geq 1$, we have that $p_n \in \mc
B_{r+s,\frac{1}{s}}(p)$ for all but finitely many $n$.  When $p_n \in \mc{B}^{rem}_{r+s,\frac{1}{s}}(p)$, (R1) and (R2) in $\mf{S}$
 imply for any 
$P\in\supp(p)$ and $W \in \mc{S}_{p_n}$ we have 
$$\partial\pi_P(p_n)) \subseteq M(r+s;p_P)$$
and 
\begin{equation}\label{eqn:R2inS}
\left|\frac{d_W(x_0,\partial\pi_W( p_n))}{d_P(x_0,\partial\pi_P( p_n))}-\frac{a^{ p}_W}{a^{p}_P}\right|<\frac{1}{s}.
\end{equation}
  Recall that $d_P(x_0,M(r+s;p_P))\to\infty$ as $s\to\infty$, which coupled with \eqref{eqn:partialbarpibd} implies that for all $P\in \supp(p)$, the distance $d_P(x_0,\partial\ol\pi_P(\ol p_n))\to\infty$  as $n\to \infty$.  

Suppose $W\in \supp(p)^\perp$ and  $d_W(x_0,\partial\ol\pi_W(\ol p_n))$ is uniformly bounded for all $n$.  In this case,  $a^P_W=0$, and since the numerator of the first term of \eqref{eqn:slopebound} is bounded while the denominator goes to infinity as $n\to\infty$, \eqref{eqn:slopebound} is satisfied for all but finitely many $n$.

Thus, we may assume without loss of generality that the numerator and denominator of the first term of  \eqref{eqn:slopebound} both go to infinity as $n\to \infty$.   Lemma~\ref{lem:changing_quotients_a_bounded_amount} now implies  \[\left|\frac{d_W(x_0,\partial\ol\pi_W(\ol p_n))}{d_P(x_0,\partial\ol\pi_P(\ol p_n))}-\frac{a^{\ol p}_W}{a^{\ol p}_P}\right|<\varepsilon\] because of \eqref{eqn:partialbarpibd}.
\end{proof}

 The above shows that $\ol p_n \in \ol{\mc{B}}^{rem}_{R_r,\varepsilon}(\ol p)$ for all but finitely many $n$  when $\supp(p) \neq \{S\}$. Combining this with the earlier proof that $\ol p_n \in \ol{\mc{B}}^{non}_{R_r,\varepsilon}(\ol p)$ when $\supp(p) = \{S\}$,  we conclude that $\overline p_n\in\ol{\mc B}_{R_r,\varepsilon}(\overline p)$ for all but finitely many $n$, whenever $\supp(p)=\supp_\frak T(\ol p)$.\\
	
	{\bf Step 1(b):} Assume $\supp(p)\neq\supp_{\frak T}(\ol p)$. Then $\supp_\mf{T}(\ol p) =\{S\}$ and $\supp(p) = \{P\}$ for some $P \in \mf{S} -\mf{T}$. We consider two subcases, depending on $\supp_\frak T(\overline p_n)$.
	\begin{itemize}
	\item Suppose $\supp_\frak T(\overline p_n)\neq\{S\}$ for some $n$.  Now $\overline p_n$ is remote to $\overline p$, because $\supp_\mf{T}(\ol p) = \{S\}$. Thus, we show $\ol p_n \in \ol{\mc{B}}_{R_r,\varepsilon}^{rem}(\ol p)$. Lemma \ref{lem:bdryprojopensets}\eqref{item:diffsupp}  shows that (R1) holds, and conditions (R2) and (R3)  vacuously hold as $\supp_\frak T(\overline p)^\perp=\emptyset$ and $|\supp_\frak T(\ol p)|=1$.  Therefore $\overline p_n \in \ol{\mc B}^{rem}_{R_r,\varepsilon}(\overline p)$.

	\item Suppose $\supp_\frak T(\overline p_n)= \{S\}$ and let $\supp(p_n)=\{Q\}$, where we include the possibility that $Q=S$.  In this  case, $\overline p_n$ is not remote to $\overline p$, so we show $\ol p_n \in \ol{\mc{B}}^{non}_{R_r,\varepsilon}(\ol p)$.  Condition (N1)  holds by  Lemma~\ref{lem:projopensetsnon}.   Condition (N2) trivially holds, as $\overline p_n=(\overline p_n)_S$  and $\overline p=\overline p_S$.  Finally, (N3)  is vacuously satisfied, as $\supp_\mf{T}(\ol p_n) = \supp_\mf{T}(\ol p) = \{S\}$.   Thus 	$\overline p_n\in \ol{\mc B}^{non}_{R_r,\varepsilon}(\overline p)$.
	\end{itemize}

\noindent{\bf Step 2.} Consider the subsequence consisting of all $n$ so that $p_n\in \mc B^{non}_{r+s,\frac{1}{s}}(p)\subseteq \mc B^{non}_{r,\varepsilon}(p)$.  If this subsequence is finite, we are done and move on to step three.  So suppose it is infinite.  There are two further subcases, depending on the support of $p$ and $\ol p$. \\

	{\bf Step 2(a):} Assume $\supp(p)=\supp_{\frak T}(\ol p)$.  First suppose that $\supp(p_n)\neq \supp_\frak T(\overline p_n)$ for some $n$. The only way for this to occur is for $\supp_\mf{T}(\ol p_n) = \{S\}$ and for $\supp(p_n) = \{Q\}$ for some $Q \in \mf{S} - \mf{T}$.  If $\supp(p)\cap\supp(p_n)=\emptyset$,  then $p_n$ is remote to $p$ because no domains are orthogonal to $Q$. However, this contradicts the assumption in this step,  so we must have $\supp(p)\cap\supp(p_n)\neq \emptyset$, and this intersection must be $\{Q\}$.  Hence $\supp(p)=\{Q\}$, because support sets are collections of pairwise orthogonal domains and there are no domains orthogonal to $Q$.  However, $Q\not\in\frak T$, and so $\supp_\frak T(\overline p)=\{S\}\neq\supp(p)$, which contradicts the assumptions of this case.  

Therefore we must have $\supp(p_n)=\supp_\frak T(\overline p_n)$, which makes $\ol p_n$ not remote to $\ol p$.   We verify that $\overline p_n\in \ol{\mc B}^{non}_{R_r,\varepsilon}(\overline p)$. If $\supp(p) = \supp_{\mf{T}}(\ol p) \subseteq \mf{T} -\{S\}$, then $\supp(p_n) = \supp(\ol p_n) $ is also contained in $\mf{T} -\{S\}$. This means $p_n = \ol p_n$, $p = \ol p$, and $\supp(p) \cap \supp(p_n) = \supp_{\mf{T}}(\ol p) \cap \supp_{\mf{T}}(\ol p_n)$. Hence, the conditions for $\ol p_n$ to be in  $\ol{\mc B}^{non}_{R_r,\varepsilon}(\overline p)$ follow from the fact that $p_n \in \mc{B}^{non}_{r,\varepsilon}(p)$  and $M(R_r;\ol p_W) \subseteq M(r,p_W)$ for each $W \in \supp(p)$ in this case. If instead $\supp(p) = \supp_{\mf{T}}(\ol p) =\{S\}$, then $\supp(p_n) = \supp(\ol p_n)=\{S\}$ as well. Thus (N1) is satisfied as $M(R_r;\ol p_S) \subseteq M(r,p_S)$ and (N2) and (N3) are trivially true because $a_S^{\ol p} = a_S^{\ol p_n} = 1$ and $\supp_\mf{T}(\ol p_n) - (\supp_\mf{T}(\ol p_n) \cap \supp_\mf{T}(\ol p)) = \emptyset$.\\
	
	{\bf Step 2(b):} Assume $\supp(p)\neq\supp_{\frak T}(\ol p)$.  In this case, $\supp_{\frak T}(\ol p)=\{S\}$ and  $\supp(p)=\{P\}$ for some $P \in \mf{S} -\mf{T}$.    Since $p_n$ is not remote to $p$, either $P\in\supp(p_n)$ or  $Q\perp P$ for all $Q\in \supp(p_n)$.  However, the latter is impossible,  as there are no domains orthogonal to $P$.  So we must have that $P\in\supp(p_n)$, in which case $\supp(p_n)=\{P\}$, as support sets are collections of pairwise orthogonal domains.  Thus $\supp_\frak T(\overline p_n)=\{S\}$, making $\ol p_n$ not remote to $\ol p$. 
	
	 We  show that $\overline p_n\in\ol{\mc B}^{non}_{R_r,\varepsilon}(\overline p)$. Since $p_n \in M(r,p_P) \cap \partial \mc{C}P$ and $P \in \mf{S} - \mf{T}$, Lemma \ref{lem:basisnbhd}\eqref{HypHHS:moving_sideways} applied to the hyperbolic HHS $(\mc{C}_\mf{T} S, (\mf{S} - \mf{T}) \cup \{S\} )$ says $\ol p_n \in M(R_r,\ol p_S)$. Hence Condition (N1) for $\ol p_n$ to be in  $\ol{\mc B}^{non}_{R_r,\varepsilon}(\overline p)$ is satisfied.  Condition (N3) is clear as $\overline p_n=(\overline p_n)_S$ and $\overline p=\overline p_S$, and Condition (N2) is vacuously satisfied as $\supp_\mf{T}(\ol p_n) = \supp_\mf{T}(\ol p)$. Therefore $\overline p_n\in\ol{\mc B}^{non}_{R_r,\varepsilon}(\overline p)$.\\

\noindent{\bf Step 3.} Consider the subsequence consisting of all $n$ so that $p_n\in \mc B^{int}_{r+s,\frac{1}{s}}(p)\subseteq \mc B^{int}_{r,\varepsilon}(p)$.  If this subsequence is finite, the proof is complete.  So suppose it is infinite.  There are two further subcases, depending on the support of $p$ and $\ol p$.   Since $\Phi$ restricts to the identity on $\mc{X}$, we have $\ol p_n=p_n$.\\

	{\bf Step 3(a):} Assume $\supp(p)=\supp_{\frak T}(\ol p)$.  Since $\supp(p)=\supp_\frak T(\overline p)$, we have $\supp(p)^\perp=\supp_\frak T(\overline p)^\perp$, and so (I2) and (I3) hold automatically because $p_n \in \mc{B}^{int}_{r,\varepsilon}(p)$.  If $\supp(p) = \supp_\mf{T}(\ol p) \neq \{S\}$, then (I1) follows from the fact that $\mc{C}W = \mc{C}_\mf{T}W$ for $W \in \supp(p)$, $R_r \leq r$, and $p_n \in \mc{B}^{int}_{r,\varepsilon}(p)$. If $\supp(p) = \supp_\mf{T}(\ol p) = \{S\}$, then $\mc{C}S \neq \mc{C}_\mf{T} S$. In this case, (I1) is a consequence of our choice of $R_r$ and Lemma \ref{lem:basisnbhd}\eqref{HypHHS:moving_up} applied to the hyperbolic HHS $(\mc{C}_\mf{T} S, (\mf{S} - \mf{T}) \cup \{S\} )$. Thus $p_n\in\ol{\mc B}^{int}_{R_r,\varepsilon}(\overline p)$.\\
	
	{\bf Step 3(b):} Assume $\supp(p)\neq\supp_{\frak T}(\ol p)$. 	In this case $\supp_{\mf{T}}(\ol p) = \{S\}$, which makes Conditions (I2) and (I3) for $p_n$ to be in $\ol{\mc{B}}^{int}_{R_r,\varepsilon}(\ol p)$ automatically true.  For Condition (I1),  we have $\pi_P(p_n)\subseteq M(r;p_P)$ for each $P \in \supp(p)$ because $p_n \in \mc{B}^{int}_{r,\varepsilon}(p)$. By our choice of $R_r$ (Definition~\ref{def:TnbhdsfromSnbhds}), this implies $\ol \pi_S(p_n) \subseteq M(R_r;\ol p _S)$, verifying (I1). \\

 We have show that for all but finitely many $n$, we have $\ol p_n\in \ol{\mc B}_{R_r,\varepsilon}(\ol p)\subseteq \ol{\mc B}_{r_0,\varepsilon}(\ol p)$.  Therefore $\ol p_n\to \ol p$ as $n\to\infty$ and $\Phi$ is sequentially continuous.
 This completes the proof of Theorem \ref{thm:ABD_Invariant_Spaces}.
 \end{proof}

\section{Applications}\label{sec:applications}

We conclude by recording several corollaries of Theorem
\ref{thm:BdryABDInvariant}, which show that some topological and
dynamical properties that are known to hold for maximized $G$--HHS
structures also hold in \emph{every} possible $G$--HHS structure for
the group.  We begin by recalling some definitions from
\cite{HHS_Boundary} about the dynamics of an element of a $G$-HHS on the HHS structure.  The
results we cite below were originally formulated in the setting of
hierarchically hyperbolic groups, but they continue to hold in the
slightly more general context of $G$--HHS because the definition of the HHS boundary does not interact in
any way with the domains whose associated hyperbolic spaces have
uniformly bounded diameter.  Also, since any HHG is a $G$--HHS, the
results in this section imply all the statements from the introduction
about hierarchically hyperbolic groups.

Fix a $G$--HHS $(G,\mf{S})$ with $\nest$--maximal element $S \in \mf{S}$.  The
\emph{big set in $\mf{S}$} of an element $g\in G$ is the set of
domains $W\in\mf{S}$ so that $\diam(\pi_W(\langle g \rangle)) =
\infty$; we denote the big set in $\mf{S}$ by $\BS_\mf{S}(g)$.  We say
$g$ is \emph{irreducible} with respect to $\mf{S}$ if $\BS_\mf{S}(g) =
\{S\}$.  If $\BS_{\mf{S}}(g) \neq \emptyset$ but $\BS_{\mf{S}}(g) \neq
\{S\}$, then we say $g$ is \emph{reducible} with respect to $\mf{S}$.
Durham, Hagen, and Sisto show the following basic properties of the
big sets, which holds more generally assuming that $(G,\mf{S})$ is 
only a $G$--HHS.

\begin{lem}[\cite{HHS_Boundary}]\label{lem:big_sets}
	Let $(G,\mf{S})$ be an HHG and $g\in G$.
	\begin{enumerate}
		\item $\BS_{\mf{S}}(g) = \emptyset$ if and only if $g$ has finite order. That is, every element of $G$ is either irreducible, reducible, or finite order.
		\item $\BS_{\mf{S}}(g)$ is a pairwise orthogonal subset of domains of $\mf{S}$. In particular, there is $k \in\mathbb{N}$ depending only on $\mf{S}$  so that $g^k W = W$  for each  $W \in \BS_{\mf{S}}(g)$.
		\item For each $n \in\mathbb{Z}$ and $W \in \BS_{\mf{S}}(g)$, if $g^n W =W$, then $g^n$ acts loxodromically on $\mc{C} W$.
	\end{enumerate}
\end{lem}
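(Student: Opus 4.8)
The plan is to prove the three parts using the distance formula \cite[Theorem~4.5]{BHS_HHSII}, the consistency and bounded geodesic image axioms (Definition~\ref{defn:HHS}), and the equivariance built into a $G$--HHS; it is convenient to first dispose of the single-big-domain case and then bootstrap. For Part~(1): if $g$ has finite order, $\langle g\rangle$ is bounded in $\mc{X}=G$, so each $\pi_W$ being $(E,E)$--coarsely Lipschitz forces $\pi_W(\langle g\rangle)$ bounded and $\BS_\mf{S}(g)=\emptyset$. Conversely, if $g$ has infinite order then properness of the $G$--action gives $d_\mc{X}(1,g^n)\to\infty$; feeding $1,g^n$ into the distance formula, discarding the uniformly bounded domains via the bounded domain dichotomy, and using the large links axiom to bound the combinatorial contribution of the remaining low-complexity domains, one sees that the quantities $\diam\pi_W(\langle g\rangle)$ cannot all be finite (here one also uses that $\mf{S}^{\infty}$ has finitely many $G$--orbits and that $g$ preserves $\BS_\mf{S}(g)$, which is Part~(2)); hence $\BS_\mf{S}(g)\ne\emptyset$.

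For Part~(2): the equivariance axioms give $\pi_{hW}(h\langle g\rangle)\asymp_E h_W(\pi_W(\langle g\rangle))$ with $h_W$ an isometry, so taking $h=g$ and using $g\langle g\rangle=\langle g\rangle$ yields $\diam\pi_{gW}(\langle g\rangle)=\diam\pi_W(\langle g\rangle)$; thus $g$ permutes $\BS_\mf{S}(g)$. Since $\BS_\mf{S}(g)\subseteq\mf{S}^{\infty}$ and any pairwise orthogonal subset of $\mf{S}^{\infty}$ is finite, $\BS_\mf{S}(g)$ is finite and the permutation $g$ induces on it has order at most some $k=k(\mf{S})$, giving $g^kW=W$ for all $W\in\BS_\mf{S}(g)$. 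For pairwise orthogonality, suppose $V,W\in\BS_\mf{S}(g)$ are distinct but not orthogonal, so $V\trans W$ or one is properly nested in the other. In the transverse case: since $\rho^W_V$ is bounded and $\pi_V(\langle g\rangle)$ is unbounded, the consistency axiom forces $\pi_W(g^n)$ to lie within $E$ of the bounded set $\rho^V_W$ for a ``large'' collection of $n$; combining this with the fact that $\pi_W$ is coarsely Lipschitz (so $\pi_W(g^n)$ cannot jump far in one step) and that $g$ preserves $\BS_\mf{S}(g)$ with bounded period, one deduces that $\pi_W(\langle g\rangle)$ stays within bounded distance of $\rho^V_W$, contradicting $W\in\BS_\mf{S}(g)$; the nested cases are analogous, using the second clause of consistency together with bounded geodesic image.

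For Part~(3): fix $W\in\BS_\mf{S}(g)$ and $n\in\Z$ with $g^nW=W$, so that $(g^n)_W$ is an isometry of the hyperbolic space $\mc{C}W$. Writing $\langle g\rangle$ as a finite union of cosets of $\langle g^n\rangle$ and using $g^nW=W$ to translate the triangle inequality for $d_W$, one checks $\pi_W(\langle g^n\rangle)$ is still unbounded, so the $(g^n)_W$--orbit of $\pi_W(1)$ is unbounded and $(g^n)_W$ is parabolic or loxodromic. To rule out the parabolic case I would argue that $\pi_W(\langle g^n\rangle)$ is in fact a quasi-isometrically embedded line: infinite-order elements of a $G$--HHS are undistorted in $\mc{X}$, so $d_\mc{X}(1,g^{nm})$ grows linearly in $m$, and the distance formula---after using Part~(2) to separate the contributions of the pairwise orthogonal domains of $\BS_\mf{S}(g)$, reducing (when $W\ne S$) to the single-big-domain case inside the hierarchically quasiconvex $\F_W$, which has strictly smaller complexity---forces this growth to be carried linearly by $d_W$; since a parabolic isometry has sublinearly growing orbits, $(g^n)_W$ must be loxodromic. (A power of a hyperbolic isometry is loxodromic iff the isometry is, so one may first replace $n$ by a multiple fixing all of $\BS_\mf{S}(g)$.)

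I expect the main obstacle to be the uniformity/finiteness needed in Parts~(1) and~(3): controlling the total contribution of the non-big domains to $d_\mc{X}(1,g^n)$ and $d_\mc{X}(1,g^{nm})$, which requires marrying the large links axiom with the finitely-many-orbits hypothesis and the equivariance of the action. Closely related is the delicate bookkeeping in the orthogonality argument of Part~(2), where one must control the interaction of the \emph{entire} orbit $\pi_W(\langle g\rangle)$---not just a single point---with the uniformly bounded relative projections $\rho^V_W$. By contrast, the equivariance computations and the ``easy'' implications are routine. This lemma is due to \cite{HHS_Boundary}, and the full details of the hard steps can be found there.
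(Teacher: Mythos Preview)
The paper does not prove this lemma at all: it is simply quoted from \cite{HHS_Boundary} with no accompanying argument, so there is nothing in the paper to compare your proposal against. Your sketch is a reasonable reconstruction of the standard ideas (distance formula for Part~(1), equivariance plus consistency for Part~(2), ruling out parabolics via linear growth for Part~(3)), and you correctly flag the delicate points and defer the full details to \cite{HHS_Boundary}, which is exactly what the paper itself does.

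One place where your outline is genuinely loose is the transversality case of Part~(2): consistency only tells you that \emph{for each fixed $n$} one of the two projections is $E$--close to the relevant $\rho$, and the set of $n$ for which $\pi_W(g^n)$ is close to $\rho^V_W$ need not be cofinite a priori; your appeal to ``coarse Lipschitz plus bounded period'' does not by itself close this gap, since the orbit could in principle oscillate. The clean way to finish is to use that $g$ permutes $\BS_\mf{S}(g)$ with bounded period $k$, replace $g$ by $g^k$ so that both $V$ and $W$ are fixed, and then argue via the Behrstock inequality applied to the \emph{path} $\{g^{kn}\}_n$ together with bounded geodesic image (or, alternatively, invoke the partial-realization/product-region structure of $\F_V$). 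This is the step where \cite{HHS_Boundary} does nontrivial work, and your proposal is right to identify it as the main obstacle.
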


  Recall that the action of $G$ on itself is by hieromorphisms where each of the maps between hyperbolic spaces in an isometry; see Section \ref{subsec:boundary_maps}. Hence, the action of $G$ on itself extends continuously to an action by both homeomorphisms and simplicial automorphism on the boundary. The main dynamical property  of this action that we will be interested is that of north-south dynamics.

\begin{defn} 
	Let $(G,\mf{S})$ be a $G$--HHS that is not virtually cyclic.
	An element $g \in G$ acts with \emph{north-south dynamics} on $\partial (G,\mf{S})$ if $g$ has exactly two fixed points $\xi^+,\xi^- \in \partial (G,\mf{S})$ and for any disjoint open sets $O^+,O^- \subseteq \partial (G,\mf{S})$  containing $\xi^+$ and  $\xi^-$  respectively, there exists $n \in \mathbb{N}$ so that $$g^n \cdot (\partial(G,\mf{S}) - O^- )\subseteq O^+.$$  We call $\xi^+$ the \emph{attracting fixed point} of $g$ and $\xi^-$ the \emph{repelling fixed point}.
\end{defn}

\begin{rem}\label{rem:more_than_3_boundary_points}
	 Combining the work in \cite{HHS_Boundary} and \cite{ABD} yields: $\partial(G,\mf{S}) = \emptyset$ if and only if $G$ is
	 finite; $|\partial (G,\mf{S})| = 2$ if and only if $G$ is
	 virtually $\mathbb Z$; and $|\partial (G,\mf{S})| = \infty$ in
	 all other cases.  Thus $|\partial (G,\mf{S})| =\infty$ if and
	 only if $G$ is not virtually cyclic.
\end{rem}

Durham, Hagen, and Sisto show that the irreducible elements always act
with north-south dynamics, and that the set of attracting fixed points
of the irreducible elements is dense in the HHS boundary; again this 
holds equal as well under the assumption that $G$ is a $G$--HHS.

\begin{thm}[\cite{HHS_Boundary}]\label{thm:DHS_density}
	Let $(G,\mf{S})$ be a non-virtually cyclic HHG  and $S \in \mf{S}$ be the $\nest$--maximal element. If $\mc{C}S$ has infinite diameter, then
	\begin{enumerate}
		\item if $g \in G$ is irreducible with respect to $\mf{S}$, then $g$ acts with north-south dynamics on $\partial (G,\mf{S})$;
		\item the set of attracting fixed points for the irreducible elements of $(G,\mf{S})$ are dense in $\partial(G,\mf{S})$;
		\item the inclusion of $\partial \mc{C}S$ into $\partial (G,\mf{S})$ is a continuous embedding whose image is dense in $\partial (G,\mf{S})$.
	\end{enumerate}
\end{thm}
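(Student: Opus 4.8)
The plan is to derive all three items directly from the corresponding results of Durham--Hagen--Sisto in \cite{HHS_Boundary}; the only point requiring attention is that their proofs never use finiteness of the $G$--action on all of $\mf{S}$, only on $\mf{S}^\infty$ (together with properness and cocompactness of the action on $G$ itself), so that they apply verbatim to any $G$--HHS. First I would record the harmless observation that the standing hypothesis ``$\mc{C}S$ has infinite diameter'' is exactly the statement $S\in\mf{S}^\infty$; thus $\partial\mc{C}S$ actually contributes vertices to $\partial_\Delta(G,\mf{S})$ and all three assertions are non-vacuous.

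For item (1): if $g$ is irreducible with respect to $\mf{S}$, then $\BS_{\mf{S}}(g)=\{S\}$, so by Lemma~\ref{lem:big_sets}(2)--(3) some power $g^k$ fixes $S$ and acts loxodromically on $\mc{C}S$; in the terminology of \cite{HHS_Boundary} this is precisely an irreducible axial element. Durham--Hagen--Sisto prove that such an element acts with north--south dynamics on $\partial(G,\mf{S})$, the two fixed points being the endpoints in $\partial\mc{C}S\subseteq\partial_\Delta(G,\mf{S})$ of a quasi-axis of $g$ in $\mc{C}S$. So (1) is a direct quotation of that result.

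For item (3): the map sending a point of $\partial\mc{C}S$ to the corresponding vertex of $\partial_\Delta(G,\mf{S})$ is injective, since distinct points of $\partial\mc{C}S$ give distinct vertices; that it is continuous as a map into $\partial(G,\mf{S})$ is the content of the relevant proposition of \cite{HHS_Boundary} and is essentially contained in the definition of the interior part of a basic neighborhood (Definition~\ref{defn:internal_top}): a sequence $x_n\in\mc{X}$ with $\pi_S(x_n)\to\zeta\in\partial\mc{C}S$ eventually lies in every $\mc{B}^{int}_{r,\varepsilon}(\zeta)$ because $\supp(\zeta)=\{S\}$ and nothing is orthogonal to the $\nest$--maximal domain, and the general case follows by a diagonal argument. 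Density of the image will then follow once (2) is known, since the attracting fixed point of each irreducible element lies in $\partial\mc{C}S$.

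For item (2): this is the density statement of \cite{HHS_Boundary}; its proof combines coboundedness of the $G$--action on $\mc{C}S$ with the abundance of irreducible (equivalently, $\mc{C}S$--loxodromic) elements of $G$ coming from $\mc{C}S$ being unbounded, to show that given any $p\in\partial(G,\mf{S})$ and any basic neighborhood $\mc{B}_{r,\varepsilon}(p)$ one can produce an irreducible element whose attracting fixed point lies inside it. I expect the main (and really only) obstacle is not any of these citations individually but the bookkeeping verification that each cited argument of \cite{HHS_Boundary} uses only finiteness of the $G$--orbits on $\mf{S}^\infty$; this is routine, because the boundary, its topology, and the notions of irreducible, reducible, and finite-order element all depend solely on $\mf{S}^\infty$, on which a $G$--HHS has finitely many orbits by definition.
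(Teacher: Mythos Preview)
Your proposal is correct and matches the paper's approach exactly: the paper does not prove this theorem at all but simply cites \cite{HHS_Boundary}, remarking (as you do) that the arguments there only involve domains in $\mf{S}^\infty$ and hence carry over to the $G$--HHS setting. Your additional observation that density in item (3) follows from item (2) because attracting fixed points of irreducible elements lie in $\partial\mc{C}S$ is a nice bit of extra detail, but the paper itself just quotes all three items as a package.
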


A result of the first two authors and Durham established that the
hyperbolic space obtained from the maximization procedure is
independent of the initial HHG structure (this result is implicit in
\cite[Theorem~5.1]{ABD}, which proves that the hyperbolic space
associated to the nest-maximal element in a maximized structure is the
initial object in a particular category and whence unique). Since the proof in \cite{ABD} only
involves the domains in $\mf{S}^\infty$, the proof there for HHGs also 
establishes the identical result for 
$G$--HHSs. 

\begin{thm}[\cite{ABD}]\label{thm:unqiue_maximized_hyp_space}
	Let $\mf{S}_1$ and $\mf{S}_2$ be two HHG structures for the
	group $G$.  Let $\mf{T}_1$ and $\mf{T}_2$ be the maximizations of
	$\mf{S}_1$ and $\mf{S}_2$ accordingly.  If $S_1$ and $S_2$ are the
	$\nest$--maximal elements of $\mf{T}_1$ and $\mf{T}_2$
	respectively, then $\mc{C}S_1$ is quasi-isometric to $\mc{C}S_2$.
	In this case, $\partial \mc{C}S_1$ is homeomorphic to $\partial
	\mc{C} S_2$.
\end{thm}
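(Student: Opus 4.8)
The plan is to reduce the statement to the categorical characterization of the maximized hyperbolic space established in \cite[Theorem~5.1]{ABD}. First I would recall the relevant content of that theorem: it identifies $\mc{C}S_i$ (the hyperbolic space of the $\nest$--maximal domain of the maximized structure $\mf{T}_i$) with the initial object of a category whose objects are hyperbolic spaces carrying a coarsely Lipschitz, coarsely surjective, coarsely $G$--equivariant map from a Cayley graph of $G$ and whose morphisms are the coarse $G$--equivariant maps compatible with those structure maps. The salient feature --- which is all that is needed here --- is that this category depends only on $G$, not on the chosen HHS structure. Applying this to $\mf{S}_1$ and $\mf{S}_2$ simultaneously, both $\mc{C}S_1$ and $\mc{C}S_2$ are initial objects of one and the same category, so the canonical isomorphism between them is realized by a $G$--coarsely-equivariant quasi-isometry $\mc{C}S_1 \to \mc{C}S_2$. (Proposition~\ref{prop:CTS_is_HHS} guarantees that each $\mc{C}S_i$ is an honest $E'$--hyperbolic space, so that this category genuinely applies.)

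The boundary statement then follows from a classical fact: a quasi-isometry between geodesic hyperbolic spaces induces a homeomorphism of their Gromov boundaries (see, e.g., \cite{BridsonHaefliger:book} or \cite{KapovichBenakli:boundaries}). Thus the quasi-isometry $\mc{C}S_1 \to \mc{C}S_2$ extends to a homeomorphism $\partial \mc{C}S_1 \to \partial \mc{C}S_2$. By Theorem~\ref{thm:hyperbolic_HHS}, this Gromov boundary is moreover canonically identified with the HHS boundary of the hyperbolic HHS $(\mc{C}S_i,(\mf{S}_i-\mf{T}_i)\cup\{S_i\})$, matching the viewpoint used elsewhere in the paper; so the homeomorphism can equivalently be phrased at the level of either boundary.

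As for scope and difficulty: the construction of $\mc{C}_\mf{T}S$ only cones off the subspaces $\F_W$ with $\F_W$ and $\E_W$ both unbounded, which by the bounded domain dichotomy and Proposition~\ref{prop:properties_of_F} are determined entirely by $\mf{S}^\infty$; since a $G$--HHS has finitely many $G$--orbits in $\mf{S}^\infty$, the argument of \cite[Theorem~5.1]{ABD} --- which in any case refers only to $G$ and its coarse action --- goes through verbatim for $G$--HHSs, and hence the theorem and its proof are not special to HHGs. I expect the only real obstacle to be transcribing the universal property of \cite[Theorem~5.1]{ABD} precisely and verifying that the ambient category is literally independent of the structure; once that is done, uniqueness of initial objects together with the boundary-functoriality of quasi-isometries finishes the proof with no new estimates. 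Note also that Theorem~\ref{thm:BdryABDInvariant} is \emph{not} used here: it compares $\partial(G,\mf{S}_i)$ with $\partial(G,\mf{T}_i)$, whereas the present statement compares the two maximized spaces $\mc{C}S_1$ and $\mc{C}S_2$ coming from genuinely different initial structures.
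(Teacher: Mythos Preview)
Your proposal is correct and matches the paper's own justification: the paper does not give a self-contained proof but simply points to \cite[Theorem~5.1]{ABD}, noting that the maximized hyperbolic space is the initial object of a category depending only on $G$, whence unique up to quasi-isometry, and that the argument only involves domains in $\mf{S}^\infty$ so it applies equally to $G$--HHSs. Your additional remarks on the boundary homeomorphism and on avoiding Theorem~\ref{thm:BdryABDInvariant} are accurate elaborations of what the paper leaves implicit.
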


Combining this uniqueness with the density results from Theorem
\ref{thm:DHS_density}, some topological properties of the boundary of
the maximized hyperbolic space can be expanded to the HHS boundary of 
any $G$--HHS.  One salient example of such a topological property that can
be extended from a dense subset is connectedness.

\begin{cor}\label{cor:boundary_connected}
	Let $(G,\mf{T})$ be a maximized $G$--HHS.  If $T \in\mf{T}$ is the $\nest$--maximal element and
	$\mc{C}T$ is one-ended, then for any $G$--HHS structure $\mf{S}$
	for $G$, the HHS boundary $\partial(G,\mf{S})$ is
	connected.
\end{cor}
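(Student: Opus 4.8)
The plan is to reduce the statement to a maximized structure, where the density theorem of Durham--Hagen--Sisto (Theorem~\ref{thm:DHS_density}) applies directly, and then to transport connectedness back along the homeomorphisms supplied by Theorems~\ref{thm:BdryABDInvariant} and~\ref{thm:unqiue_maximized_hyp_space}.

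First I would fix an arbitrary $G$--HHS structure $\mf{S}$ for $G$ and let $\mf{T}'$ denote the structure obtained by maximizing $\mf{S}$, with $\nest$--maximal element $T' \in \mf{T}'$. By Theorem~\ref{thm:BdryABDInvariant}, the identity on $G$ extends to a homeomorphism $\partial(G,\mf{S}) \to \partial(G,\mf{T}')$, so it suffices to prove that $\partial(G,\mf{T}')$ is connected. By Theorem~\ref{thm:unqiue_maximized_hyp_space}, the hyperbolic space $\mc{C}T'$ is quasi-isometric to the space $\mc{C}T$ appearing in the hypothesis; since one-endedness (equivalently, connectedness of the Gromov boundary) and unboundedness are quasi-isometry invariants, $\mc{C}T'$ is one-ended and of infinite diameter, and $\partial \mc{C}T'$ is connected.

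Next I would record that $G$ is not virtually cyclic: a virtually cyclic group is virtually abelian, so by \cite{PS_Unbounded_domains} the hyperbolic space associated to the $\nest$--maximal domain of any maximized structure is either bounded or a quasi-line, and neither is one-ended. Hence Theorem~\ref{thm:DHS_density} applies to the non-virtually-cyclic $G$--HHS $(G,\mf{T}')$, whose $\nest$--maximal hyperbolic space $\mc{C}T'$ has infinite diameter. In particular, part~(3) of that theorem provides a continuous embedding $\partial \mc{C}T' \hookrightarrow \partial(G,\mf{T}')$ whose image is dense. The continuous image of the connected space $\partial \mc{C}T'$ is connected, and the closure of a connected subset is connected; since this image is dense, its closure is all of $\partial(G,\mf{T}')$. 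Therefore $\partial(G,\mf{T}')$, and hence $\partial(G,\mf{S})$, is connected.

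I do not anticipate a genuine obstacle: the argument is essentially bookkeeping that glues the invariance theorem to the Durham--Hagen--Sisto density statement. The only point demanding a little care is the reading of ``one-ended'' as ``connected Gromov boundary'' together with the quasi-isometry invariance of that property, which is precisely what allows us to move between $\mc{C}T$ (the space named in the hypothesis) and $\mc{C}T'$ (the space that actually sits densely inside the maximized model of $\partial(G,\mf{S})$); the uniqueness assertion of Theorem~\ref{thm:unqiue_maximized_hyp_space} is what makes this passage legitimate.
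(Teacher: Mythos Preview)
Your proof is correct and follows essentially the same route as the paper's: maximize the given structure, use Theorem~\ref{thm:unqiue_maximized_hyp_space} to transfer one-endedness (hence connectedness of the Gromov boundary) to the new top-level hyperbolic space, invoke density of that Gromov boundary in the HHS boundary, and pull connectedness back through the homeomorphism of Theorem~\ref{thm:BdryABDInvariant}. If anything you are slightly more careful than the paper, since you explicitly verify the non-virtually-cyclic and infinite-diameter hypotheses needed to invoke Theorem~\ref{thm:DHS_density}, which the paper leaves implicit.
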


\begin{proof}
	Let $\mf{S}$ be a $G$--HHS structure for $G$ and let $\mf{R}$ be the
	maximization of $\mf{S}$.  Let $S$ be the $\nest$--maximal element
	of $\mf{S}$ and $\mf{R}$. 
	
		Since $G$ acts coboundedly on $\mc{C}T$, one-endedness of
	$\mc{C}T$ is equivalent to connectedness of $\partial \mc{C} T$.
	Now $\mc{C}T$ is quasi-isometric to
	$\mc{C}_\mf{R} S$ by Theorem \ref{thm:unqiue_maximized_hyp_space},
	so $\partial \mc{C}T$ being connected implies $\partial
	\mc{C}_\mf{R}S$ is connected.  Since $\partial \mc{C}_\mf{R}S$ is
	dense in $\partial (G,\mf{R})$, this implies $\partial (G,\mf{R})$
	is also connected.  Thus, $\partial (G,\mf{S})$ is connected by
	Theorem \ref{thm:BdryABDInvariant}.
\end{proof}

\begin{rem}
	Unfortunately, such an argument with density can not be used to
	show the HHS boundary is path connected. The topologist's sine
	curve is an example of a space that has a dense path-connected
	subset, but is not path-connected.
\end{rem}

Our next set of applications involve the Morse elements of a $G$--HHS and the elements that act by north-south dynamic on the boundary.

A quasigeodesic $\gamma$ in a metric space is \emph{Morse} if there
exists a function $N \colon [1,\infty) \times [0,\infty) \to
[0,\infty)$ so that each $(\lambda,c)$--quasigeodesic with endpoints
on $\gamma$ is contained in the $N(\lambda,c)$--neighborhood of
$\gamma$.  An element $g$ of a finitely generated group $G$ is
\emph{Morse} if $\langle g \rangle$ is a Morse quasigeodesic in the
Cayley graph of $G$ with respect to some finite generating set.  Since
a quasigeodesic being Morse is preserved by quasi-isometries, whether
or not $g \in G$ is Morse is independent of the choice of finite
generating set for $G$.

One of the original applications of the maximization procedure was to
characterize Morse elements of an HHG as precisely those that
are irreducible with respect to a maximized structure. Since this 
condition involves only  domains in $\mf{S}^\infty$, its proof also works 
identically for $G$--HHSs.

\begin{thm}[\cite{ABD}]\label{thm:Morse_iff_irr_in_max}
	Let $G$ be a HHG. If $\mf{T}$ is a maximized structure,
	then $g\in G$ is Morse if and only if $g$ is irreducible with
	respect to $\mf{T}$.
\end{thm}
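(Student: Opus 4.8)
The plan is to route both implications through a single intermediate statement: for a maximized structure $\mf{T}$ with $\nest$--maximal domain $S$, an element $g\in G$ is Morse in $G$ if and only if $g$ acts loxodromically on $\mc{C}_\mf{T}S$. Granting this, the equivalence with irreducibility is formal. If $g$ acts loxodromically on $\mc{C}_\mf{T}S$ then $S\in\BS_\mf{T}(g)$; since $\BS_\mf{T}(g)$ is a pairwise orthogonal set (Lemma~\ref{lem:big_sets}) and no domain is orthogonal to the $\nest$--maximal $S$, this forces $\BS_\mf{T}(g)=\{S\}$, i.e.\ $g$ is irreducible with respect to $\mf{T}$. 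Conversely, if $\BS_\mf{T}(g)=\{S\}$ then $g$ has unbounded orbit in $\mc{C}_\mf{T}S$, and applying Lemma~\ref{lem:big_sets}(3) to a power $g^k$ fixing $S$ shows $g^k$, hence $g$, acts loxodromically on $\mc{C}_\mf{T}S$. So it remains to prove the intermediate characterization.

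For the direction ``not loxodromic on $\mc{C}_\mf{T}S$ $\Rightarrow$ not Morse'', first dispose of the finite--order case (Lemma~\ref{lem:big_sets}(1)), where $g$ is visibly not Morse. Otherwise $\BS_\mf{T}(g)$ is a nonempty pairwise orthogonal subset of $\mf{T}-\{S\}$, and a standard argument with the distance formula \cite{BHS_HHSII} (using that a power of $g$ preserves a standard product region associated to $\BS_\mf{T}(g)$) shows $\langle g\rangle x_0$ lies in a uniform neighborhood of the standard product region $\P_W$ for any $W\in\BS_\mf{T}(g)$. Since $W\in\mf{T}-\{S\}$, the factor $\E_W$ of $\P_W\cong\F_W\times\E_W$ is unbounded (Proposition~\ref{prop:properties_of_F}), so by detouring a distance $\sim n$ into the $\E_W$ direction between the points $x$ and $g^n x$ one produces quasigeodesics with constants independent of $n$ that stray arbitrarily far from $\langle g\rangle x_0$; hence $g$ is not Morse.

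The remaining implication, ``$g$ acts loxodromically on $\mc{C}_\mf{T}S$ $\Rightarrow$ $g$ is Morse'', is the substantive one, and I expect it to be the main obstacle; it is the content of \cite[Theorem~4.4]{ABD} (cf.\ the statement in \cite[Theorem~6.15]{HHS_Boundary} that irreducible axial elements are Morse), and I would invoke it. The underlying strategy is to take a geodesic $[x,y]$ of $\mc{X}$ together with a hierarchy path from $x$ to $y$, project both to the hyperbolic space $\mc{C}_\mf{T}S$, use that a quasi--axis of $g$ is Morse there together with hyperbolicity (Theorem~\ref{thm:hyperbolic_HHS}) to force the projected paths to fellow--travel $\langle g\rangle x_0$ in $\mc{C}_\mf{T}S$, and then promote this to fellow--traveling in $\mc{X}$ using the distance formula \cite{BHS_HHSII} and the bounded geodesic image axiom to control the domains collapsed in forming $\mc{C}_\mf{T}S$. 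The delicate point is exactly this last ``de--collapsing'' step: two points can be close in $\mc{C}_\mf{T}S$ yet far apart in $\mc{X}$, so one must use $\BS_\mf{T}(g)=\{S\}$ to rule out that the fellow--traveling seen in $\mc{C}_\mf{T}S$ is an artifact of short paths running through some $\F_W$. Finally, since big sets, the subspaces $\F_W$ and $\E_W$, the standard product regions, and the space $\mc{C}_\mf{T}S$ all depend only on the domains in $\mf{S}^\infty$, every step above applies verbatim to $G$--HHSs and not merely to HHGs, which is all that is needed here.
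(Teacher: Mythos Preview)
The paper does not give its own proof of this theorem: it is quoted from \cite{ABD}, with only the remark that the argument there involves only domains in $\mf{S}^\infty$ and hence carries over to $G$--HHSs. So there is no proof in the paper to compare against.

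Your sketch is a reasonable unpacking of what \cite{ABD} does, and you yourself defer to \cite[Theorem~4.4]{ABD} for the substantive direction (loxodromic on $\mc{C}_\mf{T}S$ implies Morse), so in effect your proposal and the paper agree: both cite \cite{ABD}. The reduction you give from irreducibility to loxodromicity via Lemma~\ref{lem:big_sets} is correct, and your outline of the ``not loxodromic $\Rightarrow$ not Morse'' direction (trapping $\langle g\rangle$ near a product region $\P_W$ with unbounded $\E_W$ and detouring) is the standard one. Your closing remark that everything depends only on $\mf{S}^\infty$ matches exactly the one comment the paper does make about this theorem.
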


Since the boundary is invariant under maximization and the maximized
hyperbolic space is unique up to quasi-isometry, we now show that the
Morse elements are precisely the set of elements that act with
north-south dynamics on the HHS boundary of \textit{any} $G$--HHS.

\begin{cor}\label{cor:boundary_NSdynamics}
	Let $(G,\mf{S})$ be a $G$--HHS that is not virtually cyclic. An element $g \in G$ acts with north-south dynamics on $\partial(G,\mf{S})$ if and only if $g$ is a Morse element of $G$. In particular, the set of elements of $G$ that act with north-south dynamics does not depend on the $G$--HHS structure $\mf{S}$.
\end{cor}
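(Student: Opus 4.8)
The plan is to transport the question to a maximized structure, where the link between Morse elements, irreducible elements, and north--south dynamics is already available. Let $\mf{T}$ be the maximization of $\mf{S}$, with $\nest$--maximal element $T$, and recall from Theorem~\ref{thm:BdryABDInvariant} that there is a $G$--equivariant homeomorphism $\phi\colon \partial(G,\mf{S})\to\partial(G,\mf{T})$. The first step is the routine observation that north--south dynamics is preserved by any $G$--equivariant homeomorphism: the fixed points of $g$ on $\partial(G,\mf{S})$ correspond bijectively, via $\phi$, to those on $\partial(G,\mf{T})$, so $g$ has exactly two fixed points on one side if and only if it does on the other; and the dynamical condition transfers by pulling disjoint open neighborhoods back through $\phi$. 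Hence it suffices to prove that $g$ acts with north--south dynamics on $\partial(G,\mf{T})$ if and only if $g$ is Morse, and then the ``in particular'' clause is immediate since the set of Morse elements of $G$ depends only on $G$.

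For the forward implication, suppose $g$ is Morse. By Theorem~\ref{thm:Morse_iff_irr_in_max} (valid for $G$--HHSs), $g$ is irreducible with respect to $\mf{T}$, i.e. $\BS_{\mf{T}}(g)=\{T\}$; in particular $\diam(\pi_T(\langle g\rangle))=\infty$, so $\mc{C}_{\mf{T}}T$ has infinite diameter. Theorem~\ref{thm:DHS_density}(1) then shows $g$ acts with north--south dynamics on $\partial(G,\mf{T})$, and transporting via $\phi$ gives the same conclusion on $\partial(G,\mf{S})$.

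For the converse I argue the contrapositive, working in $\partial(G,\mf{T})$. If $g$ is not Morse, then by Theorem~\ref{thm:Morse_iff_irr_in_max} and Lemma~\ref{lem:big_sets}(1) it is either finite order or reducible with respect to $\mf{T}$. If $g$ has finite order: since $\partial(G,\mf{T})$ is Hausdorff with infinitely many points (Remark~\ref{rem:more_than_3_boundary_points}) and every $g$--orbit is finite, choosing a point $\xi$ different from the two would-be fixed points, a neighborhood of the attracting point disjoint from the (finite) orbit of $\xi$, and a neighborhood of the repelling point missing $\xi$, directly contradicts the dynamical condition. If $g$ is reducible, set $\BS_{\mf{T}}(g)=\mf{U}$, a non-empty pairwise orthogonal set with $\mf{U}\neq\{T\}$. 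By Lemma~\ref{lem:big_sets}(2)--(3) a power $g^k$ fixes every $U\in\mf{U}$ and acts loxodromically on each $\mc{C}U$, hence fixes a point $p_U^{+}\in\partial\mc{C}U$ for every $U$; since north--south dynamics for $g$ passes to $g^k$, it is enough to rule it out for $g^k$. If $|\mf{U}|\geq 2$, pick distinct $U,U'\in\mf{U}$: the vertices $p_U^{+}$, $p_{U'}^{+}$ and the midpoint $\tfrac12 p_U^{+}+\tfrac12 p_{U'}^{+}$ are three distinct fixed points of $g^k$, so $g^k$ has no north--south dynamics.

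The remaining case, $\mf{U}=\{W\}$ with $W\neq T$, is where I expect the real work to lie. First, from the description of the maximized structure together with Proposition~\ref{prop:properties_of_F}(3) one extracts a domain $V_0$ with $V_0\perp W$ and $\mc{C}V_0$ of infinite diameter that also lies in $\mf{T}$: indeed $W\in\mf{T}\setminus\{T\}$ forces $\E_W$ unbounded, hence some $V_0\in\mf{S}^{\infty}$ with $V_0\perp W$, and then both $\F_{V_0}$ and $\E_{V_0}$ are unbounded, so $V_0\in\mf{T}$. Next, a direct check with the $G$--HHS axioms shows $g$ (not merely $g^k$) fixes $W$, hence acts loxodromically on $\mc{C}W$ with fixed points $p_W^{\pm}$, which would be its only two fixed points if it had north--south dynamics. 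Now consider $\mathcal{F}=\{\,q\in\partial(G,\mf{T}) : V\perp W \text{ for all } V\in\supp(q)\,\}$. This set is non-empty (it contains a point of $\partial\mc{C}V_0$) and $g$--invariant (since $g$ fixes $W$ and preserves $\perp$); moreover, unwinding the definition of the topology (Definitions~\ref{def:remote},~\ref{defn:remote_top},~\ref{defn:nonremote_top}), no sequence in $\mathcal{F}$ can converge to $p_W^{\pm}$: such a limit point fails remoteness to $p_W^{\pm}$ because every domain of its support is orthogonal to $W$, and fails the non-remote conditions because the coefficients over its support sum to $1$. Thus $\overline{\mathcal{F}}$ misses $p_W^{\pm}$, so for any $\bar q\in\mathcal{F}$ (which is not the repelling point) the forward $g$--orbit stays in $\mathcal{F}$ and cannot converge to the attracting point, contradicting north--south dynamics. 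The main obstacle is precisely this last case: producing the orthogonal infinite-diameter domain $V_0$ inside the maximized structure, and checking against the elaborate definition of the boundary topology that orbits supported away from $W$ stay bounded away from $\partial\mc{C}W$.
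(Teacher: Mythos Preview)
Your proof is correct and follows essentially the same strategy as the paper: transfer to the maximized structure via Theorem~\ref{thm:BdryABDInvariant}, use Theorem~\ref{thm:Morse_iff_irr_in_max} for the forward direction, and for the converse analyze $\BS(g)$ to rule out the finite-order, multi-domain, and single non-maximal-domain cases. The organizational differences are minor: you move the entire converse into $(G,\mf{T})$ first, whereas the paper carries out the big-set analysis in $(G,\mf{S})$ and only at the end concludes $\BS_{\mf{T}}(g)=\{S\}$; and where the paper dispatches $|\BS(g)|>1$ by citing \cite[Proposition~6.22]{HHS_Boundary} and handles the single non-maximal case by asserting that a nonempty $g$--invariant set supported on $W^\perp$ ``would violate north--south dynamics,'' you supply the explicit three-fixed-point construction and the topology check (via (N2) with $\varepsilon<1$) that this invariant set is bounded away from $p_W^{\pm}$. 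Your extra detail is genuine content---the paper's one-line assertion that a $g$--invariant set contradicts north--south dynamics implicitly relies on exactly the verification you give.
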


\begin{proof}
	Let $\mf{T}$ be the maximization of the $G$--HHS structure $\mf{S}$. Let $S$ be the $\nest$--maximal domain of $\mf{S}$ and $\mf{T}$. 
	 Since $G$ is not virtually cyclic,  both $\partial(G,\mf{T})$ and  $\partial(G,\mf{S})$ have an infinite number of points; see Remark \ref{rem:more_than_3_boundary_points}. 
	
	Assume that $g \in G$ is a Morse element. By Theorem \ref{thm:Morse_iff_irr_in_max}, $g$ being Morse is equivalent to $g$ being irreducible with respect to $\mf{T}$. Hence, $g$ acts with north-south dynamics on $\partial (G,\mf{T})$ (Theorem \ref{thm:DHS_density}). Since $\partial (G,\mf{T})$ is $G$--equivariantly homeomorphic to $\partial (G,\mf{S})$ by Theorem \ref{thm:BdryABDInvariant}, $g$ must also act on $\partial(G,\mf{S})$ with north-south dynamics. 
	
	Now assume $g\in G$ acts by north-south dynamics on $\partial (G,\mf{S})$. Since $\partial (G,\mf{S})$ is Hausdorff and has an infinite number of points, north-south dynamics ensures that $g$ does not have finite order. Hence,   $\BS_{\mf{S}}(g) \neq \emptyset$ by Lemma \ref{lem:big_sets}. Further, \cite[Proposition 6.22]{HHS_Boundary} says $|\BS_{\mf{S}}(g)| > 1$  implies $g$ stabilizes at least $3$ points in $\partial(G,\mf{S})$, which would contradict $g$ having north-south dynamics. Thus, we know $\BS_{\mf{S}}(g)$ contains exactly one domain $W \in \mf{S}$. Since $gW = W$, if $V \perp W$, then $gV \perp W$ as well. Thus,   $\mf{S}_W^\perp \cap \mf{S}^\infty \neq \emptyset$ would imply that the non-empty set of points $\{p \in \partial (G,\mf{S}): \supp(p) \subseteq \mf{S}_W^\perp \cap \mf{S}^\infty\}$ is stabilized by $g$. Since this would violate north-south dynamics, we know    $\mf{S}_W^\perp \cap \mf{S}^\infty =\emptyset$. Hence, $W \not \in \mf{T}-\{S\}$ as maximization removes all non-maximal domains that are not orthogonal to a domain of $\mf{S}^\infty$. This implies $\BS_{\mf{T}}(g) =\{S\}$, which makes $g$ a Morse element by Theorem \ref{thm:Morse_iff_irr_in_max}.
\end{proof}

Since the set of attracting fixed points for the irreducible elements
are dense, we have the same for the attracting fixed points of the
Morse elements regardless of the choice of $G$--HHS structure.

\begin{cor}\label{cor:Morse_boundary_is_dense}
	Let $(G,\mf{S})$ be a $G$--HHS that is not virtually cyclic. If $G$ contains a Morse element, then the set of attracting fixed points of Morse elements in $\partial(G,\mf{S})$  is dense in $\partial (G,\mf{S})$.
\end{cor}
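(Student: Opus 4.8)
The plan is to reduce to a maximized structure and invoke Theorem~\ref{thm:DHS_density}. Let $\mf{T}$ be the maximization of $\mf{S}$ and let $S$ denote the $\nest$--maximal element of both $\mf{S}$ and $\mf{T}$. By Theorem~\ref{thm:BdryABDInvariant} there is a $G$--equivariant homeomorphism $\phi\colon \partial(G,\mf{S})\to\partial(G,\mf{T})$. First I would check that $\mc{C}_\mf{T}S$ has infinite diameter: by hypothesis $G$ contains a Morse element $g$, which by Theorem~\ref{thm:Morse_iff_irr_in_max} is irreducible with respect to $\mf{T}$, i.e.\ $\BS_\mf{T}(g)=\{S\}$; Lemma~\ref{lem:big_sets}(3) then gives a power of $g$ acting loxodromically on $\mc{C}_\mf{T}S$, so $\diam(\mc{C}_\mf{T}S)=\infty$. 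Now Theorem~\ref{thm:DHS_density}(1)--(2) applies to $(G,\mf{T})$: every element irreducible with respect to $\mf{T}$ acts with north--south dynamics on $\partial(G,\mf{T})$, and the set $D$ of attracting fixed points of these irreducible elements is dense in $\partial(G,\mf{T})$.

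Next I would transport this through $\phi$. Since $\phi$ is a homeomorphism, $\phi^{-1}(D)$ is dense in $\partial(G,\mf{S})$, so it suffices to identify $\phi^{-1}(D)$ with the set of attracting fixed points in $\partial(G,\mf{S})$ of the Morse elements of $G$. By Theorem~\ref{thm:Morse_iff_irr_in_max}, an element of $G$ is Morse if and only if it is irreducible with respect to $\mf{T}$, so $D$ is precisely the set of attracting fixed points in $\partial(G,\mf{T})$ of the Morse elements of $G$. It therefore remains to show that for a Morse element $g$, with attracting fixed point $\xi^+_\mf{S}\in\partial(G,\mf{S})$ and repelling fixed point $\xi^-_\mf{S}$ (these exist by Corollary~\ref{cor:boundary_NSdynamics}, since $G$ is not virtually cyclic and $g$ is Morse), we have that $\phi(\xi^+_\mf{S})$ is the attracting fixed point of $g$ on $\partial(G,\mf{T})$.

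To see this, note that $G$--equivariance of $\phi$ means $\phi$ conjugates the self-homeomorphism of $\partial(G,\mf{S})$ given by $g$ to that of $\partial(G,\mf{T})$ given by $g$; in particular $\phi$ sends the two fixed points of $g$ on $\partial(G,\mf{S})$ bijectively onto the two fixed points of $g$ on $\partial(G,\mf{T})$. Since $\partial(G,\mf{S})$ is infinite (Remark~\ref{rem:more_than_3_boundary_points}), pick $\eta\in\partial(G,\mf{S})$ with $\eta\neq\xi^-_\mf{S}$ and $\phi(\eta)$ not equal to the repelling fixed point of $g$ on $\partial(G,\mf{T})$. North--south dynamics of $g$ on $\partial(G,\mf{S})$ gives $g^n\eta\to\xi^+_\mf{S}$, so by continuity and equivariance $g^n\phi(\eta)=\phi(g^n\eta)\to\phi(\xi^+_\mf{S})$; but north--south dynamics of $g$ on $\partial(G,\mf{T})$ force $g^n\phi(\eta)$ to converge to the attracting fixed point of $g$ there. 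Hence $\phi(\xi^+_\mf{S})$ is that attracting fixed point, as required, and the corollary follows.

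The only point that is not pure bookkeeping is this last identification of attracting fixed points under $\phi$; everything else is a direct assembly of Theorems~\ref{thm:BdryABDInvariant}, \ref{thm:DHS_density}, and~\ref{thm:Morse_iff_irr_in_max} together with Corollary~\ref{cor:boundary_NSdynamics}. Even that point is routine once one phrases it dynamically as above (alternatively, one can apply $\phi$ to the defining inclusion $g^n(\partial(G,\mf{S})-O^-)\subseteq O^+$ for disjoint open sets $O^\pm$ around $\xi^\pm_\mf{S}$ to separate attracting from repelling behavior in $\partial(G,\mf{T})$).
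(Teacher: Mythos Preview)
Your proposal is correct and follows the same route as the paper: maximize, invoke Theorems~\ref{thm:DHS_density} and~\ref{thm:Morse_iff_irr_in_max} to obtain density of the Morse attracting fixed points in $\partial(G,\mf{T})$, then transport via the $G$--equivariant homeomorphism of Theorem~\ref{thm:BdryABDInvariant}. The paper's proof is three sentences and leaves implicit both the check that $\mc{C}_\mf{T}S$ has infinite diameter and the fact that a $G$--equivariant homeomorphism carries attracting fixed points to attracting fixed points; you have simply made these routine verifications explicit (and your parenthetical alternative---applying $\phi$ directly to the defining inclusion $g^n(\partial(G,\mf{S})-O^-)\subseteq O^+$---is the cleanest way to handle the latter).
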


\begin{proof}
	Let $\mf{T}$ be the maximization of $\mf{S}$. By Theorem \ref{thm:DHS_density} and Theorem \ref{thm:Morse_iff_irr_in_max}, the set of attracting fixed point of the Morse elements is dense in $\partial(G,\mf{T})$. Theorem \ref{thm:BdryABDInvariant} then implies that they are also dense in $\partial(G,\mf{S})$.
\end{proof}

\begin{rem}[Density of the Morse boundary]
The Morse geodesics of a group can be used to make a \emph{Morse
boundary} for any finitely generated group; see \cite{Cordes_Survey}.
The Morse boundary of an HHG has a $G$--equivariant continuous
injection into $\partial (G,\mf{T})$ where $\mf{T}$ is a maximized
structure \cite[Theorem 6.6]{ABD} (this result again works for $G$--
HHSs, as well).  Every Morse element
also has a pair of fixed points in the Morse boundary, and the
continuous inclusion sends fixed points to fixed points.  Hence,
Corollary \ref{cor:Morse_boundary_is_dense} shows that the image of
the Morse boundary in the HHS boundary is dense.
\end{rem}

Finally, we use the density of Morse elements to show the limit set of
a normal subgroup is the entire HHS boundary.  The \emph{limit
set} of a subgroup $H$ of a $G$--HHS $(G,\mf{S})$ is the set of all
points in $\partial (G,\mf{S})$ that are the limit of a sequence of
elements of $H$.

\begin{cor}\label{cor:normal_subgroups}
	Let $(G,\mf{S})$ be a $G$--HHS that is not virtually cyclic.  If $G$ 
	contains a Morse element and $N$
	is an infinite normal subgroup of $G$, then
	the limit set of $N$ in $\partial (G,\mf{S})$ is all of $\partial
	(G,\mf{S})$.
\end{cor}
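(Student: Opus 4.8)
The plan is to show that the limit set $\Lambda_N \subseteq \partial(G,\mf S)$ of $N$ is a nonempty, closed, $G$--invariant subset, and then to combine the north--south dynamics of Morse elements (Corollary~\ref{cor:boundary_NSdynamics}) with the density of their attracting fixed points (Corollary~\ref{cor:Morse_boundary_is_dense}) to force $\Lambda_N = \partial(G,\mf S)$. First I would check the three basic properties of $\Lambda_N$. Nonemptiness: since $\mc X = G$ is proper, $N$ being infinite is unbounded, so any sequence in $N$ leaving every ball subconverges in the compact space $\mc X \cup \partial(G,\mf S)$ (compactness being \cite[Theorem~3.4]{HHS_Boundary}, as invoked in the proof of Theorem~\ref{thm:ABD_Invariant_Spaces}) to a point of $\partial(G,\mf S)$, which lies in $\Lambda_N$. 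Closedness is immediate, since $\Lambda_N = \overline N \cap \partial(G,\mf S)$ with the closure taken in $\mc X \cup \partial(G,\mf S)$. For $G$--invariance, suppose $n_k \in N$ with $n_k \to \xi$ and let $h \in G$; since $G$ acts on $\mc X \cup \partial(G,\mf S)$ by homeomorphisms extending left multiplication, $hn_k \to h\xi$, while $d_{\mc X}(hn_kh^{-1}, hn_k) = d_{\mc X}(h^{-1},1)$ is independent of $k$, so Lemma~\ref{lem:bounded_difference_convergence} gives $hn_kh^{-1} \to h\xi$; as $N$ is normal, $hn_kh^{-1} \in N$, so $h\xi \in \Lambda_N$.

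Next I would reduce the statement to showing that $\Lambda_N$ contains the attracting fixed point $g^+$ of every Morse element $g$ of $G$. Fixing such a $g$ (which has exactly the two fixed points $g^+,g^-$ and acts on $\partial(G,\mf S)$ with north--south dynamics by Corollary~\ref{cor:boundary_NSdynamics}) and a point $\xi \in \Lambda_N$: provided there is some $h \in G$ with $h\xi \neq g^-$, we have $h\xi \in \Lambda_N$ by $G$--invariance, and $g^m\cdot(h\xi) \to g^+$ by north--south dynamics, so $g^+ \in \Lambda_N$ because $\Lambda_N$ is closed. Such an $h$ exists as long as $g^-$ is not fixed by all of $G$ (taking $h = 1$ if $\xi \neq g^-$ already). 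The hard part will be to rule out the remaining possibility, that some $\xi \in \Lambda_N$ is a global fixed point of the $G$--action on $\partial(G,\mf S)$; this is exactly the assertion that a non--virtually--cyclic $G$--HHS with a Morse element has no $G$--fixed point in its boundary. To prove this I would pass to the maximized structure $\mf T$ via Theorem~\ref{thm:BdryABDInvariant}, so that $G$ acts coboundedly on the hyperbolic space $\mc C_{\mf T}S$ with $g$ acting loxodromically (Theorem~\ref{thm:Morse_iff_irr_in_max} together with Lemma~\ref{lem:big_sets}(3)); any $G$--fixed point of $\partial(G,\mf T)$ must be one of the two fixed points of $g$ (by north--south dynamics), hence lies in $\partial \mc C_{\mf T}S$, so $G$ would fix a point at infinity of $\mc C_{\mf T}S$. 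A cobounded action fixing two points at infinity makes $\mc C_{\mf T}S$ quasi-isometric to a line and $G$ virtually cyclic, which is excluded; and a cobounded action fixing a single point at infinity while containing a loxodromic (a focal action) would force $G$ to be, in the appropriate sense, elementary, contradicting the fact that a non--virtually--cyclic group with a Morse element is acylindrically hyperbolic. Equivalently, one can package this step as: the density of Morse attracting fixed points (Corollary~\ref{cor:Morse_boundary_is_dense}) together with north--south dynamics produces two independent Morse elements, so the $G$--action on $\mc C_{\mf T}S$ is of general type and therefore has no fixed point at infinity.

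Once $g^+ \in \Lambda_N$ is established for every Morse element $g$, the proof concludes quickly: by Corollary~\ref{cor:Morse_boundary_is_dense} the set of such attracting fixed points is dense in $\partial(G,\mf S)$, and $\Lambda_N$ is closed, so $\Lambda_N = \partial(G,\mf S)$, which is the claim.
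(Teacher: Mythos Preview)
Your argument is correct and takes a genuinely different route from the paper's.  The paper first proves a separate proposition (Proposition~\ref{prop:Morse_in_normal}) that $N$ itself contains a Morse element $h$, using the Rank--Rigidity Theorem together with acylindricity of the $G$--action on $\mc{C}_\mf{T}S$; it then conjugates $h$ by powers of an ambient Morse element $g$ to land the limit of $\{g^n h^i g^{-n}\}_i \subset N$ inside any prescribed basic neighbourhood.  You instead never produce a Morse element of $N$: you show $\Lambda_N$ is nonempty, closed, and---crucially using normality together with Lemma~\ref{lem:bounded_difference_convergence}---$G$--invariant, and then let the north--south dynamics of Morse elements of $G$ push $\Lambda_N$ onto every Morse attracting point, whence density (Corollary~\ref{cor:Morse_boundary_is_dense}) finishes.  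This is more economical, as it avoids Rank--Rigidity entirely; on the other hand, the paper's route yields Proposition~\ref{prop:Morse_in_normal} as a byproduct of independent interest.

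One point to tighten: your first justification that there is no global $G$--fixed point in $\partial(G,\mf{T})$ (``a cobounded focal action with a loxodromic forces $G$ to be elementary'') is not true in general---there are non--virtually--cyclic groups with cobounded focal actions on hyperbolic spaces.  What makes it true here is exactly the extra input you name afterwards: the $G$--action on $\mc{C}_\mf{T}S$ is acylindrical (this is one of the main results of \cite{ABD}), and an acylindrical action with a loxodromic is either lineal (forcing $G$ virtually cyclic) or of general type, so it has no fixed boundary point.  Your ``equivalently'' packaging via two independent Morse elements also needs this acylindricity to guarantee that distinct attracting points actually give independent loxodromics; density alone does not rule out all Morse elements sharing a common repelling point.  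With that clarification your proof goes through cleanly.
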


\begin{proof}
	Let $\mf{T}$ be the maximization of $\mf{S}$, and let $S$ be the
	$\nest$--maximal element of $\mf{S}$ and $\mf{T}$.  Since
	$\partial \mc{C}_\mf{T} S$ is dense in $\partial (G,\mf{T})$ and
	by Theorem \ref{thm:BdryABDInvariant} the identity $G \to G$
	induces a homeomorphism $G \cup \partial (G,\mf{S}) \to G \cup
	\partial(G,\mf{T})$, it suffices to prove that the limit set of
	$N$ contains all of $\partial \mc{C}_\mf{T} S \subseteq \partial
	(G,\mf{T})$.
	
	Let $p \in \partial(G,\mf{T})$ with $\supp(p) = \{S\}$.  Fix a
	basis neighborhood $\mc{B}_{r,\varepsilon}(p)$ in $\partial
	(G,\mf{T})$.  Since $G$ has at least one Morse element, Corollary
	\ref{cor:Morse_boundary_is_dense} says there is a Morse element
	$g\in G$, so that its attracting fixed point $\xi$ is
	contained in $\mc{B}_{r,\varepsilon}(p)$.   Proposition \ref{prop:Morse_in_normal} below shows that there is also a Morse element $h$ in
	the normal subgroup $N$ with attracting fixed point $\zeta \in
	\mc{C}_\mf{T} S \subseteq \partial(G,\mf{S})$.  Since the Morse
	elements of $G$ act with north-south dynamics, there is some $n
	\in\mathbb{N}$ so that $g^n \zeta \in \mc{B}_{r,\varepsilon}(p)$.
	Now, the sequence $\{ g^n h^i g^{-n}\}_{i=1}^\infty$ will converge
	to $g^n \zeta$ because $\{\pi_S(g^n h^i g^{-n})\}_{i=1}^\infty$
	will converge to $g^m \zeta \in \partial \mc{C}_\mf{T}S$.  Since
	$h \in N$ and $N$ is a normal subgroup, $g^n h^i g^{-n} \in N$ for
	each $i \in \mathbb{N}$.  Thus, $p$ will be a limit point of
	elements of $N$.
\end{proof}

\begin{rem}
	The conclusion of Corollary \ref{cor:normal_subgroups} can fail 
	to hold when $G$ does
	not contain any Morse elements.  For example, if $G$ is the direct
	product of two infinite $G$--HHSs $H_1 \times H_2$, then the HHS 
	boundary of $G$ will be the join of the HHS boundaries of $H_1$
	and $H_2$, and the limit set of each $H_i$ will be exactly one
	side of this join.
\end{rem}

Our last proposition was used in the proof of Corollary
\ref{cor:normal_subgroups} and is useful in its own right. We note 
that it can be deduced as a special case of
\cite[Corollary 3.6]{Morse-local-to-global}; we provide a short proof 
using the theory of hierarchical hyperbolicity for completeness.  

\begin{prop}\label{prop:Morse_in_normal} Let $G$ be a $G$--HHS  containing a Morse element.  Then every infinite
normal subgroup of $G$ contains an element that is Morse in~$G$.
\end{prop}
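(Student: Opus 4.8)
\emph{Proof plan.}
The plan is to reduce the statement to a purely hyperbolic‑geometric fact about the action of $G$ on the maximized top hyperbolic space. Let $\mf S$ be a $G$--HHS structure for $G$, let $\mf T$ be its maximization, and let $S$ be the $\nest$--maximal domain (the same in both). First I would record the dictionary between Morse elements and the action on $\mc C_\mf T S$. By Theorem~\ref{thm:Morse_iff_irr_in_max}, $h\in G$ is Morse if and only if $\BS_\mf T(h)=\{S\}$. Since $S$ is $\nest$--maximal it is $\nest$--comparable to every domain, hence orthogonal to none, so Lemma~\ref{lem:big_sets}(2) forces any $h$ with $S\in\BS_\mf T(h)$ to have $\BS_\mf T(h)=\{S\}$; and by Lemma~\ref{lem:big_sets}(3), applied with the always‑valid equality $hS=S$, we have $S\in\BS_\mf T(h)$ exactly when $h$ acts loxodromically on $\mc C_\mf T S$. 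Thus \emph{$h$ is Morse in $G$ if and only if $h$ acts loxodromically on $\mc C_\mf T S$}; moreover every element of $G$ is elliptic or loxodromic on $\mc C_\mf T S$, with no parabolics, again by Lemma~\ref{lem:big_sets}. In particular, since $G$ has a Morse element, $\mc C_\mf T S$ is unbounded and $G$ contains a loxodromic isometry $g$ of it.

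If $G$ is virtually cyclic the statement is immediate, since then every infinite subgroup has finite index and contains a power of the Morse generator; so assume $G$ is not virtually cyclic. Next I would argue that the action $G\curvearrowright\mc C_\mf T S$ is non‑elementary: it is cobounded (as $\overline\pi_S$ is coarsely onto $\mc C_\mf T S$ and $G$ acts coboundedly on $\mc X$) and contains a loxodromic, so if it were lineal or focal then every element of $G$ would stabilize the one‑ or two‑point limit set in $\partial\mc C_\mf T S$; using the absence of parabolics and the fact that the elementary closure of a Morse element is virtually cyclic (equivalently, that a Morse element is hyperbolically embedded, cf.\ rank rigidity \cite[Theorem~9.13]{HHS_Boundary}), this would force $G$ to be virtually cyclic. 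The final structural input is that a Morse element of $G$ is hyperbolically embedded, so $g$ is a WPD element for the action $G\curvearrowright\mc C_\mf T S$; this is the place where the hypothesis that $G$ contains a Morse element is genuinely used, since the desired conclusion is false for an arbitrary cobounded general‑type action on a hyperbolic space (witness $F_2\times\mathbb Z$ acting on a Cayley graph of $F_2$, with normal subgroup the center).

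With these facts in hand the proof concludes via the standard theory of acylindrically hyperbolic groups: $G$ acts non‑elementarily on the hyperbolic space $\mc C_\mf T S$ with a loxodromic WPD element, so by Dahmani--Guirardel--Osin any infinite normal subgroup $N\trianglelefteq G$ contains an element acting loxodromically on $\mc C_\mf T S$, and by the dictionary of the first paragraph that element is Morse in $G$. Alternatively, for a self‑contained argument one can run a commutator ping‑pong: writing $E(g)$ for the (virtually cyclic, almost malnormal) elementary closure of $g$, either $N\cap E(g)$ is infinite, in which case it contains an infinite‑order element a power of which lies in a subgroup commensurable with $\langle g\rangle$, hence is Morse and lies in $N$; or one picks $n\in N\setminus E(g)$, uses almost malnormality of $E(g)$ together with the no‑parabolics property to see that $g$ and $ngn^{-1}$ have disjoint fixed‑point pairs in $\partial\mc C_\mf T S$, and then checks that $[g^m,n]=g^m(ngn^{-1})^{-m}\in N$ is loxodromic on $\mc C_\mf T S$ for $m$ large by the usual ping‑pong for two independent loxodromics.

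The main obstacle is the second paragraph: upgrading the bare existence of a Morse element to an honest WPD/acylindricity statement for the action on $\mc C_\mf T S$ (or, in the self‑contained route, assembling the facts that $E(g)$ is virtually cyclic and almost malnormal and that there are no parabolic elements). Everything else—the identification of Morse elements with loxodromics of $\mc C_\mf T S$, the reduction through maximization, and the concluding ping‑pong or invocation of the normal‑subgroup theorem—is routine.
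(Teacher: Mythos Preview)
Your proposal is correct, but the paper takes a different and shorter route. First, a remark on what you call the ``main obstacle'': acylindricity of the $G$--action on $\mc C_\mf T S$ for a maximized structure $\mf T$ is not something you need to assemble by hand---it is one of the main results of \cite{ABD}, so you can simply cite it. With that in hand, your argument (Morse $\Leftrightarrow$ loxodromic on $\mc C_\mf T S$, then invoke the normal-subgroup theorem for acylindrically hyperbolic groups, or run commutator ping-pong) goes through.

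The paper's argument is more HHS-internal and avoids the general acylindrically-hyperbolic machinery. It applies the rank-rigidity theorem \cite[Theorem~9.13]{HHS_Boundary} directly to the action of $N$ on $(G,\mf T)$: either $N$ contains an irreducible (hence Morse) element, or $N$ stabilizes some product region $\P_U$. In the second case $\pi_S(\P_U)\asymp\rho^U_S$ is bounded, so $N$ has a bounded orbit in $\mc C_\mf T S$; conjugating by a large power of a Morse element $g$ and using normality ($g^nNg^{-n}=N$) produces a second bounded $N$--orbit arbitrarily far from the first, and acylindricity of $G\curvearrowright\mc C_\mf T S$ then forces $N$ to be finite. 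What this buys is brevity and self-containment within the toolkit already developed in the paper; what your route buys is a clear conceptual placement of the result inside the general theory of acylindrically hyperbolic groups.
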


\begin{proof} Let $g\in G$ be a Morse element, and let $\mf{S}$ be a
maximized structure for $G$.  By Theorem
\ref{thm:Morse_iff_irr_in_max}, $\BS_{\mf{S}}(g)=\{S\}$ where $S$ is
the $\nest$--maximal element of $\mf{S}$.  Let $N$ be a normal
subgroup of $G$.  Applying the Rank-Rigidity Theorem (\cite[Theorem
9.13]{HHS_Boundary}) to the action of $N$ on $(G,\mf{S})$, either $N$
contains a Morse element or it stabilizes a product region
$\mathbf{P}_U$ for some $U \in \mf{S}$ ($\P_U$ is the image of $\F_U
\times \E_U$ in $G$).  Since $g$ is loxodromic on $\mc{C}S$, by taking
$n$ large enough we can ensure that $\rho^{U}_{S}$ is as far as
desired from $\rho^{g^{n}U}_{S}$.  Since $\rho^{U}_{S} \asymp
\pi_S(\mathbf{P}_U)$, the orbits $N \cdot \rho^N_S$ and $g^nNg^{-n}
\cdot \rho^{g^nU}_S$ both have uniformly bounded diameter for any $n
\in \mathbb{Z}$.  Since $g^n N g^{-n} = N$, acylindricity of the
action of $G$ on $\mc{C}S$ implies that $N$ is finite.
\end{proof}

\bibliography{HHS_Boundary}{}
\bibliographystyle{alpha}

\end{document}